\newtheorem{theorem}{Theorem}
\newtheorem{lemma}{Lemma}
\newtheorem{proposition}[lemma]{Proposition}
\newtheorem{definition}[lemma]{Definition}
\newtheorem{remark}[lemma]{Remark}
\numberwithin{lemma}{section}
\newcommand{\half}{\frac{1}{2}}
\newcommand{\D}{\partial}
 \newcommand{\Ez}{E_0}
 \newcommand{\E}{E}
 \newcommand{\Elin}{E_{lin}}
 \newcommand{\Elind}{{E^{(2)}_{lin}}}
\newcommand{\cM}{{\mathcal M}}
\newcommand{\cG}{{\mathcal G}}
\newcommand{\cK}{{\mathcal K}}
\newcommand{\Gs}{\mathcal{G}^\sharp}
\newcommand{\Ks}{\mathcal{K}^\sharp}
\newcommand{\As}{{A^\sharp}}
\newcommand{\Ass}{{A^{\sharp\sharp}}}
\newcommand{\nfw}{{w_{NF}}}
\newcommand{\nfr}{{r_{NF}}}
\newcommand{\nfW}{{\W_{NF}}}
\newcommand{\nfR}{{R_{NF}}}
\newcommand{\hatw}{\hat w}
\newcommand{\hatr}{\hat r}
\numberwithin{equation}{section}
\newcommand{\R}{{\mathbb R}}
\newcommand{\Z}{{\mathbb Z}}
\renewcommand{\H}{{\mathcal H }}
\renewcommand{\AA}{\mathbf A}
\newcommand{\tw}{{\tilde w}}
\newcommand{\tr}{{\tilde r}}
\newcommand{\dH}{{\dot{\mathcal H} }}
\newcommand{\W}{{\mathbf W}}
\newcommand{\teal}[1]{\begingroup\color{black} #1\endgroup}
\begin{document}

\title{Two dimensional gravity waves at low regularity I: energy estimates}

\author{Albert Ai}
\address{Department of Mathematics, University of Wisconsin, Madison}
\email{aai@math.wisc.edu}
\author{Mihaela Ifrim}
 \address{Department of Mathematics, University of Wisconsin, Madison}
\email{ifrim@wisconsin.edu}
\author{ Daniel Tataru}
\address{Department of Mathematics, University of California at Berkeley}
\email{tataru@math.berkeley.edu}

\begin{abstract}
  This article represents the first installment of a series of papers concerned with low regularity solutions for the  water wave equations in two space dimensions. Our focus here is on sharp cubic energy estimates. Precisely, we introduce and develop the techniques to prove a new class of energy estimates, which we call \emph{balanced cubic estimates}.  This yields a key improvement over the earlier cubic estimates
  of Hunter-Ifrim-Tataru \cite{HIT}, while preserving  their scale invariant character and their position-velocity potential holomorphic coordinate formulation.
  
 Even without using any Strichartz estimates, these results
allow us to significantly lower the  Sobolev regularity threshold for local  well-posedness, drastically  improving 
earlier results obtained by Alazard-Burq-Zuily~\cite{abz,abz-str}, Hunter-Ifrim-Tataru~\cite{HIT} and
Ai~\cite{A}. 

\end{abstract}

\maketitle

\setcounter{tocdepth}{1}

\tableofcontents

\section{Introduction}
We consider the two dimensional water wave equations with infinite
depth and with gravity but without surface tension.  This is a free boundary problem which is governed by
the incompressible Euler's equations within the fluid domain, and with appropriate boundary conditions on the
water surface, which is the free boundary. Under the additional assumption that the flow is
irrotational, the two dimensional dynamics can be expressed in terms of a
one-dimensional evolution of the free boundary coupled with the trace of the velocity potential on the surface.  

The choice of the parametrization of the free
boundary plays an important role here, and can be viewed as a form of gauge freedom.  Historically there are three such choices of coordinates; the first two, namely  the Eulerian and Lagrangian coordinates, arise in the broader context  of fluid
dynamics. The third employs the so-called  conformal method, which is specific to two dimensional irrotational flows; this leads to what we call the holomorphic coordinates, which play a key role in the present paper. 

Our objective in this and subsequent papers is to improve, 
streamline, and simplify the analysis of the two dimensional
gravity wave equations. This is a challenging quasilinear, nonlocal,
non-diagonal system. We aim to develop its analysis in multiple ways,
including:

\begin{enumerate}
\item  prove better, scale invariant energy estimates,

\item improve the existing results on long time solutions, and

\item  refine the study of the dispersive properties and improve the low regularity theory.

\end{enumerate}

In the present article we carry out the first step of this program,
and obtain a new class of sharp, scale invariant energy estimates, which
we call \emph{balanced cubic energy estimates}.

As a consequence of our estimates, we are able to drastically lower 
the regularity threshold below the prior results
in \cite{HIT}, \cite{abz-str}, \cite{A}. This is despite the fact that
here we are using no Strichartz estimates, whose investigation
is left for a future installment; thus, this should be  seen 
as an improvement over \cite{HIT}, with further improvements 
yet to come. On the other hand, for reference we note that the second step of our program, namely the application of these results to the study of long time solutions has been subsequently carried out  in \cite{WW-global}.

We note that a family of cubic, scale invariant energy estimates has already
been obtained in \cite{HIT}. The key improvement here is that 
our new estimates are \emph{balanced}, as opposed to the 
unbalanced ones in \cite{HIT}; this will be further explained below.
We emphasize the fact that we also prove balanced cubic  estimates for the linearized equation; this is essential for the local well-posedness result.

The proof of these new energy estimates brings together and refines
the two main methods in the study of the long time dynamics for water 
waves, namely (i) the \emph{modified energy method} of the last two authors and collaborators \cite{BH}, \cite{HIT}, and (ii) the \emph{paradiagonalization method} of Alazard-Delort~\cite{ad}. This reflects our dual goal, which is to both prove the new 
energy estimates, and to reduce the study of the nonlinear equation
to the corresponding linear paradifferential equation. The last 
part, which here could simply be seen as an element of the proof
of the energy estimates, will become critical in the 
study of the dispersive properties in later work.

To conclude, we emphasize that the idea of balanced 
cubic energy estimates, first developed here, should not be seen as specific to water waves, or even to one dimensional flows,  but rather as a much more 
general principle. For instance, the reader may also 
look at our subsequent work \cite{MS}, where this idea is implemented for the hyperbolic minimal surface equation
in all dimensions.

\subsection{A brief history} 
The water wave equations in general, and the equations for gravity 
waves in particular, are fundamental in fluid dynamics and have 
received considerable attention over the years. While the following discussion unavoidably gives priority to the dynamical problem, we note that extensive work was also done toward understanding periodic and solitary waves. We refer the reader to \cite{groves,const}  for an overview of work in this direction.

 The first steps toward understanding the local theory for this problem are due to Nalimov and Ovsjannikov; see \cite{n,o} and references  therein.  Ovsjannikov primarily considered solutions to shallow water equations in spaces of analytic functions. Nalimov instead worked on the infinite depth problem, which is also the one considered here, and proved the first small data result in Sobolev spaces. His approach was later extended to the finite depth problem by Yosihara~\cite{y}, and to large data by Wu~\cite{wu2}.

One key observation in the study of water waves was
that one can fully describe the evolution in terms of the 
free boundary and the trace of the velocity potential on the free boundary. This is classical and possibly goes all the way back to Stokes. Further progress was made  in the work of Zakharov~\cite{zak}, where the Hamiltonian structure
of the problem was first uncovered; this eventually led to the current Eulerian formulation of the problem, which first appeared in \cite{CSS}. More recently, the  Eulerian setting was further exploited in the study of the local well-posedness; for further references we refer the reader to the more recent paper of Alazard-Burq-Zuily~\cite{abz} as well as to Lannes's book  
~\cite{L-book}.

The well-posedness result in ~\cite{abz} is based purely on energy estimates. In the two dimensional setting this was improved in 
\cite{HIT} using refined energy estimates, which exploit more 
of the structure of the problem; this required the use of holomorphic coordinates, which are discussed in the next subsection. Further 
improvements to the local theory in \cite{abz-str} were based 
on Strichartz estimates, following the model developed 
initially for the quasilinear 
wave equation by  Tataru~\cite{T, T2, T3} and 
Bahouri-Chemin~\cite{bc,bc2}. The Strichartz estimates, and implicitly the result of \cite{abz-str} were further improved by the first author in \cite{A1}, in all dimensions, and even further in \cite{A}
in two dimensions. This latter work represents in some sense the 
water wave analogue of the results of Smith-Tataru~\cite{ST}
in the nonlinear wave equation context.

\subsection{Water waves in holomorphic (conformal) coordinates}
The conformal method first appeared in the study of the 
stationary problem for solitary waves; one such example is in the work of Levi-Civita~\cite{LC}, but possibly also earlier. Ovsjannikov's work \cite{ov} is the  first that we are aware of where the conformal method is used in the study of the evolution problem. The complete evolution equations restricted to the boundary were independently written by Wu~\cite{wu2} and Dyachenko-Kuznetsov-Spector-Zakharov~\cite{zakharov} in slightly different forms. However it was only in Wu's paper ~\cite{wu2} that this formulation is fully exploited to prove local well-posedness in the large data problem. Finally, the formulation of gravity waves in holomorphic coordinates 
was revisited in \cite{HIT}, and it is that formulation that we will be using in the present paper.

Our results apply both in the case of the real line $\mathbb{R}$ and the periodic case $\mathbb{S}^1$.  Our equations are expressed in coordinates $(t,\alpha)$ where $\alpha$ corresponds to the holomorphic parametrization of the water domain by the lower half-plane restricted to the real line.  To avoid distracting technicalities we will 
do the analysis for the real line, and refer the reader to the 
discussion in \cite{HIT} concerning the (minor) changes in the periodic case.

To write the equations we use the Hilbert transform $H$, as well as the projection operator to negative frequencies,
\begin{equation*}
P= \frac12(I-iH).
\end{equation*}
Our main variables $(Z,Q)$ are functions of $(t,\alpha)$ which 
can be described as follows:
\begin{itemize}
    \item $Z$ denotes the trace of the conformal map on the real line, which is viewed as the boundary of the lower half-space. In other words, $Z$ represents
    the conformal parametrization of the free surface in complex notation.
    \item $Q$ is the holomorphic velocity potential, defined as  the trace on the real line of the holomorphic extension of the velocity potential $\phi$,  i.e.
    $\Re Q = \phi$. 
\end{itemize}

Both $Z$ and $Q$ are complex valued functions which satisfy an addition 
spectral condition. Precisely, both $Z-\alpha$ and $Q$  will be restricted to the closed subspace of holomorphic functions within various Sobolev spaces. Here we define holomorphic functions on $\mathbb{R}$  as those
whose Fourier transform is supported in $(-\infty,0]$; equivalently,
they admit a bounded holomorphic extension into the lower
half-space. This can be described by the relation $Pf
= f$.

There is a one dimensional degree of freedom in the choice of
$\alpha$, namely the horizontal translations. To fix this, in the real
case we consider waves which decay at infinity,
\[
\lim_{|\alpha \vert \to \infty} Z(\alpha) - \alpha = 0.
\]
In the periodic case we instead assume that $ Z(\alpha) - \alpha$ has
period $2\pi$ and purely imaginary average. We can also harmlessly
assume that $Q$ has real average.

In position-velocity potential holomorphic coordinates the water wave equations equations have the form
\begin{equation*}
\left\{
\begin{aligned}
& Z_t + F Z_\alpha = 0 \\
& Q_t + F Q_\alpha -i (Z-\alpha) + P\left[ \frac{|Q_\alpha|^2}{J}\right]  = 0, \\
\end{aligned}
\right.
\end{equation*}
where the gravity is normalized to $1$ and
\begin{equation*}
 F = P\left[ \frac{Q_\alpha - \bar Q_\alpha}{J}\right] , \qquad J = |Z_\alpha|^2.
\end{equation*}

For the derivation of the above equations, we refer the reader to the appendix of \cite{HIT}.
 Slightly different forms of these equations appeared in
Wu~\cite{wu2} and Dyachenko-Kuznetsov-Spector-Zakharov~\cite{zakharov}.
The system form of the equations, used here, is closer to \cite{zakharov},
but is in complex rather than real form, as in Wu's work \cite{wu2}.

It is convenient to remove the leading, linear part of $Z$ and work with a new variable, namely
\[
W = Z-\alpha .
\]
Then the  equations become
\begin{equation}
\label{ww2d1}
\left\{
\begin{aligned}
& W_t + F (1+W_\alpha) = 0 \\
& Q_t + F Q_\alpha -i W + P\left[ \frac{|Q_\alpha|^2}{J}\right]  = 0, 
\end{aligned}
\right.
\end{equation}
where
\begin{equation*}
 F = P\left[\frac{Q_\alpha - \bar Q_\alpha}{J}\right], \qquad J = |1+W_\alpha|^2.
 \end{equation*}
Note that $J$ represents the Jacobian of the conformal change of coordinates.

As the system \eqref{ww2d1} is fully nonlinear, a standard procedure
is to convert it into a quasilinear system by differentiating
it. Observing that almost no undifferentiated functions appear in
\eqref{ww2d1}, one sees that by differentiation we get a
self-contained first order quasilinear system for
$(W_\alpha,Q_\alpha)$.  However, this system turns out to be 
degenerate hyperbolic at leading order, so it is far better to diagonalize it.
This was done in \cite{HIT} using the operator
\begin{equation}
\AA(w,q) := (w,q - Rw), \qquad R := \frac{Q_\alpha}{1+W_\alpha}.
\label{defR}
\end{equation}
The factor $R$ above has an intrinsic meaning, namely it is the complex velocity 
on the water surface in the conformal parametrization. We also remark that
\[
\AA(W_\alpha,Q_\alpha) = (\W,R), \qquad \W : = W_\alpha.
\]
Thus, the pair $(\W,R)$ diagonalizes the differentiated
system. Indeed, a direct computation yields the self-contained system
\begin{equation} \label{ww2d-diff}
\left\{
\begin{aligned}
 &D_t \W + \frac{(1+\W) R_\alpha}{1+\bar \W}   =  (1+\W)M
\\
&D_t R = i\left(\frac{\W - a}{1+\W}\right).
\end{aligned}
\right.
\end{equation}
Here the {\em material derivative} $D_t$ is given by
\begin{align*}
D_t &= \D_t + b \D_\alpha,
\end{align*}
the {\em advection velocity} $b$ is
given by
\begin{equation*}
b = \Re F =  P \left[\frac{R}{1+\bar \W}\right] +  \bar P\left[\frac{\bar R}{1+\W}\right],
\end{equation*}
the real {\em Taylor coefficient} $1 + a$ is given by
\begin{equation}\label{a-def}
a := i\left(\bar P \left[\bar{R} R_\alpha\right]- P\left[R\bar{R}_\alpha\right]\right)
\end{equation}
(though we will often abuse terminology by referring to $a$ as the Taylor coefficient),
and the auxiliary function $M$, closely related to the material derivative of $a$,
is given by
\begin{equation}\label{M-def}
M :=  \frac{R_\alpha}{1+\bar \W}  + \frac{\bar R_\alpha}{1+ \W} -  b_\alpha =
\bar P [\bar R Y_\alpha- R_\alpha \bar Y]  + P[R \bar Y_\alpha - \bar R_\alpha Y].
\end{equation}
The function $Y$ above, given by
\begin{equation}\label{Y-def}
Y := \frac{\W}{1+\W},
\end{equation}
is introduced in order to avoid rational expressions above and in many
places in the sequel.  The system \eqref{ww2d-diff} governs an
evolution in the space of holomorphic functions, and will be used both
directly and in its projected version.

Observe that using these definitions, \eqref{ww2d1} may be expressed as
\begin{equation}
\label{ww2d1-b}
\left\{
\begin{aligned}
& D_t W + b = \bar R \\
& D_t Q -i W  = \bar P \left[ \frac{|Q_\alpha|^2}{J}\right],
\end{aligned}
\right.
\end{equation}
and \eqref{ww2d-diff} may be expressed as
\begin{equation}\label{ww2d-diff-Y}
\left\{
\begin{aligned}
 & D_t Y + |1 - Y|^2 R_\alpha  =  (1 - Y)M
\\
& D_t R - i(1 + a)Y = -ia.
\end{aligned}
\right.
\end{equation}

The functions $b$ and $a$ also play a fundamental role in the
linearized equations which are computed in the next section,
Section~\ref{s:linearized}. Indeed, these functions are essential to
understanding the quasilinear evolution, and appear in one form or
another in all works on the subject, see e.g. \cite{n,abz, L-book,wu}.
Our expressions here are similar in form to those in \cite{wu}.

\

We denote the linearized variables around a solution $(W, Q)$ of \eqref{ww2d1} by $(w,q)$ and, after the diagonalization, 
$$(w, r:=q-Rw).$$
The linearized equations (see \eqref{lin(wr)0}) have the form
\begin{equation*}
\left\{
\begin{aligned}
& D_t w  +  \frac{1}{1+\bar \W} r_\alpha
+  \frac{R_{\alpha} }{1+\bar \W} w  =\ (1+\W) (P \bar m + \bar P  m)
 \\
&D_t  r  - i  \frac{1+a}{1+\W} w  = \  \bar P n - P \bar n,
\end{aligned}
\right.
\end{equation*}
 where 
 \[ 
 m := \frac{r_\alpha +R_\alpha w}{J} + \frac{\bar R w_\alpha}{(1+\W)^2}, \qquad n := \frac{ \bar R(r_{\alpha}+R_\alpha w)}{1+\W}.
\]
In particular, the linearization of the system
\eqref{ww2d-diff} around the zero solution is
\begin{equation}\label{ww2d-0}
\left\{
\begin{aligned}
 & w_{ t} +  r_\alpha   =  0
\\
& r_t - i w = 0.
\end{aligned}
\right.
\end{equation}

An important role in our paper is played by the paradifferential
counterpart of the linearized equations, which is the linear flow
\begin{equation}\label{paralin(wr)-intro}
\left\{
\begin{aligned}
& T_{D_t} {w} + T_{1- \bar Y} \D_\alpha {r}
+ T_{(1 - \bar Y)R_\alpha} {w} = 0
 \\ 
& T_{D_t} {r} - i T_{1 - Y}T_{1 + a} {w} =  0,
\end{aligned}
\right.
\end{equation}  
where $T_f$ denotes the standard paraproduct operator 
and the operator $T_{D_t}$ which we will call the \emph{para-material derivative},
is given by
\[
T_{D_t} = \partial_t + T_b \partial_\alpha.
\]

In a similar vein, we will also consider the linearization of the equations \eqref{ww2d-diff} and its associated linear paradifferential flow, which we list here for reference in inhomogeneous form: 
\begin{equation} \label{paralin(hwhr)-intro}
\left\{
\begin{aligned}
 &T_{D_t} w  + T_{b_\alpha}  w + \D_\alpha T_{1 - \bar Y}T_{1 + W_\alpha} r = G \\
&T_{D_t}  r  + T_{b_\alpha} r  -iT_{(1 - Y)^2}T_{1 + a} w + T_M r = K.
\end{aligned}
\right.
\end{equation}

The  two sets of linearized equations are derived in Section~\ref{s:linearized}, where we also describe the connection
between them. The analysis of the linearized equations \eqref{lin(wr)}, carried out in Section~\ref{s:lin-est}, is a key component of this paper. One 
intermediate step in this analysis is the study of the 
above paradifferential equation \eqref{paralin(wr)-intro}.

The second paradifferential equation \eqref{paralin(hwhr)-intro}
will play a role in the study of the full differentiated equation \eqref{ww2d-diff}. The two paradifferential flows above will be shown to be equivalent in a suitable sense.

\subsection{Function spaces and control norms}
 The system \eqref{ww2d-0} is a well-posed linear evolution in the space $\H^0$ of holomorphic functions endowed with the $L^2 \times \dot H^{\frac12}$ norm. A conserved energy for this system is
\begin{equation}\label{E0}
\Ez (w,r) = \int \frac12 |w|^2 + \frac{1}{2i} (r \bar r_\alpha - \bar r r_\alpha)\, d\alpha.
\end{equation}
The nonlinear system \eqref{ww2d1} also admits a conserved energy, which has the form
\begin{equation}\label{ww-energy}
\E(W,Q) = \int \frac12 |W|^2 + \frac1{2i} (Q \bar Q_\alpha - \bar Q Q_\alpha)
- \frac{1}{4} (\bar W^2 W_\alpha + W^2 \bar W_\alpha)\, d\alpha.
\end{equation}
 As suggested by the above energy, our main function spaces for the 
 differentiated water wave system \eqref{ww2d-diff} are the
spaces $\H^s$ endowed with the norm 
\[
\| (\W,R) \|_{\H^s}^2 :=  \| \langle D \rangle^s (\W, R)\|_{ L^2 \times \dot H^\frac12}^2,
\]
where $s \in \R$. With these notations, the linear energy $\Ez$ is equivalent to (the square of) the $\H^0$ norm.

Almost all the estimates in this paper are scale invariant,
and to describe them it is very useful to also have homogeneous versions 
of the above spaces, namely the spaces $\dH^s$ endowed with the norm 
\[
\| (\W,R) \|_{\dH^s}^2 :=  \| |D|^s (\W, R)\|_{ L^2 \times \dot H^\frac12}^2.
\]
We caution the reader that, in order to streamline the exposition here, our 
notation for the energy spaces differs slightly from the notation used in \cite{HIT}.

The energy  estimates for  the solutions in \cite{HIT} were described in terms of the (time dependent) control norms $(A,B)$,
which are defined and redenoted here as
\begin{equation}\label{A-def}
A_0 = A := \|\W\|_{L^\infty}+\| Y\|_{L^\infty} + \||D|^\frac12 R\|_{L^\infty \cap B^{0}_{\infty, 2}},
\end{equation}
respectively
\begin{equation}\label{B-def}
A_{\frac12} =  B :=\||D|^\frac12 \W\|_{BMO} + \| R_\alpha\|_{BMO}.
\end{equation}

Instead, in this article the leading role will be played by
an intermediate control norm interpolating between $A_0$
and $A_{\frac12}$,
\begin{equation}\label{A14-def}
A_{\frac14} :=\| \W\|_{\dot B^{\frac14}_{\infty,2}} + \| R \|_{\dot B^{\frac34}_{\infty,2}}.
\end{equation}
Here the subscript of $A$ represents the difference in terms of derivatives between our control norm and scaling.
In particular  $A_{s}$ corresponds to and is controlled by the homogeneous $\dH^{\frac12+s}$ norm of $(\W, R)$, and $A_0$ is a scale invariant quantity. Concerning $A_{\frac14}$, we note the following inequality, 
\begin{equation}\label{A14-def+}
\||D|^\frac14 \W\|_{BMO} + \| |D|^\frac34 R \|_{BMO} \lesssim A_{\frac14}.
\end{equation}
With one notable exception, in all of the paper we use only this slightly weaker $BMO$ bound.

In addition to the pointwise scale invariant norm measured by $A$,
we will also need a stronger scale invariant Sobolev control norm $\As$
defined by 
\begin{equation}\label{Asharp-def}
\teal{\As :=\||D|^{\frac14}\W\|_{L^4} + \||D|^{3/4} R\|_{L^4}.}
\end{equation}
A number of implicit constants in our estimates will depend on $\As$.

\subsection{The main results}

For comparison purposes, we begin by recalling the energy estimates in \cite{HIT} 
for the differentiated system \eqref{ww2d-diff}.

\begin{theorem}\label{t:ee}
For each $n \geq 0$ there exists an energy functional $E_n$
associated to the differentiated equation \eqref{ww2d-diff}
with the following two properties:

(i) Energy equivalence:
\begin{equation}
E_n(\W,R) \approx_A \|(\W,R)\|_{\dH^n}^2    
\end{equation}

(ii) Cubic energy bound:
\begin{equation}
\frac{d}{dt} E_n(\W,R) \lesssim_A AB \|(\W,R)\|_{\dH^n}^2.    
\end{equation}
\end{theorem}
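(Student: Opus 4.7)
The plan is to follow the modified energy method from \cite{HIT}. The linearization of \eqref{ww2d-diff} around zero is the flow \eqref{ww2d-0}, for which $\|(w,r)\|_{\H_0}^2$ is exactly conserved; scale invariance and the derivative structure then suggest $\|(\W,R)\|_{\dH_n}^2$ (or a Taylor-weighted analogue) as the quadratic backbone of $E_n$. I will modify this quadratic backbone by cubic corrections so that the resulting energy is conserved along the nonlinear flow modulo \emph{cubic} remainders.

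First, I would apply $|D|^n$ to \eqref{ww2d-diff} and differentiate in time the natural weighted quadratic energy
$$ E_n^{nat}(\W,R) = \int (1+a)\bigl||D|^n \W\bigr|^2 + \frac{1}{2i}\bigl(|D|^n R \cdot \partial_\alpha \overline{|D|^n R} - \text{c.c.}\bigr)\, d\alpha, $$
substituting the $D_t$ equations. The Taylor weight $1+a$ is inserted so that, at the linearized level, the quadratic pieces cancel by an integration by parts, mirroring the Hamiltonian structure of \eqref{ww-energy}. What remains splits into two parts: (a) obstructive \emph{quadratic} terms coming from the commutators of $|D|^n$ with the variable coefficients $b$, $\frac{1+\W}{1+\bar\W}$, $\frac{1}{1+\W}$, from commuting $|D|^n$ through $D_t$, and from the $M$ and $D_t a$ interactions; and (b) genuinely cubic and higher-order remainders.

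The heart of the proof is to express each obstructive quadratic term in (a) as a time derivative (modulo cubic terms) of an explicit cubic functional of $(\W,R)$; adding these cubic functionals back with appropriate signs defines the correction $E_n - E_n^{nat}$. The construction is guided by the paradifferential system \eqref{paralin(hwhr)-intro}, which transparently encodes which commutators produce which quadratic obstructions. With $E_n$ so constructed, the equivalence (i) is immediate: the cubic corrections are controlled pointwise by $A \cdot \|(\W,R)\|_{\dH_n}^2$, so they are absorbed into the quadratic backbone with $A$-dependent constants.

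For the cubic bound (ii), what remains after all cancellations is a sum of genuinely cubic integrals in $(\W,R)$. To estimate each by $AB \cdot \|(\W,R)\|_{\dH_n}^2$, I would paradecompose in Littlewood--Paley frequencies and put the highest-frequency factor in $L^2$ (contributing to the $\dH_n$ norm); the remaining two factors are bounded by the $L^\infty$-type norms in $A$ or the $BMO$-type norms in $B$, the extra half-derivative in $B$ over $A$ precisely matching the half-derivative deficit demanded by scale invariance. The main obstacle is the combinatorial bookkeeping: one must identify \emph{all} obstructive quadratic terms, realize each as a time derivative of an explicit cubic, and verify that no residual non-perturbative quadratic contribution survives. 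Conceptually this is a normal-form computation; the technical difficulty lies in organizing the paradifferential algebra cleanly and uniformly in $n$.
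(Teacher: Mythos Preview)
Your outline is essentially the modified energy method of \cite{HIT}, and that is indeed how this theorem is proved --- but note that the present paper does not itself prove Theorem~\ref{t:ee}; it is stated here purely for comparison, as a result recalled from \cite{HIT}. The paper's own contribution is the sharper Theorem~\ref{t:main}, which replaces $AB$ by the balanced quantity $A_{1/4}^2$.

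That said, two small divergences from the actual \cite{HIT} argument are worth flagging. First, \cite{HIT} works only with integer $n$ and proceeds by applying $\partial_\alpha^n$ algebraically to the differentiated system, obtaining closed-form cubic corrections via an explicit diagonalization (multiplication by powers of $1+\W$ and $1+\bar\W$) rather than via $|D|^n$ and paradifferential commutators; the paradifferential route you describe is closer in spirit to what the present paper does for Theorem~\ref{t:main}. Second, your sketch of step (ii) --- bounding each residual cubic integral by $AB\|(\W,R)\|_{\dH_n}^2$ --- glosses over the main technical point: one must verify that in every trilinear remainder the two lower-frequency factors can be distributed so that \emph{exactly one} receives the extra half-derivative measured by $B$, with the other landing in $A$. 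This is where the specific algebraic structure of the water wave system (holomorphicity, the form of $M$ and $a$) is used, and it is not automatic from scaling alone.
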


The above energy bounds are already scale invariant, so one cannot 
hope to simply lower the regularity of any of the factors. 
However, the two factors in the product $AB$ are unbalanced,
with the second factor $B = A_{\frac12}$ being the critical one 
in determining low regularity well-posedness thresholds.

A main idea in this article is that, by taking full advantage
of the structure of the water wave system, one can better rebalance the product $AB$ into $A_{\frac14}^2$.  Along the way, we also allow $n$ (now redenoted by $s$) to be non-integer:

\begin{theorem}\label{t:main}
For each $s \geq 0$ there exists an energy functional $E_s$
associated to the differentiated equation \eqref{ww2d-diff}
with the following two properties:

(i) Energy equivalence if $A \ll 1$:
\begin{equation}
E_s(\W,R) \approx \|(\W,R)\|_{\dH^s}^2    
\end{equation}

(ii) Balanced cubic energy bound:
\begin{equation}
\frac{d}{dt} E_s(\W,R) \lesssim_{\teal{\As}} A_{\frac14}^2 \|(\W,R)\|_{\dH^s}^2.  
\end{equation}

\end{theorem}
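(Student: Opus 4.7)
The plan is to combine the modified energy method of \cite{HIT} with the paradifferential reduction of Alazard--Delort \cite{ad}, as hinted in the introduction. In a first step I would write the $(\W, R)$ system \eqref{ww2d-diff} in paradifferential form matching \eqref{paralin(hwhr)-intro}, producing explicit source terms $(G,K)$ which measure the discrepancy between the full nonlinear flow and its paralinearization. The goal here is to verify that all such source terms are either (a) already cubic and satisfy the balanced bound
\[
\|G\|_{\dot H^s} + \|K\|_{\dot H^{s+\frac12}} \lesssim_{\As} A_{\frac14}^2 \|(\W,R)\|_{\dH_s},
\]
or (b) quadratic but of a specific normal-form type which can be absorbed into a cubic correction of the energy.

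Next I would prove a pure paradifferential energy estimate for \eqref{paralin(hwhr)-intro}. The natural energy is a symmetrized version of $\|w\|_{\dot H^s}^2 + \|r\|_{\dot H^{s+\frac12}}^2$ in which the scalar weights are replaced by paraproducts of the symbols $1-Y$ and $1+a$ that appear in the system. Symmetrization exploits the fact that $b$ and $a$ are real, and that $Y$, $\W$ are small in $L^\infty$ when $A \ll 1$. When computing the time derivative of this energy, the leading self-adjoint parts of $T_{D_t}$, $\D_\alpha T_{1-\bar Y}T_{1+W_\alpha}$ and $T_{(1-Y)^2}T_{1+a}$ cancel pairwise, leaving subprincipal errors coming from paraproduct commutators and from the non-self-adjoint pieces $T_{b_\alpha}$ and $T_M$, which I expect to bound by $A_{\frac14}^2 \|(\W,R)\|_{\dH_s}^2$ via paradifferential calculus and Besov-type product estimates.

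With the paradifferential estimate in hand, the remaining task is to eliminate the non-balanced, quadratic contributions of $(G,K)$ produced in Step~1. Following the modified-energy philosophy of \cite{BH, HIT}, I would construct an explicit cubic correction $E_s^{(3)}$ to the paradifferential energy whose time derivative cancels, modulo balanced cubic remainders, the offending $\langle G, w \rangle_{\dot H^s} + \mathrm{Re}\,\langle K, r \rangle_{\dot H^{s+\frac12}}$ contributions. Setting $E_s = E_s^{\mathrm{para}} + E_s^{(3)}$, the equivalence (i) with the homogeneous Sobolev norm holds for small $A$ because the correction is cubic in $(\W,R)$, and the balanced bound (ii) follows by combining the two preceding ingredients.

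The main obstacle, I expect, is precisely the rebalancing of the unbalanced product $AB$ into $A_{\frac14}^2$. The naive energy estimate naturally produces bilinear coefficients such as $\|R_\alpha\|_{BMO}\|\W\|_{L^\infty}$, corresponding exactly to the $AB$ bound of \cite{HIT}. To obtain the symmetric estimate one cannot afford to place all derivatives on a single factor: the $\tfrac12$-derivative gap between $\W$ and $R$ must be split evenly, so that each factor carries exactly $\tfrac14$ derivative more than scaling dictates. Concretely this will require repeated use of Bony's decomposition into $T$, $T^*$ and $\Pi$ pieces, with particular care for the high--high $\Pi$ interactions, which are the only type where a genuine balancing via Cauchy--Schwarz against the $\dot B^{\frac14}_{\infty,2}$ and $\dot B^{\frac34}_{\infty,2}$ norms defining $A_{\frac14}$ is possible.
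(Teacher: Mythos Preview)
Your three-step outline is the right architecture and matches the paper's Section~\ref{s:ee}: paradifferential reduction (Lemma~\ref{l:para-rewrite}), energy estimates for the paradifferential flow (Proposition~\ref{p:para2}, which in turn rests on Proposition~\ref{p:paralin-wp}), and elimination of the residual quadratic source terms. On this last point the paper applies normal forms to the \emph{variables}, $(\W,R)\mapsto(\nfW,\nfR)$ (Proposition~\ref{p:nf}), rather than adding a cubic correction to the energy; the two are equivalent for Theorem~\ref{t:main}, but the variable version also delivers Theorem~\ref{t:nf}, which is needed in the follow-up paper.

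There is, however, a genuine gap in your Step~2. You write that the non-self-adjoint piece $T_{b_\alpha}$ should be bounded by $A_{\frac14}^2$; it is not. One only has $\|b_\alpha\|_{BMO}\lesssim_A B=A_{\frac12}$, so the contribution $\int T_{b_\alpha}\hat w\cdot\overline{\hat w}$ (and the matching term in $\hat r$, as well as the identical order-zero commutator that arises when you conjugate $T_{D_t}$ by $D^s$) recovers only the unbalanced $AB$ bound of \cite{HIT}. A single normal-form or cubic-energy correction does not fix this: it converts the quadratic $b_\alpha$ contribution into a cubic one that is still unbalanced, and iterating produces an infinite cascade. The paper's resolution (Proposition~\ref{prop:cubiccommutator}, building on an idea from \cite{HIT}) is a paradifferential \emph{exponential} conjugation by $\Phi=J^{-s}=|1+\W|^{-2s}$. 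The key identity is $D_t\Phi=s\,\Phi\,(b_\alpha-M)$, so that passing to $(T_\Phi D^s w,\,T_\Phi D^s r)$ generates an extra $s\,T_{b_\alpha}$ which exactly cancels the order-zero transport commutator, leaving only source terms of order $\le -\tfrac12$ that a finite normal-form step \emph{can} balance. Your proposed weights ``paraproducts of $1-Y$ and $1+a$'' gesture in the right direction, but the specific power $J^{-s}$ and the cancellation mechanism $D_t\log\Phi=s(b_\alpha-M)$ are the crux; without naming them, Step~2 does not go through with the balanced constant $A_{\frac14}^2$.
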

One should think of the energies $E_s$ as refined versions of the earlier energies $E_n$ in Theorem~\ref{t:ee}. In particular $E_s$ would satisfy the conclusion of Theorem~\ref{t:ee}, but not vice versa.

In the same vein, in this article we also prove a similar balanced  energy bound for the linearized equation \eqref{lin(wr)} in the space $\dH^{\frac14}$; see Theorem~\ref{t:balancedenergy} in Section~\ref{s:lin-est}. This improves a corresponding 
unbalanced bound in \cite{HIT} (done there in $\H^0$). A small price to pay is the use of the stronger control norm $\As$ instead of $A$ in the implicit constant; this has no other impact in either this paper or the following installments of our work.

As a consequence of the above energy bound and of the similar bound for the linearized equation, we will prove the following 
low regularity well-posedness result:

\begin{theorem}
\label{t:lwp}
 Let  $ s \geq s_0 = 3/4$.  The system \eqref{ww2d-diff} is locally well-posed for all initial data $(\W_0,R_0)$ in $\H^s(\mathbb{R})$
 (or $\mathbb T$) so that $A(0)= A(\W_0,R_0)$ is small.
  Further, the solution can be continued for as long as $A$  remains 
 small and $A_{\frac14} \in L^2_t$. 
\end{theorem}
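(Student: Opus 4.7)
The plan is to prove Theorem~\ref{t:lwp} by combining the balanced energy estimate of Theorem~\ref{t:main} with the balanced linearized estimate (Theorem~\ref{t:balancedenergy}) in the standard quasilinear Bona--Smith scheme. The threshold $s_0 = 3/4$ is fixed precisely so that the one dimensional Sobolev--Besov embeddings $\dot H^{3/4} \hookrightarrow \dot B^{1/4}_{\infty,2}$ and $\dot H^{5/4} \hookrightarrow \dot B^{3/4}_{\infty,2}$ yield the pointwise in time bound $A_{\frac14}(t) \lesssim \|(\W,R)(t)\|_{\dH_s}$; in the same manner $\As$ is dominated by the $\H_s$ norm since $s \geq 1/2$.

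I would first regularize the initial data to a smooth sequence $(\W_0^n, R_0^n) \in \H_N$ for some large $N$, converging to $(\W_0, R_0)$ in $\H_s$ with uniform bound. Classical existence, e.g.\ from \cite{wu2} or the high regularity analog of the arguments in \cite{HIT}, produces smooth solutions $(\W^n, R^n)$ on short time intervals. Theorem~\ref{t:main} together with the pointwise embedding $A_{\frac14} \lesssim \|(\W,R)\|_{\dH_s}$ transforms the energy inequality into $\tfrac{d}{dt} E_s \lesssim_A E_s^2$, a Bernoulli type inequality that closes on a common time interval $[0,T]$ depending only on $\|(\W_0, R_0)\|_{\H_s}$ and on the smallness of $A(0)$, yielding uniform bounds along with persistence of the smallness of $A$. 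To pass to the limit, I would apply the balanced linearized estimate to the difference $(\delta \W, \delta R) := (\W^n - \W^m, R^n - R^m)$. This difference solves the linearized equation around $(\W^n, R^n)$ up to inhomogeneous source terms encoding the difference of the coefficients $b$, $a$, $M$, $Y$, $R$, $\W$ evaluated at the two solutions. Provided these sources are bounded suitably in the norm dual to the linearized energy, Theorem~\ref{t:balancedenergy} together with Gronwall gives
\[
\|(\delta \W, \delta R)(t)\|_{\dH_{\frac14}} \lesssim \|(\delta \W, \delta R)(0)\|_{\dH_{\frac14}} \exp\!\left(C \int_0^T A_{\frac14}^2 \, d\tau \right),
\]
so that $(\W^n, R^n)$ is Cauchy in $C([0,T]; \dH_{\frac14})$. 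Interpolation with the uniform $\dH_s$ bound gives convergence in $\H_{s'}$ for every $s' < s$, weak-$\ast$ compactness places the limit $(\W,R)$ in $L^\infty \H_s$, and it solves the original equation distributionally.

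Strong continuity in $\H_s$, uniqueness, and continuous dependence on initial data follow by the by-now-standard Bona--Smith strategy: compare the rough solution to its regularizations in the low norm $\dH_{\frac14}$ via the same linearized estimate, and use the uniform high regularity bounds to upgrade. The continuation criterion is a direct rereading of the Gronwall bound in Theorem~\ref{t:main}: as long as $A$ remains small (needed for the energy equivalence) and $A_{\frac14} \in L^2_t$, the $\H_s$ energy remains finite, so the solution persists. The principal obstacle lies in the convergence step, namely in casting the equation for $(\delta \W, \delta R)$ in a form where the source terms, built from differences of the nonlinear coefficients, can be absorbed within the balanced $\dH_{\frac14}$ estimate; this requires a careful paradifferential analysis in the spirit of Section~\ref{s:linearized}, together with the equivalence between \eqref{paralin(wr)-intro} and \eqref{paralin(hwhr)-intro}. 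It is precisely here that the balanced nature of the new estimates is essential: with the unbalanced $AB$ bound of \cite{HIT} one would be forced to demand $B = A_{\frac12} \in L^1_t$, essentially a Strichartz level input, whereas the present balanced $A_{\frac14}^2$ requires only $A_{\frac14} \in L^2_t$, which is automatic at the threshold $s = 3/4$.
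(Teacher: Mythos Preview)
Your overall strategy---regularize, propagate uniform $\H_s$ bounds via Theorem~\ref{t:main}, pass to the limit using the $\dH_{\frac14}$ linearized estimate, then Bona--Smith for continuity---is sound and matches the paper's broad outline. The execution of the convergence step, however, differs from the paper's in a way that bears precisely on what you flag as the principal obstacle.

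Rather than treating the discrete difference $(\W^n-\W^m, R^n-R^m)$ as an inhomogeneous linearized solution and then having to estimate the source terms built from differences of the coefficients, the paper works with a \emph{continuous} one-parameter family $(\W^k,R^k)$ with data $P_{<k}(\W_0,R_0)$ and differentiates in $k$. Then $\partial_k(\W^k,R^k)$ is an \emph{exact} solution of the linearized equation around $(\W^k,R^k)$, so Theorem~\ref{t:balancedenergy} applies with no source terms at all; the whole difficulty you single out is sidestepped. A second point you elide is that Theorem~\ref{t:balancedenergy} is stated for the linearization \eqref{lin(wr)} of the $(W,Q)$ system, in the variables $(w,r)$, not for the linearization of the differentiated $(\W,R)$ system. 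The paper recovers $(w^k,r^k)$ from $\partial_k(\W^k,R^k)$ via $\partial_\alpha w^k = \partial_k \W^k$ and $\partial_\alpha r^k = (1+\W^k)\partial_k R^k - R_\alpha^k w^k$; the inversion of $\partial_\alpha$ is legitimate because the holomorphic structure together with the frequency truncation of the data forces these expressions to be supported at frequencies $\gtrsim 2^k$. For a discrete Bona--Smith difference this inversion, and the attendant low-frequency control, is less automatic. The paper packages everything through frequency envelopes, which also yields the continuous-dependence argument cleanly, and separately proves a low-frequency $\dH_0$ difference bound by a direct energy computation in order to upgrade the $\dH_{\frac14}$ convergence to $\H_{\frac34}$; your interpolation-plus-weak-$\ast$ step would need an analogue of this.
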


We complete this theorem with two remarks which discuss different possible 
formulations of this result.

\begin{remark}\label{r:A}
The smallness assumption on $A$ is not essential here, but it does allow us to avoid 
a number of distracting technicalities and instead focus on the main new ideas of the
present paper. One should be able to replace this with a bound of the form 
$|1+\W_0| > c > 0$, which in a nutshell prevents the free surface from developing corners.
To see the potential effect of eliminating the smallness assumption on $A$ we refer the 
reader to the local well-posedness argument in \cite{HIT}.
\end{remark}

\begin{remark}
The $\dH^{\frac34}$ regularity of the data and of the solution is needed in this paper
in order to provide uniform control over $A_\frac14$. On the other hand, the 
low frequency regularity $\H^0$ is actually never used, but simply propagated in time.
With some extra work in the local well-posedness argument one could relax the low 
frequency assumption and prove well-posedness in the larger space $\dH^\frac14 \cap \dH^\frac34$, and possibly even better than that.
\end{remark}

It is revealing to compare this result with prior work. 
The direct comparison is with the results in \cite{abz} and \cite{HIT}, both of which are purely based on energy estimates.
These results correspond to $s = 1+\epsilon$, respectively $s = 1$.
Thus our improvement is by $1/4$ of a derivative, which is half-way 
to the absolute threshold $s = \frac12$ given by scaling.

One could also compare with the results of \cite{abz-str} 
with $s = 1-\frac{1}{24}+\epsilon$ or \cite{A} with $s = 1-\frac18 + \epsilon$, and our new result is still considerably better.
However, such a comparison would be biased because both \cite{abz-str}
and \cite{A} use Strichartz, while here we do not. Strichartz based improvements to the above result will be discussed in a subsequent paper, where another substantial improvement is obtained for the well-posedness range of $s$. 

A limitation of our exposition is that we work with the infinite 
depth problem, whereas the local well-posedness threshold should 
not depend on that. Indeed, we expect that this result easily carries over to the finite depth case.

One interesting consequence of the above energy estimates and well-posedness result is that we can also prove a low regularity cubic lifespan bound: 

\begin{theorem}
\label{t:cubic}
  Let $\epsilon \ll 1$,  $0 < \delta \leq \frac12$ and $c_0 \ll 1$.  Assume that the initial data for the equation   \eqref{ww2d-diff}  satisfies
\begin{equation}\label{small-data-14}
\|(\W(0), R(0))\|_{\dH^{\frac{3}{4}}} \leq \epsilon, 
\end{equation}
as well as
\begin{equation} \label{scale-inv}
\|(\W(0), R(0))\|_{ \dH^{\frac12-\delta}}^\frac14 \|(\W(0), R(0))\|_{ \dH^{\frac{3}{4}}}^\delta  \leq c_0^{\delta+\frac14}.
\end{equation}
Then the solution exists on an $\epsilon^{-2}$ sized time interval
$I_\epsilon = [0,T_\epsilon]$, and satisfies a similar bound. In
addition, the estimates
\[
\sup_{t \in I_\epsilon} \| (\W(t), R(t))\|_{\dH^s} \lesssim \| (\W(0), R(0))\|_{\dH^s}
\]
hold for all $s \geq 0$ whenever the right hand side is finite.
\end{theorem}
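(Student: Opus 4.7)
The plan is to run a bootstrap based on the balanced cubic energy estimate of Theorem~\ref{t:main} at the threshold regularity $s = 3/4$, exploiting the crucial fact that $A_{\frac14}$ is itself controlled by the $\dH_{3/4}$ norm. The key ingredient is the one-dimensional Besov embedding $\dot H^s = \dot B^s_{2,2} \hookrightarrow \dot B^{s-1/2}_{\infty,2}$, applied to $\W$ and $R$, which gives
\[
A_{\frac14} = \|\W\|_{\dot B^{1/4}_{\infty,2}} + \|R\|_{\dot B^{3/4}_{\infty,2}} \lesssim \|\W\|_{\dot H^{3/4}} + \|R\|_{\dot H^{5/4}} \approx \|(\W,R)\|_{\dH_{3/4}}.
\]
Raising \eqref{scale-inv} to the power $1/(\delta + 1/4)$ reveals it as a scale-invariant Sobolev interpolation inequality that lands exactly at the critical $\dH_{1/2}$-level, and hence, via \eqref{A14-def+} and Besov embedding, controls $A(0)$ by a universal constant times $c_0$. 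Choosing $c_0$ sufficiently small thus permits the application of Theorem~\ref{t:lwp} to produce a local solution.

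For the main bootstrap step, substituting the Besov embedding into Theorem~\ref{t:main} at $s = 3/4$ closes onto a Riccati-type differential inequality,
\[
\frac{d}{dt} E_{3/4}(\W,R) \lesssim_A A_{\frac14}^2 \|(\W,R)\|_{\dH_{3/4}}^2 \lesssim_A E_{3/4}(\W,R)^2,
\]
which integrates to $E_{3/4}(t) \leq 2 E_{3/4}(0) \lesssim \epsilon^2$ on an interval of length $T_\epsilon \sim \epsilon^{-2}$. To keep $A(t)$ small throughout the continuation, I would also propagate the lower-regularity energy at $s = \frac12 - \delta$, where Theorem~\ref{t:main} yields the linear bound $\dot E_{1/2-\delta} \lesssim_A A_{\frac14}^2\, E_{1/2-\delta}$; Gronwall then applies because
\[
\int_0^{T_\epsilon} A_{\frac14}^2 \, d\tau \lesssim \int_0^{T_\epsilon} \|(\W,R)\|_{\dH_{3/4}}^2 \, d\tau \lesssim T_\epsilon \cdot \epsilon^2 \lesssim 1.
\]
Running the same scale-invariant interpolation as for the initial data on the propagated $\dH_{1/2-\delta}$ and $\dH_{3/4}$ bounds yields $A(t) \ll 1$ throughout $[0, T_\epsilon]$. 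Together with the $L^2_t$ bound on $A_{\frac14}$ above, this verifies the continuation hypotheses of Theorem~\ref{t:lwp}. The analogous Gronwall argument applied to Theorem~\ref{t:main} at arbitrary $s \geq 0$ yields the higher-regularity propagation bound.

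The main obstacle is ensuring the bootstrap closes consistently: the implicit constants of the form $\lesssim_A$ in the energy estimates depend on the smallness of $A$, and must not degrade during the bootstrap. In particular $c_0$ in \eqref{scale-inv} must be chosen small enough that, after the interpolation step, $A(t)$ stays below the threshold required by Theorem~\ref{t:main} on the entire interval $[0, T_\epsilon]$, leaving room to absorb all the implicit constants coming from both the Riccati and Gronwall integrations as well as the interpolation. Beyond this, the argument reduces cleanly to an ODE analysis of the balanced cubic energy.
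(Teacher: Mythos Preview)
Your approach is correct and matches the paper's proof essentially step for step: a bootstrap on both the $\dH_{3/4}$ and $\dH_{1/2-\delta}$ norms, with $A_{\frac14}$ controlled by the $\dH_{3/4}$ norm via Besov embedding and $A$ controlled by the scale-invariant interpolation between the two endpoints, then closing via Theorem~\ref{t:main} at both regularity levels on an $\epsilon^{-2}$ time scale. One small correction: your reference to \eqref{A14-def+} is misplaced (that inequality concerns $A_{\frac14}$, not $A$), and ``landing at the $\dH_{1/2}$ level'' is slightly imprecise since $\dH_{1/2}$ alone does not embed into $L^\infty$; what actually controls $A$ is the two-endpoint Gagliardo--Nirenberg interpolation $\|\W\|_{L^\infty} \lesssim \|\W\|_{\dot H^{1/2-\delta}}^{\theta}\|\W\|_{\dot H^{3/4}}^{1-\theta}$ with $\theta = \tfrac{1/4}{1/4+\delta}$, which is exactly the structure of \eqref{scale-inv}.
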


We note that in the current version  we have 
slightly adjusted the statement of this theorem in order to make it fully scale invariant and to more accurately reflect the exact consequences of our main result in Theorem~\ref{t:main}. This does not change the (very brief) proof in any significant way. This is in response to a discussion with S. Wu, in order to facilitate a direct comparison of this result 
with the more recent results in \cite{Wu-2020}.
The constant $c_0$ is universal, 
and comes from the smallness requirement on $A$ in Theorem~\ref{t:main} (see also Remark~\ref{r:A}). The power of $c_0$ is simply used for homogeneity purposes. See also \cite{BFP} for related results.

\medskip

Another key point of this paper, which is less important for the results stated above but will be critical in our follow-up work,
is that we establish a normal form based  equivalence between 
our differentiated variables $(\W,R)$ and normalized variables
$(\tilde \W, \tilde R)$, which in turn solve a paradifferential 
form of the linearized equations \eqref{lin(wr)}, with a favourable perturbative source term:

\begin{theorem}\label{t:nf}
There exists a normal form transformation 
\[
(\W,R) \to (\W_{NF},R_{NF}) 
\]
with the following properties: 

\begin{itemize}
    \item[(i)] \textbf{Regularity:}  The normal form transformation is smooth
    in $\H^s$ for all $s > \frac12$, with comparable control parameters $A$, $\As$, $A_{\frac14}$ for $(\W,R)$ and $(\W_{NF},R_{NF})$.

    \item[(ii)] \textbf{Uniform bounds:} For each $s \geq 0$ we have
\begin{equation}\label{WR-nf-bdd}
\| (\W,R) - (\W_{NF},R_{NF})\|_{\dH^s} \lesssim_{A} A \|(\W,R)\|_{\dH^s}.
\end{equation}

\item[(iii)] \textbf{Normalized equation:} The normal form variables $(\W_{NF},R_{NF}) $ solve an inhomogeneous paradifferential equation \eqref{paralin(hwhr)-intro} with sources $(G,K)$
satisfying
\begin{equation}\label{WR-nf-err}
\| (G, K)\|_{\dH^s} \lesssim_{A} A_{\frac14}^2  \|(\W,R)\|_{\dH^s}, \quad s \geq 0.
\end{equation}
\end{itemize}
\end{theorem}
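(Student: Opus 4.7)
The plan is to construct $(\W_{NF}, R_{NF})$ as a bilinear perturbation of $(\W, R)$, chosen so that the non-paradifferential quadratic pieces of the full differentiated system \eqref{ww2d-diff} are exactly cancelled, modulo cubic errors.  First, I would rewrite \eqref{ww2d-diff} as a perturbation of the paradifferential flow \eqref{paralin(hwhr)-intro} applied to $(\W,R)$.  Using Bony's decomposition $fg = T_f g + T_g f + \Pi(f,g)$, expanding $Y$, $J^{-1}$, $a$, $M$, and $b$ around $\W = R = 0$ to bilinear order, and isolating the paradifferential parts (where the low-frequency factor is a coefficient), one obtains
\[
T_{D_t} \W + T_{b_\alpha}\W + \D_\alpha T_{1-\bar Y}T_{1+W_\alpha} R = N_1(\W,R) + \text{cubic},
\]
and an analogous identity for $R$ with right side $N_2(\W,R) + \text{cubic}$.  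Here $N_1, N_2$ are explicit bilinear expressions built from paraproducts with the evolved variable in the low-frequency slot plus resonant $\Pi$-terms.

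Second, I would construct bilinear pseudoproducts $B(\W,R)$, $C(\W,R)$ such that the ansatz $\W_{NF} = \W + B,\ R_{NF} = R + C$ absorbs $N_1, N_2$ into the paradifferential operator.  At the symbol level, this reduces to a system of algebraic identities for the kernels $b(\xi,\eta), c(\xi,\eta)$ of $B,C$ involving the linear paradifferential flow; to leading order these are exactly the flow relations of \eqref{ww2d-0}, namely $\W_t + R_\alpha = 0$ and $R_t - i\W = 0$.  The water wave dispersion relation ensures that the required symbols are smooth and bounded outside the origin (this is the well-known absence of dangerous resonances used for the cubic normal form), so that the resulting $B, C$ are standard bilinear Fourier multipliers mapping into the appropriate Sobolev spaces.

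Third, property (ii) and property (i) follow directly from the bilinear structure: each of $B, C$ is a paraproduct-type expression containing one factor involving a derivative, so
\[
\|(B,C)\|_{\dH^s} \lesssim \|(\W,R)\|_{L^\infty\text{-type}} \cdot \|(\W,R)\|_{\dH^s} \lesssim A\,\|(\W,R)\|_{\dH^s},
\]
yielding both \eqref{WR-nf-bdd} and the comparability of the control norms, and in particular the reversibility of the transformation when $A \ll 1$.  For property (iii), I would compute $T_{D_t}\W_{NF}$ etc.\ using \eqref{ww2d-diff} for $(\W, R)$; the resulting $(G,K)$ consists of (a) genuinely cubic terms from the original expansion, (b) commutators between $T_{D_t}$ and $B, C$, and (c) the mismatch between the linearized flow \eqref{ww2d-0} used to design $B, C$ and the full paradifferential flow \eqref{paralin(hwhr)-intro}, which itself differs by paradifferential coefficients linear in $(\W, R)$, hence produces further cubic contributions.

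The main obstacle is the balanced cubic bound \eqref{WR-nf-err}: every cubic trilinear expression in $(G,K)$ must be controlled by $A_{\frac14}^2 \|(\W,R)\|_{\dH^s}$ rather than by the much easier unbalanced $A\cdot A_{\frac12}$ bound.  I would handle this by a Littlewood--Paley trichotomy on each trilinear term, verifying that for each frequency arrangement the $1/2$ derivative which otherwise sits on a single factor (giving $A_{\frac12}=B$) can instead be split symmetrically as two $1/4$ derivatives distributed over the two control factors, leveraging the $\ell^2$-Besov structure of $\dot B^{\frac14}_{\infty,2} \times \dot B^{\frac34}_{\infty,2}$ in the definition \eqref{A14-def} of $A_{\frac14}$ for dyadic summation.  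This requires careful bookkeeping of where the derivatives from the bilinear symbols $b, c$, from the resonant $\Pi$ terms, and from the commutators land, and in particular uses the fact that the resonant interactions all come with enough smoothing to tolerate the symmetric splitting, which is where the specific structure of the water wave normal form enters crucially.
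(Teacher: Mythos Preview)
Your overall architecture is correct and mirrors the paper's: rewrite \eqref{ww2d-diff} as the paradifferential flow \eqref{paralin(hwhr)-intro} plus a source, then remove the source by a bilinear-type correction.  The gap is in your treatment of the cubic residuals, and it is exactly the point where the balanced estimate \eqref{WR-nf-err} is strictly harder than the $AB$ estimate.

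When you expand the source terms (the paper's $(\cG,\cK)$, Lemma~\ref{l:para-rewrite}), you do not get a clean ``bilinear $+$ balanced cubic'' splitting.  What you get are quadratic $\Pi$-terms \emph{dressed with low-frequency paradifferential coefficients}, e.g.\ $\D_\alpha\Pi(\W,T_{1-\bar Y}R)$ or $T_{1-Y}\Pi(R_\alpha,\bar R)$.  If your normal form $B,C$ is a genuine bilinear pseudoproduct with a fixed symbol $b(\xi,\eta)$, it can only cancel the quadratic part (coefficient $=1$).  The leftover cubic pieces then have the schematic shape $(\text{undifferentiated }Y\text{ at low frequency})\times(\text{balanced bilinear in }\W,R)$.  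For such a term your redistribution argument fails: with frequencies $Y\ll \W\sim R$, the only available control on the lowest factor is $\|Y\|_{L^\infty}\lesssim A$, and the remaining bilinear piece unavoidably costs $A_{\frac12}$; there is no mechanism to transfer a quarter-derivative down to the low-frequency factor because it carries none.  So you are stuck at $AB$, not $A_{\frac14}^2$.

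The paper's fix is to build the low-frequency coefficients \emph{into the normal form correction itself}: instead of $\Pi(\W,W)$ one uses $\Pi(\W,X)$ with $X=T_{1-Y}W$, and adds factors like $T_{1-\bar Y}$ to the antiholomorphic $\Pi$-term in $\tilde R$ (see the explicit corrections in Section~\ref{s:nf-nonlin}).  These coefficients are chosen so that, when $T_{D_t}$ hits the correction, the para-Leibniz rule and the para-material derivative formulas for $X$ (Lemma~\ref{l:XY-mat}) reproduce exactly the dressed $\Pi$-source terms, cancelling both the quadratic \emph{and} the unbalanced cubic parts simultaneously.  Your symbol-level construction via the dispersion relation of \eqref{ww2d-0} cannot see these solution-dependent coefficients; you need to work at the paraproduct level and match them explicitly.
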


From a normal forms perspective, one might ask whether
the results in this paper could be achieved directly 
using a normal form transformation. As far as the low frequency analysis is concerned, there is indeed a very simple normal form transformation which eliminates the quadratic terms in the equation \eqref{ww2d1}. As computed in \cite{HIT}, this has the form
\begin{equation}
\left\{
\begin{aligned}
\tilde W =& \  W - 2 P (\Re W  W_\alpha)
\\
\tilde Q =& \  Q - 2 P(\Re W Q_\alpha).
\end{aligned}
\right.
\label{nft1}
\end{equation}
Unfortunately this normal form transformation is unbounded at high frequency, which simply reflects the quasilinear character of the problem. 

To compound the difficulty,
if one attempts to differentiate this in order to obtain a normal form transformation for the differentiated equation \eqref{ww2d-diff}, this
will contain inverse derivatives so it will 
be unbounded also at low frequencies.

While we cannot use the above normal form directly,
we will nevertheless rely on it as a guide when computing partial normal form transformations, both 
in its differentiated form for the equation \eqref{ww2d-diff}, and in linearized form for the linearized equation \eqref{lin(wr)}.


\subsection{The structure of the paper} 
Much of the analysis in this paper is phrased in the language of  paradifferential calculus. To prepare for this, we  begin in Section~\ref{s:multilinear} with a review of notations and of some of the classical Coifman-Meyer paraproduct estimates, some further bounds from \cite{HIT}, as well as several new bounds. 

This is immediately followed and applied in Section~\ref{s:wwbounds} by a series of bounds which are specific to the water wave system. We successively consider the Taylor coefficient $a$, the advection velocity $b$, as well as
several other auxiliary functions.

Next we turn our attention to linearizations and para-linearizations.
Our goal in Section~\ref{s:linearized} is to derive 
the linearizations of the original equations \eqref{ww2d1}
and of the differentiated equations \eqref{ww2d-diff}, as 
well as the connection between them. To each of these two linearized equations we associate the respective paradifferential flows.

After that, in Section~\ref{s:lin-est}, we study the well-posedness
of the first linearized equations \eqref{lin(wr)} in the space
$\dH^{\frac14}$, as well as the corresponding balanced energy estimates. As an intermediate step, we prove a similar result 
for the corresponding paradifferential flow \eqref{paralin(wr)-intro}, and carry out a modified normal form type reduction.

In Section~\ref{s:ee} we consider the energy estimates in Sobolev spaces $\dH^s$, $s \geq 0$,  for  the solutions to the nonlinear equations. The principal part of these equations is closely related to the 
associated paradifferential linearized equations \eqref{paralin(hwhr)-intro}. The bulk of the work is to show that there exists a modified normal form type reduction of the full equations \eqref{ww2d-diff} to the paradifferential ones \eqref{paralin(hwhr)-intro}. The latter is then shown to be equivalent to the paradifferential flow \eqref{paralin(wr)-intro},
studied in the previous section.

Section~\ref{s:lwp} contains the proof of the local well-posedness
result in Theorem~\ref{t:lwp} for the equations \eqref{ww2d-diff}, as well as 
the proof of the cubic lifespan bounds in Theorem~\ref{t:cubic}.  We begin with more regular data, e.g. $\H^1$, for which well-posedness has already been proved in \cite{HIT}. The rough $\H^\frac34$ solutions are obtained as uniform limits of smooth solutions by using the estimates for the linearized equation.  The same construction yields their continuous dependence on the data.

\subsection{Acknowledgements}
The first author was partially supported by the Henry Luce Foundation as well as by the Simons Foundation. The second author was supported by a Luce Assistant Professorship, by the Sloan Foundation, and by an NSF CAREER grant DMS-1845037. The last author was supported by the NSF grant DMS-1800294 as well as by a Simons Investigator grant from the Simons Foundation. 

\section{Norms and multilinear  estimates} \label{s:multilinear}

Here we review some of the function spaces and estimates used later in the paper. We are primarily using the paradifferential calculus,
and our analysis will require a heavy dose of paraproduct type estimates. 
Many of these are relatively standard, like the classical Coifman-Meyer bounds
and some of their generalizations. Several others are from \cite{HIT}. 
To those we add a few more useful multilinear paraproduct type bounds.

\subsection{Function spaces}
We use a standard 
 Littlewood-Paley decomposition in frequency
\begin{equation*}
1=\sum_{k\in \mathbf{Z}}P_{k},
\end{equation*}
where the multipliers $P_k$ have smooth symbols localized at frequency $2^k$.

A good portion of our analysis happens at the level of homogeneous
Sobolev spaces $\dot{H}^{s}$, whose norm is given by
\begin{equation*}
\Vert f\Vert_{\dot{H}^{s}}\sim \Vert ( \sum_{k}\vert 2^{ks} P_{k}f\vert^2 )^{1/2}  \Vert _{L^2}=
\| 2^{ks} P_k f \|_{L^2_{\alpha} \ell^2_k}.
\end{equation*}
We will also use  the Littlewood-Paley
square function and its restricted version,
\begin{equation*}
\displaystyle S(f)(\alpha):=\bigg( \sum_{k\in {\mathbf Z}} |P_k(f)(\alpha)|^2\bigg)^\frac{1}{2}, \qquad S_{>k}(f)(\alpha) = \left( \sum_{j > k} |P_j f|^2 \right)^\frac12.
\end{equation*}
The Littlewood-Paley inequality  is recalled below
\begin{equation}\label{lp-square}
   \displaystyle \|S(f)\|_{L^p({\mathbf R})}\simeq_{p} \|f\|_{L^p({\mathbf R})}, \qquad  1<p<\infty.
   \end{equation}
By duality this also yields the estimate
\begin{equation}
\label{useful}
\Vert \sum_{k\in \mathbf{Z}}P_{k}f_{k}\Vert_{L^p}\lesssim
\Vert \sum_{k\in \mathbf{Z}}(\vert f_{k}\vert ^2)^{1/2}\Vert_{L^p}, \qquad 1 < p < \infty,
\end{equation}
for any sequence of functions $\left\{ f_k\right\}_k\in  L^p_{\alpha} l^2_k$.

The $p= 1$ version of the above estimate for the Hardy space $ H_1$ is
\begin{equation}\label{h1-square}
\Vert f\Vert_{H_1}\simeq \| S(f)  \|_{L^1_{\alpha} \ell^2_k},
\end{equation}
which by duality implies the BMO bound
\begin{equation}
\label{useful-bmo}
\Vert \sum_{k\in \mathbf{Z}}P_{k}f_{k}\Vert_{BMO}\lesssim
\Vert S(f)\Vert_{L^\infty}.
\end{equation}
The square function characterization of BMO is slightly different,
\begin{equation}\label{bmo-square}
\| u\|_{BMO}^2 \approx \sup_k \sup_{|Q|=2^{-k}} 2^k \int_Q |S_{>k} (u)|^2 \, dx.
\end{equation}
For real $s$ we define the homogeneous spaces\footnote{These are the same as the Triebel-Lizorkin spaces $F^{s}_{\infty,2}$.} $BMO^s$
with norm
\[
\| u\|_{BMO^s}  = \| |D|^s u\|_{BMO}.
\]
\subsection{Coifman-Meyer and and Moser type estimates}

In the context of bilinear estimates a standard tool is to consider a Littlewood-Paley 
paraproduct type decomposition of the product of two functions,
\[
 f g = \sum_{k \in \Z} f_{<k-4} g_k +  \sum_{k \in \Z} f_{k} g_{<k-4} + \sum_{|k-l| \leq 4}
f_k g_l := T_f g + T_g f + \Pi(f,g).
\]
We also define the restricted diagonal sum
\[
\Pi_{\geq k}(f,g) = \sum_{j \geq k} f_j g_j. 
\]
Here and below we use the notation $f_k = P_k f$, $f_{<k} = P_{<k} f$, etc.
By a slight abuse of notation, in the sequel we will omit the frequency separation 
from our notations in bilinear Littlewood-Paley decomposition; for instance instead of the 
above formula we will use the shorter expression
\[
  f g = \sum_{k \in \Z} f_{<k} g_k +  \sum_{k \in \Z} f_{k} g_{<k} + \sum_{k \in \Z} f_k g_k.
\]

Away from the exponents $1$ and $\infty$ one has a full set of estimates
\begin{equation}\label{CM}
\| T_f g\|_{L^r} + \| \Pi(f,g)\|_{L^r} \lesssim \|f\|_{L^p} \|g\|_{L^q}, \qquad \frac{1}r = \frac{1}{p} +
\frac{1}{q}, \qquad 1 < p,q,r < \infty.
\end{equation}
Corresponding to $q = \infty$ one also  has a BMO estimate 
\begin{equation}\label{CM-BMO}
\| T_f g\|_{L^p} + \| \Pi(f,g)\|_{L^p} \lesssim \|f\|_{L^p} \|g\|_{BMO},  \qquad 1 < p < \infty,
\end{equation}
while for the remaining product term we have the weaker bound
\begin{equation}\label{CM+}
\| T_g f\|_{L^p}  \lesssim \|f\|_{\dot W^{s,p}} \|g\|_{BMO^{-s}},  \qquad 1 < p < \infty, s > 0.
\end{equation}
These in turn lead to the commutator bound
\begin{equation}\label{CM-com}
\|[P, g] f \|_{L^p}  \lesssim \|f\|_{L^p} \|g\|_{BMO},  \qquad 1 < p < \infty.
\end{equation}

 We also need an extension of this, namely 
 
\begin{lemma}\label{l:com}
 The following commutator estimates hold 
 \teal{for $1 < p < \infty$:
  
\begin{equation}  \label{first-com}
\Vert |D|^s \left[ P,g\right] |D|^\sigma f \Vert _{L^p}\lesssim \Vert
  |D|^{\sigma+s} g\Vert_{BMO} \Vert f\Vert_{L^p}, \qquad \sigma \geq 0, \ \ s \geq 0,
\end{equation}
\begin{equation}\label{second-com}
\Vert |D|^s  \left[ P,g\right] |D|^\sigma f \Vert _{L^p}\lesssim \Vert
  |D|^{\sigma+s}  g\Vert_{L^p} \Vert f\Vert_{BMO}, \qquad  \sigma > 0, \ \ \, s \geq 0.
\end{equation}}
\end{lemma}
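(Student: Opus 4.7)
The plan is to decompose $[P,g]F$, with $F = |D|^\sigma f$, via paraproduct decomposition, and exploit a key cancellation. Expanding $gF = T_g F + T_F g + \Pi(g,F)$ and $gPF$ analogously, one obtains
\[
[P,g]F = (PT_g F - T_g PF) + (PT_F g - T_{PF} g) + (P\Pi(g,F) - \Pi(g,PF)).
\]
I would first show that $PT_g F = T_g PF$ and $PT_F g = T_F Pg$ identically. Each difference equals a sum of blocks of the form $[P, h_{<k-4}]\ell_k$, whose bilinear Fourier symbol $p(\xi+\eta) - p(\eta)$ (with $p = \mathbf{1}_{(-\infty,0)}$) vanishes on the support $|\xi| \leq 2^{k-4}$, $|\eta| \sim 2^k$: there $|\xi| < |\eta|/16$ forces $\sgn(\xi+\eta) = \sgn(\eta)$. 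This reduces the problem to bounding
\[
|D|^s \bigl[(T_F Pg - T_{PF} g) + (P\Pi(g,F) - \Pi(g,PF))\bigr]
\]
in $L^2$.

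Next I would carry out a bilinear symbol analysis. The full symbol of $|D|^s [P,g] |D|^\sigma f$ in the $(g,f)$ Fourier variables $(\xi,\eta)$ is
\[
\tau(\xi,\eta) = |\xi+\eta|^s (p(\xi+\eta) - p(\eta)) |\eta|^\sigma,
\]
and after removing the vanishing low-high regime it lives in the high-low regime $|\eta| \ll |\xi|$ and the high-high regime $|\xi| \sim |\eta|$. Factoring out $|\xi|^{s+\sigma}$ produces
\[
\tilde\mu(\xi,\eta) = \left(\tfrac{|\xi+\eta|}{|\xi|}\right)^s (p(\xi+\eta) - p(\eta)) \left(\tfrac{|\eta|}{|\xi|}\right)^\sigma,
\]
which is uniformly bounded by $2$ in both remaining regimes for $s,\sigma \geq 0$, since $|\xi+\eta|/|\xi|$ and $|\eta|/|\xi|$ are $O(1)$ there. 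This recasts the operator as a Coifman-Meyer-type bilinear paraproduct acting on the pair $(|D|^{s+\sigma}g, f)$.

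Estimate \eqref{first-com} then follows from the $BMO \times L^2 \to L^2$ paraproduct estimates: namely \eqref{CM-BMO} (for $\Pi$) combined with the standard $L^2$-boundedness of $T_f$ when $f \in BMO$. For \eqref{second-com} I would instead use the dual $L^2 \times BMO \to L^2$ bounds, valid for both $T_f g$ and $\Pi$-type paraproducts. The hypothesis $\sigma > 0$ enters precisely because the factor $(|\eta|/|\xi|)^\sigma$ in $\tilde\mu$ must vanish at $\eta = 0$: otherwise, taking $f$ to be a nontrivial constant would give $\|f\|_{BMO} = 0$ while $[P,g]f = -\tfrac{i}{2} c H g \not\equiv 0$, contradicting the claimed bound.

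The main technical obstacle is justifying the bilinear symbol calculus in the presence of the discontinuous projection symbol $p$. I would sidestep this by working directly at the level of Littlewood-Paley blocks: within each dyadic pairing, $P$ acts either trivially or as the identity according to which half-plane the frequency support occupies, so the modified paraproducts above reduce to linear combinations of standard paraproducts with uniformly bounded scalar coefficients, to which \eqref{CM-BMO} and its $L^2 \times BMO \to L^2$ dual apply directly.
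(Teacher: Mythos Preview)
The paper states this lemma without proof (it is taken over from \cite{HIT}), so there is no in-paper argument to compare against. Your approach is correct. The key cancellation $PT_gF=T_gPF$ and $PT_Fg=T_FPg$ does indeed kill the low--$g$/high--$F$ interactions, leaving only the regime $|\xi|\gtrsim|\eta|$; after rebalancing the $|D|^s$ and $|D|^\sigma$ onto $g$, the high--low piece is a $T_f(|D|^{s+\sigma}g)$--type paraproduct and the balanced piece is a $\Pi$--type paraproduct, both with bounded weights. For \eqref{first-com} these are handled by the $p=2$ case of \eqref{CM-BMO}; for \eqref{second-com} the $\Pi$ piece is again \eqref{CM-BMO}, while the high--low piece uses $\|f_j\|_{L^\infty}\lesssim\|f\|_{BMO}$ together with the geometric decay $2^{\sigma(j-m)}$, which is exactly where $\sigma>0$ is needed --- your constant--$f$ counterexample confirms this is sharp.

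Two small clarifications worth making explicit. First, your phrase ``the standard $L^2$-boundedness of $T_f$ when $f\in BMO$'' is potentially misleading: the operator $u\mapsto T_fu$ with $f\in BMO$ at \emph{low} frequency is \emph{not} bounded on $L^2$. What you actually need (and what \eqref{CM-BMO} provides) is $\|T_fG\|_{L^2}\lesssim\|f\|_{L^2}\|G\|_{BMO}$, i.e.\ the Carleson-embedding bound with the BMO factor at high frequency. Second, your block-level workaround for the discontinuous symbol $p$ is cleanest if you first split each dyadic piece $g_k,f_j$ into its positive- and negative-frequency halves; on each of the four sign combinations $P$ acts as $0$ or the identity, so the resulting bilinear forms are genuine paraproducts with scalar coefficients in $\{0,\pm1\}$, to which \eqref{CM-BMO} applies directly.
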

We remark that later this is applied to functions which are holomorphic/antiholomorphic, but that no such assumption is made above.

Next we consider some similar product type estimates involving $BMO$ and
$L^\infty$ norms.

\begin{proposition}\label{p:bmo}
a) The following estimates hold:
\begin{equation}\label{bmo-bmo}
\| \Pi(u,v) \|_{BMO} \lesssim \| u\|_{BMO} \|v\|_{BMO},
\end{equation}
\begin{equation}\label{bmo-bmo+}
\| P_{\leq k} \Pi_{\geq k}(u,v) \|_{L^\infty} \lesssim \| u\|_{BMO} \|v\|_{BMO},
\end{equation}
\begin{equation}\label{bmo-infty} 
\| T_u v\|_{BMO} \lesssim \| u\|_{L^\infty} \|v\|_{BMO},
\end{equation}
\begin{equation}\label{bmo>infty}
\| T_u v\|_{BMO} \lesssim \| u\|_{BMO^{-\sigma}} \|v\|_{BMO^\sigma}, \quad 
\qquad \sigma > 0.
\end{equation}

b) For $s > 0$ the space $L^\infty \cap BMO^{s}$ is an algebra,
\begin{equation}\label{bmo-alg}
\| uv\|_{BMO^{s}} \lesssim \| u\|_{L^\infty} \|v\|_{BMO^s}+
 \| v\|_{L^\infty} \|u\|_{BMO^s}.
\end{equation}

c) In addition, the following Moser estimate holds for a smooth function $F$ vanishing at $0$:
\begin{equation}\label{bmo-moser}
\| F(u)\|_{BMO^{s}} \lesssim_{\|u\|_{L^\infty}} \|u\|_{BMO^s}.
\end{equation}
\end{proposition}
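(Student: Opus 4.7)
The plan is to reduce everything to the square function characterization \eqref{bmo-square} of $BMO$ combined with Littlewood-Paley paraproduct decompositions. For part (a), I would start with \eqref{bmo-infty} and \eqref{bmo>infty}. Writing $T_u v = \sum_l u_{<l-4} v_l$, frequency orthogonality implies $P_j T_u v$ is controlled pointwise by $|u_{<j-C}| \, |v_j|$ up to neighboring terms. For \eqref{bmo-infty}, this gives $S_{>k}(T_u v) \lesssim \|u\|_{L^\infty} S_{>k-C}(v)$ pointwise; integrating over a cube $Q$ of size $2^{-k}$ and invoking \eqref{bmo-square} on $v$ closes the estimate. For \eqref{bmo>infty}, I would control $|u_{<l}(x)|$ on $Q$ by the local average bound $\||D|^{\sigma} u_{<l}\|_{L^\infty(Q)} \lesssim 2^{\sigma l} \|u\|_{BMO^{-\sigma}}$ (from the Carleson characterization of $BMO$), and pair this with the corresponding $BMO^{\sigma}$ information on $v_l$.

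For \eqref{bmo-bmo+}, I would write $\Pi_{\geq k}(u,v)(x) = \sum_{j \geq k} u_j(x) v_j(x)$ and apply pointwise Cauchy-Schwarz to obtain $|\Pi_{\geq k}(u,v)| \leq S_{>k-C}(u) \cdot S_{>k-C}(v)$; the low-frequency projection $P_{\leq k}$ then amounts to averaging on scale $2^{-k}$, and applying \eqref{bmo-square} to each factor gives the $L^\infty$ bound. For \eqref{bmo-bmo}, I would again use the square function: for $Q$ of size $2^{-k}$ and $j > k$, the identity $P_j \Pi(u,v) = P_j \sum_{l \geq j-C} u_l v_l$ shows that contributions with $l \leq k$ sit in the thin frequency shell $j \in (k, k+C]$ and are trivial; the main term $\sum_{l > k} u_l v_l$ is pointwise dominated by $S_{>k}(u) \, S_{>k}(v)$, and the $BMO$ bound then reduces to a Carleson / Fefferman-Stein argument on $Q$ using the characterization \eqref{bmo-square} for both factors.

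Part (b) follows from Bony's decomposition $uv = T_u v + T_v u + \Pi(u,v)$. Because $|D|^s$ acts essentially as multiplication by $2^{ls}$ on the $l$-th frequency shell (modulo Coifman-Meyer type bounded multipliers), $|D|^s T_u v = T_u(|D|^s v)$ plus a bounded error, and likewise for $|D|^s T_v u$; \eqref{bmo-infty} then delivers each term with the required $L^\infty \cdot BMO^s$ norm. For the diagonal part, $|D|^s \Pi(u,v) \sim \Pi(|D|^s u, v)$, and \eqref{bmo-bmo} combined with $\|v\|_{BMO} \leq \|v\|_{L^\infty}$ yields one of the two required contributions; symmetrizing gives the other.

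For part (c), I would apply Bony's paralinearization via the telescoping
\begin{equation*}
F(u) = \sum_k \bigl( F(u_{<k+1}) - F(u_{<k}) \bigr) = \sum_k m_k u_k, \qquad m_k = \int_0^1 F'(u_{<k} + t u_k) \, dt,
\end{equation*}
which matches the paraproduct $T_{F'(u)} u$ up to a remainder produced by the quadratic Taylor correction. The main term is estimated via \eqref{bmo-infty} with $\|F'(u)\|_{L^\infty}$ controlled by $\|u\|_{L^\infty}$ (using smoothness of $F$ and $F(0)=0$); the remainder is a diagonal sum of $F''$-weighted $u_k^2$ terms handled through part (b), exploiting the frequency localization of each $u_k^2$. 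I expect the main obstacle to be \eqref{bmo-bmo}: since $BMO$ gives no pointwise control, the diagonal paraproduct cannot be estimated naively, and the argument must fully exploit the square function characterization together with frequency localization and a Carleson-type bound at the scale of $Q$. Once this is in place, parts (b) and (c) cascade from the paraproduct machinery and the telescoping identity above.
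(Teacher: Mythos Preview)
Your approach is correct and matches the paper's treatment: the paper does not prove this proposition but cites \cite{HIT} for everything except \eqref{bmo-bmo+}, and for the latter says only that it ``is a direct consequence of the square function characterization of $BMO$ in \eqref{bmo-square}, and is left for the reader'' --- precisely the Cauchy--Schwarz plus scale-$2^{-k}$ averaging argument you outline.

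Two minor imprecisions worth tightening. First, the pointwise inequality $S_{>k}(T_u v)(x) \lesssim \|u\|_{L^\infty} S_{>k-C}(v)(x)$ is not literally true, since $P_j$ is a nonlocal operator; what holds (and what \eqref{bmo-square} actually needs) is the $L^2(Q)$-integrated version, obtained by splitting into a $CQ$ piece and a tail that decays rapidly via the kernel of $P_j$. You already say ``integrating over a cube $Q$'', so you are clearly thinking of this, but the pointwise statement as written should be softened. Second, in your sketch for \eqref{bmo>infty} the bound you actually want is $\|u_{<l}\|_{L^\infty} \lesssim 2^{\sigma l}\|u\|_{BMO^{-\sigma}}$, which follows by summing $\|u_m\|_{L^\infty}\approx 2^{\sigma m}\|(|D|^{-\sigma}u)_m\|_{L^\infty}\lesssim 2^{\sigma m}\|u\|_{BMO^{-\sigma}}$ over $m<l$; the expression $\||D|^{\sigma}u_{<l}\|_{L^\infty(Q)}$ as written appears to be a slip. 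Both are routine fixes and do not affect the strategy.
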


This Proposition is a paradifferential reformulation of 
results from \cite{HIT}, with the exception of the estimate 
\eqref{bmo-bmo+}, which represents a mild strengthening of \eqref{bmo-bmo}
in the hi $\times$ hi $\to$ low scenario. Its proof is a direct consequence 
of the square function characterization of $BMO$ in \eqref{bmo-square}, and is left
for the reader.

A more standard algebra estimate and the corresponding Moser bound is as follows:
 \begin{lemma}
Let $\sigma > 0$. Then $\dot H^\sigma \cap L^\infty$ is an algebra, and
\begin{equation}
\| fg\|_{\dot H^\sigma} \lesssim \| f\| _{\dot H^\sigma} \|g\|_{L^\infty} +
\|f\|_{L^\infty}  \| g\| _{\dot H^\sigma}.
\end{equation}
In addition, the following Moser estimate holds for a smooth function $F$ vanishing at 0:
\begin{equation}\label{bmo-hs}
\| F(u)\|_{\dot H^\sigma } \lesssim_{\|u\|_{L^\infty}} \|u\|_{\dot H^\sigma }.
\end{equation}
\end{lemma}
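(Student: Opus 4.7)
The plan is to treat the two assertions in turn, handling the algebra estimate first via a standard paraproduct decomposition and then bootstrapping it (via a telescoping Littlewood-Paley expansion) to obtain the Moser bound. The fact that $\sigma>0$ is what makes both steps work: it provides summable geometric series in the high-high paraproduct and in the telescoped Moser sum.

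For the algebra estimate I would decompose $fg = T_f g + T_g f + \Pi(f,g)$. The two low-high pieces are immediate from the square function characterization and the frequency localization of the paraproduct: since $P_k(T_f g) = P_k(f_{<k} g_k)$ is frequency-localized at $2^k$, one writes
\begin{equation*}
\| T_f g\|_{\dot H^\sigma}^2 \sim \sum_k 2^{2k\sigma} \|f_{<k} g_k\|_{L^2}^2 \lesssim \|f\|_{L^\infty}^2 \sum_k 2^{2k\sigma} \|g_k\|_{L^2}^2 \sim \|f\|_{L^\infty}^2 \|g\|_{\dot H^\sigma}^2,
\end{equation*}
and symmetrically for $T_g f$. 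For the diagonal piece $\Pi(f,g) = \sum_k f_k g_k$ I would bound $P_j \Pi(f,g)$ by $\sum_{k\geq j} \|f_k\|_{L^\infty}\|g_k\|_{L^2}$, then estimate
\begin{equation*}
2^{j\sigma}\|P_j \Pi(f,g)\|_{L^2} \lesssim \|f\|_{L^\infty} \sum_{k\geq j} 2^{(j-k)\sigma}\bigl(2^{k\sigma}\|g_k\|_{L^2}\bigr),
\end{equation*}
and apply Young's convolution inequality on $\Z$ to the summable kernel $2^{-k\sigma}\mathbf 1_{k\geq 0}$, producing $\|f\|_{L^\infty}\|g\|_{\dot H^\sigma}$ in $\ell^2_j$. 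This is precisely the place where $\sigma>0$ is needed; at $\sigma=0$ the low-frequency output of the high-high term cannot be controlled this way.

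For the Moser estimate I would use a telescoping decomposition along the Littlewood-Paley resolution of $u$:
\begin{equation*}
F(u) = \sum_k \bigl[F(u_{<k+1}) - F(u_{<k})\bigr] = \sum_k m_k u_k, \qquad m_k := \int_0^1 F'\bigl(u_{<k} + t u_k\bigr)\,dt,
\end{equation*}
which uses $F(0)=0$ to make the sum telescope. Since $\|u_{<k}+tu_k\|_{L^\infty} \lesssim \|u\|_{L^\infty}$ uniformly in $k,t$ and $F$ is smooth, one has $\|m_k\|_{L^\infty} \lesssim_{\|u\|_{L^\infty}} 1$. Then $F(u)$ has exactly the same structural form as the high-high paraproduct treated above (with $m_k$ playing the role of $f_k$ bounded in $L^\infty$), so the identical Young-type summation argument gives $\|F(u)\|_{\dot H^\sigma} \lesssim_{\|u\|_{L^\infty}} \|u\|_{\dot H^\sigma}$.

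The only mild subtlety I expect is technical bookkeeping: each $m_k$ is not frequency-localized, so I would either (i) observe that the frequency-localization of $u_k$ alone together with $\|m_k\|_{L^\infty} \lesssim 1$ suffices for the pointwise $L^2$ bound $\|P_j(m_k u_k)\|_{L^2}\lesssim \|u_k\|_{L^2}$ used above, or (ii) further split $m_k$ into low- and high-frequency pieces and absorb the high piece via a Bernstein-type estimate controlled by $\|u\|_{L^\infty}$. Either route is routine; the real content of the lemma, beyond the standard Coifman-Meyer toolkit already recorded in the paper, is the $\sigma>0$ summability that closes both estimates.
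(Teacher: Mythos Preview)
The paper does not actually provide a proof of this lemma; it is stated as a standard result and immediately followed by the next subsection. So there is no paper proof to compare against, and your proposal should be evaluated on its own merits.

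Your algebra estimate is correct and entirely standard. For the Moser estimate, your telescoping decomposition $F(u)=\sum_k m_k u_k$ with $\|m_k\|_{L^\infty}\lesssim_{\|u\|_{L^\infty}}1$ is the right starting point, and route (ii) does close the argument. However, route (i) as you describe it does \emph{not} work on its own. The bound $\|P_j(m_ku_k)\|_{L^2}\lesssim\|u_k\|_{L^2}$ holds for every $j$, but without frequency localization on $m_k$ the term $m_ku_k$ can contribute at frequencies $j>k$ (through the high-frequency part of $m_k$), and the resulting kernel $2^{(j-k)\sigma}$ is not summable for $j-k>0$. The Young argument you invoke for $\Pi(f,g)$ relies crucially on the restriction $k\gtrsim j$, which is automatic there but not here. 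What rescues the Moser estimate is precisely the content of your option (ii): since $m_k=\int_0^1 F'(u_{<k}+tu_k)\,dt$ is a smooth function of an input with frequencies $\lesssim 2^k$, repeated chain rule and Bernstein give $\|\partial_\alpha^N m_k\|_{L^\infty}\lesssim_{\|u\|_{L^\infty}}2^{Nk}$, hence $\|P_l m_k\|_{L^\infty}\lesssim 2^{-N(l-k)}$ for $l>k$. This supplies the missing decay $\|P_j(m_ku_k)\|_{L^2}\lesssim 2^{-N(j-k)}\|u_k\|_{L^2}$ for $j>k$, after which both halves of the sum close by Young's inequality. So your overall scheme is sound; just drop the claim that (i) alone is an alternative.
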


\subsection{Paraproduct estimates}
Here we record the following para-commutator, paraproduct, and para-associativity lemmas: 

\begin{lemma}[Para-commutators]\label{l:para-com}
 Assume that $\gamma_1, \gamma_2 < 1$. Then we have
\begin{equation}
\| T_f T_g - T_g T_f \|_{\dot H^{s} \to \dot H^{s+\gamma_1+\gamma_2}} \lesssim 
\||D|^{\gamma_1}f \|_{BMO}\||D|^{\gamma_2}g\|_{BMO}
\end{equation}
\teal{and the $L^4$ based version
\begin{equation}
\| T_f T_g - T_g T_f \|_{\dot W^{s, 4} \to \dot W^{s+\gamma_1+\gamma_2, 4}} \lesssim 
\||D|^{\gamma_1}f \|_{BMO}\||D|^{\gamma_2}g\|_{BMO},
\end{equation}
as well as the unbalanced version
\begin{equation}
\| T_f T_g - T_g T_f \|_{ BMO^{s} \to \dot H^{s+\gamma_1+\gamma_2}} \lesssim 
\||D|^{\gamma_1}f \|_{L^2}\||D|^{\gamma_2}g\|_{BMO}.
\end{equation}
}
\end{lemma}
We note that the $BMO$ norms are not strictly speaking necessary; we only need
the Besov norms $\dot B^{\gamma}_{\infty,\infty}$. The limiting case $\gamma_j = 1$
is more delicate, as there one would need instead $\dot W^{1.\infty}$; see also \eqref{com-limit} below.

\begin{proof}
By orthogonality we can fix the frequency of the argument $u$; call it $\lambda$.
If $f$ is supported at frequency $\ll \lambda$, then $T_f u = fu$. Since 
multiplication is commutative, the only nontrivial contributions
are those where at least one of the two frequencies of 
$f$ and $g$ is comparable to $\lambda$. We consider two cases: 
\medskip

(i) Both $f$ and $g$ are at frequency $O(\lambda)$. Then we neglect 
the commutator and estimate the output directly using the $\dot B^{\gamma_j}_{\infty,\infty}$
norms.
\medskip

(ii) $f$ is at frequency $O(\lambda)$, and $g$ is at frequency $\ll \lambda$.
Then the commutator becomes
\[
 [T_{f_\lambda}, T_{g_{\ll \lambda}}] u_\lambda = [T_{f_\lambda}, g_{\ll \lambda}] u_\lambda = \lambda^{-1} L(f_\lambda,\partial_\alpha g_{\ll \lambda},u_\lambda),
\]
where $L$ denotes a translation invariant trilinear form with integrable kernel.
Hence we obtain
\[
\| [T_{f_{\lambda}}, T_{g_{\ll \lambda}} ] u_\lambda \|_{L^2} \lesssim \| f_\lambda\|_{L^\infty} \| \partial_\alpha g_{\ll \lambda}\|_{L^\infty} \|u_\lambda\|_{L^2}
\]
and the desired conclusion easily follows.





\end{proof}

\begin{lemma}[Para-products]\label{l:para-prod}
Assume that $\gamma_1, \gamma_2 < 1$, $\gamma_1+\gamma_2 \geq 0$. Then
\begin{equation}
\| T_f T_g - T_{fg} \|_{\dot H^{s} \to \dot H^{s+\gamma_1+\gamma_2}} \lesssim 
\||D|^{\gamma_1}f \|_{BMO}\||D|^{\gamma_2}g\|_{BMO}
\end{equation}
\teal{and the $L^4$ based version
\begin{equation}
\| T_f T_g - T_{fg}\|_{\dot W^{s, 4} \to \dot W^{s+\gamma_1+\gamma_2, 4}} \lesssim 
\||D|^{\gamma_1}f \|_{BMO}\||D|^{\gamma_2}g\|_{BMO}.
\end{equation}}

\end{lemma}
\teal{ One may also ask whether this extends also to the case $\gamma_1= \gamma_2=1$. As it turns out, there one would have to use $L^\infty$ norms instead,
 \begin{equation}\label{com-limit}
\| T_f T_g - T_{fg} \|_{\dot H^{s} \to \dot H^{s+2}} \lesssim \|\partial f \|_{L^\infty}\|\partial g\|_{L^\infty}.
\end{equation}
This is however not needed in the present paper.}

\begin{proof}
Reasoning as in the previous Lemma, we can fix the argument $u$'s frequency 
to $\lambda$, and full cancellation occurs unless at least one of the frequencies
of $f$ and $g$ is $\gtrsim \lambda$. Thus we distinguish three cases, the first two of which are the same as before. Case (i) is identical, and also case (ii) in view of the relation
\[
 [T_{f_\lambda} g_{\ll \lambda} - T_{f_\lambda g_{\ll \lambda}}] u_\lambda =\lambda^{-1} L(f_\lambda,\partial_\alpha g_{\ll \lambda},u_\lambda).
\]
Thus we consider the remaining case:

(iii) The frequency of $f$ is $\gg \lambda$. Then the frequency of $g$ must be comparable. We consider the worst case scenario, when $\gamma_1+\gamma_2 = 0$.
Then we have 
\[
(fg)_{< \lambda} = P_{< \lambda}\left(\sum_{\mu > \lambda} f_\mu g_\mu \right)
\]
and we can use \eqref{bmo-bmo+} to estimate 
\[
\| (fg)_{< \lambda} \|_{L^\infty} \lesssim \||D|^{\gamma_1}f \|_{BMO}\||D|^{\gamma_2}g\|_{BMO},
\]
which suffices.

\end{proof}

\begin{lemma}[Para-associativity]\label{l:para-assoc}
For $s + \gamma_2 \geq 0, s + \gamma_1 + \gamma_2  \geq 0$, and $\gamma_1 < 1$
we have
\begin{equation}
\| T_f \Pi(v, u) - \Pi(v, T_f u)\|_{\dot H^{s + \gamma_1+\gamma_2}} \lesssim 
\||D|^{\gamma_1}f \|_{BMO}\||D|^{\gamma_2}v\|_{BMO} \|u\|_{\dot H^{s}}
\end{equation}
\teal{
and the $L^4$ based version
\begin{equation}\label{para-ass4}
\| T_f \Pi(v, u) - \Pi(v, T_f u)\|_{\dot W^{s + \gamma_1+\gamma_2,4}} \lesssim 
\||D|^{\gamma_1}f \|_{BMO}\||D|^{\gamma_2}v\|_{BMO} \|u\|_{\dot W^{s,4}},
\end{equation}
as well as a version where $f$ is measured in an $L^2$ based norm
\begin{equation}\label{para-ass-lin}
\| T_f \Pi(v, u) - \Pi(v, T_f u)\|_{\dot H^{1/4}} \lesssim 
\|f \|_{\dot H^{1/4}}\||D|^{-1/4}v\|_{BMO} \||D|^{1/4}u\|_{BMO}.
\end{equation}}
\end{lemma}
We remark that one could easily expand the range of indices
in \eqref{para-ass-lin}, but for simplicity we have limited ourselved to this case, which is used in a single thread in this paper in the study of the linearized equation in Section~\ref{s:linearized}.
\begin{proof}
We remark that our convention in this paper will be that the $\Pi$ operator
also contains the projection $P$. This is immaterial for the purpose of this lemma: We can prove the result without $P$, and then harmlessly insert it, as it commutes with $T_f$. We denote by $\lambda$ the joint frequencies of $u$ and $v$.
Then only frequencies $\leq \lambda$ of $f$ will contribute. The output 
of frequencies $O(\lambda)$ in $f$ yields no cancellations, but it is easy to bound
the two terms separately using two Coifman-Meyer estimates. Thus in what follows
we consider only frequencies $\ll \lambda$ of $f$, in which case we have 
\[
(T_{f_{\ll \lambda}} u)_\lambda = (f_{\ll \lambda} u)_\lambda 
= f_{\ll \lambda} u_\lambda +   \lambda^{-1} L(\partial_\alpha f_{\ll \lambda},u_\lambda).
\]
The second term is easy to estimate directly in $L^2$;  this 
leads to the restriction $\gamma' < 1$. For the  contribution of the first term 
we use cancellations to write
\[
\sum_\lambda T_{f_{\ll \lambda}} (u_\lambda v_\lambda) - f_{\ll \lambda} u_\lambda v_\lambda = \sum_{\mu \ll \lambda}
f_\mu P_{\lesssim \mu} (u_\lambda v_\lambda) = 
\sum_{\lambda}\Pi(f_{\ll\lambda},u_\lambda v_\lambda)
+ T_{u_\lambda v_\lambda} f_{\ll \lambda}.
\]
Harmlessly reinserting frequencies of size $\lambda$ in $f$, we can replace the 
above expression by
\[
\Pi(f, \Pi(u,v)) + T_{\Pi(u,v)} f,
\]
at which point it suffices to  apply the Coifman-Meyer bounds twice. 

\teal{The proof of the $L^4$ type bounds \eqref{para-ass4} and of 
\eqref{para-ass-lin} is similar.}
\end{proof}

We finish with an alternative form of the para-associativity bound:
\begin{lemma}\label{l:para-assoc2}
We have, for $\gamma_1, \gamma_2 < 1$, $s + \gamma_1+\gamma_2 \geq 0, s + \gamma_2 \geq 0$, and $\bar v = \bar P \bar v$,
\begin{equation}
\| T_f P(\bar v u) - P (\bar v T_f u)\|_{\dot H^{s + \gamma_1+\gamma_2}} \lesssim 
\||D|^{\gamma_1}f \|_{BMO}\||D|^{\gamma_2}v\|_{BMO} \|u\|_{\dot H^{s}}
\end{equation}
\teal{
and the $L^4$ based version
\begin{equation}
\| T_f P(\bar v u) - P (\bar v T_f u)\|_{\dot W^{s + \gamma_1+\gamma_2,4}} \lesssim 
\||D|^{\gamma_1}f \|_{BMO}\||D|^{\gamma_2}v\|_{BMO} \|u\|_{\dot W^{s,4}},
\end{equation}
as well as
\begin{equation}
\| T_f P(\bar v u) - P (\bar v T_f u)\|_{\dot H^{1/4}} \lesssim 
\|f \|_{\dot H^{1/4}}\||D|^{-1/4}v\|_{BMO} \||D|^{1/4}u\|_{BMO}.
\end{equation}}


\end{lemma}

\begin{proof}
This is a consequence of Lemmas~\ref{l:para-com} and \ref{l:para-assoc}, using also the fact that $P$ eliminates the case where $\bar v$ is high frequency.

\end{proof}

\section{Water wave related bounds}
\label{s:wwbounds}

Here we consider estimates for objects related to the water wave equations,
primarily the Taylor coefficient $a$ and advection velocity $b$. We recall that these are given by
\[
a = 2 \Im P[R \bar R_\alpha], \qquad b = 2 \Re P \left[ \frac{R}{1+\bar \W}\right]
= 2 \Re ( P[(1 - \bar Y)R]).
\]
In addition to these, we will also consider several auxiliary functions,
namely 
\[
Y= \dfrac{\W}{1+\W}, \qquad X = T_{1-Y}W, \qquad M = 2 \Re P[R \bar Y_\alpha - \bar R_\alpha Y].
\]
The auxiliary variable $Y$ is often used as a coefficient in many of our computations. The function $X$ is a convenient leading order substitute for $\partial^{-1} Y$, and is very useful in some of our normal form computations. The function $M$ is essentially the material derivative of $\ln(1+a)$. 

We estimate these in terms of the control parameters $A$, $\As$, and $A_\frac14$ defined in \eqref{A-def}, \eqref{Asharp-def}, and \eqref{A14-def}, and in terms of the $H^s$ Sobolev norms of $\W$ and $R$.

\subsection{\texorpdfstring{$L^2$}{} and pointwise bounds}
We begin with the auxiliary variable $Y$, which
inherits its regularity from $\W$ due to \eqref{bmo-moser} and \eqref{bmo-hs}:

\begin{lemma}\label{l:Y}
The function $Y$ satisfies the $BMO$ bound 
\begin{equation}\label{est:Y}
\| |D|^\frac14 Y\|_{BMO} \lesssim_A A_{\frac14},
\end{equation}
and the $\dot H^\sigma$ bounds
\begin{equation}
\|Y\|_{ \dot H^\sigma} \lesssim_A \|\W\|_{ \dot H^\sigma},\qquad \lesssim_A \|\W\|_{\dot{W}^{\sigma, 4}}, \qquad \sigma \geq 0.
\end{equation}

\end{lemma}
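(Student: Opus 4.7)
The plan is to view $Y = F(\W)$ where $F(z) := z/(1+z)$ is a smooth function vanishing at $0$, and then directly invoke the Moser-type estimates already recorded in the excerpt. The only potential obstacle is ensuring the smoothness of $F$ at $\W$, which requires $|1+\W|$ to be bounded away from $0$. Since $\|\W\|_{L^\infty} \leq A$ and all implicit constants are allowed to depend on $A$, we can restrict $F$ to a neighborhood of $0$ (cutting off smoothly outside) so that $F$ becomes a bounded smooth function vanishing at $0$ with all derivatives bounded in terms of $A$.

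For the second estimate, I would simply apply the $\dot H^\sigma$ Moser bound \eqref{bmo-hs} to $F(\W)$:
\[
\|Y\|_{\dot H^\sigma} = \|F(\W)\|_{\dot H^\sigma} \lesssim_{\|\W\|_{L^\infty}} \|\W\|_{\dot H^\sigma} \lesssim_A \|\W\|_{\dot H^\sigma},
\]
which yields exactly the stated bound for all $\sigma \geq 0$.

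For the $BMO$ estimate, I would apply the Moser bound \eqref{bmo-moser} with $s = 1/4$:
\[
\||D|^{1/4} Y\|_{BMO} = \|F(\W)\|_{BMO^{1/4}} \lesssim_{\|\W\|_{L^\infty}} \|\W\|_{BMO^{1/4}} = \||D|^{1/4}\W\|_{BMO}.
\]
The right-hand side is then controlled by $A_{1/4}$ via the inequality \eqref{A14-def+}, which gives $\||D|^{1/4}\W\|_{BMO} \lesssim A_{1/4}$. Chaining these two bounds yields the desired conclusion.

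The main obstacle (such as it is) is really just a bookkeeping one: making sure the Moser estimates apply in the form stated, which they do since $F(0) = 0$ and $F$ is smooth on the set $\{|z| \leq A\} \subset \{|1+z| \geq 1/2\}$ once $A$ is sufficiently small. No further structural argument beyond the Moser framework is required, which is why this lemma is stated essentially as an application.
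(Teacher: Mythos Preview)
Your proposal is correct and matches the paper's approach exactly: the paper does not even write out a separate proof, but simply states that the lemma follows from the Moser-type bounds \eqref{bmo-moser} and \eqref{bmo-hs}, applied to $Y=F(\W)$ with $F(z)=z/(1+z)$. Your handling of the smoothness of $F$ via the smallness of $\|\W\|_{L^\infty}\le A$ is appropriate in this setting.
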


We also have the following paradifferential identities relating $W, X,$ and $Y$:

\begin{lemma}\label{l:YtoW}
a) The function $Y$ satisfies
$$T_{1 + W_\alpha} Y = T_{1 - Y} W_\alpha - \Pi(Y, W_\alpha).$$

b) We have the relations
$$X_\alpha = T_{1 - Y} W_\alpha + \teal{Err_X} = T_{1 + W_\alpha} Y + \teal{Err_X},$$
where $\teal{Err_X}$ denotes a varying error term satisfying
\[
\|\teal{Err_X}\|_{\teal{\dot{W}^{\frac12, 4}}} \lesssim_{\As} A_{\frac14}, \quad \||D|^{\frac12}\teal{Err_X}\|_{BMO} \lesssim_A A_{\frac14}^2.
\]
\end{lemma}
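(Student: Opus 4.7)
The plan is to derive both identities algebraically by paradifferential decomposition, then verify the quantitative error bounds using the standard paraproduct estimates of Section~\ref{s:multilinear}.

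For part (a), the starting point is the pointwise identity $(1+W_\alpha)Y = W_\alpha$, which follows directly from the definition $Y = W_\alpha/(1+W_\alpha)$. Rewriting this as $Y = W_\alpha - YW_\alpha$ and decomposing $YW_\alpha = T_Y W_\alpha + T_{W_\alpha} Y + \Pi(Y,W_\alpha)$ yields
$$Y = (W_\alpha - T_Y W_\alpha) - T_{W_\alpha} Y - \Pi(Y, W_\alpha) = T_{1-Y} W_\alpha - T_{W_\alpha} Y - \Pi(Y, W_\alpha),$$
which is exactly $T_{1+W_\alpha} Y = T_{1-Y} W_\alpha - \Pi(Y, W_\alpha)$ after moving $T_{W_\alpha} Y$ to the other side. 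For part (b), the first expression comes from the Leibniz rule for paraproducts,
$$X_\alpha = \partial_\alpha T_{1-Y} W = T_{1-Y} W_\alpha - T_{Y_\alpha} W,$$
which identifies $E = -T_{Y_\alpha} W$; the second expression then follows from part (a), now with $E = \Pi(Y, W_\alpha) - T_{Y_\alpha} W$. It therefore suffices to bound each of $T_{Y_\alpha} W$ and $\Pi(Y, W_\alpha)$ in the two claimed norms.

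For $T_{Y_\alpha} W$, the idea is that $Y_\alpha$ enters as a low-frequency coefficient, so fractional derivatives can be shifted onto the high-frequency factor $W$, placing $Y_\alpha$ in a negative-regularity $BMO$ space controlled via Lemma~\ref{l:Y} by $\||D|^{\frac14} Y\|_{BMO} \lesssim_A A_{\frac14}$. Concretely, \eqref{CM+} with $s = \frac34$, $p = 2$ gives
$$\|T_{Y_\alpha} W\|_{\dot H^{3/4}} \lesssim \|W\|_{\dot H^{3/2}} \||D|^{-\frac34} Y_\alpha\|_{BMO} = \|\W\|_{\dot H^{1/2}} \||D|^{\frac14} Y\|_{BMO} \lesssim_A \As A_{\frac14},$$
while \eqref{bmo>infty} with $\sigma = \frac34$ gives, after shifting $|D|^{\frac12}$ onto $W$,
$$\||D|^{\frac12} T_{Y_\alpha} W\|_{BMO} \lesssim \||D|^{-\frac34} Y_\alpha\|_{BMO} \||D|^{\frac54} W\|_{BMO} = \||D|^{\frac14} Y\|_{BMO} \||D|^{\frac14} \W\|_{BMO} \lesssim_A A_{\frac14}^2,$$
where the last factor is controlled by $A_{\frac14}$ via \eqref{A14-def+}.

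For $\Pi(Y, W_\alpha)$ we exploit the high-high structure and split the derivatives more evenly. The $\dot H^{3/4}$ bound assigns $\frac14$ derivative to $Y$ and $\frac12$ to $W_\alpha$, giving $\lesssim_A \As A_{\frac14}$ via a standard bilinear paraproduct estimate. For the $BMO^{1/2}$ bound we split $\frac14$ onto each factor and pass through the Besov embedding $\dot B^{\frac12}_{\infty, 2} \hookrightarrow BMO^{1/2}$; writing $\Pi(Y, W_\alpha)_\mu = \sum_{\lambda \gtrsim \mu} P_\mu(Y_\lambda W_{\alpha,\lambda})$ and using the Besov bounds $\|Y_\lambda\|_{L^\infty}, \|\W_\lambda\|_{L^\infty} \lesssim_A \lambda^{-\frac14} c_\lambda$ with $\|c\|_{\ell^2} \lesssim_A A_{\frac14}$, a direct dyadic Cauchy--Schwarz argument over the double sum yields $\|\Pi(Y, W_\alpha)\|_{\dot B^{1/2}_{\infty, 2}} \lesssim_A A_{\frac14}^2$. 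The main technical obstacle is this last estimate: the off-the-shelf bound \eqref{bmo-bmo} does not accommodate fractional derivatives on a high-high paraproduct, so one must work at the level of dyadic Besov norms and exploit the $\ell^2$-summability built into the definition of $A_{\frac14}$ rather than only its pointwise $BMO$ consequences.
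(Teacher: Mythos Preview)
Your proof is correct and follows essentially the same approach as the paper: part (a) via paraproduct decomposition of the identity $Y = (1-Y)W_\alpha$, and part (b) via the Leibniz rule $X_\alpha = T_{1-Y}W_\alpha - T_{Y_\alpha}W$ followed by (a). The paper's own proof is extremely terse, simply asserting $-T_{Y_\alpha}W = E$ and $\Pi(Y,W_\alpha) = E$ with a reference to Lemma~\ref{l:Y}; you have supplied the explicit verification of both norm bounds that the paper leaves to the reader, and your dyadic Besov argument for the $BMO^{1/2}$ bound on $\Pi(Y,W_\alpha)$ is a clean way to handle the one estimate not directly covered by the listed paraproduct lemmas.
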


\begin{proof}
a) It suffices to write the paraproduct expansion
\[
Y = (1 - Y)W_\alpha = T_{1 - Y} W_\alpha - T_{W_\alpha}Y - \Pi(Y, W_\alpha).
\]

\

b) For the first identity, by Lemma~\ref{l:Y}, we have
\[
-T_{Y_\alpha} W = \teal{Err_X}.
\]
Then apply (a) to obtain the second identity for $Y$, where we similarly have 
\[
\Pi(Y, W_\alpha) = \teal{Err_X}.
\]

\end{proof}

We continue with bounds for $a$.
\begin{proposition}\label{regularity for a}
 The Taylor coefficient $a$
 is nonnegative and satisfies the $BMO$ bound
\begin{equation}\label{a-bmo}
\| a\|_{BMO}    \lesssim \Vert R\Vert_{BMO^\frac12}^2,
\end{equation}
and the uniform bound
\begin{equation}\label{a-point}
\| a\|_{L^\infty} \lesssim \Vert R\Vert_{\dot B^{\frac12}_{\infty, 2}}^2.
\end{equation}
 Moreover,
 \begin{equation}\label{a-bmo+}
\Vert |D|^{\frac{1}{4}} a\Vert_{BMO}\lesssim AA_{\frac14}, \qquad \Vert |D|^{\frac{1}{2}} a\Vert_{BMO}\lesssim A_{\frac14}^2,
\end{equation}
and
\begin{equation}\label{a-Hs}
\Vert  a\Vert_{\dot H^s}\lesssim A\Vert  R \Vert_{\dot H^{s+\frac12}} \qquad s \geq 0,
\end{equation}
as well as
\begin{equation}\label{a-last}
    \teal{\||D|^{1/2}a\|_{L^4}} \lesssim \As A_\frac14, \qquad \teal{\|a\|_{\dot F^{\frac12}_{2, 1}} \lesssim \As^2.}
\end{equation}
\end{proposition}

\begin{proof}
The first part is from \cite{HIT}, so we only need to prove 
the bounds from \eqref{a-bmo+} on.
We recall that
\[
a =  i\left(\bar{P} \left[\bar{R} R_\alpha\right]- P\left[R\bar{R}_\alpha\right]\right).
\]
The bounds of \eqref{a-bmo+} are direct consequences of \eqref{bmo-bmo} and \eqref{bmo>infty}, by bounding the holomorphic and antiholomorphic parts of $a$ separately, and writing for instance
\[
P\left[R\bar{R}_\alpha\right] = T_{\bar R_\alpha} R + \Pi(\bar R_\alpha, R).
\]
We next see that \eqref{a-Hs} and the first part of \eqref{a-last} are direct consequences of the commutator estimates in Lemma~\ref{l:com}.

Finally, we consider the second part of \eqref{a-last}. If the two input frequencies of $R$ 
and the output frequency are all fixed and equal, then 
this is a straightforward $L^4 \times L^4 \to L^2$ bound.
It remains to account for the dyadic summation, which 
is now dealt with in a Triebel-Lizorkin fashion.
Precisely, for $R$ we have the Triebel-Lizorkin interpretation of the control norm $\As$ as
$\dot W^{\frac34,4} = \dot F^{\frac34}_{4,2}$ with an $\ell^2$
dyadic summation. Due to the off-diagonal decay in the 
bilinear estimate, this yields the corresponding $\ell^1$
 summability for the output.
 \end{proof}

\medskip 

Next we consider $b$, for which we have the following result.
\begin{lemma}\label{l:b}
Let $s > 0$. Then the transport coefficient $b$ satisfies 
\begin{equation}\label{b-bounds}
\||D|^s b\|_{BMO} \lesssim_A \| |D|^s R\|_{BMO}, \quad
\teal{\||D|^s b\|_{L^4} \lesssim_A \||D|^s R\|_{L^4},} \quad
\| b\|_{\dot{H}^s} \lesssim_A \|R\|_{\dot{H}^s}.
\end{equation}
In particular we have
\begin{equation}\label{b-bounds1}
\||D|^\frac12 b\|_{BMO} \lesssim_A A, \qquad
\||D|^\frac34 b\|_{BMO} \lesssim_A A_\frac14, \teal{\qquad  \||D|^\frac34 b\|_{L^4} \lesssim_A \As}.
\end{equation}
\end{lemma}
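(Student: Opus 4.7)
The plan is to decompose $b = 2\Re P[(1-\bar Y)R]$ via a paraproduct expansion and exploit a crucial frequency cancellation coming from the holomorphic/antiholomorphic split. Writing
\[
(1-\bar Y)R = R - T_{\bar Y}R - T_R\bar Y - \Pi(\bar Y, R)
\]
and applying $P$, I would use $PR = R$ (holomorphy of $R$) together with the observation that $PT_R\bar Y = 0$. The latter holds because in $T_R\bar Y$ the low-frequency factor $R$ lives at negative frequencies (as $R$ is holomorphic) while the high-frequency factor $\bar Y$ lives at positive frequencies (as $\bar Y$ is antiholomorphic); with the standard frequency gap in the paraproduct, the product is supported strictly at positive frequencies and is annihilated by $P$. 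Consequently
\[
P[(1-\bar Y)R] = R - T_{\bar Y}R - P\Pi(\bar Y,R).
\]
This cancellation is the crucial structural input: without it, the bound would require $Y \in BMO^s$, which is uncontrolled by $A$ alone.

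For the surviving pieces I would estimate each separately. For $T_{\bar Y}R$, a standard Littlewood-Paley rearrangement allows us to move $|D|^s$ onto the high-frequency factor $R$, after which \eqref{bmo-infty} (resp. the $L^\infty \times L^2$ case of Coifman-Meyer \eqref{CM}) yields
\[
\|T_{\bar Y}R\|_{BMO^s} \lesssim \|Y\|_{L^\infty}\|R\|_{BMO^s}, \qquad \|T_{\bar Y}R\|_{\dot H^s} \lesssim \|Y\|_{L^\infty}\|R\|_{\dot H^s},
\]
both of which are $\lesssim_A \|R\|_{BMO^s}$ (resp.\ $\lesssim_A \|R\|_{\dot H^s}$). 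For the diagonal term $P\Pi(\bar Y,R)$, the key structural feature is that $P[\bar Y_k R_k]$ is supported at frequencies $\lesssim 2^k$, so $|D|^s$ again falls on the high-frequency factor $R$, giving
\[
|D|^s P\Pi(\bar Y,R) \approx P\Pi(\bar Y, |D|^s R)
\]
modulo lower-order commutators. The estimate \eqref{bmo-bmo} (resp. the $L^\infty \times L^2$ Coifman-Meyer bound) then closes the argument, using $\|Y\|_{BMO} \leq \|Y\|_{L^\infty} \lesssim A$.

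The corollary \eqref{b-bounds1} follows by specializing \eqref{b-bounds} to $s = \tfrac12$ and $s = \tfrac34$, using $\||D|^{\frac12}R\|_{BMO} \leq \||D|^{\frac12}R\|_{L^\infty} \lesssim A$ from the definition of $A$ and $\||D|^{\frac34}R\|_{BMO} \lesssim A_{\frac14}$ from \eqref{A14-def+}. The main technical obstacle is the $|D|^s$-shifting step for $P\Pi(\bar Y,R)$: since $Y$ is only assumed to lie in $L^\infty$, one must carefully track the Littlewood-Paley decomposition to ensure that derivatives never fall on $\bar Y$. This is precisely where the holomorphic/antiholomorphic structure of the problem does the heavy lifting.
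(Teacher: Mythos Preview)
Your proof is correct and follows essentially the same approach as the paper's. The paper observes that $b = R - P(R\bar Y)$ modulo conjugates, and then notes that the role of $P$ in $P(R\bar Y)$ is precisely to eliminate the low-high interaction $T_R\bar Y$, leaving only the high-low term $T_{\bar Y}R$ (handled by \eqref{bmo-infty}) and the balanced term $P\Pi(\bar Y,R)$ (handled by \eqref{bmo-bmo}); this is exactly your decomposition, just stated more tersely.
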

\begin{proof}
Recall that
\[
b = \Re P[R(1-\bar Y)] = R- P(R \bar Y).
\]
Hence, it remains to estimate $\partial_\alpha P( R \bar Y)$.
Consider first the $BMO$ bound.
As before, the role of $P$ is to restrict the bilinear
frequency interactions to high - low, in which case we can use
the bound \eqref{bmo-infty}, and the high-high case, where \eqref{bmo-bmo}
applies.

A direct argument, taking into account the same two cases, yields the $L^2$ \teal{and the $L^4$ bounds.}
\end{proof}

Next we consider the auxiliary expression $M$. For this we have

\begin{lemma}\label{l:M}
The function $M$ satisfies the pointwise bound 
\begin{equation}\label{M-infty}
\| M\|_{L^\infty} \lesssim_A A_{\frac14}^2,
\end{equation}
as well as the Sobolev bounds
\begin{equation}\label{M-L2}
\| M\|_{\dot H^{s-\frac12}} \lesssim_A A \|(\W,R)\|_{\dH^s}, \qquad s \geq 0.
\end{equation}
\end{lemma}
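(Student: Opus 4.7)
My plan is to work from the projected representation
\[
M = 2\Re P[R \bar Y_\alpha - \bar R_\alpha Y]
\]
of \eqref{M-def} and paraproduct decompose each bilinear, e.g.
\[
R \bar Y_\alpha = T_R \bar Y_\alpha + T_{\bar Y_\alpha} R + \Pi(R, \bar Y_\alpha),
\]
exploiting the outer $P$-projection. Since $R$ is holomorphic and $\bar Y_\alpha$ antiholomorphic, the block $(T_R \bar Y_\alpha)_k$ has Fourier support essentially in positive frequencies (the tail in $[-2^{k-1},0]$ comes only from the highest frequencies of $R_{<k}$ meeting the lowest of $\bar Y_{\alpha,k}$), so $P T_R \bar Y_\alpha$ reduces to a high-high commutator-type term estimable by the same bounds as the balanced $\Pi$ contribution. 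The seriously contributing pieces are therefore $P T_{\bar Y_\alpha} R$, $P\Pi(R,\bar Y_\alpha)$, and the analogous ones from $P[\bar R_\alpha Y]$.

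For \eqref{M-infty}, the control $A_{\frac14}$ provides $Y \in \dot B^{1/4}_{\infty, 2}$ and $R \in \dot B^{3/4}_{\infty, 2}$, so $\|R_k\|_{L^\infty} \lesssim 2^{-3k/4} c_k$ and $\|Y_k\|_{L^\infty} \lesssim 2^{-k/4} d_k$ with $(c_k),(d_k) \in \ell^2_k$ of norm $\lesssim A_{\frac14}$. Bernstein then gives $\|\bar Y_{\alpha, j}\|_{L^\infty} \lesssim 2^{3j/4} d_j$, and at scale $k$
\[
\|(T_{\bar Y_\alpha} R)_k\|_{L^\infty} \lesssim \|\bar Y_{\alpha,<k}\|_{L^\infty} \|R_k\|_{L^\infty} \lesssim \Big(\sum_{j<k} 2^{3(j-k)/4} d_j\Big) c_k,
\]
which is $\ell^2_k$-summable by Young's inequality; a Cauchy-Schwarz sum over $k$ then yields $A_{\frac14}^2$. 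The balanced block $\Pi(R, \bar Y_\alpha)_k$ is handled analogously by summing $c_j d_j$ over $j \geq k$. The $_A$ constants enter through the Moser bound in Lemma~\ref{l:Y} passing between $Y$ and $\W$.

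For \eqref{M-L2} with $s \geq 0$, I exploit that $Y \in L^\infty$ (controlled by $A$) while $\dH_s$ controls $R \in \dot H^{s+1/2}$. For $T_{\bar Y_\alpha} R$, I use the product rule for paraproducts together with Lemma~\ref{l:para-com} to write $T_{\bar Y_\alpha} R = \partial_\alpha T_{\bar Y} R - T_{\bar Y} R_\alpha$ modulo acceptable error, and then bound both pieces by
\[
\|\bar Y\|_{L^\infty} \|R\|_{\dot H^{s+1/2}} \lesssim_A A \|(\W, R)\|_{\dH_s}
\]
via the standard paraproduct estimate. The balanced piece $\Pi(R, \bar Y_\alpha)$ is rewritten as $\partial_\alpha \Pi(R, \bar Y) - \Pi(R_\alpha, \bar Y)$ and bounded by Coifman-Meyer \eqref{CM-BMO} with the same $L^\infty$-$\dot H^{s+1/2}$ split; the terms from $P[\bar R_\alpha Y]$ are treated symmetrically.

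The main obstacle will be the pointwise bound \eqref{M-infty}: neither $R$ nor $\bar Y_\alpha$ lies in $L^\infty$ individually, so only the elimination of the dangerous piece $T_R \bar Y_\alpha$ by the projection $P$, together with the $\ell^2$-summability built into the Besov-2 structure of $A_{\frac14}$, permits the balanced $A_{\frac14}^2$ estimate. The Sobolev bound is technically easier but still requires the para-commutator Lemma~\ref{l:para-com} to shift the derivative onto the factor that admits an $L^\infty$ bound.
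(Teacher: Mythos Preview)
Your treatment of the paraproduct pieces $T_{\bar Y_\alpha} R$ and $T_{\bar R_\alpha} Y$ is fine, and your Sobolev argument for \eqref{M-L2} is correct (though more roundabout than necessary --- the paper simply applies Coifman--Meyer directly, putting the low frequency factor in $L^\infty$ via $A$). The gap is in the $L^\infty$ estimate of the balanced piece.

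You write that ``the balanced block $\Pi(R,\bar Y_\alpha)_k$ is handled analogously by summing $c_j d_j$ over $j \geq k$.'' This does give $\|\Pi(R,\bar Y_\alpha)\|_{L^\infty} \lesssim \sum_j c_j d_j \lesssim A_{\frac14}^2$ \emph{before} the projection, but $P$ is not bounded on $L^\infty$, so you cannot conclude $\|P\Pi(R,\bar Y_\alpha)\|_{L^\infty}$ from this. If instead you decompose the output dyadically and use that each $P_k P$ is individually $L^\infty$-bounded, you face a double sum $\sum_k \sum_{j \geq k} c_j d_j$, which diverges. The same obstruction afflicts the ``tail'' of $PT_R\bar Y_\alpha$ that you identify as high--high.

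The paper resolves this by writing $M$ a second way,
\[
M = \partial_\alpha(\bar P[\bar R Y] + P[R\bar Y]) - (\bar R_\alpha Y + R_\alpha \bar Y),
\]
and using this representation precisely for the balanced interactions. The point is that while $P_{\leq k+4}P$ is not uniformly $L^\infty$-bounded (its symbol has a jump at $\xi=0$), the operator $\partial_\alpha P_{\leq k+4} P$ has a smooth symbol of size $O(2^k)$, hence an $L^1$ kernel of mass $O(2^k)$. This gives
\[
\|\partial_\alpha P[R_k \bar Y_k]\|_{L^\infty} \lesssim 2^k \|R_k\|_{L^\infty}\|Y_k\|_{L^\infty},
\]
and the remaining piece $\bar R_{k,\alpha} Y_k + R_{k,\alpha}\bar Y_k$ carries no projection at all. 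Summing the resulting $2^k\|R_k\|_{L^\infty}\|Y_k\|_{L^\infty}$ over $k$ then yields $\sum_k c_k d_k \lesssim A_{\frac14}^2$ by Cauchy--Schwarz in $\ell^2_k$ --- this is exactly where the Besov-$2$ structure of $A_{\frac14}$ is used. Your proposal is missing this $\partial_\alpha P$ pairing and would need it (or an equivalent manipulation using the $2\Re$ structure to combine terms into $\partial_\alpha P\Pi(R,\bar Y)$) to close.
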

We remark that this is the only place in the paper where the Besov norm
in $A_{\frac14}$ is used. If instead we were to use only the weaker control norm \eqref{A14-def+}, then we would only get the slightly weaker bound
\begin{equation}\label{M-BMO}
\| M\|_{BMO} \lesssim_A A_{\frac14}^2.
\end{equation}

\begin{proof}
For the pointwise bound we claim that 
\begin{equation}\label{M-infty1}
\| M\|_{L^\infty} \lesssim \|R\|_{\dot B^{\frac34}_{\infty, 2}} \|Y\|_{\dot B^{\frac14}_{\infty, 2}}.
\end{equation}
To achieve this we write $M$ in two different ways,
\begin{equation}\label{M-rep}
M = \bar P[\bar R  Y_\alpha - R_\alpha \bar Y]+ P[R \bar Y_\alpha - \bar R_\alpha Y] = \partial_\alpha (\bar P [ \bar R Y] + P [R \bar Y])
- (\bar R_\alpha Y + R_\alpha \bar Y) .
\end{equation}
We apply a bilinear Littlewood-Paley decomposition and use the first expression 
above for the high-low interactions, and the second for the high-high interactions,
to write $M = M_1+M_2$ where
\[
\begin{split}
M_1 := & \ \sum_k [\bar R_k  Y_{<k,\alpha} - R_{<k,\alpha} \bar Y_k ]+ 
[R_k \bar Y_{<k,\alpha} - \bar R_{<k,\alpha} Y_k], 
\\
M_2 := & \ \sum_k
\partial_\alpha(\bar P [ \bar R_k Y_k] + P [R_k \bar Y_k])
- (\bar R_{k,\alpha} Y_k + R_{k,\alpha} \bar Y_k).
\end{split}
\]
We estimate the terms in $M_1$ separately; we show the argument for the first:
\[
\|   \sum_k \bar R_k  Y_{<k,\alpha}\|_{L^\infty} \lesssim 
\sum_{j \leq k} 2^{\frac34(j-k)} \|R_k\|_{L^{\infty}} \|Y_j\|_{L^{\infty}}
\lesssim \|R\|_{\dot B^{\frac34}_{\infty, 2}} \|Y\|_{\dot B^{\frac14}_{\infty, 2}}.
\]
For the first term in $M_2$ we note that the multiplier $\partial_\alpha P_{<k+4}  P$ 
has an $O(2^k)$  bound in $L^\infty$. Hence, we can estimate
\[
\|M_2\|_{L^\infty} \lesssim \sum_k 2^k \|R_k\|_{L^\infty} \|Y_k\|_{L^\infty} 
\lesssim \|R\|_{\dot B^{\frac34}_{\infty, 2}} \|Y\|_{\dot B^{\frac14}_{\infty, 2}}.
\]

The $\dot H^s$ bounds are a direct application of  the Coifman-Meyer bounds in \eqref{CM-BMO}, using only $A$ as in \eqref{A-def} for the lower frequency factor and the $\dH^s$ norm for the higher frequency factor. 

\end{proof}

\subsection{ Material and para-material derivative bounds.}
The material derivative $D_t = \D_t + b\D_\alpha$ plays a key role in the 
water wave equations. When applying this to products, we can use the well-known Leibniz rule. On the other hand, at the paradifferential level 
we replace the material derivative with the \emph{para-material derivative}
\[
T_{D_t} = \partial_t + T_b \partial_\alpha.
\]
Then it becomes interesting to know whether we have an analogue of Leibniz
rule for paraproducts. We consider this in the next Lemma, which we will refer to as 
the para-Leibniz rule. To state it we define four versions
of para-Leibniz errors. The first two apply in the unbalanced case, namely 
\[
E^{p}_L (u,v) = T_{D_t} T_u v - T_{T_{D_t} u} v - 
T_u T_{D_t} v,
\]
respectively 
\[
\tilde E^{p}_L (u,v) = T_{D_t} T_u v - T_{D_t u} v - T_u T_{D_t} v.
\]
Correspondingly, in the balanced case we set
\[
E^{\pi}_L (u,v) = T_{D_t} \Pi(u, v) - \Pi(T_{D_t} u, v) - \Pi(u, T_{D_t} v),
\]
respectively
\[
\tilde{E}^{\pi}_L (u,v) = T_{D_t} \Pi(u, v) - \Pi(D_t u, v) - \Pi(u, T_{D_t} v).
\]
With these notations, our para-Leibniz rule reads as follows:

\begin{lemma}\label{l:para-L}
a) For the unbalanced Leibniz rule error $E^{p}_L (u,v)$
we have the bound 
\begin{equation}\label{Leibniz-T0}
\|  E^{p}_L (u,v) \|_{\dot H^s} \lesssim  A_{\frac14} \| u\|_{BMO^{\frac14-\sigma}} \| v\|_{\dot H^{s+\sigma}}, \qquad \sigma > 0,  
\end{equation}
\teal{as well as }
\teal{\begin{equation}\label{Leibniz-T0+}
\| E^{p}_L (u,v) \|_{\dot H^s} \lesssim  \teal{A_{\frac14}} \| u\|_{\dot H^{\frac14-\sigma}} \| v\|_{BMO^{s+\sigma}}, \qquad \sigma > 0
\end{equation}}
whereas in the $\sigma = 0$ case 
 the same bounds \eqref{Leibniz-T0}, \eqref{Leibniz-T0+} 
hold for $\tilde E^{p}_L (u,v)$ with $\sigma = 0$.

b) For the balanced Leibniz rule error  $E^{\pi}_L (u,v)$
we have the bound
\begin{equation}\label{Leibniz-Pi}
\|  E^{\pi}_L (u,v) \|_{\dot H^s} \lesssim  \teal{A_{\frac14}} \| u\|_{BMO^{\frac14- \sigma}} \| v\|_{\dot H^{s+\sigma}}, \qquad 
\sigma \in \R, s \geq 0.
\end{equation}
In the $\sigma = 0$ case we also have the same bound
for $\tilde E^{\pi}_L (u,v)$.

c) We also have the $L^4$ versions of \eqref{Leibniz-T0} and 
\eqref{Leibniz-Pi}:
\begin{equation}\label{Leibniz-T04}
\|  E^{p}_L (u,v) \|_{\dot W^{s, 4}} \lesssim  A_{\frac14} \| u\|_{BMO^{\frac14-\sigma}} \| v\|_{\dot W^{s+\sigma, 4}}, \qquad \sigma > 0,  
\end{equation}
respectively
\begin{equation}\label{Leibniz-Pi4}
\|  E^{\pi}_L (u,v) \|_{\dot W^{s,4}} \lesssim  \teal{A_{\frac14}} \| u\|_{BMO^{\frac14- \sigma}} \| v\|_{\dot W^{s+\sigma,4}}, \qquad 
\sigma \in \R, s \geq 0.
\end{equation}
\end{lemma}

\begin{proof}
a) We begin with the case $\sigma > 0$, where 
\[
E^{p}_L (u,v) = T_{b} T_{\partial_\alpha u} v - T_{T_{b} \D_\alpha u} v
+ [T_b,T_u] \partial_\alpha v.
\]

In the first difference we have cancellation for the low frequencies of $b$,
and we are left with a dyadic sum of the form
\[
\sum_{j \leq l < k} \partial_\alpha u_j b_l v_k
\]
where we can rebalance derivatives. Precisely, if we bound the first 
two terms in $L^\infty$ and the third in $L^2$ then we have off-diagonal gain away from $j = l = k$. Hence it suffices to use the $\infty$ Besov norms for the first two factors.

We next consider the case $\sigma = 0$. We write 
\[
E^{p}_L (u,v) = T_{b} T_{\partial_\alpha u} v - T_{b \D_\alpha u} v
+ [T_b,T_u] \partial_\alpha v.
\]
For the first difference we use Lemma~\ref{l:para-prod}, while for the commutator term we use Lemma~\ref{l:para-com}. 

b) Here we start from
\[
E^{\pi}_L (u,v) =  T_b \Pi(\partial_\alpha u, v) - \Pi(T_b \partial_\alpha u, v)
+ T_b \Pi( u,\partial_\alpha v) - \Pi( u, T_b \partial_\alpha v).
\]
The two differences are identical, and in both cases we can use Lemma~\ref{l:para-assoc}.

For the separate estimate in the case $\sigma = 0$, we bound
\[
\Pi(T_{\D_\alpha u} b, v) + \Pi(\Pi(\D_\alpha u, b), v)
\]
using $\infty$ Besov norms and \eqref{bmo>infty}.

c) The proof of the $L^4$ type bounds is similar, using the 
corresponding $L^4$ bounds in Lemma~\ref{l:para-assoc}. 
\end{proof}

Finally, we rewrite the equations for $(\W, R)$ in a paradifferential form, and thereby capture identities and estimates for their para-material derivatives. Here, we also record the para-material derivatives of $X$ $Y$, and $a$.

\begin{lemma}\label{l:WR-source}
We have the following para-material derivatives:

a) Para-material derivative of $W$:
\begin{equation}
\begin{aligned}
T_{D_t} W = - T_{1 + W_\alpha} P[(1 - \bar Y)R] - P\Pi(W_\alpha, b).
\end{aligned}
\end{equation}

b) Para-material derivative of $(\W, R)$:
\begin{equation}\label{para(Wa)-full}
  \begin{aligned}
&T_{D_t} W_\alpha  = - T_{b_\alpha} W_\alpha - P\D_\alpha [ T_{1 + W_\alpha}[(1 - \bar Y)R] + \Pi(W_\alpha, b)], \\
&T_{D_t} R = - T_{R_\alpha} b - \Pi(R_\alpha, b) + i(1 + a)Y - ia.
  \end{aligned}  
\end{equation}

c) Leading term of the para-material derivative of $(\W, R)$:
\begin{equation}\label{para(Wa)}
\begin{aligned}
& T_{D_t}W_\alpha + T_{1+W_\alpha}T_{1- \bar Y} \D_\alpha R
 = G,
 \\
& T_{D_t} R - i T_{1 + a} Y =  K,
\end{aligned}
\end{equation} 
where the source terms $(G,K)$ in \eqref{para(Wa)} satisfy the BMO bounds
\begin{equation}\label{para(Wa)-source}
\| G \|_{BMO} + \| K \|_{BMO^\frac12} \lesssim_A A_{\frac14}^2  ,
\end{equation}
and 
\begin{equation}\label{para(Wa)-source+}
\| K \|_{BMO^\frac14} \lesssim_A A_{\frac14}.  
\end{equation}
\end{lemma}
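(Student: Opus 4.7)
Parts (a) and (b) are algebraic paralinearizations of the equations of motion, while (c) contains the analytic work of identifying leading paradifferential operators and estimating the cubic remainders.

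For (a), I would start from the material-derivative form $D_t W = \bar R - b$ in \eqref{ww2d1-b}, convert to para-material form via $T_{D_t}W - D_t W = -T_{W_\alpha} b - \Pi(W_\alpha, b)$, and then use the explicit decomposition $b = P[(1-\bar Y)R] + \bar P[(1-Y)\bar R]$ to isolate the $-T_{1+W_\alpha}P[(1-\bar Y)R]$ term (writing $-b - T_{W_\alpha}b = -T_{1+W_\alpha}b$). The (anti)holomorphicity of $R$, $Y$, $W$ together with the projection $P$ absorbs the remaining pieces into $-P\Pi(W_\alpha, b)$. Part (b) for $T_{D_t}W_\alpha$ follows by applying $\partial_\alpha$ to (a) and using the commutator $[\partial_\alpha, T_b] = T_{b_\alpha}$. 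The equation for $T_{D_t}R$ is immediate from \eqref{ww2d-diff-Y}: since $D_t R = i(1+a)Y - ia$, we have
\[
T_{D_t}R = D_t R + (T_b - b)\partial_\alpha R = -T_{R_\alpha}b - \Pi(R_\alpha, b) + i(1+a)Y - ia.
\]

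For (c), the $R$-equation remainder, obtained by subtracting the leading $iT_{1+a}Y$ and expanding $i(1+a)Y = iT_{1+a}Y + iT_Y a + i\Pi(a, Y)$, takes the form
\[
K = -T_{R_\alpha}b - \Pi(R_\alpha, b) + iT_Y a + i\Pi(a, Y) - ia.
\]
Each summand is estimated via the BMO paraproduct bounds of Section~\ref{s:multilinear} combined with the water-wave estimates of Section~\ref{s:wwbounds}. The prototypical examples are $T_{R_\alpha}b$ and $\Pi(R_\alpha, b)$: by \eqref{bmo>infty} with $\sigma = 1/4$ one obtains
\[
\|T_{R_\alpha}b\|_{BMO^{1/2}} \lesssim \||D|^{3/4}R\|_{BMO}\, \||D|^{3/4}b\|_{BMO} \lesssim_A A_{\frac14}^2,
\]
using \eqref{A14-def+} and Lemma~\ref{l:b}; the $\Pi$-term is handled analogously via \eqref{bmo-bmo} with derivatives rebalanced to $3/4$ on each factor. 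The $a$-contributions are of size $A_{\frac14}^2$ thanks to \eqref{a-bmo+} and Lemma~\ref{l:Y}. The weaker $BMO^{\frac14}$ bound \eqref{para(Wa)-source+} follows by the same scheme, using $\|R\|_{BMO^{\frac12}} \lesssim A$ in place of the $A_{\frac14}$ control on one factor.

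For $G$, starting from (b) the task is to compare $-P\partial_\alpha T_{1+W_\alpha}[(1-\bar Y)R]$ with the leading operator $-T_{1+W_\alpha}T_{1-\bar Y}\partial_\alpha R$. Expanding the pointwise product $(1-\bar Y)R = T_{1-\bar Y}R + T_R(-\bar Y) + \Pi(-\bar Y, R)$ via Lemma~\ref{l:para-prod}, commuting $\partial_\alpha$ through $T_{1+W_\alpha}$ (which produces an acceptable $T_{W_{\alpha\alpha}}$ contribution absorbed into $-T_{b_\alpha}W_\alpha$), and estimating the residual Coifman--Meyer and para-associativity errors via Lemmas~\ref{l:para-com} and \ref{l:para-prod} yields $\|G\|_{BMO} \lesssim_A A_{\frac14}^2$ once we invoke $\||D|^{\frac14}Y\|_{BMO} \lesssim_A A_{\frac14}$ (Lemma~\ref{l:Y}), $\||D|^{\frac34}R\|_{BMO} \lesssim A_{\frac14}$, and $\||D|^{\frac34}b\|_{BMO} \lesssim_A A_{\frac14}$. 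The treatment of $P\partial_\alpha\Pi(W_\alpha, b)$ is a direct application of \eqref{bmo-bmo} with the same $3/4$-rebalancing.

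The main obstacle is the careful \emph{rebalancing} of derivatives needed to obtain the cubic estimate in the balanced form $A_{\frac14}^2$ rather than the unbalanced $A \cdot B$ which a naive Coifman--Meyer application would yield. This rebalancing exploits the asymmetric paraproduct bound \eqref{bmo>infty} to slide fractional derivatives freely between factors as long as the total scaling is preserved, and it is precisely this balanced structure which powers the cubic energy estimate of Theorem~\ref{t:main}.
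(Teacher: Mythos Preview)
Your proposal is correct and follows essentially the same route as the paper: parts (a) and (b) are algebraic paralinearizations of \eqref{ww2d1-b} and \eqref{ww2d-diff-Y}, and part (c) isolates the leading paradifferential operators and bounds the residual via the $BMO$ paraproduct estimates of Proposition~\ref{p:bmo} combined with the water-wave bounds in Section~\ref{s:wwbounds}. A couple of minor imprecisions: the expansion of $(1-\bar Y)R$ into paraproducts is just the Littlewood--Paley trichotomy, not Lemma~\ref{l:para-prod}; and the $T_{W_{\alpha\alpha}}$ commutator term is not ``absorbed into $-T_{b_\alpha}W_\alpha$'' in any algebraic sense --- both are separate constituents of $G$, each estimated on its own (the paper writes $G$ out explicitly as five terms and checks one as a representative).
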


\begin{proof}
For (a), we have
\[T_{D_t} W = - T_{1 + W_\alpha} b - \Pi(W_\alpha, b) + \bar R. \]
Observing that we may apply a projection $P$, which freely passes over low frequency paraproducts as well as $T_{D_t}$, we obtain the identity.

We proceed with the equation and estimates for $W_\alpha$. Differentiating the identity from (a),
\[
T_{D_t} W_\alpha  = - T_{b_\alpha} W_\alpha - P\D_\alpha [ T_{1 + W_\alpha}[(1 - \bar Y)R] + \Pi(W_\alpha, b)].
\]
Notice that all terms on the right hand side have a good balance of derivatives, with a derivative falling on a low frequency variable, with the exception of the case where $\D_\alpha$ falls on (the high frequency) $R$. Thus, we rewrite the right hand side as 
\[
- T_{1 + W_\alpha}T_{1 - \bar Y}R_\alpha + G,
\]
where
\[
G = T_{1 + W_\alpha}\Pi(\bar Y,R_\alpha) - T_{b_\alpha} W_\alpha - P[T_{ W_{\alpha\alpha}}[(1 - \bar Y)R] + T_{1 + W_\alpha}[\bar Y_\alpha R] + \D_\alpha\Pi(W_\alpha, b)].
\]
As observed, terms in $G$ have a good balance of derivatives and thus satisfy the desired estimate. For instance, repeated application of Proposition~\ref{p:bmo} yields
\[
\|T_{1 + W_\alpha}\Pi(\bar Y,R_\alpha)\|_{BMO} \lesssim \|W_\alpha\|_{L^\infty}\||D|^{1/4}\bar Y\|_{BMO}\||D|^{3/4}R\|_{BMO} \lesssim_A A_\frac14^2.
\]

\

We proceed with the equation for $R$. The formula in part (b) is straightforward. Then observe that the only term on the right hand side with a unfavorable balance of derivatives is $iT_{1 + a} Y$. For instance,
\[
\|i|D|^{1/2}\Pi(a, Y)\|_{BMO} \lesssim \|Y\|_{L^\infty} \||D|^{1/2}a\|_{BMO} \lesssim_A A_\frac14^2
\]
using Lemma~\ref{regularity for a} for the estimate on $a$.
The second $BMO$ bound for $K$ also follows by repeated application of Proposition~\ref{p:bmo}.
\end{proof}

Next, we record the para-material derivatives of $X$ and $Y$:

\begin{lemma}\label{l:XY-mat}
We have the following para-material derivatives:

a) Leading term of the para-material derivative of $Y$:
\begin{equation}\label{est:DtY}
T_{D_t} Y =- T_{|1 - Y|^2} R_\alpha  + G =: G' + G
\end{equation}
where the source term $G$ satisfies the pointwise bounds
\[
\| G \|_{BMO} \lesssim_A A_{\frac14}^2,
\]
and $G'$ satisfies
\[
\||D|^{-1/4} G' \|_{BMO} \lesssim_A A_{\frac14}.
\]

b) Para-material derivative of $X$:
\begin{equation}\label{est:DtX}
-T_{D_t} X = -T_{T_{1 - \bar Y} R_\alpha} X + P[(1 - \bar Y)R] + P\Pi(X_\alpha, b) + E_1,
\end{equation}
where, for $s + \frac34 \geq 0$,
\begin{equation}\label{est:DtX-err}
\|E_1\|_{\teal{\dot W^{\frac54, 4}}} \lesssim_{\As} A^2_{\frac14}, \qquad \|E_1\|_{\dot{H}^{s + \frac34}} \lesssim_{A} A_{\frac14}\|\W\|_{\dot{H}^s}.
\end{equation}

c) Leading term of the para-material derivative of $X$:
\[
T_{D_t} X + T_{1 - \bar Y}R = E_2,
\]
where
\[
\||D| E_2\|_{BMO} \lesssim_{\As} A^2_{\frac14}.
\]

d) Leading term of the para-material derivative of $X_\alpha$:
\[
T_{D_t} X_\alpha + T_{1 - \bar Y}R_\alpha = E_3, \qquad D_t X_\alpha + T_{1 - \bar Y}R_\alpha = E_3,
\]
where, for $s + \frac34 \geq 0$,
$$\| E_3\|_{BMO} \lesssim_{\As} A^2_{\frac14}, \qquad \|E_3\|_{\dot{H}^{s - \frac14}} \lesssim_{A} A_{\frac14}\|(\W, R)\|_{\dH^s}.$$

\end{lemma}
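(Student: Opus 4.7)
The plan is to treat all four parts as consequences of the exact material-derivative identity for $Y$ from \eqref{ww2d-diff-Y}, converted to para-material form, and then combined with the definition $X = T_{1-Y}W$. For part (a), I rewrite $D_t Y = -|1-Y|^2 R_\alpha + (1-Y)M$ and apply the conversion $T_{D_t}u = D_t u - T_{u_\alpha}b - \Pi(u_\alpha, b)$. Extracting the leading paraproduct via $|1-Y|^2 R_\alpha = T_{|1-Y|^2} R_\alpha + T_{R_\alpha}|1-Y|^2 + \Pi(|1-Y|^2, R_\alpha)$ isolates $G' = -T_{|1-Y|^2}R_\alpha$ and collects everything else into $G$. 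The BMO bound on $G$ follows from $\|M\|_{L^\infty} \lesssim_A A_{\frac14}^2$ (Lemma~\ref{l:M}) together with the paraproduct bound \eqref{bmo>infty}, distributing a $|D|^{1/4}$ loss between the $Y$-factors (via Lemma~\ref{l:Y}, \eqref{bmo-alg}, \eqref{bmo-moser}) and the $R$- or $b$-factor (Lemma~\ref{l:b}), and using the Besov embedding $\dot B^0_{\infty,1} \hookrightarrow L^\infty \hookrightarrow BMO$ for the $\Pi$-type terms. The bound on $G'$ pulls $|D|^{-1/4}$ past the paraproduct via \eqref{bmo-infty}, landing on $R_\alpha$ to give $\||D|^{3/4}R\|_{BMO} \lesssim A_{\frac14}$.

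For part (b), I apply the unbalanced para-Leibniz rule \eqref{Leibniz-T0} to $T_{D_t} X = T_{D_t} T_{1-Y} W$ and substitute part (a) for $T_{D_t}(1-Y) = -T_{D_t}Y$ together with Lemma~\ref{l:WR-source}(a) for $T_{D_t}W$. The key algebraic simplification is $(1-Y)(1+W_\alpha) = 1$: by Lemma~\ref{l:para-prod}, $T_{1-Y} T_{1+W_\alpha} = I + \text{err}$, so $T_{1-Y}T_{1+W_\alpha} P[(1-\bar Y)R] = P[(1-\bar Y)R]$ modulo acceptable error. The $P\Pi(W_\alpha, b)$ term is reassociated via Lemma~\ref{l:para-assoc} and Lemma~\ref{l:YtoW} to $P\Pi(X_\alpha, b)$ up to error. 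Finally, $T_{T_{|1-Y|^2}R_\alpha} W$ is converted to $T_{T_{1-\bar Y}R_\alpha} X$ by factoring $|1-Y|^2 = (1-Y)(1-\bar Y)$ and iterating Lemma~\ref{l:para-prod}, exploiting $X = T_{1-Y}W$. All remainders collect into $E_1$, estimated in the two norms of \eqref{est:DtX-err} by a careful accounting of the paraproduct bounds of Section~\ref{s:multilinear}, the scale-invariant $\dot H^{3/2}$ estimate using $\As$ to control $\W \in \dot H^{1/2}$ and thus $W \in \dot H^{3/2}$.

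For part (c), I start from the expression derived in (b), observe that $P[(1-\bar Y)R] = T_{1-\bar Y}R + P\Pi(1-\bar Y, R)$ since $P T_R \bar Y$ is negligible by anti-holomorphicity of $\bar Y$, and cancel $T_{1-\bar Y}R$ to obtain $E_2 = T_{T_{1-\bar Y} R_\alpha}X - P\Pi(1-\bar Y, R) - P\Pi(X_\alpha, b) - E_1$. The estimate $\||D|E_2\|_{BMO} \lesssim_{\As} A_{\frac14}^2$ is then verified termwise: the paraproduct term moves $|D|$ onto $X$, applies \eqref{bmo>infty} with the split $\|T_{1-\bar Y}R_\alpha\|_{BMO^{-1/4}} \cdot \||D|^{1/4}X_\alpha\|_{BMO}$, which is controlled by $\||D|^{3/4}R\|_{BMO}$ and $\||D|^{1/4}\W\|_{BMO}$; the two $\Pi$-terms use the Besov embedding $\dot B^1_{\infty,1}\hookrightarrow BMO^1$; the $E_1$-term uses the Sobolev embedding $\dot H^{3/2} \hookrightarrow BMO^1$ in one space dimension. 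For part (d), differentiate the identity from (c) and commute $\D_\alpha$ past $T_{D_t}$ picking up $T_{b_\alpha}X_\alpha$ and $T_{\bar Y_\alpha}R$, both $O(A_{\frac14}^2)$ in $BMO$. To pass from the para-material to the material derivative, $D_t X_\alpha = T_{D_t}X_\alpha + T_{X_{\alpha\alpha}}b + \Pi(X_{\alpha\alpha}, b)$, and the two extra terms are estimated by the $\dot B^{1/4}_{\infty,2} \times \dot B^{3/4}_{\infty,2}$ Besov product embedding applied to $(X_{\alpha\alpha}, b) \approx (\W_\alpha, R)$.

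The main obstacle will be in part (b), where I must keep precise track of the many error terms arising from the para-Leibniz rule, the replacements $T_a T_b \leftrightarrow T_{ab}$, and the para-associativity manipulations, while verifying both the scale-invariant $\dot H^{3/2}$ bound and the general $\dot H^{s+3/4}$ propagation. In particular, the identification $T_{T_{|1-Y|^2}R_\alpha} W \approx T_{T_{1-\bar Y}R_\alpha} X$ requires cascading applications of Lemma~\ref{l:para-prod} at the borderline combined derivative $\gamma_1 + \gamma_2 = \frac12$, and one must carefully distribute the regularity budget between the control norm $A_{\frac14}$ and the Sobolev norm of $(\W, R)$ so as not to lose derivatives.
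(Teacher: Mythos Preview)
Your proposal is correct and follows essentially the same approach as the paper's proof: both derive part (a) from the exact identity \eqref{ww2d-diff-Y} converted to para-material form via the decomposition $D_t Y = T_{D_t}Y + T_{Y_\alpha}b + \Pi(Y_\alpha,b)$, and both attack part (b) by applying the para-Leibniz rule to $T_{D_t}(T_{1-Y}W)$, using Lemma~\ref{l:WR-source}(a) for $T_{D_t}W$ and the algebraic identity $(1-Y)(1+W_\alpha)=1$ (via Lemma~\ref{l:para-prod}) to collapse $T_{1-Y}T_{1+W_\alpha}$, then Lemma~\ref{l:YtoW} to pass $\Pi(W_\alpha,b)$ to $\Pi(X_\alpha,b)$. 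The only cosmetic difference is that the paper invokes the $\sigma=0$ form of the para-Leibniz rule \eqref{Leibniz-T}, placing the \emph{full} material derivative $D_tY$ in the low-frequency slot, whereas you invoke the $\sigma>0$ form \eqref{Leibniz-T0} and substitute your part (a) expression for $T_{D_t}Y$; since $D_tY - T_{D_t}Y = T_{Y_\alpha}b + \Pi(Y_\alpha,b)$ is itself an $E_1$-type error, the two routes are equivalent. Parts (c) and (d) are handled identically in both.
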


\begin{proof}
a) First we write down the identity
\[
T_{D_t} Y + T_{|1 - Y|^2} R_\alpha  = - (T_{R_\alpha} |1 - Y|^2 + \Pi(R_\alpha, |1 - Y|^2)) - (T_{Y_\alpha} b + \Pi(Y_\alpha, b)) + (1 - Y)M.
\]
Observe that the first three terms on the right hand side have a good balance of derivatives, and thus satisfy the estimate for $G$. For instance,
\[
\|\Pi(Y_\alpha, b)\|_{BMO} \lesssim \||D|^{1/4}Y\|_{BMO}\||D|^{3/4}b\|_{BMO} \lesssim_A A_{\frac14}^2.
\]
For the last term we  use \eqref{M-infty}.

 The estimate for $G'$ is straightforward, using again \eqref{bmo>infty}.
\bigskip

b) First we apply the para-Leibniz rule in Lemma~\ref{l:para-L},
\[
-T_{D_t} X = T_{D_t Y} W - T_{1 - Y} T_{D_t} W + E_1.
\]

When the material derivative falls on the low frequency $Y$, we use \eqref{ww2d-diff-Y} to write
\begin{align*}
T_{D_t Y}W = T_{-|1 - Y|^2 R_\alpha + (1 - Y)M} W = -T_{T_{1 - \bar Y} R_\alpha} X + E_1.
\end{align*}
Here for the term $T_{-|1 - Y|^2 R_\alpha} W$ we use Lemma~\ref{l:para-prod} to rewrite $W$ in terms of $X$, and Proposition~\ref{p:bmo} to peel off perturbative components where the (differentiated) $R_\alpha$ is lowest frequency. For the term 
$T_{(1 - Y)M} W$, on the other hand, we directly use the bound \eqref{M-infty}.

When the material derivative falls on the high frequency, we obtain
\begin{align*}
-T_{1 - Y}T_{D_t} W &= T_{1 - Y}(T_{1 + W_\alpha} P[(1 - \bar Y)R] + P\Pi(W_\alpha, b)).
\end{align*}
Applying Lemmas~\ref{l:para-assoc} and \ref{l:para-prod} and \ref{l:para-prod}, and Lemma~\ref{l:YtoW} on the last term, yields (b).

\

c) All terms from b) have a good balance of derivatives and may be absorbed into $E_2$, except
$$P((1 - \bar Y) R) = T_{1 - \bar Y} R - \Pi(\bar Y, R),$$
the latter of which may also be absorbed into $E_2$.

\

d) This is a straightforward consequence of (b) and (c).
\end{proof}

Lastly, we record the para-material derivative of $a$:

\begin{lemma}\label{l:a-mat}
We have the leading term of the material and para-material derivative of $a$,
\begin{equation}\label{para-aflow}
\begin{split}
D_t a = & \ - (1+a)M + E \\
T_{D_t} a = & \   - T_{1+a} M
+ E,
\end{split}
\end{equation}
where the errors $E$ satisfy
\begin{equation*}
\|E\|_{\teal{\dot{W}^{\frac14, 4} }\cap L^\infty}\lesssim_{\As} A_{\frac14}^2.
\end{equation*}
In particular we have
\begin{equation}\label{aflow}
\Vert D_t a \Vert_{L^\infty}\lesssim_{\As} A_{\frac14}^2.
\end{equation}
\end{lemma}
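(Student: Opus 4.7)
My plan is to compute $D_t a$ directly from its defining formula $a = i(\bar P[\bar R R_\alpha] - P[R\bar R_\alpha])$, substitute the $R$-equation $D_t R = i(1+a)Y - ia$ from \eqref{ww2d-diff-Y}, and identify the leading cubic structure as $-(1+a)M$. The two auxiliary derivative-through-projection computations are governed by the identities
\[
[D_t,\partial_\alpha] = -b_\alpha\partial_\alpha, \qquad [D_t,P] = [b,P]\partial_\alpha,
\]
so applying $D_t$ and the Leibniz rule to $\bar P[\bar R R_\alpha]$ and $P[R\bar R_\alpha]$ produces, aside from the $[D_t,P]$ commutator and a $b_\alpha R_\alpha\bar R$-type term, the four principal pieces
\[
(D_tR)\bar R_\alpha,\quad R\,\partial_\alpha D_t\bar R,\quad (D_t\bar R)R_\alpha,\quad \bar R\,\partial_\alpha D_tR.
\]

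Plugging in $\partial_\alpha D_tR = i(1+a)Y_\alpha - ia_\alpha(1-Y)$ and its conjugate, and combining with the $i(A_1-A_2)$ structure, the $(1+a)Y_\alpha$, $(1+a)Y$ contributions assemble into
\[
-\bar P\bigl[(1+a)(\bar R Y_\alpha - \bar Y R_\alpha)\bigr] - P\bigl[(1+a)(R\bar Y_\alpha - Y\bar R_\alpha)\bigr],
\]
which is exactly $-(1+a)M$ up to the commutator of multiplication by $(1+a)$ with the projections $P,\bar P$. This is the main identification step. The remaining contributions fall into four classes of \emph{errors}: (i) the commutators $[P,1+a](\cdot)$ and $[\bar P, 1+a](\cdot)$ acting on bilinear combinations such as $\bar R Y_\alpha$; (ii) the $a_\alpha$ terms $\bar P[\bar R a_\alpha(1-Y)]$, $P[Ra_\alpha(1-\bar Y)]$, and the companions $\bar P[aR_\alpha]$, $P[a\bar R_\alpha]$; (iii) the $b_\alpha$ terms $\bar P[\bar R b_\alpha R_\alpha]$ and its conjugate; (iv) the commutators $[b,P]\partial_\alpha[R\bar R_\alpha]$.

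For each of these I will show an $(\dot H^{1/2}\cap L^\infty)$ bound of size $A_{1/4}^2$ (with $\As$ dependence). The projection commutators in (i) are controlled via Lemma~\ref{l:com}, using $\||D|^{1/2}a\|_{BMO}\lesssim A_{1/4}^2$ from Proposition~\ref{regularity for a} together with the $L^2$ bounds on $\bar R Y_\alpha$, etc. The $a_\alpha$ terms in (ii) rearrange, via $\bar R a_\alpha = \partial_\alpha(\bar R a) - \bar R_\alpha a$, into holomorphic-antiholomorphic pieces where the remaining factors are cubic in $R,\bar R,Y,\bar Y$; these are then estimated with Proposition~\ref{p:bmo} and \eqref{a-point}, \eqref{a-bmo+}. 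The $b_\alpha$ and $[b,P]$ terms in (iii)--(iv) are handled with Lemma~\ref{l:b} (for the $\dot H^{1/2}$ estimate on $b$) and \eqref{bmo-bmo}, since $b_\alpha$ carries the same balance as $R_\alpha$.

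Once the material-derivative identity $D_ta = -(1+a)M + E$ is established, the para-material version follows by writing
\[
T_{D_t}a - D_ta = -T_{a_\alpha}b - \Pi(a_\alpha,b), \qquad (1+a)M - T_{1+a}M = T_Ma + \Pi(a,M),
\]
so that $T_{D_t}a + T_{1+a}M = E + T_Ma + \Pi(a,M) - T_{a_\alpha}b - \Pi(a_\alpha,b)$. Each of the new remainders is bilinear in objects for which I already have $BMO$/$L^2$ bounds (Proposition~\ref{regularity for a}, Lemma~\ref{l:M}, Lemma~\ref{l:b}), and a routine application of Proposition~\ref{p:bmo} and the paraproduct lemmas from Section~\ref{s:multilinear} yields the same $\dot H^{1/2}\cap L^\infty$ estimate. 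The $L^\infty$ bound \eqref{aflow} then follows from the $L^\infty$ half of the $E$ estimate combined with \eqref{M-infty} and \eqref{a-point}. The main technical obstacle I anticipate is the simultaneous $\dot H^{1/2}$ \emph{and} $L^\infty$ control in part (i), which forces us to leverage the Besov-type sharpening \eqref{a-bmo+} rather than the $BMO$ bound on $a$ alone; this is the one spot where the Besov ingredient in $A_{1/4}$ is essential.
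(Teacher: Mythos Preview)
Your strategy of computing $D_ta$ directly from the formula, substituting the $R$-equation, and identifying $-(1+a)M$ as the main piece is sound, and the reduction from $D_ta$ to $T_{D_t}a$ at the end is exactly the reverse of what the paper does (it reduces the material relation to the para-material one first, then proves the para-material identity via the para-Leibniz rule, Lemma~\ref{l:WR-source}, and para-associativity). So the two routes are genuinely different in order and in toolkit: the paper works entirely at the paradifferential level and never opens up the commutators $[D_t,P]$, $[D_t,\partial_\alpha]$ you use. Your route is more elementary in spirit; the paper's is more systematic once the paraproduct machinery of Section~\ref{s:multilinear} is in place.

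There is, however, a genuine gap in your treatment of the $L^\infty$ bound. Every one of your error classes (i)--(iv) involves a bare projection $P$ or $\bar P$, and $P$ is not bounded on $L^\infty$. Proposition~\ref{p:bmo} gives $BMO$ bounds, not $L^\infty$ bounds, so the phrase ``a routine application of Proposition~\ref{p:bmo}'' does not close the argument. The paper addresses precisely this issue by writing \emph{both} $a$ and $M$ in two matched ways (one good for unbalanced frequencies, one for balanced frequencies with the derivative $\partial_\alpha$ absorbing the projection), splitting $a=a^p+a^\Pi$, $M=M^p+M^\Pi$, and proving the $L^\infty$ bound separately for each pair. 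You will need an analogous device for terms like $\bar P[\bar R\,b_\alpha R_\alpha]$ and $[b,P]\partial_\alpha(R\bar R_\alpha)$; the structure is there (holomorphic/antiholomorphic cancellations plus the $\partial_\alpha P$ kernel bound), but it is not ``routine'' and your outline does not mention it. Finally, your remark that the Besov refinement of $A_{1/4}$ is needed only in (i) is off target: the one place the paper flags it as essential is the bound $\|M\|_{L^\infty}\lesssim_A A_{1/4}^2$ from Lemma~\ref{l:M}, which you yourself invoke at the very last step for \eqref{aflow}.
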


\begin{proof}
We reduce the first relation in \eqref{para-aflow} to the second by replacing products with para-products.  On the left, we need to estimate 
\begin{equation}\label{tri1}
\|\Pi(b,a_\alpha)\|_{\teal{\dot{W}^{\frac14, 4} }\cap L^\infty} + 
\|T_{a_\alpha} b\|_{\teal{\dot{W}^{\frac14, 4} }\cap L^\infty} \lesssim_{\As} \As A_{\frac14}^2,
\end{equation}
and on the right we need
\begin{equation}\label{tri2}
\|\Pi(M,a)\|_{\teal{\dot{W}^{\frac14, 4} }\cap L^\infty} + 
\|T_M a \|_{\teal{\dot{W}^{\frac14, 4} }\cap L^\infty} \lesssim_{\As} \As A_{\frac14}^2,
\end{equation}
where, slightly abusing notations, $\Pi$ does not include a projection $P$.

The \teal{$\dot{W}^{\frac14, 4}$} part of \eqref{tri1} follows from \eqref{a-bmo+} and 
the second part of \eqref{b-bounds} via \eqref{CM} and \eqref{CM+}.
The $L^\infty$ part of \eqref{tri1} follows from \eqref{b-bounds1} and \eqref{a-bmo} and \eqref{a-bmo+}.
Here we need to take advantage of the fact that we have off-diagonal decay, 
interpolating between the two pairs of bounds for both $a$ and $b$ in order to gain dyadic summability.
The bound \eqref{tri2} is a consequence of the $L^\infty$ bound \eqref{M-infty} for $M$ and \teal{the second part of \eqref{a-last} via the embedding $F^{1/2}_{2, 1} \subset F^0_{\infty,1}$.}

Now we turn our attention to the proof of second part of \eqref{para-aflow}. We first give a quick
proof for the \teal{$\dot W^{\frac14, 4}$} bound. 
There we use Lemmas~\ref{l:para-L}, \ref{l:WR-source}, and the para-associativity lemma, Lemma~\ref{l:para-assoc2} to expand the material derivative,
\begin{align*}
T_{D_t} P(\bar R_\alpha R) & =  P[(\D_\alpha T_{D_t} \bar R) R] + P[\bar R_\alpha T_{D_t} R] + E
\\
& = -i P[(\D_\alpha T_{1+a} \bar Y)R] + i P[\bar R_\alpha T_{1+a} Y] + E
\\
& = i T_{1+a} (- \bar Y_\alpha R + \bar R_\alpha  Y) + E
\end{align*}
with errors $E$ satisfying 
\[
\| E \|_{\teal{\dot{W}^{\frac14, 4}} } \lesssim_{\As} \As A_{\frac14}^2.
\]
We would like to repeat the same argument 
for the $L^\infty$ bound, except that we 
have the issue that $P$ is not bounded in $L^\infty$. To avoid this problem we 
use a trick we employed before, which is to take 
better advantage of the structure of both $M$
and $R$. Precisely, we can write $a$ in two ways
\[
a = -i ( P[\bar R_\alpha R] - \bar P[R_\alpha \bar R]) = - i \left( \partial_\alpha P[ \bar R R]
- R_\alpha \bar R\right),
\]
while $M$ can be represented in a matching 
fashion as
\[
M =  P[R \bar Y_\alpha - \bar R_\alpha Y] + \bar P[\bar R  Y_\alpha - R_\alpha \bar Y] = \partial_\alpha  P [R \bar Y - Y \bar R]  
+ \bar R  Y_\alpha - R_\alpha \bar Y.
\]
Here the first representation is good for unbalanced frequencies, as it selects 
the correct frequency balance. The second, on the other hand, is good for balanced interactions,
as it pairs each projection with a differentiation,
therefore improving the kernel decay.

Correspondingly we consider matching 
paradifferential decompositions of $a$ and $M$,
namely
\[
a = a^p + a^\Pi, \qquad M = M^p + M^\Pi,
\]
 where, using the two expressions above, we set
\[
a^p = -i ( T_{\bar R_\alpha} R  - T_{R_\alpha} \bar R), \qquad  a^\Pi = - i \left( \partial_\alpha P
\Pi(\bar R, R) - \Pi(R_\alpha, \bar R)\right) ,
\]
as well as the matching decomposition for $M$.
We will now prove that the desired $L^\infty$
bound holds separately for the pairs $a^p,M^p$
respectively $a^\Pi,M^\Pi$.

\bigskip
\textbf{ a) The unbalanced case, $a^p,M^p$.}
We consider the first terms in $a^p,M^p$,
for which we reproduce the computation above:
\begin{align*}
T_{D_t} T_{\bar R_\alpha} R  & = T_{\D_\alpha T_{D_t} \bar R } R  + T_{\bar R_\alpha} T_{D_t} R + E_1
\\
& = -i T_{\D_\alpha T_{1+a} \bar Y} R + i T_{\bar R_\alpha} T_{1+a} Y + E_1+E_2
\\
& = i T_{1+a} (- T_{\bar Y_\alpha} R + T_{\bar R_\alpha}  Y) + E_1+E_2+E_3.
\end{align*}
We successively consider the three errors.
$E_1$ is the para-associativity error,
which can be represented as a sum of 
two translation invariant trilinear forms
\[
E_1 =  L_{lmh}( \bar R_{\alpha\alpha}, b, R) +
L_{lmh} (b_\alpha,R_\alpha,R)
\]
with frequency localizations ordered as listed. 
Then we can apply twice the bound \eqref{bmo>infty}
to obtain
\begin{equation}\label{tri}
\|E_1\|_{BMO^{-\frac14}} \lesssim_A A^2 A_\frac14,
\qquad \|E_1\|_{BMO^{\frac14}} \lesssim_A  A_\frac14^3,
\end{equation}
after which the desired $L^\infty$ bound follows by interpolation.

The error $E_2$ can be written in the form
\[
E_2 = T_{\partial_\alpha \bar \cK} R + T_{\bar R_\alpha} \cK,
\]
where $\cK$ is from \eqref{para(Wa)}, which also satisfies a pair of unbalanced $BMO$ bounds in \eqref{para(Wa)-source} and \eqref{para(Wa)-source+}. Then a single 
application of the bound \eqref{bmo>infty}
yields the same bound as for $E_1$. 

Finally $E_3$ is also a para-associativity error,
but better than $E_1$ because it is quartic,
and it is treated in the same way.

\bigskip
\textbf{ b) The balanced case, $a^\Pi,M^\Pi$.}
We follow the same outline as before, considering the first terms in $a^{\Pi},M^{\Pi}$
and the corresponding computation:
\begin{align*}
T_{D_t} \partial_\alpha P\Pi(\bar R, R )  & =  \partial_\alpha P\Pi(T_{D_t} \bar R, R)  + \partial_\alpha P\Pi(\bar R, T_{D_t} R) + E_1
\\
& = -i \partial_\alpha P\Pi ( T_{1+a} \bar Y, R) + i \partial_\alpha P\Pi(\bar R, T_{1+a} Y) + E_1+E_2
\\
& = i T_{1+a} (- \partial_\alpha P\Pi(\bar Y, R) + \partial_\alpha P\Pi(\bar R,Y)) + E_1+E_2+E_3.
\end{align*}
As above  $E_1$ is the para-associativity error,
which can be represented as a sum of trilinear forms
\[
E_1 =  L_{lhh}( b_\alpha, \bar R_\alpha, R) +
L_{lhh}(b_\alpha,\bar R, R_\alpha),
\]
where the first frequency is smaller and the last 
two are balanced. Then we obtain the same bound 
as in \eqref{tri} by applying once  \eqref{bmo>infty} and once \eqref{bmo-bmo}.

The error $E_2$ can be written in the form
\[
E_2 = \partial_\alpha P\Pi( \bar \cK, R) + \partial_\alpha P\Pi ( \bar R, \cK),
\]
which can be treated using \eqref{para(Wa)-source}, \eqref{para(Wa)-source+} and   \eqref{bmo-bmo}.
Here more care is needed for the second term in 
$a^\Pi$, which yields an error $E_2$ of the form
\[
E_2= \Pi(\bar \cK,R_\alpha) + \Pi(\cK,\bar R_\alpha).
\]
This we can estimate by Cauchy-Schwarz,
\[
\| E_2\|_{L^\infty} \lesssim 
\| \cK \|_{\dot B^{\frac38}_{\infty,2}} 
\|R\|_{\dot B^{-\frac38}_{\infty,2}}.
\]
Then we bound the two norms on the right by interpolation between \eqref{para(Wa)-source}, \eqref{para(Wa)-source+} for the first factor on the right, respectively $A$ and $A_{\frac14}$ for the second factor.

Finally, the error $E_3$ is similar to $E_1$ but better.

\end{proof}


\section{The linearized equations}
\label{s:linearized}

The proofs of the main results in this paper take advantage of both the 
linearization of the original equation \eqref{ww2d1} for $(W,Q)$
and the linearization of the differentiated equation \eqref{ww2d-diff} for $(\W, R)$.
Both of these linearizations are used either directly or via their associated paradifferential flows. In this section we derive both 
of these linearized equations, as well as the relation between them.
We also write down the corresponding paradifferential flows.

\subsection{The linearization of the \texorpdfstring{$(W,Q)$}{} equation 
} We denote solutions for the linearized water wave equations \eqref{ww2d1} around a solution $(W, Q)$ by $(w,q)$. However, it is more convenient to
immediately switch to diagonal variables $(w,r)$, where
\[
r := q - Rw.
\]
The system of linearized equations for $(w,r)$ was computed 
in \cite{HIT}. We review here the key points.

The linearization of $R$ is
\begin{equation}\label{lin-R}
\delta R =
\dfrac{q_{\alpha}- Rw_{\alpha}}{1+\W}
= \dfrac{r_{\alpha}+ R_\alpha w}{1+\W},
\end{equation}
while the linearization of $F$ can be expressed in the form
\[
\delta F = P[ m - \bar m],
\]
where the auxiliary variable $m$ corresponds to differentiating
$F$ with respect to the holomorphic variables,
\[
m := \frac{q_\alpha - R w_\alpha}{J} + \frac{\bar R w_\alpha}{(1+\W)^2} =
 \frac{r_\alpha +R_\alpha w}{J} + \frac{\bar R w_\alpha}{(1+\W)^2}.
\]
Denoting also
\[
n := \bar R \delta R = \frac{ \bar R(r_{\alpha}+R_\alpha w)}{1+\W},
\]
the linearized water wave equations take the form
\begin{equation*}
\left\{
\begin{aligned}
&w_{t}+ F w_\alpha + (1+ \W) P[ m-\bar m] = 0 \\
&q_{t}+ F q_\alpha + Q_\alpha P[m-\bar m]  -i w +P\left[n+\bar n\right] =0.
\end{aligned}
\right.
\end{equation*}
Recalling that $b = F + \dfrac{\bar R}{1+\W}$, this becomes
\begin{equation*}
\left\{
\begin{aligned}
&(\partial_t + b \partial_\alpha) w + (1+ \W) P[ m-\bar m] =   \dfrac{\bar R w_\alpha}{1+\W} \\
& (\partial_t + b \partial_\alpha) q + Q_\alpha P[m-\bar m]  -i w +P\left[n+\bar n\right] = \dfrac{\bar R q_\alpha}{1+\W}.
\end{aligned}
\right.
\end{equation*}
Now, we can use the second equation in \eqref{ww2d-diff} to switch from $q$ to $r$ and obtain
\begin{equation*}
\left\{
\begin{aligned}
&(\partial_t + b \partial_\alpha) w + (1+ \W) P[ m-\bar m] =   \dfrac{\bar R w_\alpha}{1+\W}\\
& (\partial_t + b \partial_\alpha) r  -i \frac{1+a}{1+\W} w +P\left[n+\bar n\right] = \dfrac{\bar R (r_\alpha+ R_\alpha w)}{1+\W}.
\end{aligned}
\right.
\end{equation*}
Terms like $\bar P m$, $\bar Pn$ are lower order since the differentiated holomorphic 
variables must be at lower frequency. The same applies to their conjugates. Moving those terms to the right and taking advantage of algebraic cancellations we are left with
\begin{equation}\label{lin(wr)0}
\left\{
\begin{aligned}
& (\partial_t + b \partial_\alpha) w  +  \frac{1}{1+\bar \W} r_\alpha
+  \frac{R_{\alpha} }{1+\bar \W} w  = \mathcal{G}_0(w,r)
 \\
&(\partial_t + b \partial_\alpha)  r  - i  \frac{1+a}{1+\W} w  = \mathcal{K}_0(w,r),
\end{aligned}
\right.
\end{equation}
where
\begin{equation}\label{GK}
\begin{aligned}
\mathcal{G}_0(w,r) = \ (1+\W) (P \bar m + \bar P  m), \quad \mathcal{K}_0(w,r) =  \  \bar P n - P \bar n.
\end{aligned}
\end{equation}

We remark that while $(w,r)$ are holomorphic, it is not directly obvious
that the above evolution preserves the space of holomorphic states. To
remedy this one can also project the linearized equations onto the
space of holomorphic functions via the projection $P$.  Then we obtain
the equations
\begin{equation}\label{lin(wr)}
\left\{
\begin{aligned}
& \partial_t w + P[b \partial_\alpha w]  + P \left[ \frac{1}{1+\bar \W} r_\alpha\right]
+  P \left[ \frac{R_{\alpha} }{1+\bar \W} w \right] = P \mathcal{G}_0(  w, r)
 \\
&\partial_t r + P[b \partial_\alpha r]  - i P\left[ \frac{1+a}{1+\W} w\right]  =
 P \mathcal{K}_0( w,r).
\end{aligned}
\right.
\end{equation}
Since the original set of equations \eqref{ww2d1} is fully
holomorphic, it follows that the two sets of equations,
\eqref{lin(wr)0} and \eqref{lin(wr)}, are algebraically equivalent.
Later in this paper we will work with \eqref{lin(wr)}.

Based on the above form of the linearized equations, we can also write
the corresponding paradifferential flow:
\begin{equation}\label{paralin(wr)}
\left\{
\begin{aligned}
& T_{D_t} w + T_{1- \bar Y} \D_\alpha r
+ T_{(1 - \bar Y)R_\alpha} w = 0
 \\ 
& T_{D_t} r - i T_{1 - Y}T_{1 + a} w =  0.
\end{aligned}
\right.
\end{equation}  
Here we use the Weyl quantization for the paraproducts in order to ensure self-adjointness. We remark that the right hand side terms 
in \eqref{lin(wr)} have no paradifferential component (i.e. components such that the 
$(w,r)$ factor is the highest frequency). 

The $\dH^{1/4}$ well-posedness of the linearized equations~\eqref{lin(wr)} will be studied in Section~\ref{s:lin-est}. The $\dH^s$ well-posedness of the paradifferential equations~\eqref{paralin(wr)}, studied in Sections~\ref{s:para}, \ref{s:para-s}, will play a key role.

\subsection{The linearization of the \texorpdfstring{$(\W,R)$}{} equation 
} We denote solutions for the linearized water wave equations \eqref{ww2d-diff} around a solution $(\W, R)$ by 
$(\hatw,\hatr )$. Recalling \eqref{ww2d-diff}
\begin{equation*}
\left\{
\begin{aligned}
 &D_t \W + \frac{(1+\W) R_\alpha}{1+\bar \W}   =  (1+\W)M
\\
&D_t R = i\left(\frac{\W - a}{1+\W}\right),
\end{aligned}
\right.
\end{equation*}
we begin by computing the linearizations of $b$,
\[
\delta b = 2\Re P[(1 - \bar Y) \hat r - (1 - \bar Y)^2 R \bar{ \hat w}],
\]
and of $M$,
\[
\delta M = 2\Re P[\hat r \bar Y_\alpha -2 R(1 - \bar Y) \bar{\hat w} \bar Y_\alpha + R (1 - \bar Y)^2 \bar{\hat w}_\alpha - \bar{\hat r}_\alpha Y - \bar R_\alpha (1 - Y)^2 \hat w].
\]
Then we can write the linearized equation in the form
\begin{equation}\label{lin(hwhr)}
\left\{
\begin{aligned}
 & D_t \hat w + (1 - \bar Y)(1 + W_\alpha) \hat r_\alpha 
 + (1 - \bar Y) R_\alpha \hat w =  \  M \hat w  + (1 + W_\alpha) \delta M - W_{\alpha\alpha} \delta b 
 \\ & \qquad \qquad \qquad  
  + (1 - \bar Y)^2  (1+W_\alpha) R_\alpha \bar{\hat w}
\\
&D_t \hat r   -i(1 + a)(1 - Y)^2\hat w =  - R_\alpha \delta b 
 - (1 - Y) [\bar P(\bar{\hat r} R_\alpha) + \bar P(\bar R \hat r_\alpha) - P(\hat r \bar R_\alpha) - P(R \bar{\hat r}_\alpha)].
\end{aligned}
\right.
\end{equation}
Next we identify the corresponding paradifferential equation,
which contains only paraproduct type contributions where $\hatw$ and 
$\hatr$ terms are the highest frequency. Thus we can exclude all
$(\bar{\hatw},\bar{\hatr})$ terms, as well as all terms where 
$\hatw$ and $\hatr$ are inside a $\bar P$ projection. 
Using  the relation \eqref{M-def} to group terms, we are left with
\begin{equation} \label{paralin(hwhr)}
\left\{
\begin{aligned}
 &T_{D_t} \hat w  + T_{b_\alpha} \hat w + T_{1 - \bar Y}T_{1 + W_\alpha} \hat r_\alpha - T_{ \bar Y_\alpha}T_{1 + W_\alpha} \hat r + T_{1 - \bar Y} T_{W_{\alpha\alpha}} \hat r = 0 \\
&T_{D_t} \hat r  + T_{b_\alpha} \hat r  -iT_{(1 - Y)^2}T_{1 + a}\hat w + T_M \hat r = 0.
\end{aligned}
\right.
\end{equation}
This linearized equation will play a key role in the study of the differentiated nonlinear flow \eqref{ww2d-diff}.

\subsection{The relation between the two linearized flows}

To each solution $(w,r)$ to the linearized equation \eqref{lin(wr)} corresponds a solution $(\hatw,\hatr)$ to the linearized differentiated 
equation \eqref{lin(hwhr)}. In view of \eqref{lin-R}, the connection 
between the two is given by the relation
\begin{equation}\label{full-relation}
    (\hatw,\hatr) = \left(w_\alpha,\dfrac{r_{\alpha}+ R_\alpha w}{1+\W}\right).
\end{equation}
In Section~\ref{s:ee} we will use a paradifferential version of this relation in order to transfer the $\dH^s$ well-posedness results
from the paradifferential equation \eqref{paralin(wr)} to \eqref{paralin(hwhr)}.

\section{Bounds for the linearized equation}
\label{s:lin-est}

A key result of \cite{HIT} asserts that the linearized equation \eqref{lin(wr)} is  well-posed in $\H^0$ for as long  as $A$ and $\int B$ remain finite:

\begin{theorem}[\cite{HIT}]
The linearized equation~\eqref{lin(wr)} is well-posed in $\H^0$. Furthermore, there 
exists an energy functional $\Elin(w,r)$ so that we have

a) Norm equivalence:
\[
\Elin(w,r) \approx_A \| (w,r)\|_{\H^0}^2,
\]

b) Energy estimates:
\[
\frac{d}{dt} \Elin(w,r) \lesssim_A AB \| (w,r)\|_{\H^0}^2.
\]
\end{theorem}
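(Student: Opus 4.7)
My plan is to construct $\Elin$ as a modification of the linear conserved energy $\Ez$ from \eqref{E0}, in which the coefficients that appear in the principal part of the linearized system \eqref{lin(wr)} enter as weights. Specifically, I would set
\[
\Elin(w,r) = \int (1+a) |w|^2 \, d\alpha + \frac{1}{i} \int \left( \bar r \frac{r_\alpha}{1+\bar \W} - r \frac{\bar r_\alpha}{1+\W} \right) d\alpha,
\]
so that the coupling coefficients $(1+a)/(1+\W)$ and $1/(1+\bar \W)$ appearing in \eqref{lin(wr)} are built into $\Elin$ in matching positions. The norm equivalence in (a) is then immediate from $\|a\|_{L^\infty} \lesssim A^2$ together with $\|\W\|_{L^\infty} \lesssim A \ll 1$ and the positivity of $1+a$ (Proposition~\ref{regularity for a}).

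For part (b), I would differentiate $\Elin$ in time and substitute \eqref{lin(wr)0} for $\partial_t w$ and $\partial_t r$. The resulting terms split naturally into four groups. First, the transport contributions from $b\partial_\alpha$ integrate by parts to produce factors of $b_\alpha$, controlled by $\|b_\alpha\|_{BMO} \lesssim B$ via Lemma~\ref{l:b}. Second, the cross-coupling between $r_\alpha/(1+\bar\W)$ in the $w$-equation and $i(1+a)w/(1+\W)$ in the $r$-equation cancels at principal order precisely because of the choice of weights in $\Elin$; the residual involves commutators between $|D|^{1/2}$ (or $\partial_\alpha$) and the coefficients $1/(1+\bar\W)$, $1/(1+\W)$, $(1+a)$, which by Lemma~\ref{l:com} and the estimates in Proposition~\ref{regularity for a} and Lemma~\ref{l:Y} are controlled by $AB\|(w,r)\|_{\H_0}^2$. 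Third, the term from $\partial_t a$ is handled via Lemma~\ref{l:a-mat}: one writes $\partial_t a = D_t a - b a_\alpha$, and both $D_t a$ and $b a_\alpha$ are bounded in $L^\infty$ by $A_{1/4}^2 \lesssim AB$, which integrated against $|w|^2$ yields an acceptable contribution. Fourth, the inhomogeneities $\mathcal G_0$ and $\mathcal K_0$ from \eqref{GK} are multilinear expressions in which $\bar P$ forces the high frequency onto the coefficients $\W, R$; applying the Coifman--Meyer bounds \eqref{CM}--\eqref{CM-BMO} from Section~\ref{s:multilinear} yields $\|\mathcal G_0\|_{L^2} + \||D|^{1/2}\mathcal K_0\|_{L^2} \lesssim B \|(w,r)\|_{\H_0}$, which pairs against $(w,r)$ to close the estimate.

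For the well-posedness assertion itself, I would proceed by a standard regularize--and--pass-to-the-limit scheme. I would mollify the initial data and the coefficients at frequency $\leq 2^n$, solve the resulting mollified linear ODE in $\H_0$, prove uniform in $n$ estimates using the energy identity above (which is stable under frequency truncation), and extract a weak limit. Uniqueness is obtained from the same energy estimate applied to the difference of two solutions, using that the linearized equation is self-adjoint up to terms controlled by $A+B$; continuous dependence on the data follows similarly.

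The main obstacle I anticipate is the cancellation in the second group of terms. The coefficient $(1+a)/(1+\W)$ in the $r$-equation and $1/(1+\bar\W)$ in the $w$-equation are not exact conjugates, so the naive pairing leaves behind a residual in which $|D|^{1/2}$ must be commuted past the weights and which must genuinely factor as the product $A \cdot B$ rather than the cruder $A^2+B^2$. This forces careful use of the commutator estimates of Lemma~\ref{l:com} and the paraproduct bounds of Lemmas~\ref{l:para-com}--\ref{l:para-assoc}, always placing the $L^\infty/BMO$ norms on the coefficients and the $L^2/\dot H^{1/2}$ norms on $(w,r)$. Getting the right \emph{balance}, in particular ensuring that each low-regularity factor is estimated in $B$ while keeping a pointwise factor of $A$, is where the bulk of the bookkeeping lies.
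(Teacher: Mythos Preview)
This theorem is \emph{cited} from \cite{HIT}, not proved in the present paper; there is no proof here to compare against directly. However, the paper does reveal, in the discussion preceding Proposition~\ref{prop:plinwp}, the energy functional that \cite{HIT} actually used:
\[
\Elind(w,r) = \int_{\R} (1+a)\,|w|^2 + \Im(r\,\bar r_\alpha)\,d\alpha,
\]
i.e.\ the $r$-part carries \emph{no} weight. Your proposal inserts an additional factor $1/(1+\bar\W)$ into the $r$-part of the energy, which is unnecessary and is the source of the ``obstacle'' you anticipate. With the simpler \cite{HIT} energy, the cross-coupling cancels \emph{exactly}: differentiating $\int(1+a)|w|^2$ and substituting the $r_\alpha/(1+\bar\W)$ term from the $w$-equation gives $-2\Re\int\frac{1+a}{1+\bar\W}\,\bar w\,r_\alpha$, while differentiating $\Im(r\bar r_\alpha)$ and substituting the $i(1+a)w/(1+\W)$ term from the $r$-equation gives $2\Re\int\frac{1+a}{1+\W}\,w\,\bar r_\alpha$; since $a$ is real and $\overline{1/(1+\bar\W)}=1/(1+\W)$, these are complex conjugates and cancel. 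No commutator is needed at this step, so the bookkeeping you flag as the main difficulty simply does not arise.

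Aside from this over-engineering, your outline is broadly correct: the transport terms produce $b_\alpha$, the $\partial_t a$ contribution is handled via $D_t a$ (in \cite{HIT} one uses the relation $D_t a = -(1+a)M + \text{error}$ and the $AB$ bound on $M$ rather than the $A_{1/4}^2$ bound you cite), and $(\mathcal G_0,\mathcal K_0)$ are estimated by commutator bounds exploiting the $\bar P$ structure. The well-posedness scheme you sketch is standard and is what \cite{HIT} does.
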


We remark that this bound is scale invariant, and thus 
no improvement is possible where one lowers the regularity 
measured by either factor $A$ or $B$.
Instead, our goal in this section is to improve this result
by replacing the product $AB$ by a more balanced version,
namely $A_{1/4}^2$. We remark that $A_{1/4}^2 \lesssim AB$
but the two terms scale in the same way. The price to pay will be a shift in our function spaces, from $\H^0$ to $\dH^{\frac14}$:

\begin{theorem}\label{t:balancedenergy}
Assume $A \lesssim 1$ and $A_\frac14 \in L^2$. Then the linearized equation~\eqref{lin(wr)} is well-posed in $\dH^{\frac14}$. Furthermore, there 
exists an energy functional $\Elin^\frac14(w,r)$ so that we have

a) Norm equivalence:
\[
\Elin^\frac14(w,r) \approx_{\As} \| (w,r)\|_{\dH^{\frac14}}^2,
\]

b) Energy estimates:
\[
\frac{d}{dt} \Elin^\frac14(w,r) \lesssim_{\As} A_\frac14^2 \|(w,r)\|_{\dH^\frac14}^2.
\]
\end{theorem}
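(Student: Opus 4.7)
The plan is to reduce the estimate for \eqref{lin(wr)} to the paradifferential flow \eqref{paralin(wr)} and then construct a good energy for the latter. Once the cubic energy bound (b) is established, the norm equivalence (a) and well-posedness will follow by a combination of (b), the matching estimate for the adjoint equation (which has the same paradifferential structure), regularization of the data to the regime in which \cite{HIT} provides existence, and a density/duality argument.

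Step 1 (reduction). Using paraproduct decompositions and the coefficient estimates in Section~\ref{s:wwbounds}, I would write \eqref{lin(wr)} as \eqref{paralin(wr)} plus source terms $(g,k)$. These sources collect: the $T_v f$ and $\Pi(f, v)$ pieces of each product where $v \in \{w, r, r_\alpha\}$ is not the highest frequency; commutator errors needed to pull $P$ across the paraproducts via \eqref{CM-com}; and the original right-hand side $(P\mathcal{G}_0, P\mathcal{K}_0)$, which is already unfavorable in the high-frequency factor. Repeated use of Proposition~\ref{p:bmo}, Lemma~\ref{l:com}, and the bounds on $a$, $b$, $Y$ from Section~\ref{s:wwbounds} gives $\|(g, k)\|_{\dH_\frac14} \lesssim_{\As} A_\frac14^2 \| (w, r)\|_{\dH_\frac14}$, which is already of the desired balanced cubic form.

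Step 2 (paradifferential energy). For the homogeneous flow \eqref{paralin(wr)}, I would work with an energy modeled on $E_0$ but lifted to the $\dH_\frac14$ level and symmetrized by the Taylor weight, e.g.
\[
\Elin^\frac14(w, r) = \int T_{1 + a}\,|D|^{\frac14} w \cdot \overline{|D|^{\frac14} w}\, d\alpha + \frac{1}{i}\int |D|^{\frac14} r \cdot \overline{|D|^{\frac14} r_\alpha}\, d\alpha,
\]
which in the $a = Y = 0$ limit is exactly $\|w\|_{\dot H^\frac14}^2 + \|r\|_{\dot H^\frac34}^2$. The norm equivalence (a) follows from the positivity of $1 + a$ (Proposition~\ref{regularity for a}), the $BMO^\frac14$ bound \eqref{a-bmo+}, and Proposition~\ref{p:bmo}. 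Differentiating $\Elin^\frac14$ in time and using \eqref{paralin(wr)}, the principal symbols cancel because the system is symmetric-hyperbolic once conjugated by $T_{1 + a}$. The residuals split into (i) commutator errors between $|D|^\frac14$ or $T_{D_t}$ and the various paraproducts, controlled by Lemmas~\ref{l:para-com},~\ref{l:para-prod},~\ref{l:para-L}, and (ii) time derivatives of the coefficients $a$, $Y$, $b$, handled using the para-material identities of Lemmas~\ref{l:XY-mat} and~\ref{l:a-mat}. Most of these residuals are directly $\lesssim_{\As} A_\frac14^2 \|(w, r)\|_{\dH_\frac14}^2$.

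Step 3 (normal form correction and main obstacle). The remaining terms are quadratic in $(w, r)$ with a coefficient whose natural control is only the unbalanced $B = A_\frac12$, giving a bound of the form $AB\|(w,r)\|_{\dH_\frac14}^2$ rather than the target $A_\frac14^2 \|(w,r)\|_{\dH_\frac14}^2$. To eliminate these, I would add to $\Elin^\frac14$ a cubic-in-$(w, r, \W, R)$ correction modeled on the differentiated and linearized version of the normal form \eqref{nft1}, with $X = T_{1 - Y}W$ playing the role of a bounded substitute for $\partial^{-1} Y$. The correction is designed so that its time derivative, computed via Lemmas~\ref{l:WR-source}--\ref{l:a-mat} and the evolution \eqref{paralin(wr)}, cancels the bad $B$-type terms modulo genuinely $A_\frac14^2$-balanced remainders. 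Boundedness of the correction in $\dH_\frac14$ at the price of the stronger control $\As$ preserves (a). The main obstacle is precisely this step: one must pinpoint which residuals in $\tfrac{d}{dt}\Elin^\frac14$ fail to be balanced, and exhibit a cubic functional whose variation along the flow matches them with opposite sign. This is delicate because the full normal form \eqref{nft1} is unbounded at high frequency, so only its paradifferential truncation can be used; the fact that $A_\frac14$ simultaneously controls the two endpoints sharply enough to rebalance one factor of $B$ into a second factor of $A_\frac14$ is exactly the point at which the $\dH_\frac14$ regularity level is critical.
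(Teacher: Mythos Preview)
Your Step 1 contains a genuine gap: the claim that the source terms $(g,k)$ obtained by peeling off the paradifferential part satisfy $\|(g,k)\|_{\dH_{\frac14}} \lesssim_{\As} A_{\frac14}^2 \|(w,r)\|_{\dH_{\frac14}}$ is false. The terms $(P\mathcal{G}_0, P\mathcal{K}_0)$ and several pieces of the paraproduct remainders $(\mathcal{G}_1,\mathcal{K}_1)$ are only \emph{quadratic}: they contain one factor from $(\W,R)$ and one from $(w,r)$, so the natural bound has a single power of $A_{\frac14}$, not two. For instance, $P\bar n$ contains $P[\bar r_\alpha R]$, and the best one gets directly is $\|P[\bar r_\alpha R]\|_{\dot H^{\frac34}} \lesssim A_{\frac14}\|r\|_{\dot H^{\frac34}}$; similarly $T_w R_\alpha$ inside $\mathcal{G}_1$ gives only one $A_{\frac14}$. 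The paper states this explicitly (the discussion preceding Proposition~\ref{paralinearization} and Lemmas~\ref{l:GK},~\ref{l:GK1}), and the bulk of Section~\ref{s:lin-est} (Sections~\ref{s:nf-1} and~\ref{s:nf-0}) is devoted precisely to constructing normal form corrections to the \emph{variables} $(w,r)\mapsto(\nfw,\nfr)$ that convert these quadratic source terms into genuinely cubic, balanced ones.

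Your Step 3 shows you are aware that normal forms are required, but you have mislocated the obstruction. You describe the bad terms as arising from the energy computation for the homogeneous paradifferential flow \eqref{paralin(wr)}; in fact, that flow already admits a balanced $A_{\frac14}^2$ energy estimate in $\H_0$ with no normal form correction at all (Proposition~\ref{prop:plinwp}), and the passage to $\dH_s$ requires only commutator-type corrections for the conjugation by $D^s$ (Proposition~\ref{prop:cubiccommutator}). The hard normal form work is entirely in handling the source terms you dismissed in Step 1. One could in principle reorganize this as cubic corrections to the energy functional rather than to the variables, but either way you must first retract the claim in Step 1 and identify the nonperturbative parts $(\mathcal{G}_{0,0},\mathcal{K}_{0,0})$ and $-T_w T_{1-\bar Y}R_\alpha$, $-iT_w T_{1+a}Y$ that actually need to be eliminated.
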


In the remainder of the section, we prove Theorem \ref{t:balancedenergy}.

\subsection{Reduction to the paradifferential equation}

 A key step in the proof of Theorem \ref{t:balancedenergy} is to reduce to the linearized paradifferential approximation \eqref{paralin(wr)} of the full linearized evolution \eqref{lin(wr)0}. Recall that this has the form
 \begin{equation*}
 \left\{
\begin{aligned}
& T_{D_t}w + T_{1- \bar Y} \D_\alpha r
+ T_{(1 - \bar Y)R_\alpha} w = 0
 \\
& T_{D_t} r - i T_{1 -Y}T_{1 + a} w =  0.
\end{aligned}
\right.
\end{equation*}  
Recall that throughout, we fix a self-adjoint quantization for $T$. For instance, we may use the Weyl quantization, or simply the average
\[
\half(T + T^*).
\]

The first step in the proof of Theorem \ref{t:balancedenergy}
is to show that the conclusion of the theorem holds for the evolution~\eqref{paralin(wr)}:

\begin{proposition} \label{p:paralin-wp}
Assume $A \lesssim 1$ and $A_\frac14 \in L^2$. Then the linearized paradifferential equation~\eqref{paralin(wr)} is well-posed in $\dH^{s}$
for every $s \in \R$. Furthermore, there  exist energy functionals $\Elin^{s.para}(w,r)$ so that we have

a) Norm equivalence:
\[
\Elin^{s,para}(w,r) \approx_{\teal{\As}} \| (w,r)\|_{\dH^{s}}^2,
\]

b) Energy estimates:
\[
\frac{d}{dt} \Elin^{s,para}(w,r) \lesssim_{\teal{\As}} A_\frac14^2 \|(w,r)\|_{\dH^s}^2.
\]
\end{proposition}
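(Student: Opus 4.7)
The plan is to construct, for each $s \in \R$, a quadratic energy functional of the form
\[
\Elin^{s,para}(w,r) = \Re \langle T_{1+a} |D|^s w,\, |D|^s w\rangle + 2\,\Re \langle T_{1-\bar Y}(-i\partial_\alpha) |D|^s r,\, |D|^s r\rangle,
\]
with weights chosen to match exactly the coefficients appearing in \eqref{paralin(wr)}. The norm equivalence (a) will be immediate under $A \ll 1$: since $a \geq 0$, $1+a$ and $1-Y$ are pointwise close to $1$ modulo terms of size $A$, and a standard application of the paraproduct estimates from Section~\ref{s:multilinear} identifies the two terms with $\||D|^s w\|_{L^2}^2$ and $\||D|^{s+1/2} r\|_{L^2}^2$ respectively, up to a relative error bounded by $A$.

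For (b), I would differentiate $\Elin^{s,para}$ in time, use the equations \eqref{paralin(wr)} to substitute for $T_{D_t} w$ and $T_{D_t} r$, and exploit the self-adjointness of our paraproduct quantization together with integration by parts in $T_{D_t} = \partial_t + T_b \partial_\alpha$ (which produces the harmless symmetrization term $T_{b_\alpha}$). The key algebraic cancellation is that the off-diagonal contributions pair as
\[
\langle T_{1-\bar Y}\partial_\alpha r,\, T_{1+a} w\rangle + \overline{\langle -iT_{1-Y}T_{1+a} w,\, (-i\partial_\alpha) T_{1-\bar Y} r\rangle \cdot i^{-1}},
\]
and differ only by a commutator and paraproduct reordering error. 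The remaining terms fall into three groups: (i) contributions where $T_{D_t}$ falls on the weights $1+a$, $1-Y$, controlled by the para-material derivative bounds of Lemmas~\ref{l:XY-mat} and \ref{l:a-mat}, which provide exactly the $A_{\frac14}^2$ scaling required; (ii) para-commutator and para-associativity errors from Lemmas~\ref{l:para-com}--\ref{l:para-assoc2} together with the para-Leibniz rule in Lemma~\ref{l:para-L}, where each $BMO$-type low-frequency factor is estimated at the $|D|^{\frac14}$ or $|D|^{\frac12}$ level against $A_{\frac14}$ using the sharper bounds \eqref{a-bmo+}, \eqref{b-bounds1}, and \eqref{est:Y}; and (iii) the subprincipal term $T_{(1-\bar Y)R_\alpha} w$ in \eqref{paralin(wr)}, directly absorbed using the $BMO^{\frac34}$ bound on $R$. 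For $s \neq 0$ the only added ingredient is commuting $|D|^s$ through the paraproducts, which by Lemma~\ref{l:para-com} yields errors of the same balanced form.

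Well-posedness then follows by a standard route: the a priori bound combined with Gronwall (using the $L^2_t$ integrability of $A_{\frac14}$) yields uniqueness and uniform $\dH_s$ control; existence is obtained by regularizing the coefficients $Y$, $a$, $b$ via Friedrichs mollifiers, solving the resulting linear ODE in $\dH_s$, and passing to the limit using the uniform energy bound; continuous dependence follows from the same estimate applied to differences, since \eqref{paralin(wr)} is linear in $(w,r)$.

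The main obstacle will be the accounting in step (ii): ensuring that in \emph{every} error term that arises, the unbalanced product $A B = A_0 A_{\frac12}$ can be rebalanced as $A_{\frac14}^2$. This forces one to consistently isolate the component in each paraproduct where the dangerous factor ($R_\alpha$, $a_\alpha$, $Y_\alpha$, $b_\alpha$) sits at the highest frequency, and to invest exactly a quarter derivative from the $|D|^s$-weighted energy into the low-frequency side. The structural reason this succeeds is that the self-adjoint matching of $T_{1+a}$ with $T_{1-Y}T_{1+a}$ and of $T_{1-\bar Y}\partial_\alpha$ with its adjoint eliminates precisely the terms that would otherwise force an unbalanced $AB$ bound; what remains is intrinsically quadratic in objects controlled by $A_{\frac14}$.
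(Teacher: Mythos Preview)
Your approach is essentially the paper's argument for the case $s=0$ (Proposition~\ref{prop:plinwp}), and at that level it is correct: the weights $T_{1+a}$ and the imaginary part of $r\bar r_\alpha$ are chosen so that the off-diagonal terms cancel, and the diagonal remainder combines the integration-by-parts term $(1+a)b_\alpha$ with the subprincipal term $2\Re[(1-\bar Y)R_\alpha] = b_\alpha + M$ and the weight derivative $D_t a$ into exactly $D_t a - (1+a)M$, which is bounded in $L^\infty$ by $A_{\frac14}^2$ via Lemma~\ref{l:a-mat}. (Your added weight $T_{1-\bar Y}$ on the $r$ side is unnecessary and in fact introduces an extra $T_{D_t \bar Y}$ contribution that you would have to cancel separately.)

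The genuine gap is in your final sentence about $s \neq 0$. Commuting $|D|^s$ through $T_b\partial_\alpha$ does \emph{not} produce errors ``of the same balanced form'': the commutator $[|D|^s, T_b]\partial_\alpha$ has leading part $-s\,T_{b_\alpha}|D|^s$, which is a \emph{quadratic} (not cubic) term in the solution $(\W,R)$. In the energy identity this yields a contribution of the shape $s\int T_{(1+a)b_\alpha}|w^s|^2$, and there is no matching term to cancel it; one would need $\|b_\alpha\|_{L^\infty} \lesssim A_{\frac14}^2$, which is false (we only have $\||D|^{3/4}b\|_{BMO} \lesssim A_{\frac14}$). Lemma~\ref{l:para-com} does not help here: it commutes two paraproducts with bounded BMO-type symbols, whereas $|D|^s$ is an unbounded Fourier multiplier and the commutator is genuinely of order $s$, not $s-1$.

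This is exactly the obstruction the paper identifies at the start of the proof of Proposition~\ref{prop:cubiccommutator}: the source terms after applying $|D|^s$ ``cannot be treated perturbatively, both because they have a quadratic component and because their cubic and higher part is unbalanced.'' The paper's remedy is a two-step renormalization: first a paradifferential conjugation by $\Phi = J^{-s}$, chosen precisely because $D_t \log J^{-s} = -s(b_\alpha - M)$ cancels the leading $sT_{b_\alpha}$ commutator term; and second, explicit normal form corrections $(\tw^s_2,\tr^s_2)$, $(\tw^s_3,\tr^s_3)$ to reduce the remaining order $\leq -\tfrac12$ source terms to balanced cubic errors. Only after this does one obtain normalized variables $(\tw^s,\tr^s)$ to which the $s=0$ energy estimate can be applied.
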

 
Here the regularity index $s$ plays no role. We will prove this result
 first in the easier special case $s = 0$ in Section~\ref{s:para},
  and then for all $s$ in Section~\ref{s:para-s}.

We remark that this result also allows us to treat 
linear perturbations of \eqref{paralin(wr)} of the form
 \begin{equation}\label{paralin(wr)FG}
 \left\{
\begin{aligned}
& T_{D_t} w + T_{1- \bar Y} \D_\alpha r
+ T_{(1 - \bar Y)R_\alpha} w = G
 \\
& T_{D_t} r - i T_{1 - Y}T_{1 + a} w = K,
\end{aligned}
\right.
\end{equation}  
provided that the source terms satisfy the bounds
\begin{equation}\label{GK-pert}
\| (G, K) \|_{\dH^{s}} \lesssim_A A_{1/4}^2 \| (w,r)\|_{\dH^{s}},
 \end{equation}
for some $s$. Then, by the variation of parameters formula,
the conclusion of the last proposition also applies 
to the equations \eqref{paralin(wr)FG} for the same $s$.

\
 
 Once we have dealt with the paradifferential equations, our next objective is to reduce the analysis of the full linearized equations to the paradifferential case. Toward this goal, we follow the clue above and rewrite the equations \eqref{lin(wr)0} as a paradifferential evolution with a source term,
\begin{equation}\label{paralin(wr)-full}
\left\{
\begin{aligned}
& T_{D_t} {w} + T_{1- \bar Y} \D_\alpha {r}
+ T_{(1 - \bar Y)R_\alpha} {w} = \Gs(w,r)
 \\
& T_{D_t} {r} - i T_{1 - Y}T_{1 + a} {w} = \Ks (w,r), 
\end{aligned}
\right.
\end{equation}
where the source terms $(\Gs,\Ks)$ have the form
\[
\Gs = P(\mathcal G_0 + \mathcal G_1), \qquad \Ks = P(\mathcal K_0 + \mathcal K_1)
\]
 with $(\mathcal{G}_0,\mathcal{K}_0)$ as in \eqref{GK} and 
 \begin{equation*}
 \begin{aligned}
& \mathcal G_1 = -(T_{w_\alpha} b + \Pi(w_\alpha, b)) + (T_{r_\alpha} \bar Y + \Pi(r_\alpha, \bar Y)) - (T_w (1 - \bar Y) R_\alpha + \Pi(w, (1 - \bar Y)R_\alpha)),
 \\
& \mathcal K_1 = -(T_{r_\alpha} b + \Pi(r_\alpha, b)) + i(T_{1 - Y}T_{w}a + T_{1 - Y}\Pi(w, a) - T_{(1 + a)w}Y - \Pi((1 + a)w, Y))
 \end{aligned}
 \end{equation*}
 denote the paradifferential truncations. Henceforth, it will be convenient to include an implicit projection $P$ in $\Pi$, so that $\Pi = P\Pi$. 
 
Unfortunately, our source terms $(\Gs,\Ks)$ do not satisfy 
the cubic bounds \eqref{GK-pert} for any $s$,
both because they contain quadratic contributions and because
 of unbalanced cubic contributions.  However, this still allows us to perturbatively discard portions of $(\Gs,\Ks)$ which do satisfy \eqref{GK-pert}. 
 
 To deal with the unfavourable part of $(\Gs,\Ks)$ we will rely on
 a more accurate paradifferential version of normal form analysis.
We also optimize the choice of $s$.  Our result reads as follows:
 
 \begin{proposition}\label{paralinearization}
Given $(w, r)$ satisfying \eqref{paralin(wr)-full}, there exist 
modified normal form linear variables $(\nfw, \nfr)$ satisfying \eqref{paralin(wr)FG} such that we have

(i) Invertibility: 
\begin{align*}
\|(\nfw, \nfr) - (w, r) \|_{\dH^\frac14} \lesssim_{A}  \teal{\As} \|(w,r)\|_{\dH^\frac14},
\end{align*}

(ii) Perturbative source term:
\begin{equation}\label{nf-err-bd-GK}
\|(G, K)\|_{\dH^{\frac14}} \lesssim_{\As} A_{\frac14}^2 \|(w, r)\|_{\dH^{\frac14}}.
\end{equation}
\end{proposition}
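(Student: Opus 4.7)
The plan is to construct $(\nfw, \nfr)$ as a bilinear correction to $(w, r)$, modeled on a paradifferential, linearized version of the normal form transformation \eqref{nft1}. The role of the correction is to cancel exactly those parts of the source $(\Gs, \Ks)$ in \eqref{paralin(wr)-full} which fail the cubic bound \eqref{nf-err-bd-GK}, converting them into genuinely cubic expressions that can be absorbed into the new source $(G, K)$.

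First I would carry out a careful paradifferential decomposition of
\[
\mathcal G_0 = (1+\W)(P\bar m + \bar P m), \qquad \mathcal K_0 = \bar P n - P \bar n,
\]
together with the truncation pieces $\mathcal G_1, \mathcal K_1$, separating terms according to whether $(w, r)$ is the highest-frequency factor or a lower-frequency factor. In the former case the piece is already a paraproduct applied to $(w, r)$ with coefficient linear in $(\W, R, Y, a, b)$, and can be treated perturbatively using Proposition \ref{p:bmo} together with the Besov bounds \eqref{bmo>infty}, \eqref{a-bmo+}, \eqref{b-bounds1}, \eqref{est:Y}. Only the latter configurations --- the quadratic and low-high contributions with coefficient linear in $(\W, R, Y)$ and $(w, r)$ at the low frequency --- are genuinely non-perturbative, and it is precisely these pieces that the normal form must eliminate.

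Second, guided by \eqref{nft1}, I would take $(\nfw, \nfr) = (w, r) + (\delta w, \delta r)$, where $\delta w, \delta r$ are finite sums of bilinear expressions of paraproduct type
\[
T_{\kappa_j} w, \quad T_{\kappa_j} r, \quad \Pi(\kappa_j, w), \quad \Pi(\kappa_j, r),
\]
with $\kappa_j$ drawn from $\{\W, \bar\W, Y, \bar Y, X_\alpha, \bar X_\alpha\}$ and the precise linear combination dictated by matching the output of $T_{D_t}$ term-by-term against the list from the previous step. When $T_{D_t}$ is distributed across each correction via the para-Leibniz rule (Lemma \ref{l:para-L}), and the para-material derivatives \eqref{para(Wa)}, \eqref{est:DtY}, \eqref{est:DtX}, \eqref{para-aflow} are substituted, the resulting principal trilinear expression --- when combined with the paradifferential contribution $T_{1-\bar Y}\D_\alpha(\delta r) + T_{(1-\bar Y)R_\alpha}\delta w$ on the first line and $-iT_{1-Y}T_{1+a}\delta w$ on the second --- cancels the designated non-perturbative source. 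For (i), each correction is bounded in $\dH_{\frac14}$ via Lemmas \ref{l:para-com}, \ref{l:para-prod}, \ref{l:para-assoc}, yielding $\||D|^\sigma \kappa_j\|_{BMO}\|(w, r)\|_{\dH_{\frac14}}$ for appropriate $\sigma \in \{0, \tfrac14\}$; interpolating this coefficient factor between $A$ (from \eqref{A-def}, \eqref{est:Y}) and $\As$ (from \eqref{Asharp-def}) yields $(A\As)^{1/2}$.

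For (ii), the residual $(G, K)$ collects three classes of terms: the already-perturbative parts of $(\Gs, \Ks)$; para-Leibniz and para-associativity errors produced by the cancellation procedure, bounded via Lemmas \ref{l:para-L}, \ref{l:para-assoc}, \ref{l:para-assoc2}; and cubic-and-higher remainders involving products of at least two of $a, M, b_\alpha, \W_\alpha, Y_\alpha, R_\alpha$. The desired bound then follows by systematic application of Proposition \ref{p:bmo} and \eqref{CM}--\eqref{CM+}, using \eqref{a-bmo+}, \eqref{b-bounds1}, \eqref{est:Y}, \eqref{M-infty} to pick up $A_{\frac14}$ from each coefficient factor. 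The main obstacle I anticipate is the combinatorial bookkeeping: each hi--hi interaction in $P\bar m$ and $P\bar n$ that the projections force into low output frequency must be paired with a $\Pi$-type correction of matching structure, and aligning signs, projections, and paraproduct orderings so that the $T_{D_t}$ output reproduces the quadratic source exactly --- rather than modulo an error of the same type --- is delicate. A secondary technical issue is to preserve the self-adjoint (Weyl) quantization throughout the reduction, so that the perturbed system satisfied by $(\nfw, \nfr)$ genuinely fits into the framework of Proposition \ref{p:paralin-wp} via \eqref{paralin(wr)FG}.
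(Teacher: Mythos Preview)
Your overall strategy --- paradifferential decomposition of the source, then normal form corrections modeled on \eqref{nft1} --- matches the paper's, but there is a structural gap that would cause the argument to fail as written.

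The correction form you propose, $T_{\kappa_j} w$, $\Pi(\kappa_j, w)$, etc., places $(w,r)$ only in the high-frequency slot. But the non-perturbative pieces of $(\cG_1,\cK_1)$ --- for instance $T_w T_{1-\bar Y} R_\alpha$ and $iT_w T_{1+a} Y$ from Lemma~\ref{l:GK1} --- have $w$ at \emph{low} frequency. To cancel them one needs corrections such as $\partial_\alpha T_w X$, with $w$ as the low-frequency coefficient; similarly the non-perturbative part of $(\cG_0,\cK_0)$ involves $\bar x = \overline{T_{1-Y}w}$ at low frequency paired with holomorphic high-frequency factors, and must be matched by corrections of the form $P(\bar x \W)$, $P(\bar x T_{1+\W}R)$.

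More seriously, a single layer of bilinear corrections is not enough. After the first-step correction (the linearization of \eqref{nft1} you describe, which is the paper's $(\tw_1,\tr_1)$), the resulting source is indeed cubic but \emph{not entirely balanced}: there remain terms such as $iT_w T_{1-Y} Pa$ and $T_{\bar x} Pa$, where $a$ is itself quadratic in $R$ but the undifferentiated $w$ or $\bar x$ sits at lowest frequency. These fail the $A_{\frac14}^2$ bound and require a second round of corrections --- now quadratic in $(\W,R)$ with $w$ or $\bar x$ carried along as a coefficient --- and then a third round specifically for the residual Taylor-coefficient terms. The paper executes this cascade in six lemmas (three each for $(\cG_1,\cK_1)$ and $(\cG_0,\cK_0)$). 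Your claim that ``cubic-and-higher remainders'' are automatically perturbative is exactly where this cascade is hidden; without it the bound \eqref{nf-err-bd-GK} does not close.
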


 To facilitate the normal form analysis in the proof of the above proposition, we 
 separately state a pair of  weaker quadratic bounds for $(\cG_0,\cK_0)$ and most of $(\cG_1,\cK_1)$, which will suffice in order to evaluate their impact as source terms for the material derivative of $(w, r)$. In the case of $(\cG_0,\cK_0)$ it will be useful to simultaneously extract the perturbative part of $(\cG_0,\cK_0)$, in preparation for the normal form analysis on the unfavorable part.

It will be convenient to define the unfavorable part in terms of the following variant of $w$ with a low frequency coefficient,
\[
x = T_{1 - Y} w.
\]
 
\begin{lemma}\label{l:GK} 
The source terms $(\cG_0,\cK_0)$ satisfy the bound
\begin{equation}\label{nf-err-bd2-GK}
\| ( \cG_0, \cK_0)\|_{\H^{0}} \lesssim_{\As} A_{\frac14} \| (w,r)\|_{\dH^{\frac14}}.
\end{equation}

Furthermore, they  have the representation
\[
P\cG_0 = \cG_{0, 0} + G, \qquad P\cK_0 = \cK_{0, 0} + K,
\]
where $(G,K)$ satisfy the bounds \eqref{nf-err-bd-GK}, whereas $(\cG_{0, 0},\cK_{0, 0})$ are given by 
\begin{align*}
\cG_{0, 0}(w, r) &= - P[T_{1 + W_\alpha}(T_{1 - \bar Y}\bar r_\alpha + T_{\bar x}\bar R_\alpha) Y] + P[T_{1 + W_\alpha}T_{1 - \bar Y} \bar x_\alpha R], \\
\cK_{0, 0}(w, r) &= - P[(T_{1 - \bar Y}\bar r_\alpha + T_{\bar x}\bar R_\alpha) R].
\end{align*}
 \end{lemma}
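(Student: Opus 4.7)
The plan is to substitute the explicit expressions
\[
m = (1-Y)(1-\bar Y)(r_\alpha + R_\alpha w) + \bar R(1-Y)^2 w_\alpha, \qquad n = \bar R(1-Y)(r_\alpha + R_\alpha w)
\]
into $\cG_0 = (1+\W)(P\bar m + \bar P m)$ and $\cK_0 = \bar P n - P \bar n$, and then perform a bilinear Littlewood--Paley/paraproduct decomposition of each resulting multilinear object. The essential role is played by the projections $P$ and $\bar P$, which enforce, via the spectral supports of holomorphic/antiholomorphic factors, that only compatible frequency arrangements survive: e.g.\ in $P\bar n$ the holomorphic $R$ must dominate the antiholomorphic $\bar r_\alpha, \bar R_\alpha, 1-\bar Y$, whereas in $\bar P m$ an antiholomorphic factor must dominate the holomorphic content.

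For the first assertion, $\|(\cG_0, \cK_0)\|_{\H_{0}} \lesssim_{\As} A_{\frac14} \|(w,r)\|_{\dH_{\frac14}}$, I would estimate each term directly using the Coifman--Meyer, paraproduct, and para-associativity bounds of Section~\ref{s:multilinear} (Proposition~\ref{p:bmo}, Lemmas~\ref{l:para-com}--\ref{l:para-assoc}), placing the $\W, R, Y$ factors in the $BMO$ or $\dot B^{s}_{\infty,2}$ norms controlled by $A_\frac14$ (via Lemma~\ref{l:Y} and the definition of $A_\frac14$) and placing the $(w, r)$ factor in $\dH_{\frac14}$. The derivative count works out cleanly: the needed total order $\half$ on the outside splits as $\frac14 + \frac14$ between an $A_\frac14$-factor and a $\dH_\frac14$-factor.

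For the decomposition $P\cK_0 = \cK_{0,0} + K$, I would first exploit the near-orthogonality $P \bar P \simeq 0$ to reduce to $P\cK_0 = -P\bar n$ modulo trivial contributions. Expanding $\bar n = R(1-\bar Y)(\bar r_\alpha + \bar R_\alpha \bar w)$, the projection $P$ retains only the arrangement where $R$ is the highest frequency factor. Moving $(1-\bar Y)$ into low-frequency paraproduct coefficients $T_{1-\bar Y}$ and replacing $\bar w$ by $\bar x = T_{1-\bar Y}\bar w$ produces precisely $\cK_{0,0}$. All remaining contributions are of two types: balanced $\Pi$-interactions where $R$ and one antiholomorphic factor are at comparable frequency, and para-associativity errors arising from moving $(1-\bar Y)$ or from the $w - x$ correction. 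In the first case the balanced $BMO$ bound \eqref{bmo-bmo} supplies two $A_\frac14$ factors; in the second, the derivative falling on the low-frequency coefficient contributes an extra $|D|^{1/4}$ that pairs with $A_\frac14$ via Lemma~\ref{l:Y}.

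For $P\cG_0$ the same strategy applies, with an additional preliminary step of paradifferentializing the outer multiplication by $(1+\W)$ into $T_{1+W_\alpha}(\cdot)$; the piece $P[(1+\W)\bar P m]$ is doubly unbalanced and is immediately cubic. Inside the main piece, $P\bar m$ selects the holomorphic-dominant part of $\bar m$, which splits into two cases mirroring the two summands of $m$: the $(1-Y)$-dominant part of the first summand yields $-T_{(1-\bar Y)(\bar r_\alpha + \bar R_\alpha \bar w)}Y$, while the $R$-dominant part of the second yields $T_{(1-\bar Y)^2 \bar w_\alpha} R$. Further paradifferentialization of $(1-\bar Y)$ and the substitution $w \to x$ then reassemble these into the two summands of $\cG_{0,0}$. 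The principal obstacle is bookkeeping: one must enumerate the many multilinear arrangements produced by the paraproduct expansions, identify exactly the two unbalanced arrangements captured by $\cG_{0, 0}, \cK_{0, 0}$, and verify in each remaining case that one has either a balanced $\Pi$-interaction or a derivative falling on a low-frequency holomorphic/antiholomorphic coefficient so that the required second $A_\frac14$ factor appears.
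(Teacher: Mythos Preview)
Your proposal is correct and follows essentially the same approach as the paper: both arguments reduce $P\cG_0$ to $T_{1+W_\alpha}P\bar m$ after disposing of $P[(1+\W)\bar P m]$ via the $L^2$ bound $\|\bar P m\|_{L^2}\lesssim_A A_{\frac14}\|(w,r)\|_{\dH_\frac14}$, then use the frequency constraints imposed by $P$ on the antiholomorphic factors in $\bar m,\bar n$ to isolate the two unbalanced paraproduct arrangements forming $\cG_{0,0},\cK_{0,0}$, with all remaining balanced or derivative-shifted pieces absorbed into $(G,K)$ by the Coifman--Meyer and para-associativity estimates of Section~\ref{s:multilinear}.
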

 We note that $(G,K)$ also satisfy \eqref{nf-err-bd2-GK}, but we will not prove it as this is not needed in the sequel.
 
We now turn our attention to $(\cG_1,\cK_1)$, where we have
 \begin{lemma}\label{l:GK1} 
The source terms $(\cG_1,\cK_1)$ have the representation
\begin{equation}
\begin{split}
P\cG_1 = & \ - T_{w} T_{1-\bar Y} R_\alpha + G, \\
P\cK_1 = & \ - i T_w T_{1+a} Y + K,
\end{split}    
\end{equation}
where $(G,K)$ satisfy the quadratic bound \eqref{nf-err-bd2-GK}.
\end{lemma}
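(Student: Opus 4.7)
My plan is to identify, in each of $\cG_1$ and $\cK_1$, the unique frequency configuration where the paraproduct bounds fail to give the quadratic estimate, and to show that this configuration is precisely captured by the principal terms $-T_w T_{1-\bar Y} R_\alpha$ and $-iT_w T_{1+a} Y$. Heuristically this is the case where $w$ sits at the lowest frequency, a slowly varying nonlinear coefficient ($1-\bar Y$ or $1+a$) sits in the middle, and $R_\alpha$ or $Y$ carries the highest frequency together with the derivative. Every other arrangement either (i) redistributes the derivative onto a factor already controlled in $\||D|^{1/4}\cdot\|_{BMO}$ or $\||D|^{3/4}\cdot\|_{BMO}$ by $A_{\frac14}$, or (ii) balances derivatives between two nonlinear factors, each tamed by $A_{\frac14}$ or $\As$.

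For $\cG_1$, the only non-routine step is to peel off the target from $-T_w[(1-\bar Y)R_\alpha]$. I apply the Littlewood-Paley trichotomy $(1-\bar Y) R_\alpha = T_{1-\bar Y} R_\alpha - T_{R_\alpha}\bar Y - \Pi(\bar Y, R_\alpha)$, which exhibits the principal term plus two trilinear residuals that I absorb into $G$. The remaining pieces of $\cG_1$, namely $T_{w_\alpha} b$, $\Pi(w_\alpha, b)$, $T_{r_\alpha}\bar Y$, $\Pi(r_\alpha, \bar Y)$, and $\Pi(w,(1-\bar Y)R_\alpha)$, are all balanced. Each $L^2$ estimate follows from the Coifman-Meyer and BMO-paraproduct bounds of Section~\ref{s:multilinear}, exchanging a derivative on $w_\alpha$ or $r_\alpha$ against one of $\||D|^{3/4}b\|_{BMO}$ (Lemma~\ref{l:b}) or $\||D|^{1/4}Y\|_{BMO}$ (Lemma~\ref{l:Y}), both bounded by $A_{\frac14}$; any remaining nonlinear factor is absorbed into an $\As$-dependent constant through the $\dot H^s$ bounds of Lemmas~\ref{l:Y}, \ref{l:b}. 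For the trilinear residuals I use the Sobolev embedding $\|w\|_{BMO^{-1/4}} \lesssim \|w\|_{\dot H^{1/4}}$ together with \eqref{CM+} to peel off $T_w$, then estimate the remaining bilinear expression in $\dot H^{1/4}$ by pairing $\|R\|_{BMO^{3/4}} \lesssim A_{\frac14}$ or $\||D|^{1/4}Y\|_{BMO} \lesssim A_{\frac14}$ against one of $\|\W\|_{\dot H^{1/2}}$ or $\|R_\alpha\|_{L^2}$, both $\leq \As$.

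For $\cK_1$ the target $-iT_w T_{1+a} Y$ is extracted from $-iT_{(1+a)w} Y$ using the para-product Lemma~\ref{l:para-prod} with parameters $\gamma_1 = -1/4$, $\gamma_2 = 1/4$, producing an $\dot H^{1/2}$ error controlled by
\[
\|w\|_{BMO^{-1/4}}\, \||D|^{1/4}a\|_{BMO}\, \|Y\|_{\dot H^{1/2}} \lesssim \|w\|_{\dot H^{1/4}} \cdot AA_{\frac14}\cdot \As,
\]
where \eqref{a-bmo+} supplies the middle factor and $A \lesssim 1$ absorbs the excess power. The remaining $\cK_1$ terms $T_{r_\alpha} b$, $\Pi(r_\alpha,b)$, $T_{1-Y}T_w a$, $T_{1-Y}\Pi(w,a)$, and $\Pi((1+a)w, Y)$ are balanced and handled in $\dot H^{1/2}$ by the same blueprint: a derivative on $b$, $a$, or $Y$ is paid for by $A_{\frac14}$ via Lemmas~\ref{l:Y}, \ref{l:b} and Proposition~\ref{regularity for a}, while the surplus half-derivative for the $\dot H^{1/2}$ norm is obtained from placing $w$ in $\dot H^{1/4}$ or $r$ in $\dot H^{3/4}$. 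The main bookkeeping obstacle, and the reason the threshold $s=1/4$ is forced upon us, is verifying that no rebalancing ever requires bounding a nonlinear factor in $\||D|^{\sigma}\cdot\|_{BMO}$ with $\sigma \geq 1$; the choice of $\dH_{\frac14}$ as the base space for $(w,r)$ is precisely the balance that avoids this.
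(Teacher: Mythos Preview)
Your proposal is correct and follows essentially the same approach as the paper: identify the single unbalanced paraproduct in each of $\cG_1,\cK_1$ and estimate every other piece by the Coifman--Meyer/BMO paraproduct bounds together with the auxiliary estimates for $Y$, $b$, and $a$. The only cosmetic differences are that the paper disposes of the antiholomorphic contributions $T_{r_\alpha}\bar Y$ and $T_w T_{R_\alpha}\bar Y$ by observing that $P$ annihilates them (rather than estimating them directly as you do, which also works), and that you make the extraction of $-iT_wT_{1+a}Y$ from $-iT_{(1+a)w}Y$ via Lemma~\ref{l:para-prod} explicit, whereas the paper leaves this implicit.
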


Using these lemmas, we will obtain the following formula for the para-material derivatives of $(w, r)$:
\begin{equation}\label{wr-mat}
\left\{
\begin{aligned}
& T_{D_t} {w} = -T_{1- \bar Y} (r_\alpha + T_{w}R_\alpha) + G,
 \\
& T_{D_t} {r} = iT_{1 + a}( T_{1 - Y} {w} - T_w Y) + K,
\end{aligned}
\right.
\end{equation}
where 
$(G, K)$ satisfy the quadratic bound \eqref{nf-err-bd2-GK}. We will also obtain a similar formula for $x$.

 We will first prove Lemmas~\ref{l:GK},~\ref{l:GK1}, and the para-material derivative formula \eqref{wr-mat} in Section~\ref{s:GK}. Next, Proposition~\ref{paralinearization} is proved in two steps in Sections~\ref{s:nf-1} and  \ref{s:nf-0}, where we construct normal form corrections for the terms $(\mathcal G_1,\mathcal K_1)$ and $(\mathcal G_0,\mathcal K_0)$ respectively.

\subsection{\texorpdfstring{$\H^0$}{} bounds for the paradifferential equation} 
\label{s:para}

In this section we consider the $\H^0$ well-posedness of \eqref{paralin(wr)}, which we reproduce here for convenience with general source term  $(G, K)$:
\begin{equation}\label{lin-para}
\left\{
\begin{aligned}
& T_{D_t} w  + T_{1 - \bar Y} r_\alpha
+ T_{(1 - \bar Y)R_{\alpha}} w  = G,
 \\
&T_{D_t} r  - i T_{1 - Y} T_{1 + a}  w  = K.
\end{aligned}
\right.
\end{equation}

In the study of \eqref{lin(wr)} in \cite{HIT}, the following associated positive definite linear energy was used:
\[
\Elind(w,r) = \int_{\R} (1+a) |w|^2 + \Im ( r \bar  r_\alpha) \,
d\alpha .
\]
Here we slightly alter this by considering its paradifferential version,
\[
\Elin^{0,para}(w,r) = \int_{\R} T_{1 + a} w \cdot \bar w + \Im ( r \bar  r_\alpha) \,
d\alpha .
\]

With this notation, we will prove that

\begin{proposition}\label{prop:plinwp}
The linear equation \eqref{lin-para} is well posed in $\H^0$, and the following estimate holds:
\begin{equation}
\frac{d}{dt}\Elin^{0,para}(w,r) = 2 \Re \int_{\R} T_{1 + a}\bar w \, G - i \bar r_\alpha \, K \ d\alpha + O_\As(A_{\frac14}^2) \Elind(w, r).
\end{equation}
\end{proposition}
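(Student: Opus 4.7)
The plan is to compute $\frac{d}{dt}\Elin^{0,para}(w,r)$ directly and verify that all leading terms either produce the stated right-hand side or cancel modulo errors of size $A_{\frac14}^2\, \Elind(w,r)$. The core cancellation mechanism is the self-adjointness built into our symmetric (Weyl) quantization: $T_a = T_a^*$ for real $a$, and $T_a^* = T_{\bar a}$ more generally. The symbols $1-\bar Y$ and $1-Y$ appearing in the two equations of \eqref{lin-para} are complex conjugates of one another, while $1+a$ is real; this dictates that the cross terms from pairing the $w$-equation with $T_{1+a}\bar w$ and the $r$-equation with $\bar r_\alpha$ will differ only by a para-commutator of the form $[T_{1+a}, T_{1-Y}]$.

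Carrying out the time differentiation and substituting \eqref{lin-para} produces four groups of terms: the source contributions $G, K$ (which give exactly the right-hand side); the cross terms $-2\Re \int T_{1+a}\bar w\, T_{1-\bar Y} r_\alpha$ and $+2\Re\int T_{1-Y} T_{1+a}w\, \bar r_\alpha$ (where we have used $2\Im \int i T_{1-Y}T_{1+a} w\, \bar r_\alpha = 2\Re \int T_{1-Y}T_{1+a} w\, \bar r_\alpha$); the transport terms $T_b w_\alpha$, $T_b r_\alpha$; and the lower-order term $T_{(1-\bar Y) R_\alpha} w$. The cross terms cancel up to the para-commutator above, controlled via Lemma~\ref{l:para-com} with $\gamma_1 = \gamma_2 = \frac14$ by $A_{\frac14}^2$. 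The pairing $\int T_b r_\alpha \bar r_\alpha$ is purely real since $T_b$ is self-adjoint, so its imaginary part vanishes. The $w$-transport is integrated by parts: the resulting derivative of the $T_{1+a}T_b$ symbol combines with $\int T_{\partial_t a} |w|^2$ (via $\partial_t a = D_t a - b a_\alpha$) to produce a principal $\int T_{D_t a} |w|^2$ contribution, bounded by $\|D_t a\|_{L^\infty} \lesssim_{\As} A_{\frac14}^2$ from Lemma~\ref{l:a-mat}, plus para-product and para-commutator residuals. The $T_{(1-\bar Y) R_\alpha} w$ term is treated after symmetrizing in $(w, \bar w)$, using that $(1-\bar Y) R_\alpha \in BMO^{-\frac14}$ with norm $\lesssim_A A_{\frac14}$, so that pairing against $T_{1+a} w \in L^2$ yields the desired cubic bound.

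The main technical obstacle is ensuring that every residual error admits a cubic bound $A_{\frac14}^2 \Elind$. The guiding principle is that derivatives shared between two low-frequency coefficients should be distributed as $\frac14 + \frac14$ to match the Besov norm in $A_{\frac14}$; the bounds of Lemma~\ref{l:Y}, Proposition~\ref{regularity for a}, and Lemma~\ref{l:b} collectively ensure that $Y, a, b$ each carry $BMO^s$ control at the appropriate exponent $s \in \{\frac14, \frac12, \frac34\}$, and the Coifman--Meyer, para-commutator, para-product, and para-Leibniz estimates of Section~\ref{s:multilinear} then close all the residual terms. Once the energy identity is established, $\H_0$-well-posedness of \eqref{lin-para} follows by a standard argument: regularize the coefficients by mollification, solve the resulting smooth system, derive uniform $\H_0$ bounds via Gronwall's inequality using the hypothesis $A_{\frac14} \in L^2_t$, and pass to the limit; uniqueness and continuous dependence follow by applying the same estimate to the difference of two solutions.
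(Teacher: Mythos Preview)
Your overall architecture is right, but there is a genuine gap in how you handle two of the terms, and the gap is exactly where the ``balanced'' improvement over the unbalanced $AB$ estimate lives.

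After integrating by parts the $w$-transport term $2\Re\int T_{1+a}\bar w\cdot T_b w_\alpha$, the symbol derivative $\partial_\alpha((1+a)b)=a_\alpha b+(1+a)b_\alpha$ combined with $T_{\partial_t a}$ indeed produces $T_{D_t a}$ from the $a_\alpha b$ piece, but it also leaves behind $\int T_{(1+a)b_\alpha}\bar w\cdot w$. This is \emph{not} a para-product or para-commutator residual: it is a principal term whose size is $\|b_\alpha\|_{L^\infty}\|w\|_{L^2}^2$, and $\|b_\alpha\|_{L^\infty}$ is only controlled by $A_{\frac12}=B$, not by $A_{\frac14}^2$. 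Likewise, your treatment of $-2\Re\int T_{1+a}\bar w\cdot T_{(1-\bar Y)R_\alpha}w$ fails: placing $(1-\bar Y)R_\alpha$ in $BMO^{-\frac14}$ gives no $L^2\to L^2$ bound for $T_{(1-\bar Y)R_\alpha}$ (that requires $L^\infty$), and even if it did, a single factor of $A_{\frac14}$ is not $A_{\frac14}^2$. After symmetrizing in $(w,\bar w)$ this term has coefficient $2\Re\bigl((1-\bar Y)R_\alpha\bigr)$, which again is only $O(A_{\frac12})$ in $L^\infty$.

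The missing idea is that these two large terms must be \emph{combined}. From the definition \eqref{M-def} one has the algebraic identity
\[
2\Re\bigl((1-\bar Y)R_\alpha\bigr)=\frac{R_\alpha}{1+\bar\W}+\frac{\bar R_\alpha}{1+\W}=b_\alpha+M,
\]
so that, modulo para-product errors of the type you already know how to control, the $(1+a)b_\alpha$ leftover and the symmetrized $T_{(1-\bar Y)R_\alpha}$ term cancel against each other, leaving $-\int(1+T_a)\bar w\cdot T_M w$. Now the pointwise bound $\|M\|_{L^\infty}\lesssim_A A_{\frac14}^2$ from Lemma~\ref{l:M} closes the estimate. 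This cancellation is the heart of the balanced cubic bound; without it one only recovers the $AB$ estimate of \cite{HIT}.
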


\begin{proof}
A direct computation yields
\begin{align*}
\frac{d}{dt} \int T_{1 + a}\bar{w} \cdot w + \Im(r \bar r_\alpha) \, d\alpha &= 2 \Re \int T_{1 + a}\bar w \cdot w_t \, d\alpha + 2 \Im \int \bar r_\alpha r_t \, d\alpha \\
&\quad + \int T_{a_t} \bar w \cdot w \, d\alpha.
\end{align*}
Next, we augment the time derivatives $w_t$ and $r_t$ on the right hand side into material derivatives. Integrating by parts, write
\begin{align*}
2 \Re \int T_{1 + a} \bar w \cdot T_b\D_\alpha w \, d\alpha &= -\int T_{((1 + a)b)_\alpha} \bar w \cdot w \, d\alpha\\
&\quad -\int (T_{b_\alpha} T_a  + T_b T_{a_\alpha} - T_{(ab)_\alpha})\bar w \cdot w \, d\alpha - \int (T_b T_a - T_a T_b)\bar w_\alpha \cdot w \, d\alpha.
\end{align*}
and similarly,
\begin{align*}
2 \Im \int \bar r_\alpha \cdot T_b\D_\alpha r \, d\alpha &= 0.
\end{align*}

Adding the above three identities and using the equations \eqref{lin-para}, the quadratic
$\Re (w \bar r_\alpha)$ term cancels:
\begin{align*}
\frac{d}{dt} \Elin^{0,para}(w,r) &= 2 \Re \int (1 + T_a)\bar w \cdot G - i \bar r_\alpha K \, d\alpha\\
&\quad - 2 \Re \int (1 + T_a) \bar w \cdot T_{\frac{R_\alpha}{1 + \W}} w \, d\alpha  \\
&\quad + \int (T_{b_\alpha} T_a  + T_b T_{a_\alpha} - T_{(ab)_\alpha})\bar w \cdot w \, d\alpha + \int (T_b T_a - T_a T_b)\bar w_\alpha \cdot w \, d\alpha \\
&\quad + \int T_{a_t + b a_\alpha + (1 + a)b_\alpha} \bar w \cdot w \, d \alpha.
\end{align*}
We can write two of the error terms (precisely, the second and fourth lines on the right hand side) in terms of the auxiliary function $M$ defined in \eqref{M-def}, modulo another paraproduct error term:
\begin{align*}
\frac{d}{dt} \Elin^{0,para}(w,r) &= 2 \Re \int (1 + T_a)\bar w \cdot G - i \bar r_\alpha K \, d\alpha \, d\alpha + \teal{Err_M} + \teal{Err_{com}}
\end{align*}
where
\begin{align*}
\teal{Err_M} &= \int T_{D_t a} \bar w \cdot w - (1 + T_a) \bar w \cdot T_M w \, d \alpha, \\
Err_{com} &= \int (T_{b_\alpha} T_a  + T_b T_{a_\alpha} - T_{(ab)_\alpha})\bar w \cdot w \, d\alpha + \int (T_b T_a - T_a T_b)\bar w_\alpha \cdot w \, d\alpha \\
&\quad + \int (T_{ab_\alpha} - T_{ a} T_{b_\alpha}) \bar w \cdot w \, d\alpha.
\end{align*}
For the first term in $\teal{Err_M}$, by Lemma~\ref{l:a-mat}, we have 
\[
\|D_ta\|_{L^\infty} \lesssim_{\As} A_{\frac14}^2.
\]
For the second term we combine the pointwise bounds $\vert a\vert \lesssim A^2$ in Proposition~\ref{regularity for a} together with 
\[
\|M\|_{L^\infty} \lesssim_A A_{\frac14}^2
\]
in Lemma~\ref{l:M}.

All terms in $Err_{com}$ may be estimated using the product and commutator estimates, Lemmas \ref{l:para-com} and \ref{l:para-prod}.
\end{proof}

\subsection{\texorpdfstring{$\dH^s$}{} bounds for the paradifferential equation}
\label{s:para-s}

Our aim here is to prove $\dH^s$ well-posedness for 
the linearized paradifferential equation \eqref{paralin(wr)},
and to identify a suitable energy functional.
Our main result is as follows:

\begin{proposition}\label{prop:cubiccommutator}
Let $s \in \R$. Given $(w, r)$ solving the homogeneous linearized equation \eqref{paralin(wr)}, there exist
linearized, normalized variables $(\tw^s, \tr^s)$ solving
\begin{equation}\label{eq:para-linearized7}
\left\{
\begin{aligned}
& T_{D_t} \tw^s + T_{1- \bar Y} \D_\alpha \tr^s
+ T_{(1 - \bar Y)R_\alpha} \tw^s = G_s \\
& T_{D_t} \tr^s - i T_{1 - Y}T_{1 + a} \tw^s =  K_s,
\end{aligned}
\right.
\end{equation}
and such that
\[
\|(\tw^s, \tr^s) - D^s(w, r) \|_{\H^{0}} \lesssim_A A \|(w, r)\|_{\dH^s},
\]
and
\begin{align}\label{GsKs}
\|(G_s, K_s)\|_{\H^0} &\lesssim_A A_{\frac14}^2 
\|(w, r)\|_{\dH^{s}}.
\end{align}
\end{proposition}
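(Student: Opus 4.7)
The strategy is to reduce Proposition~\ref{prop:cubiccommutator} to the $s=0$ case already handled in Proposition~\ref{prop:plinwp}, by constructing a normal form modification of $|D|^s(w,r)$ that absorbs the quadratic obstruction arising when $|D|^s$ is applied directly. Applying $|D|^s$ to \eqref{paralin(wr)} yields a system of the form \eqref{eq:para-linearized7} for $(|D|^s w, |D|^s r)$ with source terms consisting of commutators $[|D|^s, T_f]g$ for $f \in \{b,\ 1-\bar Y,\ (1-\bar Y)R_\alpha,\ 1-Y,\ 1+a\}$. By symbol calculus,
\[
[|D|^s, T_f] = s\, T_{f_\alpha}\, |D|^{s-1} H + R_s(f),
\]
with $H$ the Hilbert transform and $R_s(f)$ of order $s-2$. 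The remainders $R_s(f)$, together with the commutators involving the more regular coefficients $(1-\bar Y)R_\alpha$ and $1+a$, are already cubic of size $A_{\frac14}^2 \|(w,r)\|_{\dH_s}$ by the balanced bounds \eqref{a-bmo+}, \eqref{b-bounds1}, \eqref{est:Y} and the multilinear estimates of Section~\ref{s:multilinear}; they are absorbed directly into $(G_s, K_s)$. What remains problematic are the principal contributions of the form $sT_{b_\alpha}|D|^s(w,r)$ and $sT_{\bar Y_\alpha}|D|^s r$, which are only quadratic in $A_{\frac14}$ since $b_\alpha$ and $\bar Y_\alpha$ are controlled in $BMO^{-1/4}$ only by $A_{\frac14}$.

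To cancel the quadratic obstruction I would introduce a bilinear normal form correction
\[
(\tw^s, \tr^s) = |D|^s(w, r) + \bigl(T_{\rho_1}|D|^s w + T_{\rho_2}|D|^s r,\ T_{\rho_3}|D|^s w + T_{\rho_4}|D|^s r\bigr),
\]
where the coefficients $\rho_j$ are bilinear in $(W, R)$ with $\|\rho_j\|_{L^\infty} \lesssim_A A$; this yields the stated invertibility bound immediately. The $\rho_j$ are determined by the requirement that when $T_{D_t}$ is applied to the correction and expanded by the para-Leibniz identities of Lemma~\ref{l:para-L}, the resulting leading quadratic contributions exactly cancel the bad commutators. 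The guiding principle is that, combining $b = 2\Re P[(1-\bar Y)R]$ with the paradifferential material derivative identities for $X$, $Y$, $R$, $a$ of Lemmas~\ref{l:XY-mat}, \ref{l:WR-source}, \ref{l:a-mat}, the apparently unfavorable factors $b_\alpha$ and $\bar Y_\alpha$ are themselves paradifferential material derivatives of explicit bilinear combinations of $(X, Y, R, a)$ modulo cubic errors; reading these identities backwards prescribes the $\rho_j$.

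With the cancellation in place, the residual source $(G_s, K_s)$ consists of the symbol-calculus remainders $R_s(f)$, the cubic commutators with the good coefficients, para-associativity errors (Lemma~\ref{l:para-assoc}), the cubic contributions of $T_{D_t}\rho_j$ from the same water wave identities, and quartic terms from double corrections bounded by $A^2 A_{\frac14}^2 \|(w, r)\|_{\dH_s}$. Each piece is controlled by the paraproduct bounds of Section~\ref{s:multilinear} combined with the water-wave estimates of Section~\ref{s:wwbounds}, yielding \eqref{GsKs}. The main obstacle is not any individual estimate but the algebraic bookkeeping: one must identify within $b_\alpha$ and $\bar Y_\alpha$ the precise paradifferential material derivatives whose reversal defines the $\rho_j$, and verify that the resulting para-Leibniz cross-terms align. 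Once the correct $\rho_j$ are in hand the remaining estimates follow routinely from Sections~\ref{s:multilinear}--\ref{s:wwbounds}.
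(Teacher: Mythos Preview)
Your approach has the right overall shape but contains a genuine gap at the step where you claim the residual is balanced. When you add a correction $T_{\rho}|D|^s w$ with $\rho$ of fixed polynomial degree to cancel the leading term $-sT_{b_\alpha}|D|^s w$, the new source contains not only the para-Leibniz and commutator errors you list, but also the cross-term arising from $T_\rho T_{D_t}|D|^s w$ after substituting the original equation: this produces $T_\rho\cdot(-sT_{b_\alpha}|D|^s w)$. Up to a good commutator (Lemma~\ref{l:para-com}) this equals $-sT_{b_\alpha}(T_\rho|D|^s w)$, and $T_{b_\alpha}$ acting on an $L^2$ function is only controlled via $\|b_\alpha\|_{L^\infty}$ or $B=A_{1/2}$, never by $A_{1/4}$ alone. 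So you are left with an unbalanced cubic residual of size $A\cdot A_{1/2}$ rather than $A_{1/4}^2$, and \eqref{GsKs} fails. Iterating the correction to kill this new bad term regenerates the same obstruction one order higher, yielding an infinite cascade; this is exactly what the paper flags when it says a standard normal form ``would indeed replace the quadratic terms by cubic ones, but those will still be partly unbalanced.''

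The paper's fix is to replace the additive normal form by a \emph{multiplicative} conjugation: set $(\tw^s_1,\tr^s_1)=(T_\Phi|D|^s w,\,T_\Phi|D|^s r)$ with $\Phi=J^{-s}=|1+\W|^{-2s}$, which is fully nonlinear in $\W$ rather than bilinear. The identity $D_t\Phi=s\Phi(b_\alpha-M)$ then makes the induced term $sT_{b_\alpha}\tw^s_1$ match, up to a balanced commutator $[T_{b_\alpha},T_\Phi]$, the bad part $T_\Phi(-sT_{b_\alpha}|D|^s w)$ of the conjugated source, because the factor $\Phi$ appears self-consistently on both sides. This is what sums the infinite cascade in closed form and drops the remaining source to order $\leq -\tfrac12$; only \emph{after} that reduction do two further rounds of ordinary bilinear normal form corrections suffice to reach \eqref{GsKs}. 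Your $\rho_j$ amount to the first Taylor coefficient of $\Phi-1$, and that truncation is precisely where the argument breaks.
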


The $\dH^s$ well-posedness for \eqref{lin-para} follows by a fixed point argument by applying 
Proposition~\ref{prop:plinwp}  to $(\tw^s,\tr^s)$ given by the above 
proposition. The $\dH^s$ energy in Proposition~\ref{p:paralin-wp}
will then be given by
\[
\Elin^{s,para}(w,r) = \Elin^{0,para}(\tw^s,\tr^s).
\]

\begin{proof}
The natural approach to this problem is to consider 
the equations for the variables
\[
(w^s,r^s) = (D^s w,D^s r).
\]
This can be written in the form
\begin{equation}\label{eq:para-linearized7a}
\left\{
\begin{aligned}
 & T_{D_t} w^s + T_{1- \bar Y} \D_\alpha r^s
+ T_{(1 - \bar Y)R_\alpha} w^s = \cG^s_0(w^s,r^s) \\
 & T_{D_t} r^s - i T_{1 - Y}T_{1 + a} w^s = \cK^s_0(w^s,r^s),
\end{aligned}
\right.
\end{equation}
where the source terms $(\cG^s_0,\cK^s_0)$ have the form 
\begin{equation}\label{eq:para-s-source}
\begin{aligned}
 \cG^s_0(w^s,r^s) = & L(b_\alpha,w^s)  - L(\bar Y_\alpha,r^s)
 + L([(1-\bar Y)R_\alpha]_\alpha, \partial^{-1}_\alpha w^s),  \\
 \cK^s_0(w^s,r^s) = & \ L(b_\alpha,r^s)+ i L(Y_\alpha, \partial^{-1}_\alpha T_{1 + a} w^s ).
\end{aligned}
\end{equation}
Here $L$ denotes the order zero paradifferential bilinear form
\[
L(f_\alpha,u) = - [D^s,T_f] \partial_\alpha D^{-s} u
\]
which has leading term
\[
L(g,u) = - s T_g u.
\]
We need to study the evolution \eqref{eq:para-linearized7a} in $\H^0$.
The difficulty is that the source terms $(\cG^s_0,\cK^s_0)$ cannot be treated perturbatively, both because they have 
a quadratic component and because their cubic and higher part is unbalanced\footnote{This would be less of an issue if all we wanted was an $AB$ bound.}.  

 To facilitate the next discussion we note that we can organize the source terms in $(\cG,\cK)$ by order, i.e. by the relative Sobolev regularity of the leading factors $(w,r)$. For instance in \eqref{eq:para-s-source} the first terms on the right are leading order $0$, the second are  order $-\frac12$, and the third are order $-1$. The lower the order the better; this is most readily 
 seen in the normal form analysis. 

The natural attempt would be to try to apply a normal form transformation in order to eliminate all the bad terms, and replace them  by perturbative terms. Such a strategy would work for terms of order  $\leq -\frac12$. Precisely, all nonperturbative terms of order $\leq -\frac12$ can be viewed as quadratic expressions, possibly  with a lower frequency coefficient. Then the normal form computation
works directly, and  applies to the quadratic part, while simply retaining the low frequency coefficient.
 
On the other hand, for the zero order terms the above procedure 
no longer works.
A normal form would indeed replace the quadratic terms by cubic ones, but those 
will still be partly unbalanced.  One could reiterate and eventually 
obtain an infinite series of corrections. Instead of this, we will borrow 
an idea from \cite{HIT}, where it was observed that there is an appropriate exponential conjugation (which can be thought of as the sum of the 
infinite series of corrections alluded to above) which does the job
modulo ``lower order terms", which can be dealt with directly by normal forms.

The conjugation introduced in \cite{HIT}, only for integer $s$, was 
purely algebraic, by a factor
\[
 \Phi = \phi(W_\alpha) = J^{-s}.
\]
Here we will also allow for a noninteger $s$, and use the conjugation
in a paradifferential fashion. We denote the conjugated variables by
\[
(\tw^s_1,\tr^s_1) = (T_\Phi w^s, T_{\Phi} r^s)
\]
and compute the equations for $(\tw^s_1,\tr^s_1)$. Along the way  we will cheerfully discard good cubic terms
which can be placed in $(G_s,K_s)$.  By good terms, 
we mean terms which satisfy the good bounds
\eqref{GsKs}. From here on, we shall simply 
denote by $(G_s,K_s)$ any such expressions.

We first use the para-Leibniz rule in Lemma~\ref{l:para-L} 
to write
\[
T_{D_t} \tw^s_1 = T_\Phi T_{D_t} w^s + T_{D_t \Phi} w^s + G_s,
\]
where we have used the bound
\[
\| D^\frac14 \Phi\|_{BMO} \lesssim_A A_\frac14.
\]

We compute
\[
D_t \Phi = \partial_{W_\alpha} \phi\cdot D_t W_\alpha + \partial_{\bar W_\alpha} \phi\cdot D_t \bar W_\alpha
= s \Phi [ (1-\bar Y) R_\alpha + (1-Y) \bar R_\alpha - 2M] = s \Phi (b_\alpha - M).
\]
We may discard the $M$ term by Lemma~\ref{l:M}, arriving at
\[
T_{D_t} \tw^s_1 = T_\Phi T_{D_t} w^s + s  T_{\Phi b_\alpha} w^s + G_s.
\]
Finally, applying Lemma~\ref{l:para-prod} for the $\Phi b_\alpha$ paraproduct  we conclude that
\[
T_{D_t} \tw^s_1 = T_\Phi T_{D_t} w^s + s  T_{b_\alpha} \tw^s_1 + G_s.
\]
Similarly we have 
\[
T_{D_t} \tr^s_1 = T_\Phi T_{D_t} r^s + s  T_{b_\alpha} \tr^s_1 + K_s.
\]

For the other terms on the left hand side of \eqref{eq:para-linearized7a}, using again Lemmas~\ref{l:para-com}, \ref{l:para-prod},
we have
\begin{align*}
T_{1-\bar Y} \D_\alpha \tr^s_1 &= T_\Phi T_{1-\bar Y} \D_\alpha r^s -s T_{1-\bar Y} T_{(1 - Y)W_{\alpha\alpha} + (1 - \bar Y)\bar W_{\alpha\alpha}} \tr^s_1 + G_s \\
& =  T_\Phi T_{1-\bar Y} \D_\alpha r^s -s (T_{1 - \bar Y}T_{(1 + W_\alpha)Y_\alpha} + T_{\bar Y_{\alpha}}) \tr^s_1 + G_s,
\end{align*}
and 
$$T_{(1 - \bar Y)R_\alpha} \tw^s_1 = T_{\Phi} T_{(1 - \bar Y)R_\alpha} w^s + G_s$$
in the first equation, and
\[
T_{1 - Y}T_{1 + a} \tw^s_1 = T_{\Phi}T_{1 - Y}T_{1 + a} w^s + K_s
\]
in the second.
Finally, by Lemma~\ref{l:para-com},  $T_\Phi$ commutes with the coefficients on the right side of \eqref{eq:para-linearized7a} modulo good errors, so for $(\tw^s_1,\tr^s_1)$ we obtain the system
\begin{equation}\label{eq:para-linearized7b}
\left\{
\begin{aligned}
& T_{D_t} \tw^s_1 + T_{1- \bar Y} \D_\alpha \tr^s_1
+ T_{(1 - \bar Y)R_\alpha} \tw^s_1 = \cG^s_1  + G_s \\
& T_{D_t} \tr^s_1 - iT_{1 - Y}T_{1 + a} \tw^s_1 = \cK^s_1  + K_s
\end{aligned}
\right.
\end{equation}
with the nonperturbative source terms
\[
\begin{split}
\cG^s_1 = & \ L(b_\alpha,\tw^s_1) + s T_{b_\alpha} \tw^s_1   - L(\bar Y_\alpha, \tr^s_1)
- s T_{\bar Y_\alpha} \tr^s_1+ L([(1-\bar Y)R_\alpha]_\alpha, \partial^{-1}_\alpha \tw^s_1)\\
&\quad - s T_{1 - \bar Y}T_{(1 + W_\alpha) Y_{\alpha}} \tr^s_1 ,\\
\cK^s_1  = & \  L(b_\alpha,\tr^s_1) + s T_{b_\alpha} \tr^s_1  + i L(Y_\alpha, \partial^{-1}_\alpha T_{1 + a} \tw^s_1 ).
\end{split}
\]
The main point here is that there is cancellation in the first two leading terms in both $G_1$ and $K_1$,
leaving us with only terms of order $-\half$ or lower. Precisely, we may write
\[
\begin{split}
\cG^s_1 = & \ L_1(b_{\alpha\alpha}, \D_\alpha^{-1} \tw^s_1) - L_1(\bar Y_{\alpha\alpha}, \D_\alpha^{-1} \tr^s_1)+ L([(1-\bar Y)R_\alpha]_\alpha, \partial^{-1}_\alpha \tw^s_1) \\
&\quad  - s T_{1 - \bar Y}T_{(1 + W_\alpha) Y_{\alpha}} \tr^s_1, \\
\cK^s_1  = & \  L_1(b_{\alpha\alpha}, \D_\alpha^{-1}\tr^s_1) + i L(Y_\alpha, \partial^{-1}_\alpha T_{1 + a}\tw^s_1 ),
\end{split}
\]
where $L_1$ denotes 
\[
L_1(f_{\alpha\alpha}, \D_\alpha^{-1} u) = L(f_\alpha,u) + s T_{f_\alpha}u
\]
which, like $L$, has leading term
\[
L_1(g, u) = -s(s-1)T_g u.
\]
We also observe that $(\cG_1^s, \cK_1^s)$ 
satisfy bilinear estimates of the form\footnote{
Similar bounds are also satisfied by $(G_s,K_s)$, but will not be needed in the sequel because we have \eqref{GsKs}.}
\begin{equation}\label{eq:para-linearized7best}
\||D|^{-1/4}(\cG_1^s, \cK_1^s)\|_{\H^0} 
\lesssim_A A_{\frac14}
\|(w, r)\|_{\dH^{s}}
\end{equation}
which will suffice to control the secondary impact
 of the source terms in the normal form analysis below.

\
We proceed with the normal form corrections. We recall that their 
purpose is two-fold:
\begin{enumerate}[label= (\roman*)]
\item to turn bilinear source terms into trilinear ones,
and
\item to replace trilinear and higher unbalanced interactions
into balanced ones. 
\end{enumerate}
To achieve the first purpose it suffices to 
consider classical normal forms for bilinear interactions. However,
for the second goal more care is needed. 

Precisely, we first identify the leading order bilinear interaction, and think of the remaining factors (if any) as low frequency paradifferential coefficients.
For the leading order part we compute its associated bilinear normal form correction. But after this we follow up with a second step, which is to add low frequency coefficients in order to fully match the low frequency coefficients in the source terms.

One favourable feature in this process is that the secondary outcome
of the source terms $(\cG^s_1 ,\cK^s_1)$ (as they appear in the material derivative of the normal form corrections) is perturbative. This allows us to divide and conquer,
i.e. to compute the normal form corrections separately for different parts of the source terms.

We begin with a correction consisting of $L_1$ bilinear forms, which will have the twofold effect on $(\cG^s_1 ,\cK^s_1)$ of canceling the $L_1$ terms, and exchanging the $L$ terms with their leading terms. This is easily computed to have the form
\[
\left\{
\begin{aligned}
\tw^s_2 &= \D_\alpha T_{1 - Y} L_1(W_{\alpha\alpha} \D_\alpha^{-2} \tw^s_1) + T_{1 - \bar Y} L_1(\bar W_{\alpha\alpha}, \D_\alpha^{-1} \tw^s_1) =: \tw^s_{2,1}+\tw^s_{2,2} \\
\tr^s_2 &= T_{1 - Y} L_1(W_{\alpha\alpha}, \D_\alpha^{-1} \tr^s_1)+ T_{1 - \bar Y} L_1(\bar W_{\alpha\alpha}, \D_\alpha^{-1} \tr^s_1) =: \tr^s_{2,1}+\tr^s_{2,2},
\end{aligned}
\right.
\]
where the $L_1$ part is the leading bilinear expression and the preceding paraproducts should be thought of as coefficients. We claim that this correction has the following effect when inserted in \eqref{eq:para-linearized7a}:
\begin{equation}\label{eq:para-linearized7c}
\left\{
\begin{aligned}
& T_{D_t} \tw^s_2 + T_{1- \bar Y} \D_\alpha \tr^s_2
+ T_{(1 - \bar Y)R_\alpha} \tw^s_2 = \cG^s_2 - \cG^s_1 + G_s \\
& T_{D_t} \tr^s_2 - iT_{1 - Y}T_{1 + a}\tw^s_2 = \cK^s_2 - \cK^s_1 + K_s,
\end{aligned}
\right.
\end{equation}
where
\[
\begin{split}
\cG^s_2 = &\  - s T_{1 - \bar Y}T_{(1 + W_\alpha) Y_{\alpha}} \tr^s_1
-sT_{(1-\bar Y)R_{\alpha\alpha}} \partial^{-1}_\alpha \tw^s_1,
\\
\cK^s_2 = &\ - is T_{Y_\alpha} \partial^{-1}_\alpha T_{1 + a}\tw^s_1.
\end{split}
\]

To prove this we begin with the first equation of the system, and only the contributions of the bilinear holomorphic correction $\tw^s_{2,1}$. When applying $T_{D_t}$, we commute it with differentiation while using Lemma~\ref{l:para-L} to distribute it to (para)products. The commutators with $\partial_\alpha$ involve $T_{b_\alpha}$, and their contributions can all be placed in $G_s$ using \eqref{b-bounds}. Then we have
\begin{equation*}
\begin{aligned}
T_{D_t} \tw^{s}_{2,1} & =  -\D_\alpha T_{ T_{D_t} Y} L_1(W_{\alpha\alpha}, \D_\alpha^{-2} \tw^s_1) 
+ \D_\alpha T_{1 - Y} L_1(\partial_\alpha T_{D_t}W_\alpha, \D_\alpha^{-2} \tw^s_1) 
\\ & \quad \, + \D_\alpha T_{1 - Y} L_1(W_{\alpha\alpha}, \D_\alpha^{-2} T_{D_t} \tw^s_1)  + G_s.
\end{aligned}
\end{equation*}
The first term on the right is perturbative in view of \eqref{est:DtY}.
For $T_{D_t} W_\alpha$ we use the equation \eqref{para(Wa)} along with the source term bound 
\eqref{para(Wa)-source}, whereas for the source term arising 
from $T_{D_t} \tw^s_1$  we use \eqref{eq:para-linearized7b} with the source term bounds \eqref{eq:para-linearized7best} and \eqref{GsKs}. These bounds allow us to  estimate the corresponding $L_1$ contributions by taking advantage of the fact that $L_1$ has a paraproduct structure. Then we arrive at
\begin{equation*}
\begin{aligned}
T_{D_t} \tw^{s}_{2,1}
& = - \D_\alpha T_{1 - Y} L_1(T_{1 + W_\alpha} T_{1 - \bar Y} R_{\alpha\alpha}, \D_\alpha^{-2} \tw^s_1)
 - \D_\alpha T_{1 - Y} L_1(W_{\alpha\alpha}, \D_\alpha^{-2} T_{1 - \bar Y} \D_\alpha \tr^s_1)  + G_s\\
&= - L_1(T_{1 - \bar Y}R_{\alpha\alpha\alpha}, \D_\alpha^{-2} \tw^s_1) - L_1(T_{1 - \bar Y}R_{\alpha\alpha}, \D_\alpha^{-1} \tw^s_1)\\
&\quad - \D_\alpha T_{1 - Y} L_1(W_{\alpha\alpha}, T_{1 - \bar Y} \D_\alpha^{-1} \tr^s_1) + G_s.
\end{aligned}
\end{equation*}
For the remaining holomorphic terms on the left hand side of the first equation in \eqref{eq:para-linearized7b} we similarly have
\begin{equation*}
\left\{
\begin{aligned}
&T_{1 - \bar Y}\D_\alpha  \tr^s_{2,1} = \D_\alpha T_{1 - Y} L_1(W_{\alpha\alpha}, T_{1 - \bar Y}\D_\alpha^{-1} \tr^s_1) + G_s \\
&T_{(1 - \bar Y)R_\alpha} \tw^s_{2,1} = G_s. 
\end{aligned}
\right.
\end{equation*}
Throughout the above computation we have made repeated use of two facts in the course of collecting error terms into $G_s$. First, we repeatedly apply the product and commutator estimates, Lemmas \ref{l:para-com} and \ref{l:para-prod}. Second, we observe that when derivatives fall on the coefficients, we have a good balance of derivatives. More generally, any cubic paradifferential terms where the lowest frequency variable is differentiated to an order higher (in fact strictly higher due to integer mismatch) than $A_\frac14$ may be absorbed into $G_s$ (this property is combined with the low order $\leq -\half$ of the high frequency variable to pigeonhole sufficient derivatives onto the middle frequency variable; we then obtain two contributions of $A_\frac14$ from the low and middle frequencies, possibly after redistributing derivatives from the lowest frequency variable upward).

We similarly consider the contribution of the bilinear corrections with one antiholomorphic input (staying with the first equation of the system):
\begin{equation*}
\begin{aligned}
T_{D_t} \tw^s_{2,2}
&= - L_1(T_{1 - Y}\bar R_{\alpha\alpha}, \D_\alpha^{-1} \tw^s_1) - L_1(\bar Y_{\alpha}, \tr^s_1)  + G_s,\\
T_{1 - \bar Y}\D_\alpha \tr^s_{2,2} &= L_1(\bar Y_{\alpha\alpha}, \D_\alpha^{-1} \tr^s_1) + L_1(\bar Y_{\alpha}, \tr^s_1) + G_s, \\
T_{(1 - \bar Y)R_\alpha} \tw^s_{2,2} &= G_s. 
\end{aligned}
\end{equation*}
Here we used the same two facts as above in collecting $G_s$. In addition, we also used Lemma \ref{l:YtoW} to exchange instances of $W_\alpha$ with $Y$.

After collecting these contributions, observing cancellations, and extracting perturbative paradifferential terms from $\cG^s_1$ into $G_s$ using the two facts as discussed above, we see that all $L_1$ terms of $\cG^s_1$ cancel (note that we need to expand $b$ into its real and imaginary parts for both an $R$ and $\bar R$ component) and all $L$ terms are exchanged with their leading order terms, thus proving our claim  for the first equation in \eqref{eq:para-linearized7c}.

We repeat this computation for the second equation of the system \eqref{eq:para-linearized7c} (here it is convenient to compute the entire correction at once): 
\begin{equation*}
\begin{aligned}
T_{D_t} \tr^s_{2} &= L_1(2\Re (T_{1 - Y}\partial_\alpha T_{D_t} W_\alpha), \D_\alpha^{-1} \tr^s_1) 
 + L_1(2\Re (T_{1 - Y} W_{\alpha\alpha}), \D_\alpha^{-1} T_{D_t}\tr^s_1) + K_s \\ 
&= -L_1(2\Re (T_{1 - Y}T_{1 + W_\alpha}T_{1 - \bar Y}R_{\alpha\alpha}), \D_\alpha^{-1} \tr^s_1)  + L_1(2\Re (T_{1 - Y} W_{\alpha\alpha}), \D_\alpha^{-1} iT_{1 - Y}T_{1 + a}\tw^s_1) + K_s \\
&= -L_1(b_{\alpha\alpha}, \D_\alpha^{-1} \tr^s_1) 
 +  iT_{1 - Y}L_1(2\Re( T_{1 - Y} W_{\alpha\alpha}), \D_\alpha^{-1}T_{1 + a}\tw^s_1) + K_s,
 \end{aligned}
\end{equation*}
 as well as
 \begin{equation*}
\begin{aligned}
-iT_{1 - Y}T_{1 + a} \tw^s_{1,2} & 
= -iT_{1-Y} \partial_\alpha L_1( T_{1-Y} W_{\alpha\alpha}, \D_\alpha^{-2} T_{1+a} \tw^s_1) + K_s,
\\
-iT_{1 - Y}T_{1 + a} \tw^s_{2,2} & 
= -iT_{1-Y}  L_1( T_{1-\bar Y} \bar W_{\alpha\alpha}, \D_\alpha^{-1} T_{1+a} \tw^s_1) + K_s.
\end{aligned}
\end{equation*}
Thus, the $L_1$ and $L$ terms in $\cK^s_1$ are canceled or exchanged for their leading terms, respectively. This proves our claim  for the second equation in \eqref{eq:para-linearized7c}.



We are now left to deal with the source terms $(\cG^s_2,\cK^s_2)$.
We address these remaining source terms with a second normal form correction. Write
\begin{align*}
\tw^s_3 &= - sT_{(1 + W_\alpha)Y_\alpha} \D^{-1}_\alpha \tw^s_1, \\
\tr^s_3 &= 0.
\end{align*}
This is easily checked to cancel $(\cG^s_2,\cK^s_2)$, when inserted into \eqref{eq:para-linearized7a}.
Then setting
\begin{align*}
\tw^s &= \tw^s_1 + \tw^s_2 + \tw^s_3, \\
\tr^s &= \tr^s_1 + \tr^s_2 + \tr^s_3
\end{align*}
concludes the proof of Proposition \ref{prop:cubiccommutator}.
\end{proof}


 \subsection{The quadratic bound for the paradifferential source term}
 \label{s:GK}
 Here we prove Lemmas~\ref{l:GK}, \ref{l:GK1}. In addition, in Lemmas~\ref{l:Dtwr},~\ref{l:Dtx} we derive 
 formulas for the para-material derivatives of $w$, $r$ and $x$.
 
 \begin{proof}[Proof of Lemma~\ref{l:GK}]
 We recall our objective, which is to isolate the nonperturbative part of $(\cG_0, \cK_0)$ with bilinear estimates, provided in Lemma~\ref{l:GK}. For convenience, we recall that
\begin{equation*}
\begin{aligned}
\mathcal{G}_0(w, r) &= (1 + W_\alpha)(P\bar m + \bar P m), \\
\mathcal{K}_0(w, r) &= \bar P n - P \bar n,
\end{aligned}
\end{equation*}
where
$$m = |1 - Y|^2(r_\alpha + R_\alpha w) + (1 - Y)^2 \bar R w_\alpha, \quad n = (1 - Y)\bar R (r_\alpha + R_\alpha w).$$

We consider $\mathcal{G}_0$ first. We may absorb
\[
P((1 + W_\alpha)\bar P m) = P(W_\alpha \bar P m)
\]
into $G$. To see this, we begin by applying Coifman-Meyer 
bounds to write
\[
\| P(W_\alpha \bar P m)\|_{\dot H^\frac14}
\lesssim \| D^\frac14 W_\alpha \|_{BMO}
\|\bar P m\|_{L^2}.
\]
Then it remains to show that 
\begin{equation}\label{m-est}
    \| \bar P m\|_{L^2} \lesssim_A A_{\frac14} \|(w,r)\|_{\dH^\frac14}.
\end{equation}
Incidentally, this estimate also works toward proving that $\cG_0$
satisfies the bound in  \eqref{nf-err-bd2-GK} (we will prove \eqref{m-est} on each term
of $\cG_0$).

To prove \eqref{m-est} we  observe that the differentiation in the terms  of $m$ always is applied to one of the holomorphic variables, which has to be at or below the frequency of the undifferentiated antiholomorphic variable. 
 For instance, a typical term is
\[
\|\bar P(\bar Y r_\alpha)\|_{L^2} \lesssim \|r\|_{{\dot{H}^\frac34}} \||D|^{1/4} \bar Y\|_{BMO}
\lesssim_A A_{\frac14} \|r\|_{{\dot{H}^\frac34}}.
\]
(Here, also note that in the case of $\Pi(\bar Y, r_\alpha)$, derivatives cannot be shifted to the high frequency $W_\alpha$. We have addressed this with our choice of function space $\dH^{\frac14}$.) We also note the slightly different case where $w$ is not
differentiated, e.g.
\[
\|\bar P(\bar Y R_\alpha w)\|_{L^2} \lesssim
\| w\|_{L^4} \| \bar P(\bar Y R_\alpha)\|_{L^4}
\lesssim \| w\|_{\dot H^\frac14} \| D^\frac14 Y\|_{BMO}
\teal{\| |D|^{3/4} R \|_{L^4}} \lesssim_A \As A_\frac14 \| w\|_{\dot H^\frac14}.
\]
All other terms are similar to one of the two considered above.

Similarly, we have
\[
(1 + W_\alpha) P\bar m = T_{1 + W_\alpha} P \bar m + G,
\]
where the $G$ term is perturbative.
Then expand $P\bar m$, with terms
$$P [|1 - Y|^2(\bar r_\alpha + \bar R_\alpha \bar w)] = P [(1 - \bar Y)(\bar r_\alpha + \bar R_\alpha \bar w)Y], \qquad P[(1 - \bar Y)^2 \bar w_\alpha R].$$
We can peel off frequency components, using the observation that cubic terms where the lowest frequency variable is differentiated may be absorbed into $G$, similar to the above estimates. In particular, here $Y$ and $R$ respectively cannot be the lowest frequency due to $P$, and the cases where $\bar r_\alpha, \bar R_\alpha$ and $\bar w_\alpha$ are the lowest frequency, respectively, are perturbative. The remaining cases form
\begin{align*}
 P [T_{1 - \bar Y}(\bar r_\alpha + T_{\bar w}\bar R_\alpha)Y] &=  P [(T_{1 - \bar Y}\bar r_\alpha + T_{\bar x}\bar R_\alpha)Y] + G, \\
 \qquad P[T_{(1 - \bar Y)^2} \bar w_\alpha R] &= P[T_{1 - \bar Y} \bar x_\alpha R] + G.
 \end{align*}
After applying para-associativity Lemma~\ref{l:para-assoc2} to factor in $T_{1 + W_\alpha}$, we obtain $\cG_{0, 0}$.


The analysis for $P\cK_0 = P \bar n$ is similar, based on proving the bound
\begin{equation}\label{n-est}
    \| P \bar n\|_{\dot H^\frac12} \lesssim_A A_{\frac14} \|(w,r)\|_{\dH^\frac14}.
\end{equation}
\end{proof}

\begin{proof}[Proof of Lemma~\ref{l:GK1}]
We recall that 
 \begin{equation*}
 \begin{aligned}
& \mathcal G_1 = -(T_{w_\alpha} b + \Pi(w_\alpha, b)) + (T_{r_\alpha} \bar Y + \Pi(r_\alpha, \bar Y)) - (T_w (1 - \bar Y) R_\alpha + \Pi(w, (1 - \bar Y)R_\alpha)),
 \\
& \mathcal K_1 = -(T_{r_\alpha} b + \Pi(r_\alpha, b)) + i(T_{1 - Y}T_{w}a + T_{1 - Y}\Pi(w, a) - T_{(1 + a)w}Y - \Pi((1 + a)w, Y)).
 \end{aligned}
 \end{equation*}
  Putting aside the nonperturbative terms on the right hand side of Lemma \ref{l:GK1}, and those antiholomorphic terms eliminated by $P$, we observe that it remains to bound
 \begin{equation*}
 \begin{aligned}
& -(T_{w_\alpha} b + \Pi(w_\alpha, b)) + \Pi(r_\alpha, \bar Y)) - (T_w \Pi(- \bar Y, R_\alpha) + \Pi(w, (1 - \bar Y)R_\alpha)),
  \end{aligned}
 \end{equation*}
 respectively
 \begin{equation*}
 \begin{aligned}
& -(T_{r_\alpha} b + \Pi(r_\alpha, b)) + i(T_{1 - Y}T_{w}a + T_{1 - Y}\Pi(w, a) - \Pi((1 + a)w, Y)).
 \end{aligned}
 \end{equation*}

We consider terms from $\cG_1$ first. We observe that all the terms are balanced, with derivatives on low frequencies, so that we have the desired estimate. For instance, 
\[
\|T_{w_\alpha} b\|_{L^2} \lesssim \|w\|_{\dot{H}^{1/4}} \||D|^{3/4}b\|_{BMO} \lesssim_A \|w\|_{\dot{H}^{1/4}} \||D|^{3/4}R\|_{BMO}.
\]

A similar analysis holds for most of the terms from $\cK_1$, except
\[
T_{1 - Y}T_{w}a + T_{1 - Y}\Pi(w, a).
\]
For these terms, we use
\begin{align*}
\|T_{1 - Y}T_{w}a\|_{\dot{H}^{1/2}} &\lesssim\|Y\|_{L^\infty} \teal{\|w\|_{L^4}\||D|^{1/2} a\|_{L^4}}\teal{\lesssim_{\As} A_{\frac14}\| w\|_{\dot{H}^{1/4}}},
\end{align*}
using here Lemma \ref{regularity for a} to estimate $a$.

 \end{proof}

Putting these lemmas together, we obtain the following formulas, with estimates, for the para-material derivatives of $(w, r)$:

\begin{lemma}\label{l:Dtwr}
Given $(w, r)$ satisfying \eqref{paralin(wr)-full}, we have the representations
\begin{equation*}
\left\{
\begin{aligned}
& T_{D_t} {w} = -T_{1- \bar Y} (r_\alpha + T_{w}R_\alpha) + G_2 =: G_1 + G_2
 \\
& T_{D_t} {r} = iT_{1 + a}( T_{1 - Y} {w} - T_w Y) + K_2 =: K_1 + K_2,
\end{aligned}
\right.
\end{equation*}
and likewise with $D_t (w, r)$ in place of $T_{D_t}(w, r)$,
where $(G_2,K_2)$ satisfies the quadratic bound
\begin{equation*}
\begin{aligned}
\| ( G_2, K_2)\|_{\H^{0}} &\lesssim_{\As} A_{\frac14} \| (w,r)\|_{\dH^{\frac14}},
\end{aligned}
\end{equation*}
and $(G_1,K_1)$ satisfies the linear bound
\begin{equation*}
\||D|^{-1/4} (G_1, K_1)\|_{\H^{0}} \lesssim_{\As} \| (w,r)\|_{\dH^{\frac14}}.
\end{equation*}
\end{lemma}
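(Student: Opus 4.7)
The plan is to derive the formulas by starting from \eqref{paralin(wr)-full}, solving algebraically for $T_{D_t}w$ and $T_{D_t}r$, and then repackaging the right hand sides using Lemmas~\ref{l:GK} and \ref{l:GK1} together with the paraproduct calculus from Section~\ref{s:multilinear}. Concretely, from \eqref{paralin(wr)-full} we immediately get
\[
T_{D_t} w = -T_{1-\bar Y}\partial_\alpha r - T_{(1-\bar Y)R_\alpha} w + \Gs(w,r).
\]
Substituting $\Gs = P\cG_0 + P\cG_1 = \cG_{0,0} - T_w T_{1-\bar Y}R_\alpha + \tilde G$, where $\tilde G$ collects the perturbative pieces from Lemmas~\ref{l:GK} and \ref{l:GK1} (already satisfying the target bound via \eqref{nf-err-bd-GK}), yields
\[
T_{D_t} w = -T_{1-\bar Y}(r_\alpha + T_w R_\alpha) + G_2,
\]
where
\[
G_2 = T_{1-\bar Y}T_w R_\alpha - T_{(1-\bar Y)R_\alpha} w - T_w T_{1-\bar Y}R_\alpha + \cG_{0,0} + \tilde G.
\]
The combination of the first three paraproducts is exactly the type of mismatch controlled by the para-product and para-commutator bounds in Lemmas~\ref{l:para-com}, \ref{l:para-prod}, which turn it into bilinear expressions with derivatives distributed so that each of $Y$, $R$, $w$ absorbs at most $\tfrac34$ of a derivative; combined with the estimates on $Y$ in Lemma~\ref{l:Y} and the $BMO^{3/4}$ bound on $R$ this gives precisely the claimed $A_{1/4}$ quadratic estimate. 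The term $\cG_{0,0}$ is controlled directly by \eqref{nf-err-bd2-GK}.

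The computation for $r$ is analogous. Substituting $\Ks = \cK_{0,0} - i T_w T_{1+a} Y + \tilde K$ from the two lemmas gives
\[
T_{D_t} r = iT_{1+a}(T_{1-Y}w - T_w Y) + K_2,
\]
with
\[
K_2 = i[T_{1-Y}, T_{1+a}]w - i[T_w, T_{1+a}]Y + \cK_{0,0} + \tilde K.
\]
The two commutators are controlled by Lemma~\ref{l:para-com} using the $BMO$ bounds on $a$ from Proposition~\ref{regularity for a} (in particular \eqref{a-bmo+}), noting that each commutator gains $3/4$ of a derivative distributed between $Y$ (or $w$) and $a$; $\cK_{0,0}$ is bounded in $\dot H^{1/2}$ directly via \eqref{nf-err-bd2-GK}. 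The linear bound on $(G_1,K_1) = (-T_{1-\bar Y}(r_\alpha + T_w R_\alpha),\ iT_{1+a}(T_{1-Y}w - T_w Y))$ is a direct Coifman-Meyer computation: for instance $\||D|^{-1/4}T_{1-\bar Y}r_\alpha\|_{L^2}\lesssim \|r\|_{\dot H^{3/4}}$, and $\||D|^{-1/4} T_{1+a}T_{1-Y}w\|_{L^2}\lesssim_{\As} \|w\|_{\dot H^{1/4}}$ using the pointwise control on $a$ and $Y$.

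For the version with $D_t$ in place of $T_{D_t}$, we observe that
\[
(D_t - T_{D_t})u = T_{u_\alpha} b + \Pi(b, u_\alpha), \qquad u \in \{w, r\},
\]
and both expressions are balanced bilinear forms in which $b$ contributes $BMO^{3/4}$ via \eqref{b-bounds1} while $u_\alpha$ contributes $\dot H^{-1/4}$ or better; by Proposition~\ref{p:bmo} these are absorbed into $(G_2, K_2)$ without altering $(G_1, K_1)$.

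The main obstacle is purely bookkeeping: tracking how each of the many terms produced by the decompositions of $(\cG_0,\cK_0)$ and $(\cG_1,\cK_1)$ matches the target structure $G_1 + G_2$, $K_1 + K_2$, and systematically verifying that each mismatch — whether it arises as a para-product associator, a para-commutator, or a leftover antiholomorphic piece — indeed admits either an $A_{1/4}^2$ quadratic bound in $\dH^{1/4}$ (for the $G_2,K_2$ channel) or the linear $\dH^{1/4}$ bound at the level of $|D|^{-1/4}(G_1,K_1)$. Since every such estimate is an instance of the toolbox in Section~\ref{s:multilinear} applied with the derivative exponents dictated by $\tfrac14 + \tfrac14 + \tfrac12 = 1$, the verification is mechanical once the decomposition is laid out.
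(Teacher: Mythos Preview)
Your approach is essentially the same as the paper's, but you take an unnecessary detour that introduces a small gap. You decompose $P\cG_0 = \cG_{0,0} + G$ via the second part of Lemma~\ref{l:GK} and then attempt to place both pieces into $G_2$. But the bound \eqref{nf-err-bd-GK} you cite for the perturbative piece $G$ lives in $\dH_{\frac14}$ with an $A_{\frac14}^2$ constant, not in $\H_0$ with an $A_{\frac14}$ constant as required for $G_2$; the paper explicitly remarks (just after Lemma~\ref{l:GK}) that this $\H_0$ bound for $G$ is true but \emph{not} proved there. Similarly, invoking \eqref{nf-err-bd2-GK} for $\cG_{0,0}$ is a misread: that estimate is stated for $(\cG_0,\cK_0)$, not for $(\cG_{0,0},\cK_{0,0})$.

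The paper avoids this entirely by using only the \emph{first} part of Lemma~\ref{l:GK}, which gives $\|(\cG_0,\cK_0)\|_{\H_0}\lesssim_{\As} A_{\frac14}\|(w,r)\|_{\dH_{\frac14}}$ directly, so all of $P\cG_0$ goes into $G_2$ with no decomposition. Then from Lemma~\ref{l:GK1} one substitutes $P\cG_1 = -T_w T_{1-\bar Y}R_\alpha + G$ with $G$ already satisfying \eqref{nf-err-bd2-GK}, observes that $T_{(1-\bar Y)R_\alpha}w$ is itself of $G_2$ type (a single balanced estimate), and uses Lemma~\ref{l:para-com} to reorder $T_w T_{1-\bar Y}$ into $T_{1-\bar Y}T_w$. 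This is strictly simpler than tracking the three-term para-associator mismatch you wrote down. Your treatment of $(G_1,K_1)$ and of the $D_t$ versus $T_{D_t}$ discrepancy is fine and matches the paper.
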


\begin{proof}
Apply Lemmas~\ref{l:GK} and \ref{l:GK1}. Then observe that
\begin{equation*}
\|T_{(1 - \bar Y)R_\alpha} {w}\|_{L^2} \lesssim \|\bar Y\|_{L^\infty} \||D|^{3/4} R\|_{BMO} \|w\|_{\dot{H}^{1/4}},
\end{equation*}
so that this remaining term from the left hand side of \eqref{paralin(wr)-full} may be absorbed in $G_2$. Note that the para-commutator of Lemma~\ref{l:para-com} allows us to reorder paraproducts freely.

To estimate $(G_1, K_1)$, the terms 
\[
-T_{1- \bar Y} r_\alpha,\qquad i T_{1 + a}T_{1 - Y} {w}
\]
are straightforward, with a sufficient balance of derivatives on $(w, r)$ already. For the remaining two terms, apply also Sobolev embedding. For instance,
\[
\||D|^{-1/4} T_{1-\bar Y} T_{w} R_\alpha\|_{L^2} \lesssim \|\bar Y\|_{L^\infty}\teal{ \|w\|_{L^4} \||D|^{3/4}R\|_{L^4}} \lesssim_{A} A\As \|w\|_{\dot{H}^{1/4}}.
\]
\end{proof}

We likewise have an expansion for the para-material derivative of $x$:
\begin{lemma}\label{l:Dtx}
Given $(w, r)$ satisfying \eqref{paralin(wr)-full}, we have the representation
\begin{equation*}
\begin{aligned}
& T_{D_t} {x} =  -T_{1- \bar Y} (T_{1 - Y} r_\alpha + T_{x}R_\alpha) + G_2 =: G_1 + G_2
\end{aligned}
\end{equation*}
and likewise with $D_tx$ in place of $T_{D_t}x$,
where $G_2$ satisfies the quadratic bound
\begin{equation*}
\begin{aligned}
\|G_2\|_{L^2} &\lesssim_{\As} A_{\frac14} \| (w,r)\|_{\dH^{\frac14}},
\end{aligned}
\end{equation*}
and $G_1$ satisfies the linear bound
\begin{equation*}
\||D|^{-1/4} G_1\|_{L^2} \lesssim_{\As} \| (w,r)\|_{\dH^{\frac14}}.
\end{equation*}
\end{lemma}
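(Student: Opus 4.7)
The proof follows the same strategy as Lemma~\ref{l:Dtwr}, with an added first step of unfolding $x = T_{1-Y} w$ via the para-Leibniz rule. The starting point is Lemma~\ref{l:para-L}(a) in its $\tilde{E}^p_L$ form (which uses $D_t u$ rather than $T_{D_t} u$ on the low-frequency factor), applied with $u = 1-Y$ and $v = w$:
$$T_{D_t} x \;=\; T_{D_t(1-Y)}\, w \;+\; T_{1-Y}\, T_{D_t} w \;+\; \tilde{E}^p_L(1-Y, w).$$
The para-Leibniz error is absorbed into $G_2$ via Lemma~\ref{l:para-L}, using $\||D|^{1/4} Y\|_{BMO} \lesssim_A A_{\frac14}$ from Lemma~\ref{l:Y}.

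Next, I would substitute Lemma~\ref{l:Dtwr} for $T_{D_t} w$ and the second equation of \eqref{ww2d-diff-Y} for $D_t Y$, which gives $D_t(1-Y) = |1-Y|^2 R_\alpha - (1-Y)M$. The contribution $T_{(1-Y)M} w$ is placed into $G_2$ using the pointwise bound $\|M\|_{L^\infty} \lesssim_A A_{\frac14}^2$ from Lemma~\ref{l:M}. The contribution $T_{|1-Y|^2 R_\alpha} w$ is also absorbed into $G_2$ via \eqref{CM+}, since $\||1-Y|^2 R_\alpha\|_{BMO^{-1/4}} \lesssim_A A_{\frac14}$. This leaves the substitution of $T_{D_t} w$ yielding the main term $-T_{1-Y} T_{1-\bar Y}(r_\alpha + T_w R_\alpha)$ modulo perturbative errors.

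To match the target expression, I would then commute $T_{1-Y}$ past $T_{1-\bar Y}$ using Lemma~\ref{l:para-com} with $(\gamma_1, \gamma_2) = (0, \frac14)$, producing an error controlled by $\|Y\|_{L^\infty} \||D|^{1/4} Y\|_{BMO} \lesssim_A A_{\frac14}$ and gaining $\frac14$ derivative, which suffices for the $r_\alpha$ piece. For the $T_w R_\alpha$ piece, I would further apply Lemma~\ref{l:para-prod} to consolidate $T_{1-Y} T_w \approx T_{(1-Y)w}$, then decompose $(1-Y)w = T_{1-Y} w + T_w(1-Y) + \Pi(1-Y, w) = x + (\text{perturbative})$, with the $T_w Y$ and $\Pi(Y, w)$ coefficients absorbable into a paraproduct with $R_\alpha$ into $G_2$ via the now-familiar Coifman-Meyer bounds. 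The resulting main term $G_1 = -T_{1-\bar Y}(T_{1-Y} r_\alpha + T_x R_\alpha)$ is easily seen to satisfy the claimed $\dot H^{-1/4}$ bound: the $T_{1-Y} r_\alpha$ contribution is immediate from $\|r\|_{\dot H^{3/4}}$, while for $T_x R_\alpha$ one uses the Besov-type estimate $\|x_{<k}\|_{L^\infty} \lesssim 2^{k/4} \|x\|_{\dot H^{1/4}}$ to obtain $\|T_x R_\alpha\|_{\dot H^{-1/4}} \lesssim \|x\|_{\dot H^{1/4}} \|R_\alpha\|_{L^2} \lesssim_A \As \|w\|_{\dot H^{1/4}}$.

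Finally, the version with $D_t x$ in place of $T_{D_t} x$ requires bounding the difference $(D_t - T_{D_t}) x = T_{x_\alpha} b + \Pi(x_\alpha, b)$ in $L^2$. The diagonal term $\Pi(x_\alpha, b)$ is handled using Lemma~\ref{l:b} via the Besov bound $\||D|^{3/4} b\|_{BMO} \lesssim_A A_{\frac14}$ from \eqref{b-bounds1}, paired with the $\dot H^{1/4}$ regularity of $x$. The main technical obstacle is the unbalanced term $T_{x_\alpha} b$, where one must carefully exploit the asymmetry between the low-frequency (but differentiated) $x_\alpha$ and the high-frequency $b$ at just barely sufficient regularity; this delicate balancing is the principal difficulty of the proof, and parallels the analogous treatment implicit in Lemma~\ref{l:Dtwr}.
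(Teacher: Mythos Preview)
Your proposal is correct and follows essentially the same route as the paper: apply the para-Leibniz rule to $x = T_{1-Y} w$, discard the $D_t Y$ (equivalently $T_{D_t} Y$) contribution as perturbative, substitute Lemma~\ref{l:Dtwr} for $T_{D_t} w$, and reorganize via para-commutator/product bounds. The only cosmetic difference is that you use the $\tilde E^p_L$ form of Lemma~\ref{l:para-L} with $D_t Y$ computed directly from \eqref{ww2d-diff-Y}, whereas the paper uses the $E^p_L$ form and invokes Lemma~\ref{l:XY-mat}; these amount to the same thing.

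One remark: you overstate the difficulty of the final step. The term $T_{x_\alpha} b$ is not delicate. Using $\|b_k\|_{L^\infty} \lesssim 2^{-3k/4} A_{\frac14}$ (from \eqref{b-bounds1}, since $P_k: BMO \to L^\infty$ uniformly) and $\|(x_\alpha)_{<k}\|_{L^2}^2 = \sum_{j<k} 2^{3j/2} c_j^2$ with $c_j := 2^{j/4}\|x_j\|_{L^2}$, one gets
\[
\|T_{x_\alpha} b\|_{L^2}^2 \;\lesssim\; A_{\frac14}^2 \sum_k 2^{-3k/2} \sum_{j<k} 2^{3j/2} c_j^2 \;=\; A_{\frac14}^2 \sum_j c_j^2 \sum_{k>j} 2^{-3(k-j)/2} \;\lesssim\; A_{\frac14}^2 \|x\|_{\dot H^{1/4}}^2,
\]
so this term goes into $G_2$ without any special care.
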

Even though this is not needed later, we remark that $G_2$
also satisfies the last bound.
\begin{proof}
Using the para-Leibniz rule Lemma \ref{l:para-L}, we may write (collecting all terms satisfying the appropriate estimate into $G_2$)
\begin{equation}
T_{D_t} x = T_{-T_{D_t} Y}w + T_{1 - Y}T_{D_t}w + G_2.
\end{equation}
To the first term, we apply Lemma~\ref{l:XY-mat} to see that it is perturbative. Apply Lemma~\ref{l:Dtwr} to the second term, to obtain (rewriting using the para-commutator Lemma~\ref{l:para-com}, and dropping perturbative components as usual)
\[
-T_{1 - Y}T_{1- \bar Y} (r_\alpha + T_{w}R_\alpha) + G_2 = -T_{1- \bar Y} (T_{1 - Y} r_\alpha + T_{x}R_\alpha) + G_2.
\]

\end{proof}

 \subsection{Normal form analysis for \texorpdfstring{$(\mathcal G_1,\mathcal K_1)$}{}}
 \label{s:nf-1}
 
 In this section we introduce normal form corrections to $(w, r)$ which will cancel the leading order 
 contributions of the paradifferential truncations $(\mathcal G_1,\mathcal K_1)$. Precisely, 
 we will show the following:

\begin{proposition}
 Assume that $(w,r)$ solve \eqref{paralin(wr)-full}. Then there exists a linear normal form correction
\[
(\tw,\tr)(t)= NF((w,r)(t))
\]
solving an equation of the form
 \begin{equation}
 \label{paralin(wr)GK1}
 \left\{
\begin{aligned}
& T_{D_t} \tilde{w} + T_{1- \bar Y} \D_\alpha \tilde{r}
+ T_{(1 - \bar Y)R_\alpha} \tilde{w} = -P\cG_1(w, r) + G
 \\
& T_{D_t} \tilde{r} - i T_{1 - Y}T_{1 + a} \tilde{w} = 
-P\cK_1(w, r) + K,
\end{aligned}
\right.
 \end{equation}
with the following properties:

(i) Quadratic correction bound: 
\begin{equation}\label{nf-bd}
\| NF(w, r)\|_{\dH^{1/4}} \lesssim_A \teal{\As} 
\| (w,r)\|_{\dH^{1/4}},
\end{equation}

(ii) Secondary correction bound:
\begin{equation}\label{nf-bd2}
\| NF(g, k)\|_{\dH^{1/4}} \lesssim_A A_{\frac14} \|(g, k)\|_{\H^{0}},
\end{equation}

(iii) Cubic error bound:
\begin{equation}\label{nf-err-bd}
\|( G, K)\|_{\dH^{1/4}} \lesssim_{\teal{\As}} A_{\frac14}^2 \| (w,r)\|_{\dH^{1/4}}.
\end{equation}


 \end{proposition}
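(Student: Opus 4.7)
The plan is to construct $(\tilde w, \tilde r) = NF((w, r))$ as a form bilinear in $(w, r)$ and the background $(\W, R, Y)$, paraproduct-quadratic overall, guided by the classical normal form \eqref{nft1} for $(W,Q)$ translated to the diagonalized paradifferential setting. The leading pieces of $P\cG_1$ and $P\cK_1$ isolated in Lemma~\ref{l:GK1}, namely $-T_w T_{1-\bar Y} R_\alpha$ and $-i T_w T_{1+a} Y$, feature $w$ as a low-frequency paraproduct factor paired with a high-frequency background object that, by Lemmas~\ref{l:WR-source} and~\ref{l:XY-mat}, is essentially a para-material derivative of a background quantity ($-T_{D_t}Y$ and $T_{D_t}R$ respectively, modulo cubic errors). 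This motivates an ansatz of the form
\[
\tilde w = -T_w Y + \delta\tilde w, \qquad \tilde r = T_w R + \delta\tilde r,
\]
where $\delta\tilde w, \delta\tilde r$ are further bilinear corrections tuned to cancel secondary contributions from $P\cG_1, P\cK_1$ and to absorb the auxiliary pieces produced when the LHS operators $T_{1-\bar Y}\partial_\alpha$ and $T_{(1-\bar Y)R_\alpha}$ act on the ansatz (including a factor of $\half$ needed because $iT_w T_{1+a}Y$ is generated twice in the second equation, once from $T_{D_t}\tilde r$ and once from $-iT_{1-Y}T_{1+a}\tilde w$).

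The first step is to compute $T_{D_t}$ of the ansatz via the para-Leibniz rule in Lemma~\ref{l:para-L} and expand each resulting para-material derivative using Lemmas~\ref{l:WR-source}, \ref{l:XY-mat}, \ref{l:Dtwr}. The action of $T_{D_t}$ on the background factor $Y$ in $T_w Y$ yields, via \eqref{est:DtY}, exactly $-T_w T_{1-\bar Y}R_\alpha$ plus cubic errors, matching $-P\cG_1^{leading}$ on the RHS of the first equation in \eqref{paralin(wr)GK1}. The analogous expansion in the second equation, combined with the rearrangement $-iT_{1-Y}T_{1+a}(-T_w Y) \approx iT_w T_{1+a} Y$ via para-associativity and para-commutator (Lemmas~\ref{l:para-com}--\ref{l:para-assoc}), produces $-P\cK_1^{leading}$. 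The corrections $\delta\tilde w, \delta\tilde r$ are then fixed iteratively to eliminate the remaining leading quadratic mismatch and to enforce holomorphicity through the projection $P$.

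The estimates then use the paraproduct lemmas of Section~\ref{s:multilinear} together with the water-wave bounds of Section~\ref{s:wwbounds}. The bilinear bound \eqref{nf-bd} follows from Coifman--Meyer type estimates together with $\||D|^{1/4}Y\|_{BMO} \lesssim_A A_{1/4}$ from Lemma~\ref{l:Y}; the $(A\As)^{1/2}$ rate arises by interpolating between the pointwise control $\|Y\|_{L^\infty}\lesssim A$ and the Sobolev control provided by $\As$. The secondary bound \eqref{nf-bd2}, where the input $(g,k)$ sits only in $\H_0$, is handled by pushing all background regularity onto the high-frequency paraproduct factor, where the derivative balance yields a single $A_{1/4}$ factor. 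The cubic error bound \eqref{nf-err-bd} collects three classes of terms: (i) para-Leibniz remainders gaining $A_{1/4}$ from $\||D|^{1/4}b\|_{BMO}$ in Lemma~\ref{l:b}; (ii) contributions where $T_{D_t}$ hits a linearized factor, producing trilinear expressions like $T_{T_{1-\bar Y}r_\alpha}Y$ and $T_{T_{1-\bar Y}T_w R_\alpha}Y$ controlled via Lemma~\ref{l:Dtwr}; (iii) coefficient-mismatch terms from rearrangements such as $T_{1-Y}T_{1+a}T_w Y \leftrightarrow T_w T_{1+a}Y$ via Lemma~\ref{l:para-prod}.

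The main obstacle is the cubic bookkeeping in class (ii). Trilinear terms where a derivative falls on $(w,r)$ have no linearized analogue of the balanced Besov control $A_{1/4}$, so they must instead be bounded by pairing Sobolev embedding from $\dH_{1/4}$ with Besov norms on two background factors; here the off-diagonal decay in the paraproduct estimates together with the $\ell^2$ summability built into $A_{1/4}$ supply the Littlewood-Paley summability needed to close the bound by $A_{1/4}^2\|(w,r)\|_{\dH_{1/4}}$. The precise choice of $\delta\tilde w, \delta\tilde r$ is dictated by these cancellation and perturbativity requirements, and verifying each case — while keeping track of the projection $P$, the coefficients $T_{1-\bar Y}, T_{1+a}, T_{1+W_\alpha}$, and the holomorphic/antiholomorphic structure — constitutes the bulk of the work.
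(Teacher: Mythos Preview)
There is a genuine gap in the ansatz. You read Lemma~\ref{l:GK1} as saying that the only nonperturbative parts of $(P\cG_1,P\cK_1)$ are $-T_wT_{1-\bar Y}R_\alpha$ and $-iT_wT_{1+a}Y$. But the residual $(G,K)$ in that lemma satisfies only the \emph{quadratic} bound \eqref{nf-err-bd2-GK} in $\H_0$, not the \emph{cubic} bound \eqref{nf-err-bd} in $\dH_{1/4}$ required here. Hence the entire expressions $P\cG_1,P\cK_1$ --- including $T_{w_\alpha}b$, $\Pi(w_\alpha,b)$, $\Pi(r_\alpha,\bar Y)$, $T_{r_\alpha}b$, and the $\Pi$ terms --- must be reproduced by the normal form, not just the two displayed leading pieces.

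Your primary correction $(-T_wY,\,T_wR)$ cannot do this. Applying $T_{D_t}$ to $-T_wY$ and using $T_{D_t}Y\approx -R_\alpha$ yields only a $T_wR_\alpha$-type output; it never produces $T_{w_\alpha}R$, which is needed to cancel $T_{w_\alpha}b$. The paper's remedy is to work with $X=T_{1-Y}W$ (morally $\partial_\alpha^{-1}Y$) and take the first correction in the form
\[
\tilde w_1=-\partial_\alpha\bigl(T_wX+\Pi(w,X)\bigr)-\Pi(w_\alpha,\bar X),
\qquad
\tilde r_1=-\bigl(T_{r_\alpha}X+\Pi(r_\alpha,X)\bigr)-\Pi(r_\alpha,\bar X).
\]
The outer $\partial_\alpha$ on $T_wX$ distributes as $T_{w_\alpha}X+T_wX_\alpha$, so when $T_{D_t}X\approx -T_{1-\bar Y}R$ is substituted (Lemma~\ref{l:XY-mat}(c)), one obtains \emph{both} $T_{w_\alpha}R$ and $T_wR_\alpha$, matching the full structure of $\cG_1$. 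Note also that $\tilde r_1$ is built from $r_\alpha$, not $w$; this is what the linearization of \eqref{nft1} actually gives for the holomorphic-in-$(w,r)$ part, and it is what is needed for the cross-cancellation with $T_{1-\bar Y}\partial_\alpha\tilde r_1$ in the first equation. Even after fixing $(\tilde w_1,\tilde r_1)$ in this way, unbalanced cubic residuals remain (the paper's $(\cG_{1,1},\cK_{1,1})$), and two further layers of corrections $(\tilde w_2,\tilde r_2)$ and $(\tilde w_3,\tilde r_3)$ --- the last specifically for the Taylor-coefficient piece $T_xPa$ --- are required (Lemmas~\ref{lem:cor1}--\ref{lem:cor3}). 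Your sketch does not anticipate this three-step structure, and the unspecified $\delta\tilde w,\delta\tilde r$ cannot absorb these residuals without the $X$-based mechanism.
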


The motivation for the secondary bound is that the normal form corrections will be constructed not in terms of solutions for the homogeneous 
problem \eqref{paralin(wr)}, but in terms of solutions 
for the inhomogeneous problem \eqref{paralin(wr)-full} (which recall originally takes the form \eqref{lin(wr)0}). As such, we need to be able to estimate the effect of source 
terms via (ii). For this purpose 
we use the $\H^0$ norm, which is suitable in conjunction with the source terms $(G_2, K_2)$ of Lemma~\ref{l:Dtwr}.

\

We will build $(\tilde{w}, \tilde{r})$ in three steps. In the first step, we apply a correction to generate the quadratic paradifferential truncations $(\mathcal G_1,\mathcal K_1)$
  except temporarily putting aside the cubic truncation $T_w a$ corresponding to the Taylor coefficient $a$, which is itself quadratic.

By viewing the linearized variable $w$ as a low frequency coefficient, $T_w a$ may be viewed as quadratic in the nonlinear variable $R$. More generally, the correction of the first step misses unbalanced cubic source terms which are quadratic in the nonlinear variables $(W, R)$ with low frequency $w$ coefficients. We address most of these source terms via a correction in the second step, other than those quadratic in $R$, which we resolve in a third step. 

\

Before defining the corrections $(\tw, \tr)$, we enumerate notation that we will use in this subsection. Corresponding to the three steps above, we will define
$$(\tw, \tr) = (\tw_1, \tr_1) + (\tw_2, \tr_2) + (\tw_3, \tr_3),$$
where $(\tw_i, \tr_i)$ will be defined below. We denote the source terms of the paradifferential $(\tilde{w}_i, \tilde r_i)$ equation by $(\tilde{\cG}_i,\tilde{\cK}_i)$, i.e.
\begin{equation}\label{eq:para-linearized}
\left\{
\begin{aligned}
& T_{D_t} \tilde{w}_i + T_{1- \bar Y} \D_\alpha \tilde{r}_i
+ T_{(1 - \bar Y)R_\alpha} \tilde{w}_i = \tilde{\cG}_i
 \\
& T_{D_t} \tilde{r}_i - i T_{1 - Y}T_{1 + a} \tilde{w}_i = \tilde{\cK}_i.
\end{aligned}
\right.
\end{equation}

Throughout, our normal form corrections reflect two cases:
\begin{enumerate}[label= (\roman*)]

\item High-low interactions,

\item  Balanced interactions with low frequency output.

\end{enumerate}
Regarding the second case, it is convenient to let $\Pi = P\Pi$ throughout, thus including an implicit projection $P$.

Lastly, it will be convenient to state our corrections in terms of the following intermediate between $W$ and $\D_\alpha^{-1}Y$:
$$X := T_{1 - Y} W.$$

\

We proceed to define our first set of corrections,
\begin{align*}
\tilde{w}_1 &= - \D_\alpha (T_w X + \Pi(w, X)) - \Pi(w_\alpha, \bar X), \\
\tilde{r}_1 &= - (T_{r_\alpha}X + \Pi(r_\alpha, X)) - \Pi(r_\alpha, \bar X).
\end{align*}
We remark that at the quadratic level, this 
correction is nothing but the 
corresponding (i.e. non-paradifferential) holomorphic
(i.e. linear in $(w,r)$) part of the linearization of the normal form transformation \eqref{nft1}.

We first verify that this correction satisfies \eqref{nf-bd}. Indeed, by Sobolev embeddings 
and paraproduct bounds, 
\[
\|\D_\alpha (T_w X)\|_{\dot{H}^{1/4}} \lesssim\|w\|_{L^4} \||D|^{1/4} X_\alpha\|_{L^4} \lesssim_A \|w\|_{\dot{H}^{1/4}} \|W_\alpha\|_{\dot{W}^{1/4,4}}.
\]
Similarly,
\teal{
\[
\|T_{r_\alpha} X\|_{\dot{H}^{3/4}} \lesssim\||D|^{-1/4}r_\alpha\|_{L^2} \|X_\alpha\|_{BMO} \lesssim_A  \|r\|_{\dot{H}^{3/4}} \|W_\alpha\|_{L^\infty}.
\]}

The balanced frequency terms are straightforward; for instance, 
$$\|\D_\alpha \Pi(w, X)\|_{\dot{H}^{1/4}} \lesssim_A \|w\|_{\dot{H}^{1/4}}\|W_\alpha\|_{L^\infty}.$$

It is also straightforward to verify that this correction satisfies the secondary bound \eqref{nf-bd2}; for instance,
$$\|\D_\alpha (T_{\teal{g}} X)\|_{\dot{H}^{1/4}} \lesssim \||D|^{1/4}W_\alpha\|_{BMO}\|\teal{g}\|_{L^2}.$$

Next, we turn to the source terms, and prove the following:

\begin{lemma}\label{lem:cor1}
We have the representation 
\[
\tilde{\cG}_1 = -P\cG_1 + \cG_{1, 1} + G, \qquad \tilde{\cK}_1 = -P\cK_1 + \cK_{1, 1} + K
\]
where $(G,K)$ satisfy the bound \eqref{nf-err-bd}, whereas $(\cG_{1, 1},\cK_{1, 1})$ are given by 
\begin{align*}
\cG_{1, 1}(w, r) &= T_w(T_{1 - \bar Y} \D_\alpha^2 \Pi(X, R) + \Pi(Y_\alpha, \bar R) + T_{1 - \bar Y}\Pi(R_{\alpha\alpha}, \bar X) - PM), \\
\cK_{1, 1}(w, r) &= iT_w(T_{1 + a} \D_\alpha \Pi(Y, X)  + T_{1 + a}\Pi(Y_\alpha, \bar X) + T_{1 - Y}Pa).
\end{align*}
\end{lemma}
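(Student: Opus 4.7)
The plan is to directly compute $\tilde{\cG}_1 := T_{D_t}\tw_1 + T_{1-\bar Y}\partial_\alpha \tr_1 + T_{(1-\bar Y)R_\alpha}\tw_1$ and $\tilde{\cK}_1 := T_{D_t}\tr_1 - iT_{1-Y}T_{1+a}\tw_1$, and organize the result into the announced form $-P\cG_1 + \cG_{1,1} + G$ and $-P\cK_1 + \cK_{1,1} + K$ modulo a perturbative remainder satisfying \eqref{nf-err-bd}. The main mechanism is to distribute the para-material derivative $T_{D_t}$ across the para-products and $\Pi$-products defining $\tw_1, \tr_1$ using the para-Leibniz rule of Lemma~\ref{l:para-L}; the resulting Leibniz errors are of Sobolev order $1/4$ smaller than the input, hence absorbed into $G, K$.

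Having distributed $T_{D_t}$, the next step is to substitute the leading-order material derivatives $T_{D_t}w \approx -T_{1-\bar Y}(r_\alpha + T_w R_\alpha)$ and $T_{D_t}r \approx iT_{1+a}(T_{1-Y}w - T_w Y)$ from Lemma~\ref{l:Dtwr}, together with $T_{D_t}X \approx -T_{1-\bar Y}R$ and $T_{D_t}X_\alpha \approx -T_{1-\bar Y}R_\alpha$ from Lemma~\ref{l:XY-mat} (with conjugate versions obtained by complex conjugation). The secondary remainders $G_2, K_2, E_2, E_3$ furnished by those lemmas are balanced cubic expressions, hence absorbed into $G, K$. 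The $T_{D_t}w$ and $T_{D_t}r$ substitutions, combined with the $T_{D_t}X, T_{D_t}X_\alpha$ substitutions, produce after reorganization via the para-commutator, para-product and para-associativity lemmas (Lemmas~\ref{l:para-com}, \ref{l:para-prod}, \ref{l:para-assoc}, \ref{l:para-assoc2}) together with the identity $X_\alpha = T_{1+W_\alpha}Y + E$ from Lemma~\ref{l:YtoW} the quadratic $T_{w_\alpha}b$, $T_{r_\alpha}\bar Y$, $T_w(1-\bar Y)R_\alpha$ terms (and their balanced $\Pi$-counterparts) that comprise $-P\cG_1$, and analogously the terms making up $-P\cK_1$ after using the para-material derivative of $a$ from Lemma~\ref{l:a-mat}.

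After this cancellation has been identified, the residual unbalanced cubic pieces — where the linearized variable $w$ sits at low frequency while the nonlinear variables occupy the higher, mutually balanced frequencies — are precisely what is collected into $\cG_{1,1}, \cK_{1,1}$. Concretely, the chains $T_w T_{D_t}X_\alpha$ and $T_{T_{D_t}w}X_\alpha$ produce the $T_w T_{1-\bar Y}\partial_\alpha^2 \Pi(X, R)$ piece after pulling $T_{1-\bar Y}$ outside; the $\Pi(w_\alpha, T_{D_t}\bar X)$ chain produces the $T_w \Pi(Y_\alpha, \bar R)$ and $T_w T_{1-\bar Y}\Pi(R_{\alpha\alpha}, \bar X)$ pieces after applying Lemma~\ref{l:YtoW} to $\bar X_\alpha$; and the secondary $-(1-Y)M$ contribution in $T_{D_t}Y$ from \eqref{est:DtY}, once fed through the low-frequency $T_w$ channel, yields the $-T_w PM$ term. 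Analogous chains on the $\tr_1$ side account for the $iT_w T_{1+a}\partial_\alpha\Pi(Y,X)$, $iT_w T_{1+a}\Pi(Y_\alpha, \bar X)$, and $iT_w T_{1-Y}Pa$ components of $\cK_{1,1}$.

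The main obstacle is the careful bookkeeping needed to discriminate perturbative contributions from the residue $(\cG_{1,1}, \cK_{1,1})$. The guiding rule, already used repeatedly in Section~\ref{s:wwbounds}, is that a cubic paradifferential term is perturbative (absorbable into $G, K$) precisely when the lowest-frequency factor carries at least a quarter-derivative on $W_\alpha$ or $R_\alpha$ (contributing one $A_{1/4}$ factor by Lemma~\ref{l:Y} or direct Besov bounds) or when the three factors are frequency-balanced so that Proposition~\ref{p:bmo} yields the two $A_{1/4}$ factors directly; otherwise, when $w$ sits alone at low frequency and the remaining balanced factors cannot jointly provide two $A_{1/4}$ factors, the term must be tracked into $\cG_{1,1}$ or $\cK_{1,1}$. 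Verifying this dichotomy consistently for every Leibniz, para-commutator, para-product and para-associativity error, and for each secondary source term $G_2, K_2, E_2, E_3$, constitutes the technical heart of the proof.
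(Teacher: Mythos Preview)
Your strategy matches the paper's: distribute $T_{D_t}$ via the para-Leibniz rule, substitute the leading para-material derivatives of $w,r,X$ from Lemmas~\ref{l:Dtwr} and \ref{l:XY-mat}, and reorganize using Lemmas~\ref{l:para-com}--\ref{l:para-assoc2} and \ref{l:YtoW}, with the bookkeeping principle you describe governing what lands in $(G,K)$ versus $(\cG_{1,1},\cK_{1,1})$. The paper carries this out by splitting each correction into its three frequency configurations (high--low, balanced holomorphic, balanced with one antiholomorphic input) and treating each separately, which is the natural way to execute your plan.

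Two inaccuracies in your account are worth flagging. First, Lemma~\ref{l:a-mat} plays no role here: the correction $(\tw_1,\tr_1)$ contains no $a$, so $D_ta$ never arises. The term $iT_{1-Y}T_w a$ in $\cK_1$ is simply \emph{not produced} by this correction; it is precisely the uncanceled leftover that becomes the $iT_wT_{1-Y}Pa$ piece of $\cK_{1,1}$. Second, your attribution of specific residual terms to specific Leibniz chains is off. For instance, the $-T_wPM$ term in $\cG_{1,1}$ does not come from a $T_{D_t}Y$ contribution (there is no bare $Y$ in the correction); rather it assembles from the combination $-\bar Y_\alpha R + T_{T_{1-Y}\bar R_\alpha}X_\alpha$ in the high--low computation, which after Lemma~\ref{l:YtoW} becomes $-P[R\bar Y_\alpha - \bar R_\alpha Y] = -PM$. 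Similarly, the $T_wT_{1-\bar Y}\partial_\alpha^2\Pi(X,R)$ piece arises from the balanced holomorphic correction $\partial_\alpha\Pi(w,X)$, not from the high--low $T_wX$ part as you suggest. These misattributions would surface and self-correct once you actually carry out the six-case computation, but as written your sketch does not reliably locate the residual terms.
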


In other words, our first normal form correction $\tilde w_1$ does not quite allow us to dispense with the paradifferential truncations $(\cG_1,\cK_1)$. Instead, it allows us to 
replace it with the milder source term  $(\cG_{1, 1},\cK_{1, 1})$ (modulo perturbative errors $(G, K)$). These residual source terms are milder in that they are cubic, though unbalanced and hence not quite perturbative.

\

Throughout the computations below, we use the observation that cubic and higher order terms such that the lowest frequency variable is ``fully differentiated'' may be absorbed into $G$ or $K$, where 
$$(|D|^{5/4}W, |D|^{3/4}R) \in BMO, \quad (|D|^{1/4}w, |D|^{3/4}r) \in L^2.$$
For instance, for the following typical term appearing in the first equation, the lowest frequency variable is either $Y$ or $r$, and both are fully differentiated (and in fact over-differentiated):
$$\|T_{Y_\alpha} T_{r_\alpha} W\|_{\dot{H}^{\frac14}} \lesssim \||D|^{-3/4}Y_\alpha\|_{BMO} \||D|^{-1/4} r_\alpha\|_{L^2} \||D|^{5/4}W\|_{BMO} \lesssim_A A_{\frac14}^2 \|r\|_{\dot{H}^{\frac34}},$$
which verifies \eqref{nf-err-bd}. 

For quartic and higher order terms, terms such that the lowest frequency variable is a linearized variable $w$ (even with no derivatives) may also be absorbed into $G$ or $K$, by measuring $w$ in $L^4$ and applying a Sobolev embedding. For instance, for the following typical term appearing in the first equation, the lowest frequency variable may also be $w$:
$$\|T_{Y_\alpha R_\alpha} T_{w} W\|_{\dot{H}^{\frac14}} \lesssim \||D|^{-3/4}Y_\alpha\|_{BMO}\||D|^{-1/4} R_\alpha\|_{BMO}\|w\|_{L^4}\| D^\frac14 W_\alpha\|_{L^4} \lesssim_{\As} A_{\frac14}^2 \|w\|_{\dot{H}^{\frac14}},$$
which verifies \eqref{nf-err-bd}. 

\

\begin{proof}

We compute the contributions from $(\tw_1, \tr_1)$ when inserted in the left hand side of \eqref{eq:para-linearized}, separating the output into the three components as in the statement of the lemma. It is instructive to compare the leading order bilinear interactions with the normal form computations in the proof of Proposition \ref{prop:cubiccommutator}, though the situation here is more delicate as we cannot apply an exponential conjugation to lower the order of our source terms and corrections.

\

1) We begin by computing the first equation in \eqref{eq:para-linearized}, starting with the high-low corrections in $(\tw_1, \tr_1)$. After commuting $D_t$ with $\D_\alpha$,
\begin{align*}
-T_{D_t} \D_\alpha T_w X &= -\D_\alpha T_{D_t} T_w X + T_{b_\alpha} \D_\alpha T_w X
= -\D_\alpha T_{D_t} T_w X + T_{b_\alpha} T_w X_\alpha+ G,
\end{align*}
we have the following four contributions to the first equation, using the para-Leibniz rule 
in Lemma~\ref{l:para-L}, (precisely the $\sigma = 0$ case of the bound \eqref{Leibniz-T0+}), then part (b) of Lemma \ref{l:XY-mat} for the para-material derivative of $X$, and Lemma~\ref{l:Dtwr} for the para-material derivative of $(w, r)$ (combined with the secondary bound \eqref{nf-bd2} on $(\tw_1, \tr_1)$ to justify absorbing the source terms into $(G, K)$):
\begin{align*}
-\D_\alpha T_{D_t} T_w X &= -\D_\alpha ( T_{D_t w} X + T_w T_{D_t} X) + G \\
& = \D_\alpha (T_{T_{(1 - \bar{Y})}(r_\alpha + T_w R_\alpha)} X \!+\! T_w(- T_{T_{1 - \bar Y}R_\alpha} X\! +\! P((1 - \bar Y)R) + \Pi(X_\alpha, b))) + G \\ 
&= \D_\alpha T_{T_{1 - \bar{Y}} r_\alpha} X +  \D_\alpha T_w P[(1 - \bar Y)R] +  T_w \partial_\alpha \Pi(X_\alpha, b) + G\\
T_{b_\alpha}  T_w X_\alpha &= T_w T_{T_{1 - \bar Y} R_\alpha + T_{1 - Y}\bar R_\alpha}  X_\alpha + G \\
-T_{1 - \bar{Y}} \D_\alpha T_{r_\alpha} X &= -\D_\alpha T_{T_{1 - \bar Y} r_{\alpha}} X + G\\
-T_{(1 - \bar{Y})R_\alpha}\D_\alpha T_w X &= -T_w T_{T_{1 - \bar{Y}} R_\alpha} X_\alpha + G.
\end{align*}

Consider the middle term of the first contribution. Applying the product rule, we have
\[
\D_\alpha T_w P[(1 - \bar Y)R] = T_{w_\alpha}P[(1 - \bar Y)R] + T_w P [(1 - \bar Y) R_\alpha] - T_w P[\bar Y_\alpha R].
\]
The first two terms (note $T_{w_\alpha}P [(1 - \bar Y)R] = P T_{w_\alpha} b$) are two of the high-low paradifferential truncations in $\cG_1$. 

Summing all remaining nonperturbative terms and observing cancellations, we obtain
the secondary error terms
\begin{align}\label{eq:corrections1}
 T_w P\left[ - \bar Y_\alpha R  + \D_\alpha  \Pi(X_\alpha, b) + T_{T_{1 - Y}\bar R_\alpha}   X_\alpha \right]
\end{align}
which we collect in $\cG_{1, 1}$.

\

2) Continuing with the first equation in \eqref{eq:para-linearized}, we have analogous computations for the frequency balanced corrections with holomorphic variables, with the following differences:
\begin{enumerate}[label= (\roman*)]
\item Several terms have better derivative balance and may be absorbed into $G$ directly. Indeed, all of the errors corresponding to those in \eqref{eq:corrections1} are perturbative.
\item On the other hand, the following cancellation no longer applies, as the right hand side is in fact perturbative:
\[
\partial_\alpha \Pi (T_{1 - \bar Y} T_w R_\alpha, X) \neq \partial_\alpha 
\Pi(w, T_{T_{1 - \bar Y}R_\alpha} X) = G.
\]

\end{enumerate}
As a result, we obtain the desired balanced interaction truncations analogous to before, but with a different residual for $\cG_{1, 1}$,
\begin{align}\label{eq:corrections2}
 T_w T_{1 - \bar Y} \D_\alpha \Pi(R_\alpha, X),
\end{align}
where we have used Lemma~\ref{l:para-assoc} to factor out the coefficients. 

\

3) Lastly for the first equation in \eqref{eq:para-linearized}, we have similar computations for the frequency balanced corrections with one anti-holomorphic variable. The difference with 2) is that we include only the correction where $\D_\alpha$ falls on $w$. Using Lemmas~\ref{l:para-L}, \ref{l:XY-mat} we have
\begin{align*}
-T_{D_t} \Pi(w_\alpha, \bar X) &= \Pi(\partial_\alpha T_{D_t} w, \bar X)
+  \Pi(w_\alpha, T_{D_t} \bar X) + G \\
 & = T_{1 - \bar Y} \Pi(r_{\alpha \alpha} + T_w R_{\alpha \alpha}, \bar X) +\Pi(w_\alpha, T_{1 - Y}\bar R) + G \\
-T_{1 - \bar{Y}} \D_\alpha \Pi(r_\alpha, \bar X) &= - T_{1 - \bar Y} \Pi(r_{\alpha\alpha}, \bar X) - \Pi(r_\alpha, T_{1 - \bar Y}\bar X_\alpha) + G.
\end{align*}
The second term of the first contribution,
$$\Pi(w_\alpha, T_{1 - Y}\bar R) = \Pi(w_\alpha, P((1 - Y)\bar R)) + G,$$
is the other desired balanced interaction truncation in $\cG_1$ from $\Pi(w_\alpha, b)$.

Consider the second term of the second contribution. Applying Lemma \ref{l:YtoW} to express $X$ in terms of $Y$, we have
$$- \Pi(r_\alpha, T_{1 - \bar Y}\bar X_\alpha) = - \Pi(r_\alpha, \bar Y) + G.$$
This is the last desired paradifferential truncation in $\cG_1$.

After cancellations, the only residual term for $\cG_{1, 1}$ is
\begin{align}\label{eq:corrections2.1}
T_{1 - \bar Y} \Pi(T_w R_{\alpha\alpha}, \bar X) = T_w T_{1 - \bar Y} \Pi(R_{\alpha\alpha}, \bar X) + G,
\end{align}
where we have used Lemma~\ref{l:para-assoc} to factor out the coefficients.

Summing \eqref{eq:corrections1}, \eqref{eq:corrections2} and \eqref{eq:corrections2.1}, we have
the residual terms
\begin{align*}
&- T_w P\left[\bar Y_\alpha R  + \D_\alpha \Pi(X_\alpha, b) + T_{T_{1 - Y}\bar R_\alpha}   X_\alpha 
 + \D_\alpha T_{1 - \bar Y} \Pi(R_\alpha, X) + T_{1 - \bar Y} \Pi( R_{\alpha\alpha}, \bar X)\right].
\end{align*}
Applying perturbative modifications, and in particular Lemma \ref{l:YtoW}, we 
obtain  exactly $\cG_{1, 1}$ modulo perturbative terms $G$.

\

4) Next, we compute the second equation in \eqref{eq:para-linearized}. We begin with the high-low corrections, Applying the bound \eqref{Leibniz-T0+} in Lemma~\ref{l:para-L} and part (c) of Lemma \ref{l:XY-mat} for the first identity and Lemma \ref{l:YtoW} for the second, we have
\begin{align*}
-T_{D_t} T_{r_\alpha} X &= T_{\D_\alpha T_{D_t} r} X + T_{r_\alpha} T_{D_t} X + K
\\
&= - T_{i T_{1 + a}T_{1 - Y}w_\alpha  - T_{1+a} T_w Y_\alpha} X + T_{r_\alpha}T_{1 - \bar Y} R + K \\
&= - iT_{1 + a} T_{((1 - Y)w)_\alpha} X + T_{r_\alpha} T_{1 - \bar Y} R + K \\
i T_{1 - Y}T_{1 + a}\D_\alpha T_w X &= i T_{1 - Y}T_{1 + a} T_{((1 - Y)w)_\alpha} W + i T_{1 - Y}T_{1 + a} T_w T_{1 - Y}W_\alpha + K \\
&= iT_{1 + a} T_{((1 - Y)w)_\alpha} X + iT_{1 + a} T_w Y + i T_{1 - Y}T_{1 + a} T_w\Pi(Y, W_\alpha) + K.
\end{align*} 
After summing the two contributions with cancellations and applying Lemma \ref{l:YtoW}, we obtain
\[
T_{r_\alpha} T_{1 - \bar Y}R + iT_{1 + a} T_w Y + iT_{1 + a}T_w \Pi(Y, X_\alpha) + K.
\]
The first two terms (note that $T_{r_\alpha} T_{1 - \bar Y}R = P T_{r_\alpha} b + K$) are two of the high-low paradifferential truncations in $\cK_1$, leaving to $\cK_{1, 1}$ only the third term,
\begin{equation}\label{eq:corrections4.0}
iT_{1 + a}T_w \Pi(Y, X_\alpha).
\end{equation}

\

5) Continuing with the second equation in \eqref{eq:para-linearized}, we have analogous computations for the frequency balanced corrections with holomorphic variables, with the following differences, similar to step 2):
\begin{enumerate}[label= (\roman*)]
\item The error term in \eqref{eq:corrections4.0} becomes
$$iT_{1 + a}\Pi(w, \Pi(Y, X_\alpha))$$
which may be absorbed into $K$ directly.
\item On the other hand, the following cancellation no longer applies, as the right hand side is in fact perturbative:
$$iT_{1 + a}\Pi(Y_\alpha w, X) \neq i T_{1 - Y}T_{1 + a} \Pi(w, T_{Y_\alpha}W) = K.$$
\end{enumerate}
As a result, we obtain the desired balanced interaction truncations analogous to before, but with a different residual for $\cK_{1, 1}$,
\begin{align}\label{eq:corrections4}
iT_{1+a}T_w \Pi(Y_\alpha, X).
\end{align}

\

6) Lastly for the second equation in \eqref{eq:para-linearized}, we have similar computations for the frequency balanced corrections with one anti-holomorphic variable. The only difference with 5) is that we lack the term analogous to
$$i T_{1 - Y}T_{1 + a} \Pi(w, X_\alpha)$$
(contributed by $\tw_1$) which would be
$$i T_{1 - Y}T_{1 + a} \Pi(w, \bar X_\alpha),$$
but accordingly, this is absent as a paradifferential truncation in $\cK_1$. We thus obtain only the residual term analogous to that in 5):
\begin{align}\label{eq:corrections4.1}
iT_{1 + a}T_w \Pi(Y_\alpha, \bar X).
\end{align}

Summing \eqref{eq:corrections4.0}, \eqref{eq:corrections4}, and \eqref{eq:corrections4.1}, and observing that in $\cK_1$, only the high-low paradifferential error from the Taylor coefficient $a$ remains, we obtain $\cK_{1, 1}$. 

\end{proof}


We proceed with the second step in the construction of the normal form correction, which cancels those errors in $(\cG_{1, 1}, \cK_{1, 1})$ which can be seen as quadratic in the nonlinear variables $(W, R)$ and with $w$ as a low frequency coefficient. The corresponding correction addresses the quadratic part, while $w$ is simply retained as a coefficient.
We define this correction as\footnote{This is obtained again by a standard normal form analysis, but taking care to also match the low frequency coefficients.}

\begin{align*}
\tilde{w}_2 &= T_w(-T_{\frac{1}{1 + a}}Pa + \half \D_\alpha^2 \Pi(X, X) + \Pi(X_{\alpha\alpha}, \bar X)),\\
\tilde{r}_2 &= 0.
\end{align*}
We first verify that this correction satisfies \eqref{nf-bd}. By Sobolev embedding and Proposition~\ref{regularity for a},
\[
\|T_wT_{\frac{1}{1 + a}}a\|_{\dot{H}^{\frac14}} \lesssim \teal{\|w\|_{L^4}(1+ \|a\|_{L^\infty}) \||D|^{1/4} a\|_{L^4}}  \lesssim_A 
 \teal{\As^2} \|w\|_{\dot{H}^{1/4}}.
\]

The balanced terms are then easily estimated.

As with $(\tw_1, \tr_1)$, the secondary bound \eqref{nf-bd2} is also easily verified. For instance, 
\[
\|T_fT_{\frac{1}{1 + a}}a\|_{\dot{H}^{\frac14}} \lesssim \|f\|_{L^2} (1+\|a\|_{L^\infty}) \||D|^{1/4} a\|_{BMO}  \lesssim_A \teal{A A_{\frac14}}\|f\|_{L^2}.
\]
Next we turn our attention to the source terms:

\begin{lemma}\label{lem:cor2}

We have the representation
$$\tilde \cG_2 = -\cG_{1, 1} + \cG_{1, 2} + G, \qquad \tilde \cK_2 = -\cK_{1, 1} + \cK_{1, 2} + K$$
where $(G,K)$ satisfy the bound \eqref{nf-err-bd}, whereas $(\cG_{1, 2},\cK_{1, 2})$ are given by 
\begin{align*}
\cG_{1, 2}(w,r) = 0, \qquad \cK_{1, 2}(w, r) &= 2iT_{x}Pa.
\end{align*}
\end{lemma}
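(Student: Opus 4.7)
The plan is to compute $\tilde\cG_2 := T_{D_t}\tilde w_2 + T_{(1-\bar Y)R_\alpha}\tilde w_2$ and $\tilde\cK_2 := -iT_{1-Y}T_{1+a}\tilde w_2$ by inserting $\tilde w_2 = T_w F$ with
\[
F = -T_{1/(1+a)}Pa + \tfrac{1}{2}\partial_\alpha^2\Pi(X, X) + \Pi(X_{\alpha\alpha}, \bar X)
\]
into \eqref{eq:para-linearized} and tracking cancellations with $-\cG_{1, 1}$ and $-\cK_{1, 1}$. The para-Leibniz rule (Lemma~\ref{l:para-L}) distributes $T_{D_t}$ between $T_w$ and $F$ with a perturbative error, and Lemma~\ref{l:Dtwr} shows $T_{T_{D_t}w}F$ is perturbative given $F$'s built-in $A_\frac14^2$-sized factors. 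The transport piece $T_{(1-\bar Y)R_\alpha}\tilde w_2$ is likewise absorbed into $G$ by standard paraproduct bounds.

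For $\tilde\cG_2$, treat the three components of $T_w T_{D_t} F$ separately. The Taylor-coefficient piece $-T_w T_{D_t}(T_{1/(1+a)}Pa)$, via Lemma~\ref{l:a-mat}'s identity $T_{D_t}a = -T_{1+a}M + E$ together with the para-product collapse $T_{1/(1+a)}T_{1+a} \approx I$, produces exactly $+T_w PM$, cancelling $-T_w PM$ in $\cG_{1, 1}$. For $T_w T_{D_t}(\tfrac12\partial_\alpha^2\Pi(X, X))$, Lemma~\ref{l:XY-mat}(c) supplies $T_{D_t}X = -T_{1-\bar Y}R + E$, and by symmetry of $\Pi$ plus para-associativity we recover $-T_w T_{1-\bar Y}\partial_\alpha^2\Pi(R, X)$. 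Finally, $T_w T_{D_t}\Pi(X_{\alpha\alpha}, \bar X)$ splits into holomorphic and antiholomorphic contributions; using Lemma~\ref{l:XY-mat}(d) and the identification $X_{\alpha\alpha} \approx Y_\alpha$ from Lemma~\ref{l:YtoW}, these give $-T_w T_{1-\bar Y}\Pi(R_{\alpha\alpha}, \bar X)$ and $-T_w\Pi(Y_\alpha, \bar R)$, completing the cancellation of $\cG_{1, 1}$ and yielding $\cG_{1, 2} = 0$.

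For $\tilde\cK_2$, the two balanced $X$-pieces are handled by moving $T_w$ outside $T_{1-Y}T_{1+a}$ (Lemma~\ref{l:para-com}), applying $X_\alpha \approx Y$ and $X_{\alpha\alpha} \approx Y_\alpha$ (Lemma~\ref{l:YtoW}), and absorbing $T_{1-Y}$ into $\Pi$ through para-associativity; they yield $-iT_w T_{1+a}\partial_\alpha\Pi(Y,X)$ and $-iT_w T_{1+a}\Pi(Y_\alpha, \bar X)$, matching the first two summands of $-\cK_{1, 1}$. The Taylor-coefficient piece exploits the algebraic identity $w \equiv (1+a)\cdot w \cdot (1+a)^{-1}$ implemented paradifferentially via Lemma~\ref{l:para-prod}, giving $T_{1+a}T_w T_{1/(1+a)} = T_w$ modulo quartic error; this reduces the contribution to $iT_{1-Y}T_w Pa + K$. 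The delicate step is then to establish
\[
T_{1-Y}T_w Pa + T_w T_{1-Y}Pa - 2T_x Pa \in K,
\]
which we obtain from two complementary para-product expansions $T_{1-Y}T_w \approx T_{(1-Y)w}$ and $T_w T_{1-Y} \approx T_{(1-Y)w}$ (Lemma~\ref{l:para-prod}) together with the Bony decomposition $(1-Y)w = x - T_w Y - \Pi(Y, w)$, which yields $2T_{(1-Y)w} - 2T_x = -2T_{T_w Y + \Pi(Y, w)}$, a quartic paraproduct on $Pa$ controlled by the sharp bounds from Proposition~\ref{regularity for a}.

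The main obstacle is this last estimate: neither $T_w T_{1-Y}Pa$ nor $T_x Pa$ is individually perturbative at the $\dH_\frac14$ regularity, reflecting the critical nature of the threshold; the cancellation hinges on the symmetric splitting around the intermediate $T_{(1-Y)w}$, which is precisely why the coefficient $2$ appears in $\cK_{1, 2} = 2iT_x Pa$. All other para-commutator, para-product, and para-associator remainders produced along the way are perturbative once one invokes the sharp Taylor coefficient bounds $\||D|^{1/2}a\|_{BMO} \lesssim A_\frac14^2$ and $\||D|^{1/4}a\|_{BMO} \lesssim A A_\frac14$ from Proposition~\ref{regularity for a}, together with the pigeonhole principle that pushes derivatives onto middle-frequency factors controlled in $BMO^\frac14$ by $A_\frac14$.
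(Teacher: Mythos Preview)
Your proposal is correct and follows essentially the same approach as the paper: distribute $T_{D_t}$ via the para-Leibniz rule, treat the three pieces of $F$ separately using Lemmas~\ref{l:XY-mat} and~\ref{l:a-mat}, and match the resulting terms against $\cG_{1,1}$ and $\cK_{1,1}$. The only minor deviation is your handling of the Taylor piece in the second equation --- where the paper simplifies $iT_{1-Y}T_{1+a}T_w T_{1/(1+a)}Pa$ and $iT_w T_{1-Y}Pa$ each directly to $iT_x Pa + K$ (peeling off the quartic ``$w$ lowest'' case), you instead route both through $T_{(1-Y)w}$ via a Bony decomposition; this is equivalent, and your shorthand $X_\alpha \approx Y$ should be read as $T_{1-Y}X_\alpha \approx Y$ after absorbing the outer $T_{1-Y}$ prefactor.
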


We see this nearly dispenses with $ (\cG_{1, 1}, \cK_{1, 1})$, other than a residual Taylor coefficient truncation in $\cK_{1, 2}$, which we will leave to the third step of our normal form construction.


\begin{proof}

We begin by considering the Taylor coefficient correction term in $\tilde{w}_2$, starting with the contribution to the first equation of \eqref{eq:para-linearized}. This is done as in the proof of the previous lemma, using Lemma~\ref{l:para-L} 
to distribute the para-material derivative:
\begin{align*}
- T_{D_t}T_w T_{\frac{1}{1 + a}}a & = T_{T_{D_t} w} T_{\frac{1}{1 + a}}a
+ T_w T_{T_{D_t} \frac{1}{1+a}} a + T_w T_{\frac{1}{1 + a}} T_{D_t} a + G.
\end{align*}
The first term goes into $G$ using the second estimate of Lemma \ref{l:Dtwr} for the material derivative of $w$ along with the bounds on $a$ in Lemma \ref{regularity for a}:
\[
\| T_{T_{D_t} w} T_{\frac{1}{1 + a}}a \|_{\dot H^\frac14} \lesssim_A \| T_{D_t} w\|_{\dot H^{-\frac14}} \| |D|^\frac12 a\|_{BMO} \lesssim_{\As} A_{\frac14}^2 \|((w,r)\|_{\dH^\frac14}. 
\]
The second term is also perturbative, as we can use \eqref{a-bmo+} to complete
$T_{D_t}$ to $D_t$ and then finish with \eqref{aflow}. Lastly, we have by \eqref{para-aflow}, 
\begin{equation*}
T_{D_t}P a = -T_{1+a}PM + E,
\end{equation*}
where $E$ is perturbative via the error bound
\begin{equation*}
\teal{\|E\|_{\dot{W}^{\frac14, 4}}} \lesssim_{\As} A_{\frac14}^2.
\end{equation*}

Summarizing, we have proved that
\begin{align*}
-T_{D_t}T_w T_{\frac{1}{1 + a}}Pa &= T_w PM + G
\end{align*}
which matches the last two terms of $\mathcal{G}_{1, 1}$.
To the second equation, this correction term generates
\begin{align*}
i T_{1 - Y}T_{1 + a}T_w T_{\frac{1}{1 + a}}Pa &= iT_xPa + K.
\end{align*}
This contributes to $\mathcal{K}_{1, 2}$ a second copy of the Taylor coefficient term, where we have rewritten
\[
iT_wT_{1 - Y}Pa = iT_xPa + K,
\] 
perturbatively peeling off the case where $w$ is lowest frequency, as this component is quartic.

\

We proceed with the remaining correction terms, which are somewhat simpler:

\

1) We compute the first equation in \eqref{eq:para-linearized}, starting with the term with holomorphic variables and using Lemma~\ref{l:para-L} and Lemma~\ref{l:XY-mat}:
\begin{align*}
 \half T_{D_t} T_w \D_\alpha^2 \Pi(X, X) &= -T_w T_{1 - \bar Y} \D_\alpha^2\Pi(X, R) + G
\end{align*}
which is the first term of $\mathcal{G}_{1, 1}$. 

\

2) Continuing with the first equation, we address the term with one anti-holomorphic variable
in the same manner:
\begin{align*}
T_{D_t} T_w \Pi(X_{\alpha\alpha}, \bar X) &= -T_w(T_{1 - \bar Y}\Pi(R_{\alpha\alpha}, \bar X) + T_{1 - Y}\Pi(X_{\alpha\alpha}, \bar R)) + G.
\end{align*}
These are the remaining terms of $\mathcal{G}_{1, 1}$, using Lemma \ref{l:YtoW} on the second term.

\

3) We compute the second equation in \eqref{eq:para-linearized}, starting with the term with holomorphic variables:
\begin{align*}
-\half i T_{1 - Y}T_{1 + a}T_w\D_\alpha^2 \Pi(X, X) &= -i T_{1 - Y}T_{1 + a}T_w \D_\alpha\Pi(X_{\alpha}, X) + K
\end{align*}
which is the first term of $\mathcal{K}_{1, 1}$, after applying Lemma \ref{l:YtoW}.

\

4) Continuing with the second equation, we address the term with one anti-holomorphic variable:
\begin{align*}
-i T_{1 - Y}T_{1 + a} T_w \Pi(X_\alpha, \bar X) &= 
-iT_{1 + a} T_w \Pi(Y_\alpha, \bar X) + G.
\end{align*}
This is the remaining term of $\mathcal{K}_{1, 1}$.

\end{proof}

It remains to cancel the paradifferential truncation from the Taylor coefficient. We define the correction
\begin{align*}
\tilde{w}_3 &= -P((T_w \bar X_\alpha)X_\alpha) - T_{\frac{1}{1 + a}} T_w Pa, \\
\tilde{r}_3 &= -P((T_w \bar X_\alpha) R).
\end{align*}
As with $(\tw_2, \tr_2)$, it is straightforward to verify that this correction satisfies \eqref{nf-bd} and \eqref{nf-bd2}.

\begin{lemma}\label{lem:cor3}
We have
$$\tilde \cG_3 = -\cG_{1, 2} + G = G, \qquad \tilde \cK_2 = -\cK_{1, 2} + K = -2iT_{x}P a + K,$$
where $(G,K)$ satisfy the bound \eqref{nf-err-bd}.
\end{lemma}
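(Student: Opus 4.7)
The approach mirrors Lemmas~\ref{lem:cor1} and~\ref{lem:cor2}: we substitute $(\tw_3,\tr_3)$ into the left-hand side of~\eqref{eq:para-linearized}, distribute $T_{D_t}$ across the paraproduct structure via Lemma~\ref{l:para-L}, and evaluate the resulting para-material derivatives of $\bar X_\alpha$, $w$, $R$, and $Pa$ using Lemmas~\ref{l:XY-mat}(d),~\ref{l:Dtwr},~\ref{l:WR-source}(c), and~\ref{l:a-mat} respectively. Rearranging paraproducts through $P$ or past one another is justified by Lemmas~\ref{l:para-com},~\ref{l:para-prod}, and~\ref{l:para-assoc2}, while the balanced cubic and quartic remainders are bounded by Proposition~\ref{p:bmo} together with the pointwise estimates in Proposition~\ref{regularity for a} and Lemma~\ref{l:M}. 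As a preliminary step, the size bounds~\eqref{nf-bd} and~\eqref{nf-bd2} for $(\tw_3,\tr_3)$ follow from a direct Coifman--Meyer estimate, using the $BMO$ control $\||D|^{1/4}X_\alpha\|_{BMO}\lesssim_A A_{\frac14}$ from Lemma~\ref{l:YtoW} together with the Moser bound on the Nemytskii factor $(1+a)^{-1}$.

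For equation~1, the plan is to show that every contribution collapses into $G$. The piece $-P(T_w\bar X_\alpha X_\alpha)$ generates cubic terms only when $T_{D_t}$ lands on $X_\alpha$ or $\bar X_\alpha$, in which case Lemma~\ref{l:XY-mat}(d) replaces that factor by $T_{1-\bar Y}R_\alpha$ (or its conjugate) and the surviving triplet contains $R_\alpha$ paired against a second $X_\alpha$--type factor at comparable frequency and a lowest-frequency $w$; this is the over-differentiated lowest-frequency situation already handled in Lemmas~\ref{lem:cor1} and~\ref{lem:cor2} and is absorbed into $G$. The Taylor coefficient piece $-T_{\frac{1}{1+a}}T_wPa$ telescopes through $T_{D_t}Pa = -T_{1+a}PM + \text{error}$ (Lemma~\ref{l:a-mat}) to a term proportional to $T_wPM$, which by the pointwise bound of Lemma~\ref{l:M} lies in $L^\infty\cdot BMO$ and is thus perturbative. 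The remaining terms $T_{1-\bar Y}\partial_\alpha\tr_3$ and $T_{(1-\bar Y)R_\alpha}\tw_3$ are fully balanced cubics and also go into $G$, confirming $\tilde\cG_3 = G$.

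For equation~2, the heart of the argument is the identification of two copies of $iT_xPa$ whose sum matches $-\cK_{1,2}$. The first copy comes from $T_{D_t}\tr_3$: applying the Leibniz identity of Lemma~\ref{l:para-L}, the only nonperturbative case is when $T_{D_t}$ falls on $\bar X_\alpha$, for which Lemma~\ref{l:XY-mat}(d) gives $T_{D_t}\bar X_\alpha = -T_{1-Y}\bar R_\alpha + \text{error}$. Moving $T_w T_{1-Y}$ outside $P$ via Lemma~\ref{l:para-assoc2} leaves $P(\bar R_\alpha R) = iPa$ inside --- this identity is immediate from $a=i(\bar P[\bar R R_\alpha]-P[R\bar R_\alpha])$ since $P\bar P=0$ modulo constants --- yielding a copy of $iT_xPa$ modulo $K$. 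The second copy arises from $-iT_{1-Y}T_{1+a}\tw_3$ acting on $-T_{\frac{1}{1+a}}T_wPa$: Lemma~\ref{l:para-prod} cancels the $T_{1+a}T_{\frac{1}{1+a}}$ pair and Lemma~\ref{l:para-com} interchanges $T_{1-Y}$ with $T_w$, producing a second $iT_xPa$ modulo $K$. The cubic term generated by $-iT_{1-Y}T_{1+a}$ acting on $-P(T_w\bar X_\alpha X_\alpha)$ is again a balanced triplet and perturbative. The main obstacle lies in the bookkeeping: confirming that every over-differentiated residual (from the $P(T_w\bar X_\alpha X_\alpha)$ contributions, from the para-Leibniz commutators, and from the many para-associativity rearrangements) actually satisfies the bound~\eqref{nf-err-bd}. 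This is precisely where the quarter-derivative built into $A_{\frac14}$ and the $\dH_{1/4}$ threshold is consumed, and where the classical algebraic normal form~\eqref{nft1} serves as the essential organizational guide for tracking which combinations cancel.
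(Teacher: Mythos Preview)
Your argument has a genuine gap: several terms you dismiss as perturbative are in fact not, and the proof hinges on nontrivial cancellations among the different pieces of $(\tw_3,\tr_3)$ that you have overlooked.

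In the first equation, when $T_{D_t}$ lands on $\bar X_\alpha$ or on $X_\alpha$ inside $-P[(T_w\bar X_\alpha)X_\alpha]$, Lemma~\ref{l:XY-mat}(d) gives the two contributions
\[
P[(T_w T_{1-Y}\bar R_\alpha)X_\alpha] = T_w P[\bar R_\alpha Y] + G, \qquad P[(T_w\bar X_\alpha)T_{1-\bar Y}R_\alpha].
\]
Neither of these is in $G$. The frequency structure is $w \leq \bar X_\alpha \leq X_\alpha$ (or $R_\alpha$), and the two high-frequency factors are \emph{not} forced to be comparable: the projection $P$ only constrains the antiholomorphic factor to lie at or below the holomorphic one. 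Hence the lowest frequency is an undifferentiated $w$, which fails the ``over-differentiated lowest frequency'' criterion you invoke (that rule required $|D|^{1/4}w$, and its extension to undifferentiated $w$ was only stated for quartic terms). These two terms must instead cancel against $T_w PM = T_w P[R\bar Y_\alpha - \bar R_\alpha Y]$ coming from the Taylor-coefficient piece, and against the terms $-T_{1-\bar Y}[(T_w\bar X_{\alpha\alpha})R] - T_{1-\bar Y}[(T_w\bar X_\alpha)R_\alpha]$ produced by $-T_{1-\bar Y}\partial_\alpha \tr_3$, which you also wrongly claimed to be perturbative. Only after summing all four and invoking Lemma~\ref{l:YtoW} does everything collapse into $G$.

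In the second equation the same issue recurs. When $T_{D_t}$ falls on $R$ inside $\tr_3 = -P[(T_w\bar X_\alpha)R]$, Lemma~\ref{l:WR-source}(c) produces $-iP[(T_w\bar X_\alpha)T_{1+a}Y]$, which is not in $K$ for the same frequency-balance reason as above. Likewise, $-iT_{1-Y}T_{1+a}$ applied to $-P[(T_w\bar X_\alpha)X_\alpha]$ yields $iT_{1+a}P[(T_w\bar X_\alpha)Y]$, again not perturbative. These two terms cancel each other, and only then do the two remaining contributions give the required $2iT_x Pa$. Your identification of the two copies of $iT_x Pa$ is correct, but the claim that the other pieces are ``balanced triplets'' is not.
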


\begin{proof}
1) We begin by checking that no nonperturbative errors are contributed to the first equation in \eqref{eq:para-linearized}. For this we repeatedly use Lemma~\ref{l:para-L}, noting that the terms 
involving the para-material derivative $T_{D_t}$ of the "coefficients" $w$ and $\frac{1}{1+a}$,
as well as those involving  $[T_{D_t},\partial_\alpha]$, are all perturbative. We also repeatedly apply the para-associativity lemma, Lemma~\ref{l:para-assoc2} and use \eqref{para-aflow} to compute the para-material derivative of $a$:
\begin{align*}
- T_{D_t}P ((T_w \bar X_\alpha)X_\alpha) &=  P[(T_w \D_\alpha T_{D_t}\bar X)X_\alpha] +
P[(T_w \bar X_\alpha)\D_\alpha  T_{D_t} X] + G\\
& = P[(T_w T_{1 - Y} \bar R_\alpha)X_\alpha] + P[(T_w \bar X_\alpha)T_{1 - \bar Y}R_\alpha] + G \\
& = T_wP[\bar R_\alpha Y] + P[(T_w \bar X_\alpha)T_{1 - \bar Y}R_\alpha] + G ,\\
-T_{D_t} T_{\frac{1}{1 + a}}T_w Pa &= -T_{\frac{1}{1 + a}} T_w T_{D_t} Pa + G  = T_w PM + G \\
&= T_w P[R \bar Y_\alpha - \bar R_\alpha Y] + G ,\\
- T_{1 - \bar Y}\D_\alpha ((T_w \bar X_\alpha) R) &= - T_{1 - \bar Y}[(T_{w} \bar X_{\alpha\alpha}) R] - T_{1 - \bar Y}[(T_w \bar X_\alpha) R_\alpha] + G ,\\
 T_{(1 - \bar Y) R_\alpha}( - (T_w \bar X_\alpha)X_\alpha - T_{\frac{1}{1 + a}} T_wP a) &= G.
\end{align*}
Summing and using Lemma \ref{l:YtoW}, all the terms cancel up to $G$ errors.

\

2) For the second equation of \eqref{eq:para-linearized}, we compute
\begin{align*}
- T_{D_t} P[(T_w \bar X_\alpha) R] &= P[(T_{w} T_{1 - Y}\bar R_\alpha) R] - iP[(T_w \bar X_\alpha) T_{1 + a}Y] + K, \\
- i T_{1 - Y}T_{1 + a}( -P[(T_w \bar X_\alpha)X_\alpha] - T_{\frac{1}{1 + a}} T_w Pa) &=  iT_{1 + a}P[(T_w \bar X_\alpha) Y] + iT_xPa + K.
\end{align*}
Summing, we see that two terms cancel, leaving two copies of the desired Taylor coefficient term.
\end{proof}


\subsection{Normal form analysis for \texorpdfstring{$(\mathcal G_0,\mathcal K_0)$}{}}
\label{s:nf-0}

 In this section we introduce normal form corrections to $(w, r)$ which will cancel the leading order contributions of the source terms $(\mathcal G_0,\mathcal K_0)$ of the linearized equations, defined in \eqref{GK}. Precisely, 
 we will show the following:
 
 \begin{proposition}
 Assume that $(w,r)$ solve \eqref{paralin(wr)-full}. Then there exists a linear normal form correction
\[
(\tw,\tr)(t)= NF((w,r)(t))
\]
solving an equation of the form
 \begin{equation}
 \label{paralin(wr)GK0}
 \left\{
\begin{aligned}
& T_{D_t} \tilde{w} + T_{1- \bar Y} \D_\alpha \tilde{r}
+ T_{(1 - \bar Y)R_\alpha} \tilde{w} = -P\cG_0 + G
 \\
& T_{D_t} \tilde{r} - i T_{1 - Y}T_{1 + a} \tilde{w} = 
-P\cK_0 + K,
\end{aligned}
\right.
 \end{equation}
with the following properties:

(i) Quadratic correction bound: 
\[
\| NF(w, r)\|_{\dH^{1/4}} \lesssim_A  \teal{\As} \| (w,r)\|_{\dH^{1/4}},
\]

(ii) Secondary correction bound:
\begin{equation*}
\| NF(g, k)\|_{\dH^{1/4}} \lesssim_A A_{\frac14} \|(g, k)\|_{\dH^{0}},
\end{equation*}

(iii) Cubic error bound: 
\[
\|( G, K)\|_{\dH^{1/4}} \lesssim_{\teal{\As}} A_{\frac14}^2 \| (w,r)\|_{\dH^{1/4}}.
\]
 \end{proposition}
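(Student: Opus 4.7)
My plan is to mirror the three-step normal form construction of the previous subsection, but adapted to the asymmetric antiholomorphic structure of the source terms $(\mathcal{G}_0, \mathcal{K}_0)$. The first step is to apply Lemma~\ref{l:GK} to reduce the task to cancelling the explicit leading quadratic terms
\[
\mathcal G_{0,0} = -P[T_{1+W_\alpha}(T_{1-\bar Y}\bar r_\alpha + T_{\bar x}\bar R_\alpha)Y] + P[T_{1+W_\alpha}T_{1-\bar Y}\bar x_\alpha R], \quad
\mathcal K_{0,0} = -P[(T_{1-\bar Y}\bar r_\alpha + T_{\bar x}\bar R_\alpha)R],
\]
modulo perturbative source terms already bounded by \eqref{nf-err-bd}. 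Note the structure: each monomial pairs an antiholomorphic low-frequency factor $\bar r$ or $\bar x$ with a high-frequency holomorphic nonlinear factor $Y$ or $R$, all wrapped by $P$.

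The natural ansatz, motivated by inverting the leading paradifferential operator applied to such bilinear expressions, is a correction of the schematic form
\[
\tilde w = -P[T_{1+W_\alpha}(T_{1-\bar Y}\bar r + T_{\bar x}\bar R_\alpha \partial_\alpha^{-1})\,\partial_\alpha^{-1}(\cdots)], \qquad
\tilde r = -P[T_{\bar x}R] + \cdots,
\]
where the exact constants/multipliers will be dictated by matching. To compute $T_{D_t}\tilde w$ and $T_{D_t}\tilde r$, I would apply the para-Leibniz rule of Lemma~\ref{l:para-L}, then distribute the material derivative onto each factor using the para-material derivative formulas: for the low-frequency factors, Lemma~\ref{l:Dtwr} for $\bar r$ (giving $-iT_{1-\bar Y}T_{1+a}\bar w$ at leading order, which in turn ties back to $\bar r$ via the paradifferential equation, so only bounded multipliers survive) and Lemma~\ref{l:Dtx} for $\bar x$; for the high-frequency nonlinear factors, Lemma~\ref{l:XY-mat} for $Y$ (giving $-T_{|1-Y|^2}R_\alpha$) and Lemma~\ref{l:WR-source} for $R$ (giving $iT_{1+a}Y$). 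The terms from the left-hand side operator $T_{1-\bar Y}\partial_\alpha + T_{(1-\bar Y)R_\alpha}$ and $-iT_{1-Y}T_{1+a}$ acting on $\tilde w, \tilde r$ then either reproduce $-\mathcal G_{0,0}, -\mathcal K_{0,0}$ after cancellations, or generate acceptable balanced cubic errors controlled by $A_\frac14^2$ via Proposition~\ref{p:bmo} and the paraproduct lemmas.

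For the quantitative bounds, (i) and (ii) are straightforward: the correction is a sum of paraproducts where the nonlinear factors $W, R, Y, X$ live in $L^\infty$ with $A$ norms, so $L^2$ or $\dH^{1/4}$ inputs map to $\dH^{1/4}$ outputs, picking up a factor of $(A\As)^{1/2}$ or $A_\frac14$ respectively from the appropriate frequency balance (with Sobolev embedding used, as before, when $w$ must be placed in $L^4$). The cubic error bound (iii) reflects the general principle used repeatedly in Section~\ref{s:nf-1}: any resulting cubic paradifferential term whose lowest frequency factor is ``fully differentiated'' against the scaling, or whose low-frequency factor is a linearized variable measured in $L^4$, can be absorbed into $(G,K)$.

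The principal obstacle I expect is twofold. First, unlike the setup of Section~\ref{s:nf-1}, the $P$ projections wrapping $\mathcal G_{0,0}$, $\mathcal K_{0,0}$ are essential, since they force the holomorphic output despite the antiholomorphic low-frequency factors; I must carefully commute $P$ and $\bar P$ with paraproducts and with $T_{D_t}$ throughout, relying on Lemma~\ref{l:para-assoc2} to handle such manipulations perturbatively. Second, the material derivative of $\bar r$ involves $T_{1+a}$ acting on $\bar w$, which reintroduces the Taylor coefficient and is not immediately matched by a term in $\mathcal G_{0,0}$; handling this either requires an additional Taylor-coefficient subcorrection analogous to $\tilde w_3$ in Lemma~\ref{lem:cor3}, or a careful accounting showing that after taking the $P$ projection the unwanted terms are genuinely cubic-perturbative. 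Once these bookkeeping issues are settled, the remainder of the argument is a routine, if lengthy, para-calculus computation in the spirit of the proofs of Lemmas~\ref{lem:cor1}--\ref{lem:cor3}.
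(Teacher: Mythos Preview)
Your overall three-step strategy is right, and your identification of the two obstacles (careful handling of $P$-projections, and the Taylor-coefficient residual) is accurate. However, your first-step ansatz is off in a way that would make the computation much harder than necessary.

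You propose corrections of the form $P[T_{1+W_\alpha}(T_{1-\bar Y}\bar r + T_{\bar x}\bar R_\alpha\partial_\alpha^{-1})\partial_\alpha^{-1}(\cdots)]$, treating $\bar r$ and $\bar x$ as separate low-frequency building blocks. As you note yourself, $T_{D_t}\bar r$ produces a $T_{1+a}\bar w$ term, which then has to be dealt with via further subcorrections; you leave this as an open bookkeeping issue. The paper sidesteps this entirely. Its first correction is simply
\[
\tilde w_1 = -P(\bar x\, W_\alpha), \qquad \tilde r_1 = -P(\bar x\, T_{1+W_\alpha}R),
\]
i.e.\ the antiholomorphic linearized part of the differentiated normal form \eqref{nft1}, using \emph{only} $\bar x$ as the low-frequency factor. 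The point is that by Lemma~\ref{l:Dtx}, the material derivative $D_t\bar x$ already produces the full combination $-T_{1-Y}(T_{1-\bar Y}\bar r_\alpha + T_{\bar x}\bar R_\alpha)$ in one shot; applying $P[\,\cdot\, W_\alpha]$ and using Lemma~\ref{l:YtoW} then regenerates the entire first line of $\mathcal G_{0,0}$ at once, and similarly for $\mathcal K_{0,0}$. No $\bar r$ factor ever appears in the corrections, so the $D_t\bar r \to \bar w$ loop you worry about never arises.

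With this choice, the residual $(\mathcal G_{0,1},\mathcal K_{0,1})$ after step one is purely of the form $T_{\bar x}(\text{quadratic in }W,R)$ (Lemma~\ref{lem:cor4}), and steps two and three proceed exactly as in Section~\ref{s:nf-1}: a correction quadratic in $(W,R)$ with $T_{\bar x}$ coefficient handles most terms (Lemma~\ref{lem:cor5}), and a final Taylor-coefficient / $R_\alpha R$ correction closes the argument (Lemma~\ref{lem:cor6}). So the ``additional Taylor-coefficient subcorrection'' you anticipate is indeed needed, but it arises naturally at step three rather than as a patch for an awkward step-one choice.
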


Similar to the normal form correction for the paradifferential truncations $(\cG_1, \cK_1)$, we will build the correction in three steps, denoting the output of the three steps by, respectively,
$$(\tw, \tr) = (\tw_1, \tr_1) + (\tw_2, \tr_2) + (\tw_3, \tr_3),$$
along with their source terms $(\cG_i, \cK_i)$ in \eqref{eq:para-linearized}.

In the first step, we construct a correction to generate the quadratic terms, which are linear in the linearized variables $(w, r)$. Second, we construct a correction to generate the terms which are quadratic in the nonlinear variables $(W, R)$ and their conjugates (with $\bar w$ appearing as a low frequency coefficient). Lastly, we have a correction to address terms quadratic specifically in $R$.

\

We define our first set of corrections, which rectify the quadratic source terms which are linear in 
$(w, r)$ (unlike the last two corrections, which 
are primarily quadratic in $(W,R)$ and with $(w,r)$
playing the role of coefficients):
\begin{align*}
\tilde{w}_1 &= - P(\bar x W_\alpha), \\
\tilde{r}_1 &= - P(\bar x T_{1 + W_\alpha}R).
\end{align*}
We remark that at the quadratic level, this 
correction is nothing but the 
corresponding (i.e. non-paradifferential) antiholomorphic
(i.e. linear in $(\bar w,\bar r)$) part of the linearization of the normal form transformation \eqref{nft1}.

These corrections satisfy the bound \eqref{nf-bd}, as is easily verified with Sobolev embedding, as well as \eqref{nf-bd2}, similar to $\tw_1$ in Section \ref{s:GK}.

We will prove the following, where  we recall that 
$(\cG_{0, 0}, \cK_{0, 0})$ is defined in Lemma \ref{l:GK}. 

\begin{lemma}\label{lem:cor4}

We have the representation 
\[
\tilde{\cG}_1 = -\cG_{0, 0} + \cG_{0, 1} + G, \qquad \tilde{\cK}_1 = -\cK_{0, 0} + \cK_{0, 1} + K
\]
where $(G,K)$ satisfy the bound \eqref{nf-err-bd}, whereas $(\cG_{0, 1},\cK_{0, 1})$ are given by 
\begin{align*}
\cG_{0, 1}(w, r) &= T_{\bar x}(\Pi(W_{\alpha\alpha},b) + \Pi(T_{1 - \bar Y} W_\alpha - T_{1 + W_\alpha} \bar Y, R_\alpha ) - T_{1 + W_\alpha}PM), \\
\cK_{0, 1}(w, r) &= T_{\bar x}(iPa + 2T_{1 + W_\alpha}T_{1 - \bar Y}T_{R_\alpha}R + \Pi(T_{1 + W_\alpha}R_{ \alpha}, b) + iT_{1 + a} \Pi(Y, W_\alpha)).
\end{align*}
\end{lemma}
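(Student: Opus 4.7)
The correction $(\tw_1, \tr_1) = (-P(\bar x W_\alpha), -P(\bar x T_{1+W_\alpha}R))$ is chosen as the antiholomorphic linearization of the nonlinear normal form \eqref{nft1}, mirroring the holomorphic correction in Lemma~\ref{lem:cor1}. The plan is to insert it into the left-hand sides of \eqref{eq:para-linearized}, expand using the para-Leibniz rule of Lemma~\ref{l:para-L}, substitute the para-material derivatives of Lemmas~\ref{l:WR-source}, \ref{l:XY-mat}, \ref{l:Dtx}, and verify that the leading contributions match $-\cG_{0, 0}$ and $-\cK_{0, 0}$ from Lemma~\ref{l:GK}, with the remaining structured residuals forming $\cG_{0, 1}$ and $\cK_{0, 1}$, and everything else being absorbed into perturbative errors $(G, K)$ satisfying \eqref{nf-err-bd}.

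The key substitutions when $T_{D_t}$ is distributed across the products in $\tw_1, \tr_1$ are
\[
T_{D_t} W_\alpha = -T_{1+W_\alpha} T_{1 - \bar Y} R_\alpha + (\text{pert.}), \qquad T_{D_t} R = i T_{1+a} Y - ia + (\text{pert.}),
\]
from \eqref{para(Wa)}, and, by conjugating Lemma~\ref{l:Dtx},
\[
T_{D_t} \bar x = -T_{1 - Y}(T_{1 - \bar Y}\bar r_\alpha + T_{\bar x}\bar R_\alpha) + (\text{pert.}).
\]
The para-Leibniz errors are controlled by \eqref{Leibniz-T}, \eqref{Leibniz-Pi0} since $\bar x$ is bounded and the other factors lie in $BMO^{1/4}$. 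The terms involving $\bar r_\alpha$ and $\bar R_\alpha$ arising from $T_{D_t} \bar x$ combine with the holomorphic factors $W_\alpha$ and $T_{1+W_\alpha} R$ to produce, after using Lemma~\ref{l:YtoW} to rewrite $W_\alpha = T_{1+W_\alpha} Y + (\text{pert.})$ and applying para-associativity (Lemma~\ref{l:para-assoc2}), exactly the expressions in $\cG_{0, 0}, \cK_{0, 0}$ with opposite sign. The Taylor coefficient $a$ contribution in $\cK_{0, 1}$ arises from the $-ia$ residual in $T_{D_t} R$ paired against $T_{\bar x}$; the $M$ contribution in $\cG_{0, 1}$ arises from the material derivative of the $1 + W_\alpha$ coefficient inside $\tr_1$, which via \eqref{M-def} produces $M$; and the $\Pi(W_{\alpha\alpha}, b)$ and $\Pi(T_{1 - \bar Y} W_\alpha - T_{1+W_\alpha}\bar Y, R_\alpha)$ terms come from commuting $T_{D_t}$ with $\partial_\alpha$ and from the para-commutator corrections needed to align the paraproduct coefficients. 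The term $T_{(1 - \bar Y)R_\alpha} \tw_1$ on the left-hand side is cubic and balanced, hence directly perturbative, while $-i T_{1-Y} T_{1+a} \tw_1$ for the second equation produces balanced products of $Y, a, \bar x, W_\alpha$ that partly cancel the output of $T_{D_t} \tr_1$ and partly contribute the $iT_{1+a}\Pi(Y, W_\alpha)$ residual in $\cK_{0, 1}$.

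The main obstacle is the careful bookkeeping required to sort each of the many terms produced by the expansion into three categories: (i) cancellation against $\cG_{0, 0}$ or $\cK_{0, 0}$, (ii) structured residual with $T_{\bar x}$ as outermost coefficient (forming $\cG_{0, 1}, \cK_{0, 1}$), or (iii) perturbative error in $(G, K)$. As in the proof of Lemma~\ref{lem:cor1}, the perturbative criterion is that a cubic term is absorbable when the lowest-frequency nonlinear factor carries more than $1/4$ derivatives (controlled by $\||D|^{5/4} W\|_{BMO}, \||D|^{3/4} R\|_{BMO} \lesssim A_{\frac14}$ from Section~\ref{s:wwbounds}), and quartic terms are absorbable when a linearized factor sits at the lowest frequency (via Sobolev embedding into $L^4$). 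The bounds on $a$, $b$, $M$ from Proposition~\ref{regularity for a} and Lemmas~\ref{l:b}, \ref{l:M} then suffice to place each leftover term into its correct bucket; the presence of the projection $P$ is handled, as in Lemma~\ref{l:GK}, by noting that high-frequency antiholomorphic contributions are killed, so only high-low (with $\bar x$ at low frequency) and balanced configurations survive, which is precisely the structure encoded by writing the residuals with $T_{\bar x}$ outside.
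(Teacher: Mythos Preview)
Your overall outline---distribute $T_{D_t}$ via the para-Leibniz rule, substitute para-material derivatives, identify cancellations against $\cG_{0,0}, \cK_{0,0}$, and sort the remainder---matches the paper's strategy. However, the substitutions you propose are too weak and several of your attributions for the residual terms are incorrect.

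The central gap is that you substitute only the \emph{leading} parts from \eqref{para(Wa)}, writing $T_{D_t} W_\alpha = -T_{1+W_\alpha} T_{1-\bar Y} R_\alpha + (\text{pert.})$ and $T_{D_t} R = i T_{1+a} Y - ia + (\text{pert.})$. In this lemma the non-leading parts are \emph{not} perturbative: the paper explicitly uses the full formulas \eqref{para(Wa)-full}. For the first equation, the term $\D_\alpha \Pi(W_\alpha, b)$ inside \eqref{para(Wa)-full}, multiplied by $\bar x$, produces $T_{\bar x}\Pi(W_{\alpha\alpha}, b)$, which is exactly one of the residuals in $\cG_{0,1}$; likewise the $-T_{1+W_\alpha} P[\bar Y_\alpha R]$ piece combines with $T_{(1-Y)\bar R_\alpha} W_\alpha$ (which itself comes from $T_{b_\alpha} W_\alpha$ after cancelling the $(1-\bar Y)R_\alpha$ part against the $T_{(1-\bar Y)R_\alpha}\tw_1$ term) to form $-T_{1+W_\alpha} PM$. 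For the second equation, the pieces $-T_{R_\alpha} b - \Pi(R_\alpha, b)$ in \eqref{para(Wa)-full} for $T_{D_t} R$, together with the contribution $T_{T_{D_t} W_\alpha} R$ from the low-frequency coefficient in $\tr_1$, are what generate the $2T_{1+W_\alpha} T_{1-\bar Y} T_{R_\alpha} R$ and $\Pi(T_{1+W_\alpha} R_\alpha, b)$ residuals in $\cK_{0,1}$---terms you do not account for at all.

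Relatedly, two of your source attributions are wrong. The $\Pi(W_{\alpha\alpha}, b)$ term cannot come from ``commuting $T_{D_t}$ with $\partial_\alpha$'': unlike in Lemma~\ref{lem:cor1}, here $\tw_1 = -P(\bar x W_\alpha)$ carries no outer $\partial_\alpha$. And the $M$ term in $\cG_{0,1}$ does not arise from the para-material derivative of $1+W_\alpha$ inside $\tr_1$, since $\tr_1$ enters the first equation only through $-T_{1-\bar Y}\D_\alpha \tr_1$, not through $T_{D_t}$. Fixing the plan requires replacing \eqref{para(Wa)} by \eqref{para(Wa)-full} throughout and redoing the bookkeeping accordingly.
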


\begin{proof}

1) We compute the first equation in \eqref{eq:para-linearized}. For this we use Lemma~\ref{l:para-L} to distribute the para-material derivative
and then Lemma~\ref{l:Dtx} to 
capture the leading part of $D_t \bar x$. On the other hand, for $T_{D_t} W_\alpha$ we need to use the full expression \eqref{para(Wa)-full} of Lemma \ref{l:WR-source}. We will also use Lemma~\ref{l:YtoW} to relate $Y$ and $W_\alpha$ paraproducts.
\begin{align*}
-T_{D_t} P(\bar x W_\alpha) &= - P[(D_t \bar x) W_\alpha] - P[\bar x T_{D_t}W_\alpha] + G
\\
& = P[T_{1 - Y}(T_{1 - \bar Y}\bar r_\alpha + T_{\bar x}\bar R_\alpha) W_\alpha] \\
&\quad P[\bar x( \D_\alpha T_{1 + W_\alpha} P[(1 - \bar Y)R]+ T_{b_\alpha} W_\alpha  + \D_\alpha \Pi(W_\alpha, b))] + G \\
& = P[T_{1 + W_\alpha}(T_{1 - \bar Y}\bar r_\alpha + T_{\bar x}\bar R_\alpha)Y] \\
&\quad + P[\bar x ( T_{W_{\alpha\alpha}} T_{1 - \bar Y}R + T_{1 + W_\alpha} P[- \bar Y_\alpha R] + T_{1 + W_\alpha} P[(1 - \bar Y)R_\alpha])] \\
&\quad + P[\bar x(T_{b_\alpha} W_\alpha + \Pi(W_{\alpha\alpha}, b) + \Pi(W_\alpha, b_\alpha))] + G, \\
-T_{1 - \bar Y}\D_\alpha P(\bar x T_{1 + W_\alpha}R) &= -P[ T_{1 + W_\alpha}T_{1 - \bar Y}\bar x_\alpha R] \\
&\quad - P[\bar x T_{W_{\alpha\alpha}}T_{1- \bar Y}R]  \\
&\quad - P[\bar x T_{1 + W_\alpha} T_{1 - \bar Y}R_\alpha] + G, \\
-T_{(1 - \bar Y)R_\alpha} P[\bar x W_\alpha] &= -P[\bar x T_{(1 - \bar Y)R_\alpha} W_\alpha] + G.
\end{align*}

Summing up and observing cancellations, we obtain the following contributions to the 
source terms in the first equation in \eqref{eq:para-linearized}:
\begin{equation}\label{eq:RHSerror1}
\begin{aligned}
&P[T_{1 - \bar Y}T_{1 + W_\alpha}(\bar r_\alpha + T_{\bar w}\bar R_\alpha)Y]  -P[ T_{1 + W_\alpha}T_{1 - \bar Y}\bar x_\alpha R]\\
&\quad + P[\bar x(-T_{1 + W_\alpha} P[\bar Y_\alpha R] - T_{1 + W_\alpha} \Pi(\bar Y,R_\alpha))] \\
&\quad + P[\bar x(T_{(1 - Y)\bar R_\alpha} W_\alpha + \Pi(W_{\alpha\alpha}, b) + \Pi(W_\alpha, b_\alpha))] + G.
\end{aligned}
\end{equation}
The first two terms form $\mathcal{G}_{0, 0}$, as desired. The remaining terms are precisely $\mathcal{G}_{0, 1}$, after perturbative rearrangements (noting that the case where $\bar x$ is balanced with the holomorphic terms is perturbative) and applying Lemma~\ref{l:YtoW}.

\

2) We compute the second equation similarly, by applying Lemma~\ref{l:para-L} to distribute $T_{D_t}$ and Lemma~\ref{l:Dtx} for the leading part of $D_t \bar x$. We use \eqref{para(Wa)-full} of Lemma \ref{l:WR-source} for $T_{D_t}R$, but \eqref{para(Wa)} suffices for $T_{D_t} W_\alpha$. Thus we obtain 
\begin{align*}
-T_{D_t} P[\bar x T_{1 + W_\alpha} R] &= - P[(D_t \bar x) T_{1 + W_\alpha} R] - P[\bar x T_{D_t W_\alpha} R] -  P[\bar x T_{1 + W_\alpha} T_{D_t} R] \teal{+ K} \\ 
& = P[T_{1 - Y}(T_{1 - \bar Y}\bar r_\alpha + T_{\bar x}\bar R_\alpha) T_{1 + W_\alpha} R]
 + P[\bar x T_{T_{1 - \bar Y}T_{1 + W_\alpha}R_\alpha} R] \\
&\quad + P[\bar x T_{1 + W_\alpha} (i(1 + a)(1 - Y) + T_{R_\alpha}b + \Pi(R_{ \alpha}, b))]+ K, \\
iT_{1 - Y}T_{1 + a}P[ \bar x W_\alpha] &= iT_{1 + a}P[\bar x (T_{1 + W_\alpha }Y + \Pi(Y, W_\alpha))] + K.
\end{align*}
Summing up and peeling away perturbative terms using para-commutator and paraproduct estimates, we obtain the following contributions to the source term in the second equation: 
\begin{equation}\label{eq:RHSerror2}
\begin{aligned}
&P[(T_{1 - \bar Y}\bar r_\alpha + T_{\bar x}\bar R_\alpha) R]\\
&\quad + P[\bar x(ia + 2T_{1 + W_\alpha}T_{1 - \bar Y}T_{R_\alpha}R + \Pi(T_{1 + W_\alpha}R_{ \alpha}, b) + iT_{1 + a} \Pi(Y, W_\alpha))]+ K.
\end{aligned}
\end{equation}
The first term matches $\mathcal{K}_{0, 0}$, as desired. The remaining terms are precisely $\mathcal{K}_{0, 1}$ (again noting that the case where $\bar x$ is balanced with the holomorphic terms is perturbative).

\end{proof}


In the second step, we introduce the correction which generate the source terms which are quadratic in the nonlinear variables appearing in $(\cG_{0, 1}, \cK_{0, 1})$: 
\begin{align*}
\tilde{w}_2 &= T_{\bar x}(\Pi(W_{\alpha\alpha}, \Re X) - \Pi(W_\alpha, \Re X_\alpha) - \Pi(W_\alpha, X_\alpha) - T_{\frac{1}{1 + a}}T_{1 + W_\alpha}P a), \\
\tilde{r}_2 &= T_{\bar x}(\Pi(T_{1 + W_\alpha}R_\alpha, \Re X) - \Pi(\Re T_{1 + W_\alpha} R, X_\alpha)).
\end{align*}
Here we have treated $T_{\bar x}$ as a low frequency coefficient which simply carries over to the corrections.
These corrections satisfy the bound \eqref{nf-bd}, as it is easily verified with Sobolev embeddings, as well as \eqref{nf-bd2}, which is similar to the case of $\tw_2$ in Section \ref{s:GK}. The source terms generated by these corrections in \eqref{eq:para-linearized}
are as follows:

\begin{lemma}\label{lem:cor5}
We have the representation 
\[
\tilde{\cG}_2 = -\cG_{0, 1} + \cG_{0, 2} + G, \qquad \tilde{\cK}_1 = -\cK_{0, 1} + \cK_{0, 2} + K,
\]
where $(G,K)$ satisfy the bound \eqref{nf-err-bd}, whereas $(\cG_{0, 2},\cK_{0, 2})$ are given by 
\begin{align*}
\cG_{0, 2}(w, r) &= 0, \\
\cK_{0, 2}(w, r) &= T_{\bar x}(2iPa +  2T_{1 + W_\alpha}T_{1 - \bar Y}T_{R_\alpha}R + \half \D_\alpha \Pi(T_{1 + W_\alpha} R, b) ).
\end{align*}
\end{lemma}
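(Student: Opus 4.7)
The proof plan is to follow the template established in Lemma~\ref{lem:cor2}, treating $T_{\bar x}$ as a low frequency coefficient that is carried along inertly through the computation. By the para-Leibniz rule of Lemma~\ref{l:para-L} combined with Lemma~\ref{l:Dtx} for $D_t \bar x$ and the secondary bound \eqref{nf-bd2} applied to the inner expressions, any contribution where $T_{D_t}$ falls on $T_{\bar x}$ is perturbative and can be absorbed into $(G,K)$. Thus we only need to compute $T_{D_t}$ applied to the interior paradifferential/balanced expressions, together with the remaining contributions on the left of \eqref{eq:para-linearized}.

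For the first equation, I would compute four contributions. Using Lemma~\ref{l:WR-source} for $T_{D_t} W_\alpha$ and Lemma~\ref{l:XY-mat} for $T_{D_t} X$, the leading part of $T_{D_t}\Pi(W_{\alpha\alpha}, \Re X)$ produces (modulo perturbative errors where the $\D_\alpha$ in the material derivative commutator hits a low frequency) a term involving $\Pi(W_{\alpha\alpha}, \Re(-T_{1 - \bar Y} R))= -\half\Pi(W_{\alpha\alpha}, b)$, together with a term of the form $\Pi(\D_\alpha T_{D_t} W_\alpha, \Re X)$ that after applying Lemma~\ref{l:YtoW} combines with the analogous contribution from $T_{D_t}\Pi(W_\alpha, \Re X_\alpha)$ and from $T_{1 - \bar Y}\D_\alpha \tilde{r}_2$ to reconstruct the remaining $\Pi(\cdot, R_\alpha)$ term in $\cG_{0,1}$. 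The purely holomorphic balanced correction $\Pi(W_\alpha, X_\alpha)$ is designed to cancel the unmatched holomorphic parts arising from the real-part operators, again via Lemma~\ref{l:XY-mat}. Finally, $T_{D_t}$ applied to $T_{\frac{1}{1 + a}}T_{1 + W_\alpha}Pa$ produces $T_{1 + W_\alpha}PM$ (which cancels the last term of $\cG_{0,1}$) by the same mechanism as in Lemma~\ref{lem:cor2}: using \eqref{para-aflow} to compute $T_{D_t}Pa$ and using Lemma~\ref{l:WR-source} on $T_{D_t} W_\alpha$, with the derivative falling on $\frac{1}{1+a}$ handled perturbatively via Lemmas~\ref{regularity for a} and \ref{l:a-mat}. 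The term $T_{(1 - \bar Y)R_\alpha}\tilde{w}_2$ is entirely perturbative by the Coifman-Meyer bounds. After summing, the terms match $-\cG_{0,1}$ exactly and no residual survives, so $\cG_{0,2}=0$.

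For the second equation, the computation is analogous. The correction $\tilde{r}_2$ is designed to, upon applying $T_{D_t}$ via Lemma~\ref{l:XY-mat} and \eqref{para(Wa)-full}, generate the $\Pi(T_{1 + W_\alpha}R_\alpha, b)$ and $iT_{1 + a}\Pi(Y, W_\alpha)$ terms of $\cK_{0,1}$ together with partial cancellations from $iT_{1 - Y}T_{1 + a}\tilde{w}_2$. However, because the $T_w Y$ type terms arising from $T_{D_t} R$ are tied to the Taylor coefficient, a residual term $T_{\bar x}iPa$ will appear, doubling when combined with the symmetric contribution from $iT_{1 - Y}T_{1 + a}(-T_{\frac{1}{1+a}}T_{1 + W_\alpha}Pa) = iT_{\bar x}Pa + K$ after applying para-associativity. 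Similarly, contributions from the $R_\alpha R$ and $R b$ interactions generate the cubic terms $2T_{1 + W_\alpha}T_{1 - \bar Y}T_{R_\alpha}R$ and $\half \D_\alpha \Pi(T_{1+W_\alpha}R, b)$ in $\cK_{0,2}$ which cannot be absorbed into $K$ as they will be addressed in the third step.

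The main obstacle will be bookkeeping: tracking the many terms generated by expanding $\Re X = \half(X + \bar X)$ in each bilinear paraproduct, distinguishing the holomorphic and antiholomorphic parts (since the projection $P$ treats them very differently), and verifying that each non-matching term either cancels with another, is of the form needed in $\cK_{0,2}$, or is cubic with sufficient derivative imbalance in favor of the low frequency to be absorbed into $(G,K)$ via the estimates in Section~\ref{s:multilinear} together with Proposition~\ref{regularity for a} and Lemma~\ref{l:M}. As in Lemma~\ref{lem:cor2}, the pigeonhole principle that terms with a fully-differentiated low frequency factor are perturbative will do much of the work.
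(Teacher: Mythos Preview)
Your proposal is correct and follows essentially the same approach as the paper: treat $T_{\bar x}$ as an inert low-frequency coefficient via the para-Leibniz rule (Lemma~\ref{l:para-L}) together with Lemma~\ref{l:Dtx}, apply the para-material derivative formulas for $W_\alpha$, $X$, and $a$ (Lemmas~\ref{l:WR-source}, \ref{l:XY-mat}, \ref{l:a-mat}), and then track terms. The paper organizes the computation slightly differently, first isolating the Taylor-coefficient piece and the purely holomorphic piece $-T_{\bar x}\Pi(W_\alpha,X_\alpha)$ to reduce to modified targets $\cG_{0,1}',\cK_{0,1}'$ and a reduced correction $(\tw_2',\tr_2')$, but this is only a bookkeeping convenience. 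One small imprecision: your explanation of the $2iPa$ in $\cK_{0,2}$ is slightly garbled---the doubling is not from two mechanisms inside the correction, but rather from the fact that the Taylor-coefficient correction contributes $+iT_{\bar x}Pa$ to the second equation (via $-iT_{1-Y}T_{1+a}$ applied to $-T_{\bar x}T_{\frac{1}{1+a}}T_{1+W_\alpha}Pa$), which \emph{adds to} rather than cancels the $iPa$ term already sitting inside $\cK_{0,1}$.
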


\begin{proof}

We begin by considering the Taylor coefficient correction in $\tilde{w}_2$. Its contribution to the source term $\tilde{\cG}_2$ of the first equation in \eqref{eq:para-linearized} is computed using Lemma~\ref{l:para-L}, and then \eqref{para-aflow} to compute the para-material derivative of $a$. 
Here, as well as in all computations from here on, the para-material derivative
 $T_{D_t} \bar x$ of $\bar x$, given by Lemma~\ref{l:Dtx}, plays a perturbative role.
Note that the only nonperturbative case is when the derivative falls on the high frequency $a$:
\begin{align*}
-T_{D_t} T_{\bar x}T_{\frac{1}{1 + a}}T_{1 + W_\alpha}P a &= T_{\bar x}T_{1 + W_\alpha}PM + G,
\end{align*}
matching the last two two terms in $\cG_{0, 1}$. 

On the other hand its contribution to the second equation is
\[
-iT_{1 - Y}T_{1 + a}(-T_{\bar x}T_{\frac{1}{1 + a}}T_{1 + W_\alpha}Pa) = iT_{\bar x} Pa.
\]
This is the extra Taylor coefficient term $iT_{\bar x}Pa$ in $\cK_{0, 2}$.

\

It is now convenient to compute the derivatives of the term
\[
-T_{\bar x} \Pi(W_\alpha, X_\alpha)
\]
in $\tw_2$. We have in the first equation,
\[
T_{D_t}T_{\bar x} \Pi(W_\alpha, X_\alpha) = -2T_{\bar x}T_{1 - \bar Y} \Pi(W_\alpha, R_\alpha) + G,
\]
and in the second equation,
\[
-iT_{1 - Y}T_{1 + a}T_{\bar x} \Pi(W_\alpha, X_\alpha) = -iT_{\bar x} T_{1 + a}\Pi(W_\alpha, Y) + G.
\]
Thus, after $(G, K)$-perturbative modifications, it remains to match 
\begin{equation}\label{eq:para-linearized4.1}
\begin{aligned}
\mathcal{G}_{0, 1}'(w, r) &= T_{\bar x}(\Pi(W_{\alpha\alpha}, 2\Re T_{1 - \bar Y} R) - T_{1 + W_\alpha}T_{1 - \bar Y}\Pi(2\Re X_\alpha, R_\alpha )), \\
\mathcal{K}_{0, 1}'(w, r) &= T_{\bar x}(ia + 2T_{1 + W_\alpha}T_{1 - \bar Y}T_{R_\alpha}R + \Pi(T_{1 + W_\alpha}R_{ \alpha}, 2\Re T_{1 - \bar Y} R)),
\end{aligned}
\end{equation}
instead of $(\mathcal{G}_{0, 1},\mathcal{K}_{0, 1})$, with correction terms
\begin{align*}
\tilde{w}_2' &= T_{\bar x}(\Pi(W_{\alpha\alpha}, \Re X) - \Pi(W_\alpha, \Re X_\alpha))\\
\tilde{r}_2' &= T_{\bar x}(\Pi(T_{1 + W_\alpha}R_\alpha, \Re X) - \Pi(\Re T_{1 + W_\alpha} R, X_\alpha)).
\end{align*}

We substitute this in the paralinearized equation, and apply our para-Leibniz 
rule in Lemma~\ref{l:para-L}. Whenever the para-material derivative or a regular derivative applies to the coefficient $T_{\bar x}$ (or any other low frequency coefficient) we get perturbative contributions which we can neglect. This greatly simplifies the computations below.

\medskip 
1) We compute the contributions in the first equation in \eqref{eq:para-linearized}:
\begin{align*}
T_{D_t} \tilde{w}_2' &= T_{\bar x}(-\Pi(T_{1 - \bar Y}T_{1 + W_\alpha}R_{\alpha\alpha}, \Re X) -\Pi(W_{\alpha\alpha}, \Re T_{1 - \bar Y} R) \\
&\quad +\Pi(T_{1 - \bar Y}T_{1 + W_\alpha}R_{\alpha}, \Re X_\alpha)  +\Pi(W_\alpha,\Re T_{1 - \bar Y} R_{\alpha})) + G ,\\
T_{1 - \bar Y} \D_\alpha \tr_2' &= T_{\bar x}(\Pi(T_{1 - \bar Y}T_{1 + W_\alpha}R_{\alpha\alpha}, \Re X)  + \Pi(T_{1 - \bar Y}T_{1 + W_\alpha}R_{\alpha}, \Re X_\alpha) \\
&\quad - \Pi(T_{1 - \bar Y}\Re T_{1 + W_\alpha}R_{\alpha}, X_\alpha) - \Pi(T_{1 - \bar Y}\Re T_{1 + W_\alpha}R, X_{\alpha\alpha})) + G\\
&= T_{\bar x}(\Pi(T_{1 - \bar Y}T_{1 + W_\alpha}R_{\alpha\alpha}, \Re X) 
+ \Pi(T_{1 - \bar Y}T_{1 + W_\alpha}R_{\alpha}, \Re X_\alpha) \\
&\quad - \Pi(\Re T_{1 - \bar Y}R_{\alpha}, W_\alpha)- \Pi(\Re T_{1 - \bar Y}R, W_{\alpha\alpha})) + G.
\end{align*}
Summing the two contributions and observing cancellations, we obtain the source terms
\begin{align*}
& T_{\bar x}(\Pi(T_{1 - \bar Y}T_{1 + W_\alpha}R_{\alpha}, 2\Re X_\alpha) -\Pi(W_{\alpha\alpha}, 2\Re T_{1 - \bar Y} R)) + G
 \end{align*}
which match the remaining terms in $\mathcal{G}_{0, 1}$, recorded in $\mathcal{G}_{0, 1}'$, \eqref{eq:para-linearized4.1}.

\

2) We compute the second equation in \eqref{eq:para-linearized}:
\begin{align*}
T_{D_t}\tr_2' &= T_{\bar x}(iT_{1 + a}\Pi(T_{1 + W_\alpha}Y_\alpha, \Re X) - \Pi(T_{1 + W_\alpha} R_\alpha, \Re T_{1 - \bar Y}R) \\
&\quad -  iT_{1 + a}\Pi(\Re T_{1 + W_\alpha} Y, X_\alpha) 
  + \Pi(T_{1 + W_\alpha}R, \Re T_{1 - \bar Y}R_\alpha)) + K \\
-iT_{1 - Y}T_{1 + a} \tw_2' &= - iT_{\bar x}T_{1 + a}\Pi(T_{1 + W_\alpha} Y_{\alpha}, \Re X) +  i T_{\bar x}T_{1 + a}\Pi(T_{1 + W_\alpha}Y, \Re X_\alpha) + K, \\
&= - iT_{\bar x}T_{1 + a}\Pi(T_{1 + W_\alpha} Y_{\alpha}, \Re X) +  i T_{\bar x}T_{1 + a}\Pi(\Re T_{1 + W_\alpha}Y, X_\alpha) + K.
\end{align*}
Summing the two contributions and observing cancellations, we obtain the source terms
\[
\half T_{\bar x}( \Pi (R, \bar R_\alpha) - \Pi(R_\alpha, \bar R))+K.
\]
Summing this with the remaining terms in $\mathcal{K}_{0, 1}$, recorded in $\mathcal{K}_{0, 1}'$, \eqref{eq:para-linearized4.1}, we obtain $\cK_{0, 2}$. 
\end{proof}

It remains to build a correction to match $\cK_{0, 2}$, consisting of errors in the second equation which are quadratic in $R$  and with $\bar x$ coefficients. Define 
\begin{align*}
\tilde{w}_3 &= iT_{1 + W_\alpha}(P[T_{\bar x} \bar X_\alpha X_\alpha] - iT_{\frac{1}{1 + a}}  P[T_{\bar x}  \bar R_\alpha R]) \\
&\quad + \frac{1}{2} T_{\bar x}\Re T_{1 + W_\alpha}(\Pi(\bar X_\alpha , X_\alpha) - iT_{\frac{1}{1 + a}} \Pi(\bar R_\alpha, R)) \\
&\quad +  \D_\alpha(T_{T_{\bar x} W_\alpha} X \\
&\quad+ \half T_{\bar x}\Pi( W_\alpha, X )) =:\tw_{3, 1} +\tw_{3, 2} +\tw_{3, 3} +\tw_{3, 4}, \\
\tilde{r}_3 &= iT_{1 + W_\alpha} P[T_{\bar x} \bar X_\alpha R]\\
&\quad + \frac{1}{2}T_{\bar x} \Re T_{1 + W_\alpha}\Pi( \bar X_\alpha, R) \\
&\quad +  T_{T_{\bar x}W_\alpha} R + T_{T_{\bar x}R_\alpha} W \\
&\quad + \half T_{\bar x} \D_\alpha \Pi(W, R) =: \tr_{3, 1} + \tr_{3, 2} + \tr_{3, 3} + \tr_{3, 4}.
\end{align*}
As usual, it is easy to verify \eqref{nf-bd} and \eqref{nf-bd2}.

\begin{lemma}\label{lem:cor6}
We have
$$\tilde \cG_3 = -\cG_{0, 2} + G = G, \qquad \tilde \cK_2 = -\cK_{0, 2} + K,$$
where $(G,K)$ satisfy the bound \eqref{nf-err-bd}.
\end{lemma}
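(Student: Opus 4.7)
The structure of the argument mirrors that of Lemmas~\ref{lem:cor3} and \ref{lem:cor5}: we insert the four-part decomposition of $(\tw_3,\tr_3)$ into \eqref{eq:para-linearized}, compute each contribution using the para-Leibniz rule (Lemma~\ref{l:para-L}), and rely on the para-material derivative identities of Lemmas~\ref{l:WR-source}, \ref{l:XY-mat} and \ref{l:a-mat}. Throughout, every time a material derivative or a spatial derivative falls on the low-frequency coefficient $T_{\bar x}$ we drop the resulting term into $(G,K)$ by Lemma~\ref{l:Dtx}, exactly as in the previous cancellation lemmas. This allows us to treat $T_{\bar x}$ simply as a passive coefficient and reduces the problem to a quadratic cancellation in the nonlinear variables.

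The plan is to organize the computation by matching sub-corrections to the three summands of $\cK_{0,2}$. First, I would handle the Taylor coefficient piece $2iT_{\bar x}Pa$ using $\tw_{3,1}$: applying $-iT_{1-Y}T_{1+a}$ to the second summand of $\tw_{3,1}$ produces $T_{\bar x}P[\bar R_\alpha R]$ which, up to the balanced part that is absorbed into $K$ through \eqref{bmo-bmo+}, equals $\tfrac{1}{2}iT_{\bar x} Pa$; the first summand of $\tw_{3,1}$ and its material derivative, combined with the matching $\tr_{3,1}$, will cancel in the first equation via Lemma~\ref{l:XY-mat}(d) (since $\partial_\alpha T_{D_t} X = -T_{1-\bar Y}R_\alpha + E_3$ and symmetrically for $\bar X_\alpha$) and produce the remaining $\tfrac{3}{2}iT_{\bar x}Pa$ through the use of \eqref{para-aflow} on the $T_{\frac{1}{1+a}}$ factor, using that $T_{D_t}Pa = -T_{1+a}PM + E$ with $M = P[R\bar Y_\alpha - \bar R_\alpha Y]+\bar P[\bar R Y_\alpha - R_\alpha \bar Y]$. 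The balanced variants $\tw_{3,2},\tr_{3,2}$ are designed so that their contributions produce the $2T_{\bar x}T_{1+W_\alpha}T_{1-\bar Y}T_{R_\alpha}R$ piece of $\cK_{0,2}$; here one uses that for balanced interactions, Lemma~\ref{l:para-assoc} lets us extract the low-frequency coefficient $T_{\bar x}$ before running the same identities as in the first group.

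For the remaining piece $\tfrac{1}{2}T_{\bar x}\partial_\alpha \Pi(T_{1+W_\alpha}R,b)$ of $\cK_{0,2}$, I would use $\tw_{3,3}+\tw_{3,4}$ and $\tr_{3,3}+\tr_{3,4}$, which are direct paradifferential and balanced truncations of the classical normal form transformation \eqref{nft1} restricted to the $(W,Q)$ sector with $T_{\bar x}$ as coefficient. Applying $T_{D_t}$ via Lemma~\ref{l:WR-source} produces $T_{D_t}W_\alpha = -T_{1+W_\alpha}T_{1-\bar Y}R_\alpha + \cdots$ and $T_{D_t}R = i(1+a)Y - T_{R_\alpha}b + \cdots$, each with BMO-perturbative source by \eqref{para(Wa)-source}; the combinations with $T_{1-\bar Y}\partial_\alpha \tr_{3,3}, T_{1-\bar Y}\partial_\alpha \tr_{3,4}$ and $-iT_{1-Y}T_{1+a}\tw_{3,3}, -iT_{1-Y}T_{1+a}\tw_{3,4}$ then exhibit the same cancellation as in step~(1) of Lemma~\ref{lem:cor4}, leaving precisely the contribution $\tfrac{1}{2}T_{\bar x}\partial_\alpha\Pi(T_{1+W_\alpha}R,b)$ after rewriting $b = 2\Re P[(1-\bar Y)R]$ and using Lemma~\ref{l:YtoW} to swap between $W_\alpha,Y$ and $X_\alpha$.

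The main obstacle will be bookkeeping: each of the four sub-corrections, when hit with the three operators $T_{D_t}$, $T_{1-\bar Y}\partial_\alpha$, $-iT_{1-Y}T_{1+a}$, produces on the order of six to ten terms, many of which must cancel pairwise across the four groups. The genuine nonperturbative difficulty lies only in the Taylor-coefficient subcase, because the identity $T_{D_t}Pa \equiv -T_{1+a}PM$ is valid only modulo an $L^\infty\cap \dot H^{1/2}$ error, and one must verify that the factor $T_{\frac{1}{1+a}}T_{1+W_\alpha}$ sitting in front does not destroy the cubic bound \eqref{nf-err-bd} when this error is carried through; this is controlled by \eqref{aflow}, \eqref{a-bmo+} and Lemma~\ref{l:para-prod}, exactly as in Lemma~\ref{lem:cor3}. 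All other error terms arise either from para-commutators (handled by Lemma~\ref{l:para-com}), para-associativity (Lemmas~\ref{l:para-assoc}--\ref{l:para-assoc2}), or the observation used repeatedly in Sections~\ref{s:nf-1}--\ref{s:nf-0} that any cubic paradifferential term in which the lowest-frequency variable is differentiated strictly beyond the $A_{\frac14}$ threshold can be absorbed into $(G,K)$.
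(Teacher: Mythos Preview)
Your overall methodology is right --- treat $T_{\bar x}$ as a passive low-frequency coefficient, distribute $T_{D_t}$ via Lemma~\ref{l:para-L}, and feed in the para-material derivative identities of Lemmas~\ref{l:WR-source}, \ref{l:XY-mat}, \ref{l:a-mat}. But your pairing of sub-corrections with the pieces of $\cK_{0,2}$ is wrong, and with that pairing the cancellations you claim will not occur.

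The paper splits $\cK_{0,2}$ as $\cK_{0,2,1}+\cK_{0,2,2}+\cK_{0,2,3}+\cK_{0,2,4}$, separating the $\bar R R$-type contributions (the Taylor coefficient term $2iT_{\bar x}Pa$ and the $T_{1-Y}\bar R$ component of $\tfrac12\partial_\alpha\Pi(T_{1+W_\alpha}R,b)$) from the $RR$-type contributions (the high-low term $2T_{\bar x}T_{1+W_\alpha}T_{1-\bar Y}T_{R_\alpha}R$ and the $T_{1-\bar Y}R$ component of the $\Pi$ term). Look at the corrections: $(\tw_{3,1},\tr_{3,1})$ and $(\tw_{3,2},\tr_{3,2})$ each carry an antiholomorphic factor $\bar X_\alpha$ or $\bar R_\alpha$, so they can only generate $\bar R R$-type output --- they match $\cK_{0,2,1}$ and $\cK_{0,2,2}$ respectively, and the paper says this part is adapted directly from Lemma~\ref{lem:cor3}. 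By contrast, $(\tw_{3,3},\tr_{3,3})$ and $(\tw_{3,4},\tr_{3,4})$ are purely holomorphic (built from $W_\alpha, X, R, W$), so they match the $RR$-type pieces $\cK_{0,2,3}$ and $\cK_{0,2,4}$. Your plan assigns $(\tw_{3,2},\tr_{3,2})$ to the high-low holomorphic term $2T_{\bar x}T_{1+W_\alpha}T_{1-\bar Y}T_{R_\alpha}R$, which is impossible: a correction with an $\bar X_\alpha$ factor cannot produce a source term bilinear in holomorphic $R$. Likewise, $(\tw_{3,3},\tr_{3,3})=\bigl(\partial_\alpha T_{T_{\bar x}W_\alpha}X,\ T_{T_{\bar x}W_\alpha}R+T_{T_{\bar x}R_\alpha}W\bigr)$ is a high-low holomorphic correction and has nothing to do with the balanced $\Pi$ term; the paper shows explicitly that it cancels $\cK_{0,2,3}$ via the two-line computation in steps 1) and 2). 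Once you fix this pairing, your plan goes through essentially as the paper's proof.
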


\begin{proof}
It remains to match
\begin{align*}
\mathcal{K}_{0, 2}(w, r) &= T_{\bar x}(2ia +  2T_{1 + W_\alpha}T_{1 - \bar Y}T_{R_\alpha}R +\half \D_\alpha \Pi(T_{1 + W_\alpha} R, b) ) \\
&= T_{\bar x}(2ia + \half \D_\alpha \Pi(T_{1 + W_\alpha} R, T_{1 -  Y} \bar R)\\
&\quad +  2T_{1 + W_\alpha}T_{1 - \bar Y}T_{R_\alpha}R + \half \D_\alpha \Pi(T_{1 + W_\alpha} R, T_{1 - \bar Y} R)) + G \\
&=: \cK_{0, 2, 1} + \cK_{0, 2, 2} \\
&\quad + \cK_{0, 2, 3} + \cK_{0, 2, 4}.
\end{align*}
The Taylor coefficient term, and more generally the quadratic $\bar R_\alpha R$ and $\bar R R_\alpha$ terms collected in $\cK_{0,2,1}, \cK_{0, 2, 2}$, are corrected by $(\tw_{3, 1}, \tr_{3, 1}), (\tw_{3, 2}, \tr_{3, 2})$, respectively. The computations are easily adapted from the Taylor coefficient corrections of Lemma \ref{lem:cor3}, so we omit them here.

The remaining terms in $\cK_{0, 2, 3}, \cK_{0, 2, 4}$ are of the form $R_\alpha R$, and will be corrected by $(\tw_{3, 3}, \tr_{3, 3}), (\tw_{3, 4}, \tr_{3, 4})$, respectively. In the following, we demonstrate only the computations for the high-low interactions $\cK_{0, 2, 3}$, since the frequency balanced interactions in $\cK_{0, 2, 4}$ are similar. Thus, we are reduced to correcting
\begin{align*}
\mathcal{K}_{0, 2, 3} = 2T_{\bar x}T_{1 + W_\alpha}T_{1 - \bar Y}T_{R_\alpha}R
\end{align*}
with the corrections
\begin{align*}
\tilde{w}_{3, 3} &= \D_\alpha T_{T_{\bar x} W_\alpha} X, \\
\tilde{r}_{3, 3} &= T_{T_{\bar x}W_\alpha} R + T_{T_{\bar x}R_\alpha} W.
\end{align*}

\

1) We begin with the first equation, using as usual the para-Leibniz rule and the appropriate para-material derivative formulas, and observing that cases where the derivative falls on $\bar x$ are perturbative:
\begin{align*}
T_{D_t}\tw_{3, 3} &= - \D_\alpha(T_{T_{\bar x}(1 - \bar Y) R_\alpha} W + T_{T_{\bar x} W_\alpha }T_{1 - \bar Y} R) + G ,\\
T_{1 - \bar Y} \D_\alpha \tr_{3, 3} &= \D_\alpha(T_{1 - \bar Y}T_{T_{\bar x}R_\alpha} W + T_{1 - \bar Y}T_{T_{\bar x} W_\alpha} R) + G,
\end{align*}
so that we have no non-perturbative new contributions.

\

2) For the second equation, 
\begin{align*}
T_{D_t} \tr_{3, 3} &= -T_{T_{\bar x}T_{1 - \bar Y}T_{1 + W_\alpha}R_\alpha} R + iT_{T_{\bar x}W_\alpha}T_{1 + a} Y \\
&\quad + iT_{T_{\bar x}T_{1 + a}Y_\alpha} W - T_{T_{\bar x}R_\alpha} T_{1 - \bar Y}T_{1 + W_\alpha}R + G, \\
-iT_{1 - Y}T_{1 + a} \tw_{3, 3} &= -iT_{1 + a}(T_{T_{\bar x} W_\alpha}Y + T_{T_{\bar x} Y_\alpha}W)  + G. 
\end{align*}
We see the remaining terms, after perturbative modifications and cancellations, match $\mathcal{K}_{0, 2, 3}$.
\end{proof}

\section{Energy estimates for the full equation}
\label{s:ee}

The main goal of this section is to prove energy estimates for the $(\W,R)$ equation in $\dH^s$ for $s \geq 0$ with an $A_{\frac14}^2$ constant, as stated 
in Theorem~\ref{t:main}.

We are not only interested in having a good energy estimate, but also in understanding the structure of the differentiated evolution \eqref{ww2d-diff}.
 Thus, as part of the proof, we will also prove the renormalization result in Theorem~\ref{t:nf}. This will be useful in the next paper 
in our series, where we prove Strichartz estimates at low regularity.

We remark that, since $(\W,R)$ solves the linearized equation, 
the bounds for the linearized equation  in the previous section yield the case $s = \frac14$. We are interested in all $s$ but particularly the case $s \leq \frac34$.  The estimates for the linearized equation
will not work for $s \neq \frac14$, but here we can take advantage of the fact that $(w,r)=(\W,R)$ and balance things better.

\subsection{The main steps of the argument}

To begin the analysis, it is useful to recast \eqref{ww2d-diff} in a paradifferential form with a source term, based on the paradifferential equation \eqref{paralin(hwhr)}. Within the source term, we will peel off perturbative 
contributions, denoted by $(G,K)$ for the remainder of the section.
These are nonlinear expressions in $(\W,R)$ with the property that they satisfy 
favourable balanced cubic bounds of the form
\begin{equation}\label{good-err}
\|(G,K)\|_{\dH^s} \lesssim_A A_\frac14^2 \|(\W, R)\|_{\dH^s}.
\end{equation}
The remaining source terms are of two types:
\begin{enumerate}
    \item [(a)] quadratic terms in $(\W,R)$.
\item [(b)] unbalanced cubic terms, which can also be viewed as a quadratic 
expression in $(\W,R)$ but with a lower frequency coefficient which depends only on 
an undifferentiated $\W$.
\end{enumerate}
Implicitly, a similar analysis was carried out in \cite{HIT} by further differentiating the $(\W,R)$ equation. The difference is that in \cite{HIT}
all of the cubic source terms went into the perturbative box, whereas here we make a finer distinction.

We state the outcome of this computation in the following:

\begin{lemma}\label{l:para-rewrite}
The equation \eqref{ww2d-diff} can be rewritten as a paradifferential equation
for the variables $(\hat w,\hat r) = (\W,R)$, of the form
\begin{equation} \label{para-to-nonlin}
\left\{
\begin{aligned}
 &T_{D_t} \hat w  + T_{b_\alpha} \hat w + \D_\alpha T_{1 - \bar Y}T_{1 + W_\alpha} \hat r = \cG(\W, R) + G     \\
&T_{D_t} \hat r  + T_{b_\alpha} \hat r  -iT_{(1 - Y)^2}T_{1 + a}\hat w + T_M \hat r
= \cK(\W, R) + K,
\end{aligned}
\right.
\end{equation}
where
\begin{equation}\label{nonlin-source}
\begin{split}
\cG = & \ - \D_\alpha \left[ \Pi(\W, T_{1-\bar Y} R)+\Pi(\W, T_{1-Y} \bar R)
+ \Pi(\bar Y, T_{1 + \W} R)\right], \\
\cK = & \ - T_{1-\bar Y} \Pi(R_\alpha, R) - T_{1-Y} \Pi(R_\alpha, \bar R)
- iT_{1 - Y}T_{1 + a} \Pi(Y, \W)- T_{1-Y} \Pi(\bar R_\alpha, R).
\end{split}
\end{equation}
\end{lemma}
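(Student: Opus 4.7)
The plan is a direct paradifferential expansion of the two equations in \eqref{ww2d-diff}, where each product is rewritten as $fg = T_f g + T_g f + \Pi(f,g)$ and each resulting contribution is sorted into three bins: leading paradifferential terms that assemble into the LHS of \eqref{para-to-nonlin}; balanced $\Pi$-type contributions that make up $\cG$ and $\cK$; and trilinear and higher errors absorbed into $(G,K)$ via the bound \eqref{good-err}. The natural starting point is Lemma~\ref{l:WR-source}, which has already converted $D_t$ into $T_{D_t}$ via $(D_t - T_{D_t}) u = T_{u_\alpha} b + \Pi(u_\alpha, b)$ and packages the para-material derivatives of $\W$ and $R$ in an algebraically compact form.

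For the first equation, I start from
\[
T_{D_t}\W + T_{b_\alpha}\W = - P\D_\alpha\bigl[T_{1+\W}[(1-\bar Y)R] + \Pi(\W, b)\bigr]
\]
and expand both $(1-\bar Y)R$ and $b$ as paraproducts, using repeatedly Lemmas~\ref{l:para-com}, \ref{l:para-prod}, \ref{l:para-assoc}, \ref{l:para-assoc2} to rearrange coefficient orderings and the convention $P\Pi = \Pi$ to handle $P$-commutator errors. The principal high-low contribution $-\D_\alpha T_{1-\bar Y} T_{1+\W} R$ is transposed to the LHS, while the three balanced contributions—coming from the $\Pi(\bar Y, R)$ piece of $T_{1+\W}[(1-\bar Y)R]$ and from the two halves $P[(1-\bar Y)R]$, $\bar P[(1-Y)\bar R]$ of $b$ inside $\Pi(\W, b)$—assemble into the three $\Pi$ terms of $\cG$. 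Everything else (high-frequency antiholomorphic remainders killed by $P$, para-commutator and para-associativity errors, and the source term $(1+\W)M$ from \eqref{ww2d-diff}) is placed in $G$, the last using $\|M\|_{L^\infty} \lesssim_A A_\frac14^2$ from Lemma~\ref{l:M}.

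For the second equation, using $T_{D_t} R = D_t R - T_{R_\alpha} b - \Pi(R_\alpha, b)$ together with $D_t R = i(1-Y)(\W-a)$, I expand $i(1-Y)(\W-a)$ paradifferentially. The resulting high-low terms in $\W$, combined with the bounds on $a$ from Proposition~\ref{regularity for a} and the para-material derivative of $a$ from Lemma~\ref{l:a-mat}, the $T_M R$ correction, and appropriate reorderings via Lemma~\ref{l:para-prod}, reassemble into the target $-iT_{(1-Y)^2} T_{1+a}\W$ on the LHS; the balanced-frequency piece produces the $-iT_{1-Y} T_{1+a}\Pi(Y, \W)$ term in $\cK$. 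Expanding $b = 2\Re P[(1-\bar Y)R]$ inside $\Pi(R_\alpha, b)$ and using the para-Leibniz rule Lemma~\ref{l:para-L} to reorganize $T_{R_\alpha} b$ and $T_{b_\alpha} R$ modulo balanced contributions then produces the remaining three $T_{1-\bar Y}\Pi(R_\alpha, R)$, $T_{1-Y}\Pi(R_\alpha, \bar R)$, $T_{1-Y}\Pi(\bar R_\alpha, R)$ terms of $\cK$.

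The main obstacle is to verify that every discarded remainder actually satisfies \eqref{good-err}, which requires a tight budgeting of derivatives: each time a derivative lands on a low-frequency coefficient (e.g., $\bar Y_\alpha$, $b_\alpha$, $a_\alpha$, or $\W_{\alpha\alpha}$) the resulting trilinear expression must be organized so that two factors contribute $A_\frac14$ each while the third is measured in $\dH_s$. This rests on the sharp pointwise and $BMO$ bounds of Section~\ref{s:wwbounds}—notably $\|M\|_{L^\infty} \lesssim_A A_\frac14^2$, $\||D|^\frac12 a\|_{BMO} \lesssim A_\frac14^2$, and the Besov bound \eqref{est:Y} for $Y$—together with systematic use of the para-commutator, para-product, and para-associativity lemmas of Section~\ref{s:multilinear} to reorder coefficients while keeping track of remainders. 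The projection $P$ is handled via $P\Pi = \Pi$, with residual commutator errors $[P, T_f]$ being perturbative by~\eqref{CM-com}.
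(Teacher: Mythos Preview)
Your approach is correct and essentially matches the paper's: both proofs paraproduct-expand each term of \eqref{ww2d-diff}, separate out the leading paradifferential operators for the left-hand side, the balanced $\Pi$-contributions for $(\cG,\cK)$, and the cubic remainders for $(G,K)$; taking Lemma~\ref{l:WR-source}(b) as the packaged starting point is a legitimate shortcut that the paper itself supplies.

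Two small corrections to the write-up. First, once you start from the identity in Lemma~\ref{l:WR-source}(b), the source $(1+\W)M$ no longer appears---it has already been absorbed algebraically into the $T_{b_\alpha}\W$ term---so your remark about placing it in $G$ via $\|M\|_{L^\infty}$ is superfluous; more importantly, that claim taken literally would \emph{fail}, since $T_{1+\W}PM$ alone does not satisfy \eqref{good-err} (only after its high-low pieces combine with the $T_{R_\alpha}\bar Y$-type terms does one recover $T_{b_\alpha}\W$, which is precisely what Lemma~\ref{l:WR-source} has already done). Second, Lemma~\ref{l:a-mat} (the para-material derivative of $a$) is not used here; the only input on $a$ is the algebraic expansion $Pa = -iP[R\bar R_\alpha]$ together with the size bounds of Proposition~\ref{regularity for a}, which is what makes the combination $T_{b_\alpha}R + T_M R - T_{R_\alpha}T_{1-\bar Y}R - T_{1-Y}T_{\bar R_\alpha}R = K$ work in the second equation.
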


As in the previous section, here we include an implicit projection $P$ in $\Pi$. We note that in $\cG$ one can equivalently take out in front all the paraproducts. The above lemma is proved in Section~\ref{s:para-rewrite}.

We will divide the analysis of the equation \eqref{para-to-nonlin} into two steps.
We will first prove an $\dH^s$ well-posedness result for the 
paradifferential equation \eqref{paralin(hwhr)}. Fortunately we do not need to do this from scratch, and instead we can rely on the similar result proved in Proposition~\ref{p:paralin-wp} in the course of the analysis of the linearized equation. For this reason, we state the outcome of this step as follows:

\begin{proposition}\label{p:para2}
a) There exists a bounded linear transformation 
(independent of $s$) reducing \eqref{paralin(hwhr)} to the linearized paradifferential equation \eqref{paralin(wr)}. Precisely, given $(\hat w, \hat r)$ satisfying \eqref{paralin(hwhr)}, there exists $(w, r)$ satisfying \eqref{paralin(wr)FG} such that for any $s \in \R$ we have

(i) Invertibility:
\begin{align*}
\|(w_\alpha, r_\alpha) - (\hat w, \hat r) \|_{\dH^s} \lesssim_A A \|(w,r)\|_{\dH^s},
\end{align*}

(ii) Perturbative source term:
\begin{equation*}
\|(G, K)\|_{\dH^{s}} \lesssim_A A_{\frac14}^2 \|(w, r)\|_{\dH^{s}}.
\end{equation*}

b) As a consequence, Proposition \ref{p:paralin-wp} holds with \eqref{paralin(hwhr)} in place of \eqref{paralin(wr)}.
 \end{proposition}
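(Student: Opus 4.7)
\medskip

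\noindent\textbf{Proof plan for Proposition~\ref{p:para2}.}
The strategy is to bridge the two paradifferential flows \eqref{paralin(hwhr)} and \eqref{paralin(wr)} via the paradifferential counterpart of the linearization relation \eqref{full-relation}, namely
\[
\hat w = w_\alpha, \qquad \hat r = T_{1-Y}\bigl(r_\alpha + T_{R_\alpha} w\bigr).
\]
This mirrors at the paradifferential level the identities $\hat w = w_\alpha$ and $\hat r = (r_\alpha+R_\alpha w)/(1+\W)$. The construction is done by solving these relations for $(w,r)$ in terms of $(\hat w,\hat r)$: the first equation yields $w := P\partial_\alpha^{-1} \hat w$, which is well-defined on holomorphic data, and after inverting the $T_{1-Y}$ factor we set
$r := P\partial_\alpha^{-1}\bigl(T_{1+W_\alpha}\hat r - T_{R_\alpha} w\bigr)$. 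The definition is independent of $s$ and linear in $(\hat w,\hat r)$.

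For part (i) (invertibility), $\hat w = w_\alpha$ holds by construction, while
\[
r_\alpha - \hat r = T_Y\, r_\alpha - T_{1-Y}T_{R_\alpha} w.
\]
The first term is bounded in $\dH_s$ using $\|Y\|_{L^\infty}\lesssim A$ and the paraproduct estimate Lemma~\ref{l:para-prod}, while the second uses the $A$-bound on $R_\alpha$ (via the $|D|^\frac12 R\in L^\infty$ control) together with para-associativity in Lemma~\ref{l:para-assoc} and paraproduct bounds from Proposition~\ref{p:bmo}. Both contributions yield the $O_A(A)$ factor on the $\dH_s$ norm of $(w,r)$ as required.

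The main work lies in part (ii), the derivation of \eqref{paralin(wr)FG} from \eqref{paralin(hwhr)}. Substituting the ansatz into the first equation of \eqref{paralin(hwhr)} and using the commutator $[T_{D_t},\partial_\alpha] = -T_{b_\alpha}\partial_\alpha$, the terms $T_{D_t}\hat w + T_{b_\alpha}\hat w$ collapse to $\partial_\alpha T_{D_t} w$; applying $P\partial_\alpha^{-1}$ leaves us with the $T_{D_t} w$ from \eqref{paralin(wr)FG}. The term $\partial_\alpha T_{1-\bar Y}T_{1+W_\alpha}\hat r$ is rewritten using Lemma~\ref{l:YtoW} and the para-product/para-commutator/para-associativity lemmas (Lemmas~\ref{l:para-com}--\ref{l:para-assoc2}) as $\partial_\alpha T_{1-\bar Y} r_\alpha + \partial_\alpha T_{1-\bar Y}T_{R_\alpha} w$ modulo cubic errors of the required perturbative form. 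For the second equation of \eqref{paralin(hwhr)}, we distribute $T_{D_t}$ across the paraproduct $T_{1-Y}(r_\alpha + T_{R_\alpha}w)$ using the para-Leibniz rule (Lemma~\ref{l:para-L}); the para-material derivative of $Y$ is handled by Lemma~\ref{l:XY-mat}, that of $R_\alpha$ by \eqref{para(Wa)} of Lemma~\ref{l:WR-source}, and the resulting source terms are perturbative by \eqref{para(Wa)-source}. The term $T_M \hat r$ is absorbed using the sharp $L^\infty$ bound \eqref{M-infty}. Finally, the factor $T_{(1-Y)^2}T_{1+a}$ combined with $\hat w = w_\alpha$ yields $T_{1-Y}T_{1+a}w_\alpha$ after using Lemma~\ref{l:para-prod} to merge $T_{1-Y}\cdot T_{1-Y} = T_{(1-Y)^2}$ up to perturbative errors. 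Collecting everything and applying $P\partial_\alpha^{-1}$ produces the equation \eqref{paralin(wr)FG} for $(w,r)$ with source terms $(G,K)$ controlled by \eqref{GK-pert}.

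Part (b) follows by composition: given $(\hat w,\hat r)$ solving \eqref{paralin(hwhr)}, the induced $(w,r)$ solves \eqref{paralin(wr)FG} to which Proposition~\ref{p:paralin-wp} applies, producing energies and cubic bounds of the stated form. The induced energy functional for \eqref{paralin(hwhr)} is $\Elin^{s,para}$ composed with the linear map $(\hat w,\hat r)\mapsto(w,r)$, and the invertibility from part (i) transfers the norm equivalence. The main obstacle throughout is bookkeeping: ensuring that every para-Leibniz, para-commutator and para-associativity error carries the correct derivative balance so that all remainders are controlled by the balanced constant $A_{\frac14}^2$ rather than the unbalanced $AB$. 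This hinges crucially on the sharpened Besov-type bounds for $M$, $a$, $b$ and $Y$ established in Section~\ref{s:wwbounds}.
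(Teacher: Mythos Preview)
Your proposal is correct and follows essentially the same route as the paper. Both you and the paper use the paradifferential inverse of \eqref{full-relation}, namely $(w,r) = \bigl(\partial_\alpha^{-1}\hat w,\ \partial_\alpha^{-1}(T_{1+W_\alpha}\hat r - T_{R_\alpha}\,\partial_\alpha^{-1}\hat w)\bigr)$, and then verify that $(w,r)$ satisfies \eqref{paralin(wr)FG} with perturbative source; the paper's verification of the second equation is organized as a single long chain using \eqref{ww2d-diff} and \eqref{paralin(hwhr)} directly, whereas you phrase it via para-Leibniz applied to $T_{1-Y}(r_\alpha + T_{R_\alpha}w)$, but the ingredients (Lemmas~\ref{l:para-com}--\ref{l:para-L}, \ref{l:XY-mat}, \ref{l:WR-source}, \ref{l:M}) are identical.
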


This result is proved in Section~\ref{s:para2}; in particular, it yields a good $\dH^s$ energy functional for \eqref{paralin(hwhr)}. 

\

Our second step is to interpret the differentiated system \eqref{ww2d-diff}, expressed in the form \eqref{para-to-nonlin},
perturbatively based on the paradifferential equation \eqref{paralin(hwhr)}. However, the source terms $(\cG,\cK)$ are
not directly perturbative. Instead, we will show that there exists a favourable normal form transformation which largely eliminates the source terms:

\begin{proposition}\label{p:nf}
Given $(\W,R)$ satisfying \eqref{ww2d-diff}, there exist 
modified normal form variables $(\nfW, \nfR)$ satisfying an equation of the form
\begin{equation*}
\left\{
\begin{aligned}
 &T_{D_t} \nfW  + T_{b_\alpha} \nfW + \D_\alpha T_{1 - \bar Y}T_{1 + W_\alpha} \nfR = \tilde{\cG}(\W,R) \\
&T_{D_t} \nfR  + T_{b_\alpha} \nfR  -iT_{(1 - Y)^2}T_{1 + a}\nfW + T_M \nfR
= \tilde{\cK}(\W,R),
\end{aligned}
\right.
\end{equation*}
such that for any $s \geq 0$ we have

(i) Invertibility:
\begin{equation}
    \| (\nfW, \nfR) - (\W,R)\|_{\dH^s} \lesssim_A A \|(\W,R)\|_{\dH^s},
\end{equation}

(ii) Perturbative source term:
\begin{equation}
\| (\tilde{\cG}(\W,R),\tilde{\cK}(\W,R)\|_{\dH^s} \lesssim_A A_{\frac14}^2
\| (\W,R)\|_{\dH^s}.
\end{equation}
\end{proposition}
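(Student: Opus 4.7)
The strategy is to construct $(\nfW, \nfR) = (\W, R) + (\delta \W, \delta R)$, where the quadratic corrections $(\delta\W, \delta R)$ are designed so that, when substituted into the left-hand side of \eqref{para-to-nonlin}, they cancel the source terms $(\cG, \cK)$ modulo cubic perturbative errors satisfying the balanced bound \eqref{good-err}. Concretely, I want
\[
T_{D_t} \delta\W + T_{b_\alpha}\delta\W + \D_\alpha T_{1-\bar Y} T_{1+W_\alpha}\delta R = -\cG(\W,R) + G',
\]
and similarly for the second equation with residual $K'$. Then $(\nfW, \nfR)$ automatically solves the required paradifferential equation with source $(\tilde\cG, \tilde\cK) = (G+G', K+K')$, and only (i) and (ii) need to be verified. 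This parallels the linearized normal form constructions of Sections~\ref{s:nf-1} and \ref{s:nf-0}, specialized to the case in which the linearized variables coincide with $(\W, R)$; the key simplification is that the unbalanced cubic trash has already been absorbed into $G,K$ via Lemma~\ref{l:para-rewrite}.

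To define the corrections, I would match each paraproduct in \eqref{nonlin-source} with a quadratic expression whose leading bilinear resonance inverts the dispersive operator on the left. For the three terms in $\cG$ this yields corrections schematically of the form $\delta\W \sim -\Pi(W, T_{1-\bar Y} R)$, $-\Pi(W, T_{1-Y}\bar R)$, and $-\Pi(\bar X, T_{1+\W}R)$, where the auxiliary variable $X = T_{1-Y}W$ provides an anti-derivative of $Y$ at leading order (cf.\ Lemma~\ref{l:YtoW}); the first three terms in $\cK$ are handled by matching $\delta R$ corrections, and the Taylor-coefficient term $-iT_{1-Y}T_{1+a}\Pi(Y,\W)$ is cancelled by a correction of the form $\delta\W \sim -T_{1/(1+a)}T_{1+W_\alpha}Pa$ in the spirit of Lemma~\ref{lem:cor2}, paired with a companion $\delta R$ correction as in Lemma~\ref{lem:cor3} for the second-equation Taylor term.

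To verify the cancellation I would apply the para-Leibniz rule (Lemma~\ref{l:para-L}) to distribute $T_{D_t}$ across the paraproducts, then substitute the para-material derivatives from Lemma~\ref{l:WR-source} for $T_{D_t}W_\alpha, T_{D_t}R$, from Lemma~\ref{l:XY-mat} for $T_{D_t}Y, T_{D_t}X$, and from Lemma~\ref{l:a-mat} for $T_{D_t}a$. The leading bilinear output of each correction reproduces the corresponding term in $(\cG, \cK)$ with the opposite sign. The remaining cubic terms, together with the source terms from the para-material derivative formulas, are collected into $(G', K')$ and controlled by repeated application of Proposition~\ref{p:bmo} and the estimates of Section~\ref{s:wwbounds}, with the crucial point being that every such residual involves at least two factors measurable in $A_{\frac14}$. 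The invertibility (i) follows from Lemma~\ref{l:para-prod} and the bounds on the coefficients $\W, Y, a$ in terms of $A$: each correction has the form of a paraproduct with coefficient of size $O(A)$ acting on a quadratic expression in $(\W, R)$, giving $\|(\delta\W, \delta R)\|_{\dH^s} \lesssim_A A\,\|(\W,R)\|_{\dH^s}$.

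The main obstacle, as in Section~\ref{s:nf-0}, is the bookkeeping required to exactly match the low-frequency paradifferential coefficients $T_{1-\bar Y}, T_{1+W_\alpha}, T_{(1-Y)^2}, T_{1+a}$ appearing in $\cG, \cK$ with the coefficients naturally produced by the normal form computation. These rarely agree on the nose, so successive auxiliary corrections (analogous to $(\tw_2, \tr_2)$ and $(\tw_3, \tr_3)$ in Section~\ref{s:nf-0}) are required to rectify mismatches while staying within the perturbative class. A further subtlety is that the cubic residuals generated when $T_{D_t}$ acts on the \emph{low-frequency} coefficients (via $b_\alpha, M, D_t Y, D_t a$) must be rebalanced — typically by rebalancing derivatives via para-commutators (Lemma~\ref{l:para-com}) — so that in each residual at least one factor can be measured in $A_{\frac14}$ rather than $A$, thereby delivering the $A_{\frac14}^2$ gain demanded by \eqref{good-err}.
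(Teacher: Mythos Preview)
Your overall strategy is correct and matches the paper's: define bilinear corrections $(\delta\W,\delta R)$, insert into the left side of \eqref{paralin(hwhr)}, distribute $T_{D_t}$ via Lemma~\ref{l:para-L}, and substitute the para-material derivative formulas from Lemmas~\ref{l:WR-source}, \ref{l:XY-mat}, \ref{l:a-mat}. The invertibility argument and the use of Lemmas~\ref{l:para-com}, \ref{l:para-prod} to control low-frequency coefficient mismatches are also right.

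However, you substantially overcomplicate the construction, and one of your proposed corrections is misdirected. The key point you miss is that Lemma~\ref{l:para-rewrite} has already reduced the source terms $(\cG,\cK)$ to \emph{balanced} $\Pi$-type expressions only: there are no high--low $T$-type terms left, and in particular no term where the Taylor coefficient $a$ sits at high frequency. Consequently the paper's proof is essentially a one-step affair. The actual correction is
\[
\tilde\W = -\D_\alpha \Pi(\W, 2\Re X), \qquad
\tilde R = -\Pi(R_\alpha, 2\Re X) - \Pi(T_{1-\bar Y}\bar\W, R),
\]
which is exactly the balanced part of the differentiated normal form \eqref{nft1} (with $X$ replacing $W$ to carry the correct low-frequency coefficients). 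A short computation then shows this cancels all of $(\cG,\cK)$ modulo perturbative errors; no auxiliary rounds analogous to $(\tw_2,\tr_2)$ or $(\tw_3,\tr_3)$ from Section~\ref{s:nf-0} are needed.

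Your proposal to treat the term $-iT_{1-Y}T_{1+a}\Pi(Y,\W)$ via a $T_{1/(1+a)}T_{1+W_\alpha}Pa$-style correction (as in Lemma~\ref{lem:cor2}) is misguided: here $T_{1+a}$ is merely a low-frequency weight on a balanced $\Pi(Y,\W)$ interaction, not a high-frequency occurrence of $a$. This term, together with the $-T_{1-Y}\Pi(\bar R_\alpha,R)$ term, is cancelled directly by the combination of $iT_{(1-Y)^2}T_{1+a}\D_\alpha\Pi(\W,2\Re X)$ and $-T_{D_t}\Pi(T_{1-\bar Y}\bar\W,R)$; no $a$-specific correction is required. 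So while your route could perhaps be made to work, it introduces several layers of machinery that the structure of the problem simply does not demand.
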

This result is proved in Section~\ref{s:nf-nonlin}.

\subsection{The paradifferential form of the differentiated equation}
\label{s:para-rewrite}

Our goal here is to prove Lemma~\ref{l:para-rewrite}. We succesively consider 
all terms in the two equations in \eqref{ww2d-diff}, which for convenience we recall here, written in a slightly more convenient algebraic fashion
\begin{equation*} 
\left\{
\begin{aligned}
 &D_t \W + (1-\bar Y)(1+\W) R_\alpha   =  (1+\W)M
\\
&D_t R = i\left(1-(1-Y)(1+a)\right).
\end{aligned}
\right.
\end{equation*}
We will use this equation in its projected form.

Along the way, we place perturbative terms in $(G,K)$. As a general rule,
all terms we place in $(G,K)$ are cubic and balanced, in the sense that the two highest frequencies are comparable and the low frequency variable is fully differentiated.

\medskip

1) For the first term in the first equation, expanding $b$ and discarding balanced terms, we have
\[
\begin{split}
P D_t \W =  & \ T_{D_t} \W + T_{\W_\alpha} P b +   \Pi(\W_\alpha,b)
\\ 
=  & \ T_{D_t} \W + T_{\W_\alpha} T_{1-\bar Y} R +   \Pi(\W_\alpha, T_{1-\bar Y} R
+ T_{1-Y} \bar R) + G.
\end{split}
\]

For the second term in the first equation we use a paraproduct expansion
discarding all balanced terms,
\[
\begin{split}
P[(1+\W)(1-\bar Y) R_\alpha] = & \  T_{1+\W} T_{1-\bar Y} R_\alpha
- PT_{1+\W} T_{R_\alpha} \bar Y + T_{1-\bar Y} T_{R_\alpha} \W 
\\ & \
 - T_{1+\W} \Pi(\bar Y, R_\alpha) + T_{1 - \bar Y} \Pi(\W,R_\alpha) + G.
\end{split}
\]

Finally for the source term in the first equation we use \eqref{M-def}
and Lemma~\ref{l:M} to get
\[
\begin{split}
P[(1+\W) M] = & \  T_{1+\W} PM + T_M \W + G = T_{1+\W}P [\bar Y_\alpha R - \bar R_\alpha Y ] + T_M \W+ G
\\
= & \ T_{1+\W} (T_{\bar Y_\alpha} R - T_{\bar R_\alpha} Y)
+ T_{1+\W} [\Pi(\bar Y_\alpha, R) - \Pi(\bar R_\alpha, Y) ] + G.
\end{split}
\]
We combine these terms, using  Lemmas~\ref{l:YtoW} and \ref{l:para-prod} to write 
\[
T_{1-\bar Y} T_{R_\alpha} \W + T_{1+\W} T_{\bar R_\alpha} Y  = 
T_{1-\bar Y} T_{R_\alpha} \W + T_{1-Y} T_{\bar R_\alpha} \W - T_M \W + G = T_{b_\alpha} \W + G,
\]
in order to obtain the first equation in \eqref{para-to-nonlin}.

\medskip

2) We now consider the two terms in the second equation, which we expand in a similar fashion:
\[
\begin{split}
P D_t R =  & \ T_{D_t} R + T_{R_\alpha}P b +   \Pi(R_\alpha,b)
\\ 
=  & \ T_{D_t} R + T_{R_\alpha} T_{1-\bar Y} R +   \Pi(R_\alpha, T_{1-\bar Y} R
+ T_{1-Y} \bar R) + K
\end{split}
\]
respectively, using Lemma~\ref{regularity for a},
\[
\begin{split}
P\left[\frac{\W-a}{1+\W}\right] = & \   P[ 1- (1+a)(1-Y)]  
\\
= & \ T_{1+a} Y - T_{1-Y} Pa + \Pi(a,Y)
\\
= & \ T_{1+a} Y - i T_{1-Y} [ \bar R_\alpha R] + K
\\
= & \ T_{1+a} Y - i T_{1-Y} T_{\bar R_\alpha} R - i T_{1-Y} \Pi(\bar R_\alpha, R) + K.
\end{split}
\]
Just as in the case of the first equation, we recombine
\[
T_{R_\alpha} T_{1-\bar Y} R + T_{1-Y} T_{\bar R_\alpha} R = T_{b_\alpha + M} R + K,
\]
and use Lemma~\ref{l:YtoW} to write
\[
T_{1+a} Y = T_{(1 - Y)^2}T_{1 + a}\W - T_{1 - Y} T_{1 + a}\Pi(Y, \W) + K,
\]
obtaining the second equation in \eqref{para-to-nonlin}.

\subsection{Well-posedness for the paradifferential flow}
\label{s:para2}

In this section we prove the paradifferential 
well-posedness result in Proposition~\ref{p:para2}.
One could also do this directly, but instead it is more 
efficient to transfer this result from the similar result 
in Proposition~\ref{p:paralin-wp}.

Consider a solution $(\hat w,\hat r)$ to \eqref{paralin(hwhr)}. Motivated by \eqref{full-relation}, define
\begin{equation}\label{para-relation}
 (w, r) = (\D_\alpha^{-1}\hatw, \D_\alpha^{-1}T_{1 + \W}\hatr - \D_\alpha^{-1}T_{R_\alpha} \D^{-1}_\alpha \hat w).
\end{equation}
It is easily seen that we have the norm equivalence
\begin{equation}
\|(w,r)\|_{\dH^{s+1}} \lesssim_A \| (\hat w,\hat r)\|_{\dH^{s}}.    
\end{equation}
We will prove that the equation for $(w,r)$ is a perturbation of 
the equation \eqref{paralin(wr)}, which will allow us to establish an equivalence 
between the $\dH^{s}$ well-posedness of 
\eqref{paralin(hwhr)} and the $\dH^{s+1}$ well-posedness of 
\eqref{paralin(wr)}.

We insert $(w,r)$  in \eqref{paralin(wr)} and compute the corresponding source terms.
For the first equation of the system for $(w, r)$ we have full cancellation
\begin{align*}
\D_\alpha(T_{D_t} w + T_{1 - \bar Y} \D_\alpha r + T_{1 - \bar Y}T_{R_\alpha} w) = T_{D_t} \hat w  + T_{b_\alpha} \hat w + \D_\alpha T_{1 - \bar Y}T_{1 + W_\alpha} \hat r = 0.
\end{align*}
For the second equation we use \eqref{ww2d-diff} and \eqref{paralin(hwhr)} to compute material derivatives, Lemma~\ref{l:para-L} to distribute para-material derivatives,
and Lemma~\ref{l:para-com} to commute paraproducts. Denoting by $K$ perturbative terms, i.e. which satisfy
\[
\|K\|_{\dot H^{s+\frac12}} \lesssim_A A_\frac14^2 \|(w, r)\|_{\dH^{s + 1}},
\]
we succesively evaluate the expression
\[
\cK = \D_\alpha(T_{D_t} r - i T_{1 - Y}T_{1 + a} w)
\]
as follows:
\begin{equation*}
\begin{aligned}
\cK &= T_{D_t} (T_{1 + \W}\hatr - T_{R_\alpha} \D_\alpha^{-1} \hat w) + T_{b_\alpha}(T_{1 + \W}\hatr - T_{R_\alpha} \D_\alpha^{-1} \hat w) - i\D_\alpha T_{1 - Y}T_{1 + a} w 
\\
&=  (T_{D_t \W} \hat r + T_{1 + \W}T_{D_t}\hatr -T_{\D_\alpha D_t R} w -T_{R_\alpha} T_{D_t}\D^{-1}_\alpha \hat w) + T_{1 + \W} T_{b_\alpha}\hatr\\
&\quad + i T_{1 + a} T_{Y_\alpha} w - i T_{1 - Y}T_{1 + a} w_\alpha + K
\\
&=  T_{1 + \W}(T_{D_t}+T_{b_\alpha}) \hatr - T_{R_\alpha} \partial^{-1} (T_{D_t}+T_{b_\alpha}) \hatw + T_{D_t \W} \hat r - T_{\D_\alpha D_t R} w
\\
&\quad + i T_{1 + a} T_{ Y_\alpha} w - i T_{1 - Y}T_{1 + a} w_\alpha + K
\\
&=  i T_{1 + \W}T_{(1+a)(1-Y)^2} \hatw - T_{1 + \W}T_M \hat r
+ T_{R_\alpha} T_{1-\bar Y}T_{1+\W} \hatr - T_{ (1+\W)(1-\bar Y) R_\alpha} \hat r + T_{(1+\W)M}\hat r 
\\
& \quad + i T_{\D_\alpha (1+a)(1-Y) } w
 + i T_{1 + a} T_{ Y_\alpha} w - i T_{1 - Y}T_{1 + a} \hatw + K
\\ 
& = K.
\end{aligned}
\end{equation*} 
Here we have harmlessly replaced the para-material derivatives of 
$\W$ and $R_\alpha$ with their full material derivatives, and also we have discarded 
the $\partial_\alpha a$ term using \eqref{a-bmo+}. Finally, at the last step we have manipulated 
paraproducts using Lemmas~\ref{l:para-com}, \ref{l:para-prod},~\ref{l:para-assoc}.

Thus, the $\dH^{s+ 1}$ well-posedness of \eqref{paralin(wr)} given by Proposition~\ref{p:paralin-wp} implies the $\dH^s$ well-posedness of \eqref{paralin(hwhr)}.

\subsection{The normal form transformation}
\label{s:nf-nonlin}
In this section we construct the normal form transformation
in Proposition~\ref{p:nf}. We recall the nonperturbative part of the source term,
\begin{equation*}
\begin{split}
\cG = & \ - \D_\alpha \left[ \Pi(\W, T_{1-\bar Y} R)+\Pi(\W, T_{1-Y} \bar R)
+ \Pi(\bar Y, T_{1 + \W} R)\right], \\
\cK = & \ - T_{1-\bar Y} \Pi(R_\alpha, R) - T_{1-Y} \Pi(R_\alpha, \bar R)
- iT_{1 - Y}T_{1 + a} \Pi(Y, \W)- T_{1-Y} \Pi(\bar R_\alpha, R),
\end{split}
\end{equation*}
where one could harmlessly commute outside the paraproducts in the first 
expression.


Then we define the correction
\begin{align*}
\tilde{\W} &= - \D_\alpha \Pi(\W,2\Re X), \\
\tilde{R} &= - \Pi(R_\alpha, 2\Re X) - \Pi(T_{1 - \bar Y}\bar \W,R).
\end{align*}
For a partial verification of these expressions, we remark that the quadratic part of this correction coincides with the balanced part of the derivative of the correction
in \eqref{nft1}, after switching to the good variables $(\W,R)$.

We now insert these corrections into the equation \eqref{paralin(hwhr)},
and verify that the generated source terms cancel $(\cG,\cK)$ modulo perturbative terms (i.e. which satisfy \eqref{good-err}).

 Note that for both equations, the contribution of the $T_{b_\alpha}$ term is perturbative. Also, in the first equation, the undifferentiated $\hat r$ terms are perturbative. Also recall that we use Lemma~\ref{l:para-L} to distribute the para-material derivative, and Lemma \ref{l:XY-mat} to compute the para-material derivative of $X$.

We compute the contributions to the first equation of \eqref{paralin(hwhr)}.
\begin{equation*}
\begin{aligned}
-T_{D_t} \D_\alpha \Pi(\W, 2\Re X) &= \D_\alpha( \Pi(T_{(1 - \bar Y)(1 + \W)} R_\alpha, 2\Re X) + \Pi(\W, T_{1-\bar Y} R + T_{1-Y} \bar R) + G, \\
-T_{1 -\bar Y} T_{1 + \W} \D_\alpha \Pi(R_\alpha,2\Re X) &= - \D_\alpha \Pi(T_{(1 - \bar Y)(1 + \W)} R_\alpha, 2\Re X) + G, \\
-T_{1 -\bar Y} T_{1 + \W} \D_\alpha\Pi(T_{1 - \bar Y}\bar \W, R) &= - \D_\alpha \Pi(\bar Y, T_{1 + \W} R) + G.
\end{aligned}
\end{equation*}
After cancellations, only the second term of the first contribution and the last contribution remain, modulo perturbative $G$ terms, matching $\cG$. 

Similarly, for the second equation of \eqref{paralin(hwhr)} we have
\begin{equation*}
\begin{aligned}
-T_{D_t}\Pi(R_\alpha, 2\Re X) &= -i\Pi(T_{1 + a}Y_\alpha, 2\Re X) + \Pi(R_\alpha, (T_{1-\bar Y} R + T_{1-Y} \bar R)) + K, \\
-T_{D_t} \Pi(T_{1 - \bar Y}\bar \W,R) &= T_{1 - Y}\Pi(\bar R_\alpha, R) -iT_{1 + a}\Pi(T_{1 - \bar Y}\bar \W, Y) + K, \\
iT_{(1 - Y)^2}T_{1 + a} \D_\alpha \Pi(\W,2\Re X) &= i\Pi(T_{1 + a}Y_\alpha, 2\Re X) \\
&\quad  +  iT_{1 + a} (T_{1 - Y} \Pi(\W, Y)  +\Pi(Y, \bar X_\alpha)) + K,
\\
T_M \tilde R &= K.
\end{aligned}
\end{equation*}
After cancellations, we match $\cK$ modulo perturbative $K$ terms. Setting
$$(\nfW, \nfR) = (\W + \tilde{\W}, R + \tilde{R})$$
yields the desired normal form variables.

\section{Local well-posedness}
\label{s:lwp}

In this section we prove the low regularity local well-posedness result 
stated in Theorem \ref{t:lwp}. The result from our previous paper \cite{HIT}  asserts that local well-posedness holds for more regular data. We use those more regular solutions to construct the rough solutions by a scheme 
which is similar to the one used there. Precisely, we truncate the data in frequency and then move through a continuous family of solutions, thereby estimating only a solution for a  linearized equation at each step. We implement this strategy directly on the equations \eqref{ww2d-diff}  
which are in terms of the diagonal variables  $(\W,R)$. 
We begin with the main well-posedness result in \cite{HIT}:
\begin{theorem}\label{t:lwp-hit}
  a) Let $n \geq 1$. Then the problem \eqref{ww2d1} is locally
  well-posed in for initial data $(\W_0,R_0)$ in $\H^n$.

b) (lifespan) There exists $T= T(\|(\W_0,R_0)\|_{\H^1}, \|Y_0\|_{L^\infty})$
so that the above solutions are well defined in $[0,T]$, with uniform bounds.

\end{theorem}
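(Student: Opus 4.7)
The plan is to follow the standard quasilinear playbook for water waves, using the cubic energy estimates of Theorem~\ref{t:ee} as the driver. The argument splits into four pieces: (i) construct a family of smooth solutions by regularizing the data; (ii) derive uniform a priori bounds via Theorem~\ref{t:ee}; (iii) prove uniqueness through the linearized equation; and (iv) deduce continuous dependence by a Bona--Smith type argument.

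For existence and a priori bounds, I would approximate the data by $(\W_0^h, R_0^h) = P_{\leq h}(\W_0, R_0)$, which lies in every $\H_m$. For each $h$, a classical fixed point argument (in $\H_m$ for $m$ large enough that all control norms embed into $L^\infty$) produces a smooth solution $(\W^h, R^h)$ on a short time interval. Theorem~\ref{t:ee} then gives
\[
\frac{d}{dt}\, E_n(\W^h, R^h) \lesssim_A A B\, \|(\W^h, R^h)\|_{\dH_n}^2.
\]
To close this into a uniform lifespan depending only on $\|(\W_0, R_0)\|_{\H_1}$ and $\|Y_0\|_{L^\infty}$, I would bootstrap: control the coefficient norm $A$ by $\|(\W, R)\|_{\H_1}$ together with $\|Y\|_{L^\infty}$, via Sobolev embedding (combined with Lemma~\ref{l:Y} for $Y$); and bound $\int_0^T B\,dt$ by time-interpolation between the low $\H_1$ norm and the higher $\H_n$ norm, using a bit of extra regularity that propagates from the higher $\H_n$ ambient space. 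A Gronwall inequality then shows that $E_n$ remains bounded on an interval $[0,T]$ with $T = T(\|(\W_0, R_0)\|_{\H_1}, \|Y_0\|_{L^\infty})$, independently of $h$, so a weak limit as $h \to \infty$ provides the solution.

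Uniqueness follows by writing the difference of two $\H_n$ solutions as a solution of a linearized-type equation (plus cubic remainders) and applying the $\H_0$ well-posedness theory of the linearized equation from \cite{HIT} together with Gronwall. Continuous dependence is obtained by a Bona--Smith scheme: approximate rough data by smoother data, use a priori bounds at the rough level, and use the linearized estimates at a rougher level to pass convergence in $\H_n$. The main obstacle is the lifespan bound in Step (ii): the control norm $B$ involves half a derivative in BMO of $\W$, which is critical for $\H_1$ data and not directly controlled by it. Closing the bootstrap requires exploiting the precise bilinear structure underlying Theorem~\ref{t:ee}, together with Littlewood--Paley interpolation and the pointwise $Y$ bound, so that products like $AB$ integrated in time can be absorbed back into the main energy without forcing $T$ to depend on higher norms. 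Once this delicate balance is struck, the rest of the scheme is essentially bookkeeping.
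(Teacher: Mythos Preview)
This theorem is not proved in the present paper; it is quoted verbatim from \cite{HIT} as the starting point for the low-regularity argument in Section~\ref{s:lwp}. There is therefore no ``paper's own proof'' to compare against---the paper simply invokes the result and moves on.

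Your outline is broadly the right shape for how such a result is established (and indeed this is essentially the scheme carried out in \cite{HIT}), but one point deserves correction. You worry that $B = \||D|^{1/2}\W\|_{BMO} + \|R_\alpha\|_{BMO}$ is ``not directly controlled'' by the $\H_1$ norm and that closing the bootstrap requires some delicate bilinear structure. In fact it is controlled: in one space dimension the critical Sobolev embedding $H^{1/2} \hookrightarrow BMO$ gives $\||D|^{1/2}\W\|_{BMO} \lesssim \|\W\|_{H^1}$ and $\|R_\alpha\|_{BMO} \lesssim \|R\|_{\dot H^{3/2}}$, so $B \lesssim \|(\W,R)\|_{\H_1}$ outright. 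The lifespan bound then closes by a straightforward Gronwall argument on the $\H_1$ energy, with no interpolation against higher norms needed. The genuine subtlety in \cite{HIT} is not the $B$ bound but rather propagating the pointwise control on $Y$ (equivalently, keeping $|1+\W|$ bounded away from zero), which is why $\|Y_0\|_{L^\infty}$ appears explicitly in the lifespan.
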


Our goal will be to obtain an $ \H^s$ version of the above theorem 
with $s = \frac34$.
We recall that the well-posedness result in part (a) carries different meanings
depending on $n$. If $n \geq 2$, then we obtain existence and uniqueness
in $C(\H^n)$ together with continuous dependence on the data.  On the other hand if $n =1$ then we only produce rough solutions $C(\H^1)$  as the unique strong limit of smooth solutions, again with continuous
dependence  on the data; however, for $n=1$ we do not establish a direct uniqueness result.

\subsection{\texorpdfstring{$\mathcal \dH^s $}\ \  bounds for regular solutions}

The solutions in the last theorem have a lifespan which depends on the $\H^1$
size of data. Here we prove that in effect the lifespan depends only on the
$\H^{\frac{3}{4}}$ size of data, and that we have uniform bounds for as long as the $\H^{\frac{3}{4}}$ size of the solutions is controlled.

Precisely, suppose that for some $n \geq 2$ we have an   $\H^n$ solution $(\W,R)$ which satisfies the bounds
\begin{equation*}
\| (\W,R)(0)\|_{\H^{\frac{3}{4}}} < \cM_0 \ll 1.
\end{equation*}
Then we claim that there exists $T = T(\cM_0)$ so that the solution
exists in $C([0,T];\H^n)$ and satisfies the bounds
\begin{equation}\label{h1bd}
\| (\W,R)\|_{L^\infty(0,T; \H^{\frac{3}{4}})} < \cM(\cM_0),
\end{equation}
as well as the $\H^n$ and $\dH^n $ bounds
\begin{equation*}
\| (\W,R)\|_{L^\infty(0,T; \H^n)} \leq  C (\cM_0)\| (\W,R)(0)\|_{ \H^n},
\end{equation*}
\begin{equation*}
\| (\W,R)\|_{L^\infty(0,T; \dH^n)} \leq  C (\cM_0)\| (\W,R)(0)\|_{ \dH^n}.
\end{equation*}

To prove this, we begin by making the bootstrap assumption
\begin{equation}
\label{h1boot}
\| (\W,R)\|_{L^\infty(0,T;\H^{\frac{3}{4}})} < 2\cM.
\end{equation}
We will show that for a suitable choice  $\cM(\cM_0)$ depending only on $\cM_0$, we can improve 
this to \eqref{h1bd}, provided that $T < T(\cM_0)$.

From the bootstrap assumption \eqref{h1boot} we bound the control parameter 
\[
A_{\frac{1}{4}}\lesssim \mathcal{M}.
\]
By applying the energy estimates obtained in Theorem~\ref{t:main}, and Gronwall's inequality
to  $(\W,R)$
\begin{equation}\label{h1-dh0}
\| (\W,R)(t)\|_{\H^{\frac{3}{4}}} \lesssim  e^{C t } \| (\W,R)(0)\|_{\H^{\frac{3}{4}}}\lesssim e^{Ct}\cM_{0}
, \qquad C = C(\cM).
\end{equation}
It is here that our smallness assumption on $A$ is used, as it guarantees that 
our energies $E_s$ are comparable with the $\dH^s$ norm of the solution.

The above estimate allows us to chose first $\cM$ large enough and then $T$ such that the bound in \eqref{h1bd} holds for $t\in [0.T]$.

 At the same time, applying the energy estimates proven in Theorem~ \ref{t:main} and Gronwall's inequality for higher norms the  pair
 $(\W,R)$ we get
\begin{equation}\label{h1-dh1}
\| (\W,R) (t)\|_{\H^n} \lesssim  e^{C  t }
\| (\W,R)(0)\|_{\H^n},
\end{equation}
which allow us to propagate $(\W, R)$ as classical solutions up to time $T$ using Theorem~\ref{t:lwp-hit}.

\subsection{Rough solutions}

Here we construct solutions for data in $\H^{\frac{3}{4}}$ as unique limits of
smooth solutions. Given a $\H^{\frac{3}{4}}$ initial data $(\W_0,R_0)$ as above
we regularize it to produce smooth approximate data $(\W_0^k,R_0^k) =
P_{< k} (\W_0,R_0) $.  We denote the corresponding solutions by
$(\W^k,R^k)$. By the previous analysis, these solutions  exist on a
$k$-independent time interval $[0,T]$ and  satisfy uniform $ \H^{\frac{3}{4}}$
bounds. Further, they are smooth and have a smooth dependence on $k$.

To better understand the evolution of the $\dH^{\frac34}$ norm of the solution
it is convenient to use the language of frequency envelopes. 

\begin{definition}
A sequence $c_k \in \ell^2$ is called an $\H^s$ frequency envelope for
$(\W,R) \in \H^s$ if 

(i) it is slowly varying, $c_j/c_k \leq
2^{-\delta|j-k|}$ with a small universal constant $\delta$, and 

(ii) it bounds the dyadic norms of $ (\W,R) $, namely $\|P_k (\W,R)
\|_{\H^s} \leq c_k$.
\end{definition}

Consider a frequency envelope $c_k$ for the initial data $(\W_0,R_0)$  in $\H^{\frac{3}{4}}$.
Then for the regularized data we have
\[
\| (\W^k_0,R^k_0)\|_{\mathcal{H}^{n}} \lesssim c_k 2^{\left(n-\frac{3}{4}\right)k}, \qquad n \geq 1.
\]
Hence, in the time interval $[0,T]$  we also have the estimates
\begin{equation}\label{high(W,K)}
\| (\W^k,R^k)\|_{\mathcal H^n} \lesssim c_k 2^{\left(n-\frac{3}{4}\right)k}, \qquad n \geq 1.
\end{equation}

We will use these for the high frequency part of the regularized solutions.

For the low frequency part, on the other hand, we 
view $k$ as a continuous rather than a discrete parameter,
differentiate $(\W^k,R^k)$ with respect to $k$ and use the estimates
for the linearized equation. One minor difficulty is that the linearized 
equation \eqref{lin(wr)0} arises from the linearization of 
the $(W,Q)$ system in \eqref{ww2d1} rather than the differentiated
$(\W,R)$ system in \eqref{ww2d-diff}. 
Assuming that $(W^k,Q^k)$ were also defined, we formally  
denote  
\[
(w^k,r^k) = (\partial_k
W^k, \partial_k Q^k - R^k \partial_k W^k).
\]
These would solve the linearized equation around the $(\W^k,R^k)$ solution. 
For our analysis we want to refer only to the differentiated variables, so we 
we compute
\[
\begin{split}
\partial_\alpha w^k = & \  \partial_k \W^k, \\
\partial_\alpha r^k = & \ (1+\W^k) \partial_k R^k - R_\alpha^k w^k.
\end{split}
\]
We take these formulas as the definition of $(w^k,r^k)$, and observe 
that inverting the $\partial_\alpha$ operator is straightforward 
since the above multiplications involve only holomorphic factors 
therefore the products are at frequency $2^k$ and higher.
To take advantage of the bounds in Theorem~\ref{t:balancedenergy} for
the linearized equation, we need a $\dH^{\frac{1}{4}}$ bound for
$(w^k(0),r^k(0))$, namely
\begin{equation}
\| (w^k(0),r^k(0)) \|_{\dH^{\frac{1}{4}}} \lesssim c_k 2^{-\frac{3}{2}k}.
\end{equation}
The bound for $w^k(0)$ is straightforward, but some work is required
for $r^k(0)$.  This follows via the usual Littlewood-Paley trichotomy
and Bernstein's inequality for the low frequency factor, with the
twist that, since both factors are holomorphic, no high-high to low
interactions occur.

In view of the uniform $\H^{\frac{3}{4}}$ bound for $(\W^k,R^k)$,
Theorem~\ref{t:balancedenergy} shows that in $[0,T]$ we have the uniform
estimate
\begin{equation}
\| (w^k,r^k) \|_{\dH^{\frac{1}{4}}} \lesssim  c_k 2^{-\frac{3}{2}k}.
\end{equation}
Now, we return to $(\W^k, R^k)$ and claim the bound
\begin{equation}\label{diff(W,K)}
\| P_{\leq k} (\partial_k \W^k, \partial_k R^k) \|_{\dH^{\frac{1}{4}}} \lesssim  c_k 2^{-\frac{1}{2}k}.
\end{equation}
Again the $\W^k$ bound is straightforward. For $\partial_k R^k$ we write
\[
\partial_k R^k = (1- Y^k)(\partial_\alpha r^k + R_\alpha^k \partial_k W^k),
\]
where again all factors are holomorphic. Then applying $P_{\leq k}$ restricts all frequencies
to $\lesssim 2^k$, and the  Littlewood-Paley trichotomy and Bernstein's
inequality again apply.

Now we integrate \eqref{diff(W,K)} over unit $k$ intervals and use it
to estimate the differences. Combining the result  with \eqref{high(W,K)} we obtain
\begin{equation}
\label{h14}
\begin{split}
&\| (\W^{k+1} - \W^k, R^{k+1}-R^k)\|_{\dH^{\frac{1}{4}}} \lesssim   \ c_k 2^{-\frac{1}{2}k},
\\
& \| \partial_\alpha^2(\W^{k+1} - \W^k, 
 R^{k+1}-R^k)\|_{\dH^{\frac{1}{4}}} \lesssim  \ c_k 2^{\frac{3}{2}k}.
\end{split}
\end{equation}

Summing up with respect to $k$ it follows that the sequence
 $(\W^k,R^k)$ converges uniformly in $\dH^{\frac{3}{4}} \cap \dH^{\frac14}$ in the time interval $[0,T]$
 to a solution $(\W,R)$. As the sequence $(\W^k,R^k)$ is uniformly bounded
 in $\H^{\frac34}$, it follows that we also have $(\W,R) \in \H^\frac34$.
 Furthermore, it is easily seen that the solution $(\W,R)$
 inherits the frequency envelope bounds from the data.
 
 For the continuous dependence  on the initial data argument, we need to show that the sequence of solutions $(\W^k,R^k)$  converges to $(\W,R)$ in the non-homogeneous $\H^{\frac{3}{4}}$ topology. For the computations below, we note that the solutions $(\W^k,R^k)$ and $(\W,R)$ have uniformly bounded control parameters $A$, $A_\frac14$ and $\As$, so we will use these notations without any $k$ indices. In addition, we introduce 
 \[
 \Ass = \| (\W_0,R_0)\|_{\dH^\frac34},
 \]
 which uniformly controls the $\dH^\frac34$ norm of all the solutions $(\W^k,R^k)$ and $(\W,R)$. We also  introduce the notation 
 \[
 \delta \W^k := \W^k-\W, \qquad \delta R^k :=  R^k -R.
 \]
 
 From  \eqref{h14} we get the $\dH^{\frac{1}{4}}$ convergence of the approximating sequence
 \begin{equation}\label{h14-diff}
 \Vert (\delta \W^k, \delta R^k)\Vert_{\dH^{\frac{1}{4}}}\lesssim 2^{-\frac{k}{2}}.
 \end{equation}
 We will use this, together with  the equations \eqref{ww2d-diff} for $(\W^k,R^k)$ and $(\W,R)$, in order to supplement this with a low frequency bound
 \begin{equation}\label{h0-diff}
 \Vert (\delta \W^k, \delta R^k)\Vert_{\dH^{0}}\lesssim_{\As,\Ass,T} 2^{-\frac{k}{4}}.
 \end{equation}
 By construction this holds at the initial time, so we need to propagate it.
 
 We first claim that from \eqref{h14-diff} we obtain the following difference
 bounds:
 \begin{equation}
 \label{differences}
 \Vert \delta Y^k\Vert_{\dot{H}^{\frac{1}{4}}} + \Vert \delta a^k\Vert_{\dot{H}^{\frac{1}{4}}}+ \Vert \delta b^k\Vert_{\dot{H}^{\frac{1}{4}}} + \Vert \delta M^k\Vert_{\dot H^{-\frac14}} \lesssim_{\As} 2^{-\frac{k}2}
\end{equation}
where $a^k$ and $b^k$ are the corresponding $a$ and  $b$ where $(\W, R)$ is replaced with $(\W^k, R^k)$.  These are all  relatively straightforward balanced $L^2$ bounds, whose proofs are left for the reader.
The bounds for $\delta \W^k$ and $\delta b^k$ are too low in regularity for our purposes, but we can interpolate with the uniform $\dH^\frac34$ bounds to  get 
 \begin{equation}
 \label{differences1}
   \Vert \delta b^k\Vert_{\dot{H}^{\frac{1}{2}} \cap L^\infty}+ \Vert \delta \W^k\Vert_{\dot{H}^{\frac{1}{2}} \cap L^\infty}  \lesssim_{\As,\Ass} 2^{-\frac{k}4}.
\end{equation}

Subtracting the equations \eqref{ww2d-diff} for $(\W^k,R^k)$ and $(\W,R)$ we obtain 
\begin{equation*}
\left\{
\begin{aligned}
  D_t \delta \W = & \  \delta b^k  \W^k_\alpha   -(1+\W)(1-\bar Y)\delta R^k_{\alpha} + (1-\bar Y) R^k_{\alpha}\delta \W^k - (1+\W^k) R^k_{\alpha}\delta \bar Y^k\\
  & +(1+ \W^k)\delta M^k + M\delta \W^k \\
  D_t\delta R^k = & \ \delta b^k  R^k_{\alpha}-i(1-Y)\delta a^k +i(1+a^k)\delta Y^k.
\end{aligned}    
\right.
\end{equation*}
Here we use the relations \eqref{differences}, \eqref{differences1}
to estimate all the terms on the right in $\dH^{-\frac14}$, obtaining
\begin{equation*}
\| D_t \delta \W^k  \|_{\dot H^{-\frac14}} + \| D_t \delta R^k  \|_{\dot H^{\frac14}}
\lesssim_{\As,\Ass} 2^{-\frac{k}4}
\end{equation*}
with $D_t = \partial_t + b \partial_\alpha$.
In the last bound we can harmlessly convert $D_t$ to $T_{D_t}$ modulo errors that can be placed on the right,
\begin{equation*}
\| T_{D_t} \delta \W^k \|_{\dot H^{-\frac14}} + \| T_{D_t} \delta R^k \|_{\dot H^{\frac14}}
\lesssim_{\As,\Ass} 2^{-\frac{k}4}.
\end{equation*}
These two bounds allow us to compute energy estimates
for $\delta \W_k$ and $\delta R_k$ as follows:
\[
\begin{split}
\frac{d}{dt} \| \delta \W^k\|_{L^2}^2 & =   \int T_{b_\alpha} \delta \W^k \cdot \delta \bar \W^k d\alpha + 2 \Re \int T_{D_t} \delta \W^k \cdot \delta \bar \W^k d\alpha \\ 
& \lesssim_A  \  \|   \delta \W^k \|_{\dot H^\frac14}^2  + 
\|   \delta \W^k \|_{\dot H^\frac14} \| T_{D_t}  \delta \W^k \|_{\dot H^{-\frac14}} \\
& \lesssim_{\As,\Ass} 2^{-\frac{k}4},
\end{split}
\]
and a nearly identical computation for $\delta R^k$.
This yields the desired bound \eqref{h0-diff} for the low frequencies of the difference.

Now we return to the last step of the proof of the well-posedness result, which is the continuous dependence on the initial data in the $\H^{\frac{3}{4}}$ topology. The frequency envelope bounds are very useful
in this proof. This is standard, but we
briefly outline the argument. Suppose that $(\W_j,R_j)(0) \in \H^{\frac{3}{4}}$ and
$(\W_j,R_j)(0)-(\W,R)(0) \to 0 $ in $\H^{\frac{3}{4}}$. We consider the approximate
solutions $(\W_j^k,R_j^k)$, respectively $(\W^k,R^k)$. According to
our result for more regular solutions, we have 
\begin{equation}\label{reg-lim}
(\W_j^k,R_j^k) - (\W^k,R^k) \to 0 \qquad \text{in} \ \ \H^n.
\end{equation}
On the other hand, from the $\H^{\frac{3}{4}}$ data convergence we get
\[
(\W_j^k,R_j^k)(0) - (\W_j,R_j)(0) \to 0  \qquad \text{in} \ \ \H^{\frac{3}{4}} \ \ \text{uniformly in} \ \ j.
\]
Then the above frequency envelope analysis,  shows that
\[
(\W_j^k,R_j^k) - (\W_j,R_j) \to 0 \qquad \text{in} \ \ \H^{\frac{3}{4}} \ \ \ \text{uniformly in} \ \ j.
\]
Hence we can let $k$ go to infinity in \eqref{reg-lim}
and conclude that
\[
(\W_j,R_j) - (\W,R) \to 0 \qquad \text{in} \ \ \H^{\frac{3}{4}}.
\]

\subsection{Enhanced cubic lifespan bounds}
\label{s:cubic}
In this section we prove Theorem~\ref{t:cubic}. Given initial data $(\W,R)$ for \eqref{ww2d-diff} 
satisfying the conditions in the theorem, namely 
\[
\| (\W,R)(0)\|_{\dH^{\frac{3}{4}}}  \leq \epsilon,
\qquad
\| (\W,R)(0)\|_{\dH^{\frac12-\delta}}^\frac14  
\| (\W,R)(0)\|_{\dH^{\frac{3}{4}}}^\delta  \leq c_0^{\frac14 + \delta},
\]
we consider the solutions on a time interval $\left[0, T \right] $ and seek to prove the estimate 
\begin{equation}
\label{miki}
\| (\W,R)(t)\|_{\dH^{\frac{3}{4}}}  \leq C \epsilon,
\quad
\| (\W,R)(t)\|_{\dH^{\frac12-\delta}}^\frac14  
\| (\W,R)(t)\|_{\dH^{\frac{3}{4}}}^\delta  \leq (C c_0)^{\frac14+\delta},
\quad t\in \left[0, T \right], \!\!\!\!
\end{equation}
provided that  $T\ll \epsilon^{-2}$.  In view of our local well-posedness result this shows that the solutions can be extended up to time $T_{\epsilon}=c\epsilon^{-2}$ concluding the proof of the theorem.

In order to prove \eqref{miki} we can harmlessly make the bootstrap assumption
\begin{equation}
\label{miki2}
\| (\W,R)(t)\|_{\dH^{\frac{3}{4}}}  \leq 2C \epsilon,
\quad
\| (\W,R)(t)\|_{\dH^{\frac12-\delta}}^\frac14  
\| (\W,R)(t)\|_{\dH^{\frac{3}{4}}}^\delta  \leq (2 C c_0)^{\frac14+\delta},
\quad t\in \left[0, T \right].\!\!\!\!
\end{equation}

By Sobolev embeddings and interpolation, from \eqref{miki2} we obtain the bounds
\begin{equation*}
A \lesssim C c_0 \ll 1, \qquad  A_{\frac{1}{4}}\lesssim C\epsilon.
\end{equation*}
Hence, by the energy estimates in Theorem~\ref{t:main} applied to $(\W, R)$ with $s = \frac34$, we obtain
\[
\begin{split}
\Vert (\W, R)\Vert_{L^{\infty}(0,T; \dH^{\frac{3}{4}})} 
\lesssim & \ \Vert (\W, R)(0)\Vert_{ \dH^{\frac{3}{4}}}+TA^2_{\frac{1}{4}}\Vert (\W, R)\Vert_{L^{\infty}(0,T;\dH^{\frac{3}{4}})}\\
\lesssim & \ \epsilon +TC^3\epsilon^3.
\end{split}
\]
Combining this with  the same energy estimates in Theorem~\ref{t:main} but for $s = \frac12-\delta$, we also get
\[
\| (\W,R)(t)\|_{\dH^{\frac12-\delta}}^\frac14  
\| (\W,R)(t)\|_{\dH^{\frac{3}{4}}}^\delta  \lesssim (c_0(1+T\epsilon^2))^{\frac14+\delta}.
\]
Hence, the desired estimate \eqref{miki} follows provided that $T\ll (C\epsilon)^{-2}$.

\bibliography{bib-holo-gw}
\bibliographystyle{plain}

\end{document}